\newcommand{\smallsum}{\textstyle\sum}
\newcommand{\tzero}{0}
\newtheorem{remark}{Remark}[section]
\newtheorem{lemma}{Lemma}[section]
\newtheorem{theorem}[lemma]{Theorem}
\newtheorem{prop}[lemma]{Proposition}
\newtheorem{corollary}[lemma]{Corollary}
\providecommand{\eps}{{\ensuremath{\varepsilon}}}
\providecommand{\N}{{\ensuremath{\mathbbm{N}}}}
\providecommand{\R}{{\ensuremath{\mathbbm{R}}}}
\providecommand{\E}{{\ensuremath{\mathbb{E}}}}
\renewcommand{\P}{{\ensuremath{\mathbb{P}}}}
\providecommand{\N}{{\ensuremath{\mathbbm{N}}}}
\providecommand{\R}{{\ensuremath{\mathbbm{R}}}}
\providecommand{\E}{{\ensuremath{\mathbb{E}}}}
\renewcommand{\P}{{\ensuremath{\mathbb{P}}}}
\providecommand{\HS}{{\ensuremath{\textup{HS}}}}
\title{On a perturbation theory and on strong convergence rates 
for stochastic ordinary and partial differential equations
with non-globally monotone coefficients}
\begin{document}
\author{Martin Hutzenthaler and Arnulf Jentzen}
\maketitle
\begin{abstract}
\end{abstract}
  We develope a perturbation theory for stochastic differential equations (SDEs)
%   of evolutionary type 
  by which we mean both
%   stochastic evolution equations (SEEs) including, for example,
  stochastic ordinary differential equations (SODEs)
  and stochastic partial differential equations (SPDEs).
  In particular, we estimate the $ L^p $-distance
  between the solution process of an SDE and an
  arbitrary It\^o process, which we view as a perturbation of the solution process of the SDE, 
  by the $ L^q $-distances of the differences of the local characteristics
  for suitable $ p, q > 0 $.
  As application of our perturbation theory,
  we establish 
  strong convergence rates 
%   the strong convergence rate $ \tfrac{ 1 }{ 2 } $ 
  for numerical approximations of a class of
  SODEs with non-globally monotone coefficients.
%  including
%  the stochastic Lorenz equation with bounded noise,
%  the stochastic van der Pol oscillator
%  and the stochastic Duffing
  As another application of our perturbation theory, we prove strong convergence rates
  for spectral Galerkin approximations of solutions
  of semilinear SPDEs with non-globally monotone nonlinearities
  including Cahn-Hilliard-Cook type equations
  and stochastic Burgers equations.
  Further applications of the perturbation theory include 
  the regularity of solutions of SDEs with respect to the initial values 
  and small-noise analysis for ordinary and partial differential equations.
\tableofcontents
%\newpage

\section{Introduction}
In this article we develop 
a \emph{perturbation theory}
for stochastic differential equations (SDEs)
by which we mean both
stochastic ordinary differential equations (SODEs)
and
stochastic partial differential equations (SPDEs).
To illustrate this perturbation theory,
we use the following setting
in this introductory section.
Let 
$
  ( H, \left< \cdot , \cdot \right>_H, \left\| \cdot \right\|_H )
$
and
$
  ( U, \left< \cdot , \cdot \right>_U, \left\| \cdot \right\|_U )
$
be separable $ \R $-Hilbert spaces,
let 
$ D \subseteq H $ be a Borel measurable set,
let
$ 
  \mu \colon D \to H
$
and
$
  \sigma \colon D \to HS( U, H ) 
$
be Borel measurable functions,
let 
$ T \in (0,\infty) $,
let
$ 
  ( 
    \Omega, \mathcal{F}, \P,  
    ( \mathcal{F}_t )_{ t \in [ 0 , T ] }
  ) 
$
be a stochastic basis,
let
$
  ( W_t )_{ t \in [ 0, T ] }
$
be a cylindrical $ I_U $-Wiener process
with respect to $ ( \mathcal{F}_t )_{ t \in [ 0, T ] } $,
%let $ V = ( V(x, y) )_{ (x, y) \in H^2 } \in C^2( H^2, \R ) $,
let 
$ X, Y \colon [ 0 , T ] \times \Omega \to D $
be adapted stochastic processes
with continuous sample paths
and let
$ a \colon [ 0, T ] \times \Omega \to H $
and
$ b \colon [ 0, T ] \times \Omega \to HS( U, H ) $
be predictable stochastic processes
with 
$
  \int_{ 0 }^T
  \| a_s \|_H
  +
  \| b_s \|_{ HS( U, H ) }^2
  +
  \| \mu( X_s ) \|_H
  +
  \| \sigma( X_s ) \|^2_{ HS( U, H ) }
  +
  \| \mu( Y_s ) \|_H
  +
  \| \sigma( Y_s ) \|^2_{ HS( U, H ) }
  \,
  ds
  < \infty
$
$ \P $-a.s.\ and
%$
\begin{align}
\label{eq:SDE.intro}
  X_t 
  &= 
  X_0 
  +
  \int_{ 0 }^t \mu( X_s ) \, ds
  +
  \int_{ 0 }^t \sigma( X_s ) \, dW_s
%$
%$ \P $-a.s. and
%$
  \\
  Y_t 
  &= 
  Y_0 
  +
  \int_{ 0 }^t a_s \, ds
  +
  \int_{ 0 }^t b_s \, dW_s
\label{eq:perturbation.intro}
\end{align}
%$
$ \P $-a.s.\ for all
$ t \in [0, T] $.
The process $ X $ is thus a solution process
of the SDE~\eqref{eq:SDE.intro}
and the process $ Y $ is a general It\^{o} process
with the drift process $ a $, the 
diffusion process $ b $
and the Wiener process $ W $.
We view the stochastic process $ Y $ 
as a \emph{perturbation} of the solution
process of the 
SDE~\eqref{eq:SDE.intro}
and we are interested 
in estimates for the strong
\emph{perturbation error}
$ 
  \| X_t - Y_t \|_{ L^p( \Omega; H ) } 
$
at some fixed (or random) time
$ t \in [0,T] $
for $ p \in (0, \infty) $.

Informally speaking, we 
\emph{estimate the global perturbation error by 
the local perturbation error}.
More formally,
for every $ p \in (0,\infty) $
we estimate the global perturbation error
$ \| X_T - Y_T \|_{ L^p( \Omega; H ) } $
%, $t\in[0,T]$, 
by the $ L^q $-norms
of the difference $ X_0 - Y_0 $ at time $ 0 $
and
of the differences 
$ 
  a - \mu( Y ) 
  =
  ( a_t - \mu( Y_t ) )_{ t \in [0,T] } 
$ 
and 
$ 
  b - \sigma( Y ) 
  =
  ( b_t - \sigma( Y_t ) )_{ t \in [0,T] } 
$ 
of the local characteristics
where $ q \in (0,\infty) $ is appropriate;
see Theorem~\ref{thm:perturbation.estimate} 
below for details.
This perturbation result can then be applied 
to any stochastic process that is an
It\^{o} process with respect to 
the Wiener process $W$.
Possible applications include
\begin{enumerate}[(i)]
  \item  
     \emph{local Lipschitz continuity
    % in the strong sense 
    of solutions of SDEs with respect to the initital value}
     (choose $ a_t = \mu( Y_t ) $ and $ b_t = \sigma( Y_t ) $ for $ t \in [0,T] $; see 
    Corollary~\ref{cor:perturbation_initial}
    below and 
    Cox et 
    al.~\cite{CoxHutzenthalerJentzen2013} for details),
     \label{i:problem1}
  \item
     strong convergence rates for \emph{time-discrete numerical approximations
     of SODEs}
     (e.g., the Euler-Maruyama approximation with $N\in\N$ time discretization steps
     is given by
     $ a_t = \mu( Y_{ \frac{ k T }{ N } } ) $ 
      and
     $ b_t = \sigma( Y_{ \frac{ k T }{ N } } ) $ 
    for $ t \in [ \frac{ kT }{ N }, \frac{ (k+1) T }{ N } )
    $ and $ k \in \N $;
    see Subsection~\ref{sec:SODEs} below),
     \label{i:problem2}
  \item
    strong convergence rates for
     \emph{Galerkin approximations
     for SPDEs}
     (choose $ a_t = P( \mu( Y_t ) ) $ 
    and $ b_t u = P( \sigma( Y_t) u ) $ 
    for $ u \in U $, $ t \in [0,T] $
    and some suitable projection operator $ P \in L( H ) $;
    see 
    Subsection~\ref{ssec:Galerkin}
    below)\label{i:problem3} and
   \item
    strong convergence rates for
     \emph{small noise perturbations}
     of solutions of deterministic 
    differential equations
     (choose $ \sigma = 0 $, $ a_t = \mu( Y_t ) $ 
    and $ b_t = \eps \, \tilde{\sigma}( Y_t ) $
    for $ t \in [0,T] $
     where 
      $
       \tilde{\sigma} \colon D \to HS( U, H ) 
     $
     is a suitable Borel measurable function and
    where $ \eps > 0 $ is a sufficiently small 
    parameter; see Subsection~\ref{ssec:small.noise}
    below).
     \label{i:problem4}
\end{enumerate}
In the literature, a frequently used 
method to estimate 
strong perturbation errors is to employ
Gronwall's lemma together with
the popular \emph{global monotonicity}
assumption
(see, e.g., Minty~\cite{Minty1962,Minty1963} for deterministic equations
and condition (4.19) in Pardoux~\cite{Pardoux1975} for SODEs)
% on the coefficient functions,
% that is,
% to assume
that there exists a real number $ c \in \R $ 
such that for all $ x, y \in D $ 
it holds that
\begin{equation} 
\label{eq:global.monotonicity}
  \left\langle x-y,\mu(x)-\mu(y)\right\rangle_{H}+\tfrac{1}{2}\left\|\sigma(x)-\sigma(y)\right\|^2_{\HS(U,H)}
  \leq c\left\|x-y\right\|_H^2.
\end{equation}
%To illustrate this, we derive strong local Lipschitz continuity in the intial value with this approach.
%In the case $a_s=\mu(Y_s)$ and $b_s=\sigma(Y_s)$ for all $s\in[0,T]$,
%It\^o's formula and the global monotonicity assumption~\ref{eq:global.monotonicity} imply
%that
%\begin{equation}  \begin{split}
%  \left\|X_t-Y_t\right\|^2
%  &
%  =
%  \left\|X_0-Y_0\right\|^2
%  +2\int_0^t
%  \left\langle X_s-Y_s,\mu(X_s)-\mu(Y_s)\right\rangle_{H}
%   +\tfrac{1}{2}\left\|\sigma(X_s)-\sigma(Y_s)\right\|^2_{\HS(U,H)}\,ds
%  +2\int_0^t \left\langle X_s-Y_s,(\sigma(X_s)-\sigma(Y_s))dW_s\right\rangle_H
%  \\&
%  \leq
%  \left\|X_0-Y_0\right\|^2
%  +2c\int_0^t
%  \left\| X_s-Y_s\right\|_{H}^2\,ds
%  +2\int_0^t \left\langle X_s-Y_s,(\sigma(X_s)-\sigma(Y_s))dW_s\right\rangle_H
%\end{split}     \end{equation}
%$\P$-a.s.~for all $t\in[0,T]$.
%After taking expectations, Gronwall's lemma can then be applied
%to infer that
%$\|X_t-Y_t\|_{L^2(\Omega;H)}\leq
% \|X_0-Y_0\|_{L^2(\Omega;H)}e^{ct}$
%for all $t\in[0,T]$.
%Note that this global monotonicity assumption is more general than globalLipchitz
Under the global monotonicity assumption~\eqref{eq:global.monotonicity},
there are a multitute of results in the literature 
and, at least partially, the above problems~\eqref{i:problem1}--\eqref{i:problem4}
have been solved under this assumption
(cf., e.g., Proposition 4.2.10 in Pr\'ev\^ot \& R\"ockner~\cite{PrevotRoeckner2007}
for problem~\eqref{i:problem1}, 
% Theorem~2.4 in 
Hu~\cite{Hu1996},
% Theorem~5.3 in 
% Higham, Mao \& Stuart~\cite{hms02},
% Theorems~2 and 3 in 
Sabanis~\cite{Sabanis2013Arxiv}
for problem~\eqref{i:problem2},
% Theorem~3.5 in 
Liu~\cite{Liu2003},
Sauer \& Stannat~\cite{SauerStannat2013}
for problem~\eqref{i:problem3}).
Unfortunately,
the global monotonicity assumption~\eqref{eq:global.monotonicity} is 
too restrictive in the sense that
the nonlinearities
in the coefficient functions of the majority
of nonlinear (stochastic) differential equations 
from applications
do not satisfy the global monotonicity assumption~\eqref{eq:global.monotonicity}
(see, e.g., Sections~\ref{sec:SODEs} 
and~\ref{ssec:Galerkin} below
for a few examples).

Beyond the global monotonicity assumption~\eqref{eq:global.monotonicity},
we are not aware of a general technique for
estimating global perturbation errors by local perturbation errors.
In the literature, there exist the following results
%For specific questions, however, there exist  approaches
%for proving convergence in the strong sense to the solution of an SODE or SPDE
for SDEs with non-globally monotone nonlinearities for the problems~\eqref{i:problem1}--\eqref{i:problem4}.
Problem~\eqref{i:problem1} -- which is in a certain sense the simpliest of
the problems~\eqref{i:problem1}--\eqref{i:problem4}
as there is only a perturbation of the initial value but
no perturbation of the 
dynamics of \eqref{eq:SDE.intro} --
is already solved
%in the sense that there exists a theory which
%implies local Lipschitz continuity in the strong sense with respect to the initial value
for a large class of SDEs with non-globally monotone nonlinearities
(see, e.g.,
% Theorem 5.1 and Lemma 6.1 in 
Li~\cite{Li1994},
% Lemma 4.10 in 
Hairer \& Mattingly~\cite{HairerMattingly2006},
% Lemma 2.3 in 
Zhang~\cite{Zhang2010} 
% and, in particular,
% Theorem 1.1 and the examples in Section 4 in 
and
Cox et.\ al~\cite{CoxHutzenthalerJentzen2013}).
Problem~\eqref{i:problem2}
has been solved for a large class of one-dimensional
square-root diffusion processes 
with inaccessible boundaries
(see
Gy\"{o}ngy \& Rasonyi~\cite{GyoengyRasonyi2011},
Dereich, Neuenkirch \& Szpruch~\cite{DereichNeuenkirchSzpruch2012},
Alfonsi~\cite{Alfonsi2012},
Neuenkirch \& Szpruch~\cite{NeuenkirchSzpruch2012}).
% \cite{
% GyoengyRasonyi2011,
% DereichNeuenkirchSzpruch2012,% Theorem 1.1 (CIR)
% Alfonsi2012,%Theorem 2 (CIR) und Proposition 3 (general)
% NeuenkirchSzpruch2012%, Prop 3.1 (CIR), Heston 3/2, CEV, WF-Diffusion, Ait-Sahalia, Thm 2.7 (general)
% }).
%
%
We are not aware of any result in the literature that solves problem~\eqref{i:problem2}
in the case of a multi-dimensional SODE
which fails to satisfy \eqref{eq:global.monotonicity}.
Regarding
problem~\eqref{i:problem3},
we are aware of exactly 
one result in the literature
on SPDEs with non-globally monotone nonlinearities, that is,
the work of D\"orsek~\cite{Doersek2012}.
More precisely,
Corollary~3.2 in \cite{Doersek2012}
establishes the strong convergence rate $ 1 $
% in the $ L^2( \Omega; H ) $-norm
for spectral Galerkin approximations 
of the vorticity formulation of the two dimensional stochastic
Navier-Stokes equations with degenerate additive noise.
For problem~\eqref{i:problem4}, we have not found results in the literature
on SDEs with non-globally monotone nonlinearities.
%Again
%finiteness of certain exponential moments from
%Lemma 4.10 in Hairer \& Mattingly~\cite{HairerMattingly2006}
%play an important role in the proof of
%Corollary 3.2 in D\"orsek~\cite{Doersek2012}.

%Our new approach for estimating global approximation errors relies on
%a simple application of It\^o's formula.
An important observation of this article is that there exist
exponential integrating factors 
$
  \exp( \int_0^t \chi_s \, ds ) 
$, 
$ t \in [0,T] $,
%such that
%if we apply It\^o's formula
such that, informally speaking,
the rescaled squared distances
$
  \| X_t - Y_t \|^2_H
  \exp( - \smallint_0^t \chi_s \, ds )
$,
$ 
  t \in [0,T] 
$,
are sums and integrals over local perturbation errors
where $ ( \chi_t )_{ t \in [0,T] } $ is a suitable stochastic process.
The following proposition, 
Proposition~\ref{prop:main_perturbation_intro} below,
formalizes this idea and establishes a \emph{pathwise perturbation formula}.
In Proposition~\ref{prop:main_perturbation_intro}
the squared Hilbert-space distance 
$ \left\| v - w \right\|_H^2 $, $ v, w \in H $,
is replaced by a more general function $ V( v, w ) $, $ v, w \in H $,
to measure distances.
It proved very beneficial in the case of some SDEs such as
Cox-Ingersoll-Ross processes or the Cahn-Hilliard-Cook equation with space-time white noise
to measure distance between the solution $ X $ and its perturbation $ Y $
with a general function $ V \in C^2( H^2 , \R ) $ rather than with 
the squared Hilbert space distance
(see Sections~4.9 and 4.12.2 in Cox et.~al~\cite{CoxHutzenthalerJentzen2013}
for details).
Next we note that
in the perturbation formula~\eqref{eq:perturbation.formula.intro} below,
there appears an operator
$ \overline{\mathcal{G}}_{ \mu, \sigma } \colon C^2( H^2 , \R ) \to C( H^2 , \R ) $
defined in~\eqref{eq:extended_generator} below
which is the formal generator of the bivariate process consisting of two
solution processes of the SDE~\eqref{eq:SDE.intro};
see also Ichikawa~\cite{Ichikawa1984}, Maslowski~\cite{Maslowski1986} 
and, e.g., Leha \& Ritter~\cite{LehaRitter1994,LehaRitter2003}
for references where this operator has been introduced and used in the literature.
% 
% This operator was already used in Theorem 1.2 in Maslowski~\cite{Maslowski1986}
% for studying long-time stability properties of SDEs under the
% assumption $(\overline{\mathcal{G}}_{\mu,\sigma}V)(x,y)\leq 0$ for all $x,y\in H$
% (cf.~also Ichikawa~\cite{Ichikawa1984} and, e.g., Leha \& Ritter~\cite{LehaRitter1994,LehaRitter2003}).
%
% %%%%%%%%%%%%%%%%%%%%%%%%%%%%%%%%%%%%%%%%%%%%%%%%%%%%%%
%
%
%
\begin{prop}[Perturbation formula]
\label{prop:main_perturbation_intro}
Assume the above setting,
let
$ V = ( V(x, y) )_{ ( x, y ) \in H^2 } \in C^2( H^2, \R ) $,
let 
$ \chi \colon [0,T] \times \Omega \to \R $ be a predictable stochastic process 
with $ \int_0^T | \chi_s | \, ds < \infty $ $ \P $-a.s.\ 
and let 
$ e_k \in U $,
$ k \in \N$,
be an orthonormal basis of $ U $.
Then
\begin{equation}
\begin{split}
\label{eq:perturbation.formula.intro}
  \frac{
    V( X_t, Y_t )
  }{
    \exp\!\big(
      \int_{0}^t \chi_u \, du
    \big)
  }
 =&
 \, V( X_{ 0 }, Y_{ 0 } )
  +
  \smallint\limits_0^t
  \tfrac{
    (
        \partial_x
      V
    )( X_s, Y_s ) 
    \, \sigma( X_s )
    +
    (
        \partial_{ y }
      V
    )( X_s, Y_s ) 
    \, b_s
  }{
    \exp(
      \int_{0}^s \chi_u \, du
    )
  }
  \, dW_s
\\ & 
  +
  \smallint\limits_{ 0 }^t
  \tfrac{
    ( \overline{\mathcal{G}}_{ \mu, \sigma } V)( X_s, Y_s )
    - 
    \chi_s V( X_s, Y_s )
    +
    \sum_{ k = 1 }^{ \infty }
    \left(
      \partial_x \partial_y 
      V
    \right)( X_s, Y_s ) 
    \left(
      \sigma( X_s ) e_k
      ,
      [ b_s - \sigma( Y_s ) ] e_k
    \right)
  }{
    \exp(
      \int_{0}^s \chi_u \, du
    )
  }
  \, ds
\\ & 
  +
  \smallint\limits_{ 0 }^t
%  \int_{  \tzero  }^t
  \tfrac{
    ( \partial_y V )( X_s, Y_s ) 
    \left[
      a_s 
      -
      \mu( Y_s )
    \right]
    +
    \frac{ 1 }{ 2 }
    \operatorname{trace}\left(
      \left[ 
        b_s + \sigma( Y_s ) 
      \right]^*
      ( \operatorname{Hess}_{ y } V)( X_s, Y_s )
      \left[ 
        b_s
        -
        \sigma( Y_s )
      \right]
    \right)
  }{
    \exp(
      \int_{0}^s \chi_u \, du
    )
  }
  \, ds
\end{split}
\end{equation}
$ \P $-a.s.\ for all 
$ t \in [ 0, T ] $.
\end{prop}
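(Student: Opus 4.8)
The plan is to apply the two-dimensional Itô formula to the $C^2$-function $V$ evaluated along the continuous semimartingale $(X,Y)$, and then to incorporate the exponential integrating factor by a further application of Itô's formula (or, equivalently, the product rule for semimartingales) to $t \mapsto V(X_t,Y_t)\exp(-\int_0^t \chi_u\,du)$. First I would record the semimartingale decompositions of $X$ and $Y$ from \eqref{eq:SDE.intro}--\eqref{eq:perturbation.intro}: the finite-variation parts are $\int_0^\cdot \mu(X_s)\,ds$ and $\int_0^\cdot a_s\,ds$, while the martingale parts are the stochastic integrals against $W$ with integrands $\sigma(X_s)$ and $b_s$. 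The cross-variation structure is the key input: for the $H$-valued (more precisely, since $H^2$-valued) process, the quadratic covariation of the $X$-component with itself is governed by $\sigma(X_s)\sigma(X_s)^*$, of the $Y$-component with itself by $b_s b_s^*$, and the $X$--$Y$ cross term by $\sigma(X_s) b_s^*$ — all expressed via the fixed orthonormal basis $(e_k)_{k\in\N}$ of $U$, which is why the infinite sum $\sum_{k=1}^\infty$ appears in \eqref{eq:perturbation.formula.intro}.

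Second, Itô's formula for $V\in C^2(H^2,\R)$ applied to $(X_t,Y_t)$ produces: the initial value $V(X_0,Y_0)$; the martingale term with integrand $(\partial_x V)(X_s,Y_s)\sigma(X_s) + (\partial_y V)(X_s,Y_s)b_s$ against $dW_s$; a drift term $(\partial_x V)(X_s,Y_s)\mu(X_s) + (\partial_y V)(X_s,Y_s)a_s\,ds$; and the second-order correction $\tfrac12\operatorname{trace}$ terms built from $\operatorname{Hess}_x V$, $\operatorname{Hess}_y V$ and the mixed $\partial_x\partial_y V$, contracted with $\sigma(X_s)\sigma(X_s)^*$, $b_s b_s^*$ and $\sigma(X_s)b_s^*$ respectively. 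The next step is purely algebraic rewriting: I would group the $\mu(X_s)$-drift and the $\sigma(X_s)\sigma(X_s)^*$ and (the half of the) mixed second-order term that pairs $\sigma(X_s)$ with $\sigma(Y_s)$ into the expression that is by definition $(\overline{\mathcal G}_{\mu,\sigma}V)(X_s,Y_s)$ — this is exactly the formal generator of the bivariate solution process referenced in \eqref{eq:extended_generator}. What remains after extracting $\overline{\mathcal G}_{\mu,\sigma}V$ are precisely the ``perturbation'' pieces: the $\partial_y V$ term paired with $a_s-\mu(Y_s)$; the mixed term paired with $b_s-\sigma(Y_s)$ (against $\sigma(X_s)$); and the $\operatorname{Hess}_y V$ term, which must be reorganized using the polarization identity $b_s b_s^* - \sigma(Y_s)\sigma(Y_s)^* = \tfrac12\big([b_s+\sigma(Y_s)][b_s-\sigma(Y_s)]^* + [b_s-\sigma(Y_s)][b_s+\sigma(Y_s)]^*\big)$, combined with symmetry of $\operatorname{Hess}_y V$, to yield the single $\operatorname{trace}([b_s+\sigma(Y_s)]^*(\operatorname{Hess}_y V)[b_s-\sigma(Y_s)])$ expression in the last line.

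Third, to insert the integrating factor, I would apply the product rule to the product of the real-valued continuous semimartingale $V(X_t,Y_t)$ (whose decomposition was just obtained) with the continuous finite-variation process $R_t := \exp(-\int_0^t\chi_u\,du)$, which satisfies $dR_t = -\chi_t R_t\,dt$. Since $R$ has no martingale part, there is no additional quadratic-covariation contribution, and the product rule gives $d(V(X_t,Y_t)R_t) = R_t\,d(V(X_t,Y_t)) - \chi_t R_t V(X_t,Y_t)\,dt$; dividing through by $\exp(\int_0^s\chi_u\,du)$ inside the integrals and absorbing the $-\chi_s V(X_s,Y_s)$ term into the $ds$-integral on the second line yields exactly \eqref{eq:perturbation.formula.intro}.

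The main obstacle is justifying the infinite-dimensional Itô formula and the interchange of summation and stochastic integration in the cylindrical-Wiener-process setting under only the stated integrability hypothesis $\int_0^T \|a_s\|_H + \|b_s\|_{HS(U,H)}^2 + \|\mu(X_s)\|_H + \|\sigma(X_s)\|_{HS(U,H)}^2 + \dots\,ds < \infty$ $\P$-a.s. — in particular, the convergence of $\sum_{k=1}^\infty (\partial_x\partial_y V)(X_s,Y_s)(\sigma(X_s)e_k, [b_s-\sigma(Y_s)]e_k)$ and the fact that the martingale term is a genuine (local) stochastic integral rather than merely formally defined. I would handle this by a localization argument: introduce stopping times $\tau_n$ that bound $\|X_s\|_H, \|Y_s\|_H$ and the relevant integrals, prove the formula on each stochastic interval $[0,\tau_n]$ where all the $C^2$-data of $V$ restricted to a bounded set are uniformly controlled (so that the Hilbert--Schmidt contractions with $\partial_x\partial_y V$ and $\operatorname{Hess} V$ are absolutely summable and the standard Hilbert-space Itô formula applies), and then pass to the limit $n\to\infty$ using $\tau_n\uparrow T$ $\P$-a.s. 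Everything else — the algebraic regrouping into $\overline{\mathcal G}_{\mu,\sigma}V$ and the polarization rewriting of the Hessian term — is routine bookkeeping once the decomposition and the definition \eqref{eq:extended_generator} are in hand.
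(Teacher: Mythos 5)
Your proposal is correct and follows essentially the same route as the paper: the paper also obtains \eqref{eq:perturbation.formula.intro} by applying It\^o's formula (in the paper, directly to the rescaled process $V(X_t,Y_t)/\exp(\int_0^t\chi_u\,du)$, which amounts to your It\^o-plus-product-rule step since the exponential factor has finite variation), then adding and subtracting the extended generator $\overline{\mathcal{G}}_{\mu,\sigma}V$ and rewriting the difference of the $\operatorname{Hess}_y V$ contributions as $\tfrac12\operatorname{trace}\bigl([b_s+\sigma(Y_s)]^*(\operatorname{Hess}_y V)(X_s,Y_s)[b_s-\sigma(Y_s)]\bigr)$ using the symmetry of the Hessian, exactly as in your polarization step (see Lemma~\ref{lem:perturbation_formula}, Lemma~\ref{lem:perturbation_formula2}, Proposition~\ref{prop:perturbation_formula} and Proposition~\ref{prop:main_perturbation}). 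Your localization remark for justifying the infinite-dimensional It\^o formula under the stated a.s.\ integrability hypotheses is consistent with (and somewhat more explicit than) the paper's treatment.
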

Proposition~\ref{prop:main_perturbation_intro}
follows immediately from It\^{o}'s formula together with the addition and the subtraction of a suitable term;
see Proposition~\ref{prop:main_perturbation}
below for details.
Proposition~\ref{prop:main_perturbation_intro} turned out to be rather
useful to develop a perturbation theory for the SDE~\eqref{eq:SDE.intro}
and, thereby, to partially solve the problems~\eqref{i:problem1}--\eqref{i:problem4}
without assuming global monotonicity.
In the formulation of Proposition~\ref{prop:main_perturbation_intro},
the exponential integrating factors
$\exp(\smallint_0^t\chi_s\,ds)$, $t\in[0,T]$, can be quite arbitrary.
However,
it is essential to observe that if $\chi$ can be chosen such that
$
    ( \overline{\mathcal{G}}_{ \mu, \sigma } V)( X_s, Y_s )
    - 
    \chi_s V( X_s, Y_s )
    \leq 0
$
$ \P $-a.s.\ for all
$ s \in [ 0, T ] $,
then the expectation of the right-hand side of~\eqref{eq:perturbation.formula.intro}
is
-- informally speaking --
dominated by sums and integrals over 
the local perturbation errors $a-\mu(Y)$ and $b-\sigma(Y)$
times random factors.
The exponential integrating factors
$\exp(\smallint_0^t\chi_s\,ds)$, $t\in[0,T]$,
on the left-hand side of~\eqref{eq:perturbation.formula.intro}
and the random factors on the right-hand side of \eqref{eq:perturbation.formula.intro}
can then
-- roughly speaking --
be estimated by using H\"older's inequality and Young's inequality.
In the case of $V(x,y)=\|x-y\|^p$ for $ x, y \in H $ and some $p\in[2,\infty)$,
this leads to the following \emph{perturbation estimate}~\eqref{eq:perturbation.estimate.intro}.
For more general estimates including a general `distance-type' function $ V $
see Section~\ref{ssec:perturbation.estimates}.
%
% %%%%%%%%%%%%%%%%%%%%%%%%%%%%%%%%%%%%%%%%%%%%%%%%%%%%%%
%
\begin{theorem}%[Perturbation estimate]
\label{thm:perturbation.estimate}
Assume the above setting,
let
$ \varepsilon\in [0,\infty] $,
$ p \in [2,\infty) $, 
%let $ \mu \in \mathcal{L}^0( D, H ) $, $ \sigma \in \mathcal{L}^0( D, HS( U, H ) ) $,
let $ \tau \colon \Omega \to [  \tzero  , T ] $ 
be a stopping time and assume
%\begin{equation}
%\label{eq:perturbation_estimate_unique_assumption_intro}
{\small $
    \int_{  \tzero  }^{ \tau } 
      \big[
          \langle 
            X_s - Y_s , \mu( X_s ) - \mu( Y_s ) 
          \rangle_H
          +
          \frac{ ( p - 1 ) \, ( 1 + \varepsilon ) }{ 2 }
          \|
            \sigma( X_s ) - \sigma( Y_s )
          \|^2_{ HS( U, H ) }
    \big]^+ \!
    / \,
          \| X_s - Y_s \|^{ 2 }_H
    \, ds
  < \infty
$}
$ \P $-a.s.
Then it holds for all 
$ \alpha, \beta \in (0,\infty) $,
$ r, q \in (0,\infty] $
with 
$ \frac{ 1 }{ p } + \frac{ 1 }{ q } = \frac{ 1 }{ r } $
that
{\footnotesize
\begin{align}
&
  \big\| X_{ \tau } - Y_{ \tau }
  \big\|_{
    L^r( \Omega; H )
  }
  \leq
  \left\|
  \exp\!\left(
    \smallint_{  \tzero  }^{ \tau }
  \bigg[
  \tfrac{
    \left< 
      X_s - Y_s , \mu( X_s ) - \mu( Y_s ) 
    \right>_H
    +
    \frac{ ( p - 1 ) \, ( 1 + \varepsilon ) }{ 2 }
    \left\|
      \sigma( X_s ) - \sigma( Y_s )
    \right\|^2_{ HS( U, H ) }
  }{
    \left\| X_s - Y_s \right\|^2_H
  }
  +
  \tfrac{
      ( 1 - \frac{ 1 }{ p } ) 
  }{
      \alpha
  }
      +
  \tfrac{
      ( \frac{ 1 }{ 2 } - \frac{ 1 }{ p } )
  }{
      \beta
  }
  \bigg]^+
  \!
    ds
  \right)
  \right\|_{
    L^q( \Omega; \R )
  }
\nonumber
\\ & \cdot
\label{eq:perturbation.estimate.intro}
  \left[
    \big\| X_{  \tzero  } - Y_{  \tzero  } 
    \big\|_{ L^p( \Omega; H ) }
  +
    \alpha^{
      ( 1 - \frac{ 1 }{ p } )
    }
      \left\|
        a -
        \mu( Y )
      \right\|_{ L^p( \llbracket  \tzero , \tau \rrbracket ; H ) }
    +
      \beta^{ ( \frac{ 1 }{ 2 } - \frac{ 1 }{ p } ) }
      \sqrt{ ( p - 1 ) ( 1 + 1 / \varepsilon ) }
        \left\|
          b -
          \sigma( Y )
        \right\|_{ 
          L^p( \llbracket  \tzero  , \tau \rrbracket ; HS( U, H ) )
        }
  \right]
  .
\end{align}}
\end{theorem}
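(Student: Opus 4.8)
The plan is to apply Proposition~\ref{prop:main_perturbation_intro} with $ V(x,y) = \| x - y \|_H^p $ and a carefully chosen exponential integrating factor. A standard computation shows that for $ p \in [2,\infty) $ the function $ H^2 \ni (x,y) \mapsto \| x - y \|_H^p $ lies in $ C^2( H^2, \R ) $, with $ (\partial_x V)(x,y) h = p \| x - y \|_H^{p-2} \langle x - y, h \rangle_H $, $ (\partial_y V) = - (\partial_x V) $, $ (\partial_x \partial_y V)(x,y) = - (\operatorname{Hess}_y V)(x,y) $ and $ (\operatorname{Hess}_y V)(x,y) = p \| x - y \|_H^{p-2} \operatorname{Id}_H + p(p-2) \| x - y \|_H^{p-4} (x-y)(x-y)^* $ (read as the zero operator when $ x = y $ and $ p > 2 $). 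I would then set, for $ s \in [0,T] $,
\[
  \chi_s
  :=
  \1_{[0,\tau]}(s) \cdot p
  \left[
    \tfrac{
      \langle X_s - Y_s, \mu(X_s) - \mu(Y_s) \rangle_H
      + \frac{(p-1)(1+\varepsilon)}{2} \| \sigma(X_s) - \sigma(Y_s) \|^2_{\HS(U,H)}
    }{
      \| X_s - Y_s \|_H^2
    }
    + \tfrac{1 - 1/p}{\alpha}
    + \tfrac{1/2 - 1/p}{\beta}
  \right]^+
\]
(with the quotient read as $ 0 $ on $ \{ X_s = Y_s \} $). Since $ X, Y $ have continuous adapted sample paths and $ \mu, \sigma $ are Borel measurable, $ \chi $ is predictable and nonnegative, and since $ [a+b]^+ \le [a]^+ + b $ for $ b \ge 0 $ the hypothesis on $ \tau $ implies $ \int_0^T |\chi_s| \, ds < \infty $ $ \P $-a.s.; thus Proposition~\ref{prop:main_perturbation_intro} applies with this $ V $ and $ \chi $.

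The heart of the proof is the pointwise estimate that the drift integrand of~\eqref{eq:perturbation.formula.intro} is $ \le \alpha^{p-1} \| a_s - \mu(Y_s) \|_H^p + [(p-1)(1+1/\varepsilon)]^{p/2} \beta^{(p-2)/2} \| b_s - \sigma(Y_s) \|_{\HS(U,H)}^p $ for a.e.\ $ s \in [0,\tau] $. The key observation is that, upon recombining the $ \overline{\mathcal G}_{\mu,\sigma} V $ term, the $ \partial_x \partial_y V $ cross term and the $ \operatorname{trace} $ term and exploiting the symmetry of $ \operatorname{Hess}_y V $, all second-order contributions collapse to $ \frac12 \operatorname{trace}\big( (\sigma(X_s) - b_s)^* (\operatorname{Hess} f)(X_s - Y_s) (\sigma(X_s) - b_s) \big) $ with $ f = \| \cdot \|_H^p $. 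Writing $ \sigma(X_s) - b_s = (\sigma(X_s) - \sigma(Y_s)) - (b_s - \sigma(Y_s)) $, expanding, and using $ | \langle (\operatorname{Hess} f)(v) g, h \rangle | \le p(p-1) \| v \|_H^{p-2} \| g \|_H \| h \|_H $ (valid for $ p \ge 2 $) together with Young's inequality $ rs \le \tfrac{\varepsilon}{2} r^2 + \tfrac{1}{2\varepsilon} s^2 $ on the mixed term, the second-order part is bounded by $ \tfrac{p(p-1)(1+\varepsilon)}{2} \| X_s - Y_s \|^{p-2} \| \sigma(X_s) - \sigma(Y_s) \|_{\HS}^2 + \tfrac{p(p-1)(1+1/\varepsilon)}{2} \| X_s - Y_s \|^{p-2} \| b_s - \sigma(Y_s) \|_{\HS}^2 $. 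Then I would apply Young's inequality with conjugate exponents $ (\tfrac{p}{p-1}, p) $ to $ p \| X_s - Y_s \|^{p-1} \| a_s - \mu(Y_s) \| $, tuning the weight so its $ \| X_s - Y_s \|^p $-part has coefficient $ \tfrac{p-1}{\alpha} $ and its remainder is $ \alpha^{p-1} \| a_s - \mu(Y_s) \|^p $, and with conjugate exponents $ (\tfrac{p}{p-2}, \tfrac{p}{2}) $ to $ \| X_s - Y_s \|^{p-2} \| b_s - \sigma(Y_s) \|_{\HS}^2 $, tuning the weight so its $ \| X_s - Y_s \|^p $-part has coefficient $ \tfrac{p-2}{2\beta} $ and its remainder is $ [(p-1)(1+1/\varepsilon)]^{p/2} \beta^{(p-2)/2} \| b_s - \sigma(Y_s) \|_{\HS}^p $, keeping $ p \| X_s - Y_s \|^{p-2} \langle X_s - Y_s, \mu(X_s) - \mu(Y_s) \rangle $ untouched. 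Collecting the terms proportional to $ \| X_s - Y_s \|^p $, their total coefficient is exactly $ p $ times the bracket defining $ \chi_s $, hence $ \le \chi_s $, so it is cancelled by $ -\chi_s V(X_s,Y_s) = -\chi_s \| X_s - Y_s \|^p $, which leaves the asserted pointwise bound.

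Given this bound, I would evaluate~\eqref{eq:perturbation.formula.intro} at $ t = \tau $ (it holds $ \P $-a.s.\ for all $ t \in [0,T] $), localize the stochastic integral by stopping times $ \rho_n \uparrow \tau $ so that each stopped integral is a true martingale -- possible since, by continuity of $ X - Y $ and the standing integrability of $ \sigma(X), b $, the integrand is $ \P $-a.s.\ square integrable on $ [0,T] $ -- take expectations (the martingale term vanishing), and use $ \exp( \int_0^s \chi_u \, du ) \ge 1 $ to bound the drift integral by $ \E\big[ \alpha^{p-1} \int_0^\tau \| a_s - \mu(Y_s) \|^p ds + [(p-1)(1+1/\varepsilon)]^{p/2} \beta^{(p-2)/2} \int_0^\tau \| b_s - \sigma(Y_s) \|_{\HS}^p ds \big] $. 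Fatou's lemma as $ n \to \infty $ gives
\[
  \E\!\left[ \tfrac{ \| X_\tau - Y_\tau \|_H^p }{ \exp( \int_0^\tau \chi_u \, du ) } \right]
  \le
  \| X_0 - Y_0 \|_{L^p(\Omega;H)}^p
  + \alpha^{p-1} \| a - \mu(Y) \|_{ L^p(\llbracket \tzero, \tau \rrbracket; H) }^p
  + [(p-1)(1+\tfrac1\varepsilon)]^{\frac p2} \beta^{\frac{p-2}2} \| b - \sigma(Y) \|_{ L^p(\llbracket \tzero, \tau \rrbracket; \HS(U,H)) }^p .
\]
Finally, writing $ \| X_\tau - Y_\tau \|_H^r = \exp\!\big( \tfrac rp \smallint_0^\tau \chi_u \, du \big) \big( \| X_\tau - Y_\tau \|_H^p \exp(- \smallint_0^\tau \chi_u \, du) \big)^{r/p} $ and noting that $ \tfrac rp \smallint_0^\tau \chi_u \, du = r \smallint_0^\tau [ \cdots ]^+ \, du $ with $ [\cdots]^+ $ the integrand of~\eqref{eq:perturbation.estimate.intro}, I apply H\"older's inequality with the conjugate exponents $ (q/r, p/r) $ (both $ > 1 $ as $ r < \min(p,q) $, and $ r/q + r/p = 1 $), take $ r $-th roots, bound the remaining factor by the previous display, and distribute the $ p $-th root over the three summands by subadditivity of $ x \mapsto x^{1/p} $ on $ [0,\infty) $; since $ \alpha^{(p-1)/p} = \alpha^{1-1/p} $, $ \beta^{(p-2)/(2p)} = \beta^{1/2-1/p} $ and $ [(p-1)(1+1/\varepsilon)]^{1/2} = \sqrt{(p-1)(1+1/\varepsilon)} $, this is precisely~\eqref{eq:perturbation.estimate.intro}.

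The step I expect to be the main obstacle is twofold: the precise constant-bookkeeping through the two Young's inequalities (each weight must be tuned so that the $ \| X_s - Y_s \|^p $-coefficients sum to exactly $ p $ times the bracket of $ \chi_s $ while the leftover coefficients come out as $ \alpha^{1-1/p} $ and $ \sqrt{(p-1)(1+1/\varepsilon)}\,\beta^{1/2-1/p} $), and the localization/integrability issue of passing from the local-martingale identity~\eqref{eq:perturbation.formula.intro} to the expectation inequality via the stopping-time plus Fatou argument; the latter is preceded by a reduction to the case of a finite right-hand side and by treating the degenerate parameter values $ \varepsilon \in \{0,\infty\} $ and $ q = \infty $ separately (for $ \varepsilon \in \{0,\infty\} $ the corresponding Young step is vacuous because the relevant perturbation then vanishes a.e.).
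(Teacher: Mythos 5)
Your proposal is correct and follows essentially the same route as the paper: it reproduces, in compressed form, the chain Proposition~\ref{prop:norm2} (pathwise estimate for $V(x,y)=\|x-y\|_H^p$ with Young's inequality producing the $(1+\varepsilon)$/$(1+1/\varepsilon)$ split of the second-order terms), Theorem~\ref{thm:norm2_expectation_max0} (localization, expectations, Fatou, H\"older with $\tfrac1p+\tfrac1q=\tfrac1r$) and Corollary~\ref{cor:norm2} (the constant part $\tfrac{1-1/p}{\alpha}+\tfrac{1/2-1/p}{\beta}$ of the integrating factor and the $\alpha,\beta$-weighted Young step). The only cosmetic differences are that you carry out the $\alpha,\beta$-Young step pointwise, inside the choice of $\chi$, before taking expectations, and that you obtain the diffusion bound by collapsing all second-order contributions to $\tfrac12\operatorname{trace}\bigl((\sigma(X_s)-b_s)^*(\operatorname{Hess}\|\cdot\|_H^p)(X_s-Y_s)(\sigma(X_s)-b_s)\bigr)$ and re-expanding, instead of citing the computations from Cox et al.\ as the paper does; both give the identical estimate.
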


In Theorem~\ref{thm:perturbation.estimate}
the expression
$ 
  \llbracket 0 , \tau \rrbracket 
  :=
  \left\{
    ( t, \omega ) \in [0,T] \times \Omega 
    \colon
    t \leq \tau( \omega )
  \right\}
$
denotes the stochastic interval
from $ 0 $ to $ \tau $
(see, e.g., K\"{u}hn~\cite{Kuehn2004})
and in the formulation of 
Theorem~\ref{thm:perturbation.estimate}
the convention 
$ \frac{ 0 }{ 0 } := 0 $ 
is used.
Theorem~\ref{thm:perturbation.estimate} follows immediately from Corollary~\ref{cor:norm2}
below
which, in turn, follows from 
Theorem~\ref{thm:norm2_expectation_max0} 
below.
Theorem~\ref{thm:perturbation.estimate}
can be applied to prove local Lipschitz continuity in the strong sense
with respect to the initial value by choosing
$ \tau = T $, 
$ \eps = 0 $, 
% $ \delta = \rho = \infty $
$ a = \mu( Y ) $,
$ b = \sigma( Y ) $.
% for $ t \in [0,T] $.
Thereby one obtains a quite similar inequality than Corollary 2.19
in Cox et.~al~\cite{CoxHutzenthalerJentzen2013}
(see also Corollary~\ref{cor:perturbation_initial} below).
Local Lipschitz continuity with respect to the initial value
follows then from finiteness of the 
\emph{exponential moment}
on the right-hand side of~\eqref{eq:perturbation.estimate.intro}
which, in turn, is implied by conditions similar
to~\eqref{eq:local.monotonicity} and \eqref{eq:generator.condition.exp.moments} below
in the case $a=\mu(Y)$ and $b=\sigma(Y)$
(see, e.g., Lemma 2.22
in \cite{CoxHutzenthalerJentzen2013}
for details
and, e.g., 
also \cite{HairerMattingly2006,
FangImkellerZhang2007,
EsSarhirStannat2010,
BouRabeeHairer2013,
HieberStannat2013}
for some instructive results on exponential moments).
Note that 
the counterexamples in 
Hairer et.\ al~\cite{hhj12}
show that some condition on $ \mu $ 
and $ \sigma $ beyond smoothness
and global boundedness
is necessary to ensure that the exponential
moment on the right-hand side of
\eqref{eq:perturbation.estimate.intro}
is finite and, thereby, that solutions of
\eqref{eq:SDE.intro} are locally Lipschitz
continuous with respect to
the initial values.

In order to demonstrate the flexibility of Theorem~\ref{thm:perturbation.estimate}
(and Theorem~\ref{thm:norm2_expectation_max0} below),
we partially solve two well-known approximation problems.
In our first application of Theorem~\ref{thm:perturbation.estimate},
we
establish
in Theorem~\ref{thm:ST2} below
the strong convergence rate $ 1 / 2 $ 
for suitable numerical approximations 
for a large class of 
finite-dimensional 
SODEs with non-globally monotone coefficients.
We point out that strong convergence rates are particularly
important for efficient multilevel Monte Carlo methods
(see Giles~\cite{g08a}, Heinrich~\cite{h98}, Kebaier~\cite{k05}).
In the literature, strong convergence rates for time-discrete approximation processes 
for multi-dimensional SODEs are
% in general 
only known under the global monotonicity assumption~\eqref{eq:global.monotonicity}
(cf., e.g.,
\cite{Hu1996,
hms02,
HutzenthalerJentzenKloeden2012,
TretyakovZhang2012,
MaoSzpruch2013Rate,
KloedenNeuenkirch2013,
Sabanis2013ECP,
Sabanis2013Arxiv}
and the references mentioned therein).
In addition,
strong convergence without rates
has been established for time-discrete approximation processes
for multi-dimensional SDEs with
non-globally monotone coefficients
%, e.g.,
in~\cite{BrzezniakCarelliProhl2013,
HutzenthalerJentzen2014Memoires,
Szpruch2013Vstable,
Sabanis2013Arxiv,
KovacsLarssonLindgren2013}.
To the best of our knowledge,
Theorem~\ref{thm:ST2} is the first result
which proves a strong convergence rate
of time-discrete approximation processes
for a multi-dimensional SODE with
non-globally monotone coefficients.
In particular,
to the best of our knowledge,
Theorem~\ref{thm:ST2} is the first result
which implies a strong convergence rate
for the stochastic Lorenz equation with bounded noise
(see Section~\ref{ssec:Lorenz}),
for the stochastic van der Pol oscillator
(see Section~\ref{ssec:van.der.Pol}),
for the stochastic Duffing-van der Pol oscillator
(see Section~\ref{ssec:stochastic.Duffing.van.der.Pol.oscillator}),
for a model from experimental psychology
(see Section~\ref{ssec:experimental.psychology}),
%,
%for the Brownian dynamics
%(see Section~\ref{ssec:overdamped.Langevin.dynamics}),
%for the Langevin dynamics
%(see Section~\ref{ssec:Langevin.dynamics})
for the overdamped Langevin dynamics 
under suitable assumptions
(see
Section~\ref{ssec:overdamped.Langevin.dynamics})
or for the stochastic Duffing oscillator
with additive noise
(see Section~\ref{ssec:Langevin.dynamics}).
In inequality~\eqref{eq:generator.condition.exp.moments} below,
there appears an operator
$ 
  \mathcal{G}_{ \mu, \sigma } 
  \colon C^2( H , \R ) \to C( H ,\R)
$
defined in~\eqref{eq:generator} below
which is the 
% formal 
generator
associated with
the SDE~\eqref{eq:SDE.intro}.
Theorem~\ref{thm:ST2} follows  immediately from Proposition~\ref{prop:ST2}
below.
%
% %%%%%%%%%%%%%%%%%%%%%%%%%%%%%%%%%%%%%%%%%%%%%%%%%%%%%%
%
\begin{theorem}[Strong convergence rates for numerical approximations]
\label{thm:ST2}
Assume the above setting,
let $ d, m \in \N $,
$ c, r \in (0,\infty) $, 
$ q_0, q_1 \in (0,\infty] $,
$ \alpha \in [0,\infty) $,
$ p, q \in [2,\infty) $
with 
$
  \frac{ 1 }{ p } + \frac{ 1 }{ q_0 } + \frac{ 1 }{ q_1 } = \frac{ 1 }{ r } 
$,
assume 
$ H = D = \R^d $, $ U = \R^m $,
let
$ U_1 \in C^1( \R^d, [0,\infty) ) $,
$ \mu \in C^1( \R^d, \R^d ) $,
$ \sigma \in C^1( \R^d , \R^{ d \times m } ) $
have at most polynomially growing derivatives,
let
$
  U_0 \in C^3( \R^d, [1,\infty) )
$
satisfy
$
  \sum_{ i = 1 }^3
  \|
    ( U_0^{ (i) } )( x )
  \|_{
    L^{ (i) }( \R^d , \R ) 
  }
  \leq 
  c
  \left|
    U_0(x)
  \right|^{ ( 1 - 1 / q ) }
$,
$ \| x \|^{ 1 / c } \leq c \left( 1 + U_0( x ) \right) $,
$
  \E\big[
    e^{ U_0( X_0 ) }
  \big]
  < \infty
$
and
{\small
\begin{align}
\label{eq:local.monotonicity}
  \tfrac{
    \left< 
      x - y , \mu( x ) - \mu( y ) 
    \right>_{ \R^d }
    +
    \frac{ ( p - 1 ) \, ( 1 + 1/c ) }{ 2 }
    \left\|
      \sigma( x ) - \sigma( y )
    \right\|^2_{ HS( \R^m , \R^d ) }
  }{
    \left\| x - y \right\|^2_{ \R^d }
  }
& \leq
    c
    +
    \tfrac{
      U_0( x ) + U_0( y )
    }{
      2 q_0 T e^{ \alpha T }
    }
    +
    \tfrac{
      U_1( x ) + U_1( y )
    }{
      2 q_1 e^{ \alpha T }
    }
\\
\label{eq:generator.condition.exp.moments}
  ( \mathcal{G}_{ \mu, \sigma } U_0 )( x )
  +
  \tfrac{ 1 }{ 2 } 
  \, \| \sigma( x )^* ( \nabla U_0 )( x ) \|^2_{ \R^d } 
  +
  U_1( x )
& \leq
  \alpha \, U_0( x )
  +
  c
\end{align}}for 
all $ x, y \in \R^d $
with $ x \neq y $
and let
%$ 
%  X \colon [ 0, T ] \times \Omega \to \R^d 
%$
%and 
$ 
  Z^{ N } \colon \{0,1,\ldots,N\} \times \Omega \to \R^d 
$,
$ N \in \N $,
satisfy
$ Z_0^{ N } = X_0 $
and 
\begin{equation}
\label{eq:numerical.scheme.Z}
  Z_{n+1}^{ N }
  = 
  Z_{ n }^{ N }
  +
  \mathbbm{1}_{
    \left\{
      \| Z_{ n }^{ N } \|_{ \R^d }
      < 
      \exp(
        | \ln( T / N ) |^{ 1 / 2 }
      )
    \right\}
  }
  \left[
  \tfrac{
    \mu( Z_{ n }^{ N } ) \frac{T}{N}
    +
    \sigma( Z_{ n }^{ N } )
    ( W_{ (n+1)T/N } - W_{ nT/N } )
  }{
    1 +
    \|
      \mu( Z_{ n }^{ N } ) \frac{T}{N}
      +
      \sigma( Z_{ n }^{ N } )
      ( W_{ (n+1)T/N } - W_{ nT/N } )
    \|^2_{ \R^d }
  }
  \right]
\end{equation}
for all $ n \in \{ 0, 1, \ldots , N - 1 \} $, $ N \in \N $.
Then there exists a real number $ C \in [0,\infty) $
such that 
for all $ N \in \N $
it holds that
\begin{equation}
  \sup\nolimits_{ n\in\{0,1,\ldots,N\}}
  \| 
    X_{\frac{nT}{N}} - Z^{ N }_n
  \|_{
    L^r( \Omega; \R^d )
  }
\leq
  C
  \,
  N^{
    -
    1 / 2
  }
  .
\end{equation}
\end{theorem}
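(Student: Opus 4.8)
The plan is to apply Theorem~\ref{thm:perturbation.estimate} with $Y$ taken to be a continuous-time interpolation of the numerical scheme, and to deduce the rate from (a) a standard local discretisation-error estimate in $L^p$ together with (b) a finiteness-of-exponential-moment estimate for the random prefactor, the latter coming from the a~priori Lyapunov bound \eqref{eq:generator.condition.exp.moments}. Concretely, I would first fix $N\in\N$, write $h=T/N$, and introduce the natural continuous-time process $Y=Y^N$ defined by $Y_0=X_0$ and, on each interval $[nh,(n+1)h)$, $Y_t = Z^N_n + \mathbbm 1_{\{\|Z^N_n\|<\exp(|\ln h|^{1/2})\}}\big[\text{(tamed increment)}\big]\cdot\tfrac{t-nh}{h}$ realized as an It\^o process with drift $a_t$ and diffusion $b_t$ that are piecewise constant in the sense of the scheme — i.e.\ $a$ and $b$ are chosen so that $Y_{(n+1)h}=Z^N_{n+1}$ and $a_t,b_t$ vanish outside the ``active'' event. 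The point is that $a_t-\mu(Y_t)$ and $b_t-\sigma(Y_t)$ are exactly the local characteristics of the scheme minus those of the SDE, and these differences are controlled on the event $\{\|Z^N_n\|<\exp(|\ln h|^{1/2})\}$ because there $\|\mu(Z^N_n)\|$, $\|\sigma(Z^N_n)\|$ grow at most sub-exponentially in $|\ln h|^{1/2}$ (polynomial growth composed with $\exp(|\ln h|^{1/2})$ is $h^{-o(1)}$), while taming contributes an error of order $h^{3/2}$ times powers, and the off-event complement has super-polynomially small probability by the exponential moment bound on $U_0(X_0)$ propagated along the flow via \eqref{eq:generator.condition.exp.moments}. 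Summing $N$ intervals of length $h$ then yields $\|a-\mu(Y)\|_{L^p(\llbracket 0,\tau\rrbracket;H)}+\|b-\sigma(Y)\|_{L^p(\llbracket 0,\tau\rrbracket;HS)}=O(N^{-1/2})$, with $\tau:=\inf\{t:\|Z^N_{\lfloor t/h\rfloor}\|\ge\exp(|\ln h|^{1/2})\}\wedge T$.

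Second, I would verify the hypothesis of Theorem~\ref{thm:perturbation.estimate}, namely the a.s.\ finiteness of $\int_0^\tau[\ldots]^+/\|X_s-Y_s\|^2\,ds$, which is immediate from the pointwise bound \eqref{eq:local.monotonicity}: the integrand is dominated by $c+\frac{U_0(X_s)+U_0(Y_s)}{2q_0Te^{\alpha T}}+\frac{U_1(X_s)+U_1(Y_s)}{2q_1e^{\alpha T}}$, which is finite on $[0,T]$ by continuity of sample paths (and boundedness of $U_0,U_1$ along the paths). Then the exponent in the integrating factor on the right-hand side of \eqref{eq:perturbation.estimate.intro}, with the choice $\varepsilon=1/c$, $\alpha_{\mathrm{HK}}=q_0Te^{\alpha T}$, $\beta_{\mathrm{HK}}=q_1e^{\alpha T}$ for the two free parameters (I rename them to avoid clashing with the $\alpha$ in the theorem), is pointwise at most $\int_0^\tau\big[c'+\frac{U_0(X_s)+U_0(Y_s)}{2q_0Te^{\alpha T}}+\frac{U_1(X_s)+U_1(Y_s)}{2q_1e^{\alpha T}}\big]\,ds$, and after exponentiating this splits, via Young's/H\"older's inequality in the $L^q$-norm with the exponent split $\tfrac1q=\tfrac1{q_0}+\tfrac1{q_1}$ dictated by the hypothesis $\tfrac1p+\tfrac1{q_0}+\tfrac1{q_1}=\tfrac1r$, into a product of $\E\big[e^{\int_0^T(U_0(X_s)+U_0(Y_s))/(Te^{\alpha T})\,ds/2}\big]^{1/q_0}$-type and $U_1$-type factors. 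The $U_0$-factor is bounded uniformly in $N$ by combining \eqref{eq:generator.condition.exp.moments} (which gives $t\mapsto \E[e^{U_0(X_t)}e^{-\int_0^t\cdots}]$ supermartingale-type control and hence $\sup_{t\le T}\E[e^{U_0(X_t)}]<\infty$ and, by Jensen applied to the time-average, $\E[e^{\int_0^T U_0(X_s)\,ds/(Te^{\alpha T})}]\le e^{-T}\int_0^T\E[e^{U_0(X_s)/e^{\alpha T}}]\,ds/T\le\sup_t\E[e^{U_0(X_t)}]$) together with a parallel bound for $Y^N$ along the scheme — for the latter one shows $U_0$ is a discrete supermartingale-Lyapunov function for the tamed scheme, using \eqref{eq:generator.condition.exp.moments} and the $C^3$-bound $\sum_{i=1}^3\|U_0^{(i)}(x)\|\le c\,U_0(x)^{1-1/q}$ to control the third-order Taylor remainder of $U_0$ along a single tamed step, exactly as in the exponential-moment estimates referenced in the excerpt. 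The $U_1$-term is bounded similarly.

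Third, with $\|X_0-Y_0\|_{L^p}=0$ and the local errors of order $N^{-1/2}$ and the prefactor bounded uniformly in $N$, Theorem~\ref{thm:perturbation.estimate} gives $\|X_\tau-Y_\tau\|_{L^r(\Omega;\R^d)}\le C N^{-1/2}$; one then upgrades this to control of $\sup_{n}\|X_{nh}-Z^N_n\|_{L^r}$. The standard device is to run the argument with a generic stopping time $\tau\wedge\theta$ for $\theta$ ranging over the deterministic grid times — Theorem~\ref{thm:perturbation.estimate} is stated for an arbitrary stopping time, so applying it with $\tau$ replaced by $\tau_N\wedge (nh)$ yields $\|X_{\tau_N\wedge nh}-Y_{\tau_N\wedge nh}\|_{L^r}\le CN^{-1/2}$ with $C$ independent of $n$ and $N$ — and then to remove the stopping at $\tau_N$ by noting $\P[\tau_N<T]$ is super-polynomially small in $N$ (again from the exponential moment: $\P[\sup_{n}\|Z^N_n\|\ge\exp(|\ln h|^{1/2})]\le e^{-\exp(|\ln h|^{1/2})}\,\E[\sup_n e^{U_0(Z^N_n)\cdot\text{const}}]$, using $\|x\|^{1/c}\le c(1+U_0(x))$), so that on the complement $Y_{nh}=Z^N_n$ and $X$ on $[0,T]$ has all moments, and H\"older in the indicator of $\{\tau_N<T\}$ absorbs the discrepancy into a term $o(N^{-1/2})$. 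Taking the supremum over $n$ and choosing $C$ accordingly completes the proof.

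I expect the main obstacle to be \textbf{step two's uniform-in-$N$ bound on the exponential moment of $Y^N$}, i.e.\ showing that $U_0$ is a genuine Lyapunov function for the \emph{tamed, stopped} scheme \eqref{eq:numerical.scheme.Z} with the \emph{same} constant $\alpha$ as in \eqref{eq:generator.condition.exp.moments}: one must Taylor-expand $U_0$ at $Z^N_n$ to third order along the tamed increment, recognize $(\mathcal G_{\mu,\sigma}U_0)(Z^N_n)h+\tfrac12\|\sigma(Z^N_n)^*\nabla U_0(Z^N_n)\|^2 h$ plus a martingale term plus remainders, use \eqref{eq:generator.condition.exp.moments} to close the recursion $\E[e^{U_0(Z^N_{n+1})}\mid\mathcal F_{nh}]\le e^{(1+o(1))\alpha h}e^{U_0(Z^N_n)}\cdot(1+Ch)$, and check that the taming denominator and the cutoff event only \emph{help}. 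This is delicate because one needs the remainder terms (which involve $U_0^{(3)}$ against a squared Wiener increment against $\|\sigma(Z^N_n)\|^2$, and against taming errors) to be genuinely $o(h)$ in the exponent uniformly over the cutoff region $\|Z^N_n\|<\exp(|\ln h|^{1/2})$, and this is exactly where the sub-exponential growth of the cutoff and the $U_0^{(i)}$-bound $\|U_0^{(i)}(x)\|\le c\,U_0(x)^{1-1/q}$ combine — the latter making $U_0^{(3)}$ times $U_0^{2/q}\le$ a low power of $U_0$, small relative to the growth of $U_0$ itself. The local-error estimate in step one, by contrast, is routine, and step three is a standard stopping-time/Markov-inequality manoeuvre.
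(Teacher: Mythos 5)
Your proposal follows essentially the same route as the paper's proof (Lemma~\ref{lem:ST} and Proposition~\ref{prop:ST2}): interpret the tamed--stopped scheme as a continuous-time It\^o process, bound the differences of the local characteristics by $O(N^{-1/2})$ using the taming estimates of Lemma~\ref{lem:psi_estimate}, apply the perturbation estimate (Corollary~\ref{cor:norm2}) with $\varepsilon=1/c$ so that \eqref{eq:local.monotonicity} and \eqref{eq:generator.condition.exp.moments} control the exponential integrating factor via H\"older and the Jensen-type lemma, and finally remove the cutoff event by a Markov-inequality argument giving a super-polynomially small probability. The only notable difference is that the uniform-in-$N$ exponential integrability of the scheme, which you correctly single out as the main obstacle and sketch via a third-order Taylor/Lyapunov recursion, is not reproved in the paper but cited from Corollary~2.9 of Hutzenthaler, Jentzen \& Wang (2013), exactly along the lines you describe.
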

The numerical
scheme~\eqref{eq:numerical.scheme.Z}
has been proposed in~\cite{HutzenthalerJentzenWang2013}.
Note that we cannot replace 
scheme~\eqref{eq:numerical.scheme.Z}
by the well-known Euler-Maruyama scheme
since Euler-Maruyama approximations
diverge in the strong sense in the case of superlinearly
growing coefficient functions
(see Theorem 2.1 in~\cite{hjk11} and 
\cite{HutzenthalerJentzenKloeden2013}).
As sketched above,
\emph{exponential integrability properties} 
play an
important role 
in the perturbation theory developed
in this article.
The advantage of the numerical approximations~\eqref{eq:numerical.scheme.Z}
is to preserve 
\emph{exponential integrability} properties of the exact
solution under minor additional assumptions
(see~\cite{HutzenthalerJentzenWang2013} for more details).
Condition 
\eqref{eq:generator.condition.exp.moments} 
ensures that both the exact solution and the
numerical approximations admit suitable 
exponential integrability properties
and assumption \eqref{eq:local.monotonicity}
ensures that the exponential term on the right
hand side of \eqref{eq:perturbation.estimate.intro}
can be estimated in an appropriate way.
Observe that if we choose $ q_0 = q_1 = \infty $
in Theorem~\ref{thm:ST2},
then condition~\eqref{eq:local.monotonicity}
essentially reduces to the global
monotonicity 
assumption~\eqref{eq:global.monotonicity}.
%for a.s.\ divergence results of the associated multilevel Monte Carlo method).
%
%
%
%
%
%
%

Our second application of Theorem~\ref{thm:perturbation.estimate}
and of the more general
Theorem~\ref{thm:norm2_expectation_max0} 
below
concerns the approximation and the analysis 
of SPDEs.
In the literature, there are a number of results
which prove pathwise convergence
rates or convergence rates 
for convergence in probability
for spatially discrete approximation 
processes of SPDEs with non-globally 
monotone nonlinearities
(see, e.g.,
\cite{
p01,%Thm4.3
ag06,%Thm22 on Burgers
CarelliProhl2011,%Thm5.1 on SNSE 2d
BloemkerJentzen2013,
BloemkerKamraniHosseini2013,
KovacsLarssonLindgren2013%Thm5.3
})
or which prove strong convergence 
without convergence rates
for spatially discrete
%(and temporally discrete)
approximation processes of SPDEs
with non-globally monotone nonlinearities
(see, e.g., \cite{
GyoengyMillet2005,%Thm2.10
KovacsLarssonMesforush2011,%
BrzezniakCarelliProhl2013, % Theorem am Ende des Artikels!
KovacsLarssonLindgren2013%Thm5.3
}).
We are aware of only one result
which establishes a strong convergence rate
for spatially discrete approximation processes of SPDEs
with non-globally monotone nonlinearities
namely the above mentioned
Corollary 3.2 in D\"orsek~\cite{Doersek2012}.
%This result is
%which proves  strong convergence rate $1$
%of spectral Galerkin approximations 
%for the vorticity formulation of the stochastic
%Navier-Stokes equations on the two-dimensional torus.
Now our perturbation estimate~\eqref{eq:perturbation.estimate.intro}
in Theorem~\ref{thm:perturbation.estimate}
and its more general version~\eqref{eq:norm2_expectation}
in Theorem~\ref{thm:norm2_expectation_max0} below
result in
Theorem~\ref{thm:Galerkin2} below
which can be applied to semilinear SPDEs with non-globally monotone nonlinearities
to establish 
strong convergence rates for
Galerkin approximations.
In particular, we apply
Theorem~\ref{thm:Galerkin2} below
to obtain for the first time
a strong convergence rate
for spectral Galerkin approximations
for Cahn-Hilliard-Cook type SPDEs
(see inequality~\eqref{eq:CHC.Galerkin.estimate}
in Section~\ref{ssec:Cahn_Hilliard} below for details)
and for stochastic Burgers equations with
bounded diffusion coefficients
(see inequality~\eqref{eq:Burgers.Galerkin.estimate}
in Section~\ref{ssec:stochastic.Burgers.equation} below for details).
Theorem~\ref{thm:Galerkin2} follows  immediately from Proposition~\ref{prop:Galerkin2}
below.
%
% %%%%%%%%%%%%%%%%%%%%%%%%%%%%%%%%%%%%%%%%%%%%%%%%%%%%%%
%
\begin{theorem}[Strong convergence rates for Galerkin approximations]
\label{thm:Galerkin2}
Assume the above setting,
let
$
  \varphi \colon D^2 \to \R
$
be a Borel measurable mapping,
let 
$ \varepsilon \in [0,\infty] $,
$ r \in (0,\infty) $, 
$ q_0, q_1, \hat{q}_0, \hat{q}_1 \in (0,\infty] $,
$ c, \alpha, \beta, \hat{\alpha}, \hat{\beta} \in [0,\infty) $,
$ p \in [2,\infty) $,
$
  U_0, \hat{U}_0 \in C^2( H, [0,\infty) )
$,
$ 
  U_1, \hat{U}_1 \in C( D, [0,\infty) ) 
$,
%$ \mu \in \mathcal{L}^0( D ; H ) $,
%$ \sigma \in \mathcal{L}^0( D ; HS( U, H ) ) $,
$ P \in L( H ) $
satisfy 
% $ P^2 = P = P^* $,
% $ \| P \|_{ L(H) } \leq 1 $,
$ P( H ) \subseteq D $,
$
  \frac{ 1 }{ p } + \frac{ 1 }{ q_0 } + \frac{ 1 }{ q_1 } 
  + \frac{ 1 }{ \hat{q}_0 } + \frac{ 1 }{ \hat{q}_1 } 
  = \frac{ 1 }{ r } 
$,
$
  \E\big[
    e^{
      U_0( X_0 ) 
    }
    +
    e^{
      \hat{U}_0( Y_0 )
    }
  \big] 
  < \infty
$
and
{\small
\begin{eqnarray}
&
  ( \mathcal{G}_{ \mu, \sigma } U_0 )( x )
  +
  \tfrac{ 1 }{ 2 } 
  \, \| \sigma( x )^* ( \nabla U_0 )( x ) \|^2_H 
  +
  U_1( x )
\leq
  \alpha \, U_0( x )
  +
  \beta
  ,
\nonumber
\\[0.5ex]
&
\label{eq:Galerkin_assumption.intro}
  ( \mathcal{G}_{ P \mu, P \sigma } \hat{U}_0 )( y )
  +
  \tfrac{ 1 }{ 2 } 
  \, \| \sigma( y )^* P^* ( \nabla \hat{U}_0 )( y ) \|^2_H 
  +
  \hat{U}_1( y )
\leq
  \hat{\alpha} \, \hat{U}_0( y )
  +
  \hat{\beta}
  ,
\\[0.5ex]
&
\nonumber
    \left< 
      P x - y , P \mu( P x ) - P \mu( y ) 
    \right>_H
    +
    \frac{ ( p - 1 ) \, ( 1 + \varepsilon ) }{ 2 }
    \left\|
      P \sigma( P x ) - P \sigma( y ) 
    \right\|^2_{ HS( U , H ) }
  +
    \left< 
      y - P x , P \mu( P x ) - P \mu( x ) 
    \right>_H
\\ & 
\nonumber
    +
    \frac{ ( p - 1 ) \, ( 1 + 1 / \varepsilon ) }{ 2 }
    \left\|
      P \sigma( P x ) - P \sigma( x )
    \right\|^2_{ HS( U , H ) }
\leq
  \tfrac{
    \left|
      \varphi( x )
    \right|^2
  }{ 2 }
  +
  \left[
    c
    +
    \tfrac{
      U_0( x ) 
    }{
      q_0 T e^{ \alpha T }
    }
    +
    \tfrac{
      \hat{U}_0( y )
    }{
      \hat{q}_0 T e^{ \hat{\alpha} T }
    }
    +
    \tfrac{
      U_1( x ) 
    }{
      q_1 e^{ \alpha T }
    }
    +
    \tfrac{
      \hat{U}_1( y )
    }{
      \hat{q}_1 e^{ \hat{\alpha} T }
    }
  \right] 
  \left\| P x - y \right\|^2_H
\end{eqnarray}}for 
all $ x \in D $,
$ y \in P( H ) $
%let
%$ X, Y \colon [  \tzero , T ] \times \Omega \to D $
%be predictable stochastic processes
%with 
%$
%  \int_{  \tzero  }^T
%  \| \mu( X_s ) \|_H
%  +
%  \| \sigma( X_s ) \|^2_{ HS( U, H ) }
%  +
%  \| \mu( Y_s ) \|_H
%  +
%  \| \sigma( Y_s ) \|^2_{ HS( U, H ) }
%  \,
%  ds
%  < \infty
%$
%$ \P $-a.s.
%$
%  X_t 
%  = 
%  X_{  \tzero  } 
%  +
%  \int_{  \tzero  }^t \mu( X_s ) \, ds
%  +
%  \int_{  \tzero  }^t \sigma( X_s ) \, dW_s
%$
%$ \P $-a.s. and
and assume that
$
  \int_0^T
  \| \mu( P X_s ) \|_H 
  +
  \| \sigma( P X_s ) \|^2_{ HS( U, H ) }
  \, ds
$
and
$
  Y_t 
  = 
  P X_{  \tzero  } 
  +
  \int_{  \tzero  }^t P \mu( Y_s ) \, ds
  +
  \int_{  \tzero  }^t P \sigma( Y_s ) \, dW_s
$
$ \P $-a.s.\ for all
$ t \in [ \tzero ,T] $.
Then 
\begin{align}
&
  \sup_{ t \in [  \tzero , T] }
  \left\|
    X_t - Y_t
  \right\|_{
    L^r( \Omega; H )
  }
\leq
  T^{
    ( \frac{ 1 }{ 2 } - \frac{ 1 }{ p } )
  }
  \exp\!\left(
    \tfrac{ 1 }{ 2 } - \tfrac{ 1 }{ p }
    +
    \smallint_0^T
    c
    +
    \smallsum\limits_{ i = 0 }^1
    \left[
    \tfrac{
      \beta
    }{
      q_i e^{ \alpha s }
    }
    +
    \tfrac{
      \hat{\beta}
    }{
      \hat{q}_i e^{ \hat{ \alpha } s }
    }
    \right]
    ds
  \right)
\\ & \quad 
\nonumber
  \cdot
  \left\|
    \varphi( X )
  \right\|_{
    L^p( [  \tzero , T ] \times \Omega ; \R )
  }
  \left|
  \E\Big[
    e^{
      U_0( X_0 ) 
    }
  \Big]
  \right|^{
    \left[ 
      \frac{ 1 }{ q_0 }
      +
      \frac{ 1 }{ q_1 }
    \right]
  }
  \left|
  \E\!\left[
    e^{
      \hat{U}_0( Y_0 )
    }
  \right] 
  \right|^{
    \left[
      \frac{ 1 }{ \hat{q}_0 }
      +
      \frac{ 1 }{ \hat{q}_1 }
    \right]
  }
  +
  \sup_{ t \in [  \tzero , T] }
  \left\|
    ( I - P ) X_t
  \right\|_{
    L^r( \Omega; H ) 
  }
  .
\end{align}
\end{theorem}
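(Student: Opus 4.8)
\emph{Proof strategy.}
The plan is to first split off the part of the error that is invisible to the perturbation machinery and then apply the perturbation estimate to what remains. For $t\in[\tzero,T]$ we have $X_t-Y_t=(PX_t-Y_t)+(I-P)X_t$, so the triangle inequality in $L^r(\Omega;H)$ gives $\sup_{t\in[\tzero,T]}\|X_t-Y_t\|_{L^r(\Omega;H)}\le\sup_{t\in[\tzero,T]}\|PX_t-Y_t\|_{L^r(\Omega;H)}+\sup_{t\in[\tzero,T]}\|(I-P)X_t\|_{L^r(\Omega;H)}$, and the second summand is exactly the last term in the asserted bound; it therefore remains to dominate $\sup_{t}\|PX_t-Y_t\|_{L^r(\Omega;H)}$ by the first term. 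Since $P\in L(H)$ commutes with Bochner and It\^o integration, applying $P$ to~\eqref{eq:SDE.intro} exhibits $PX$ as an It\^o process, $PX_t=PX_{\tzero}+\int_{\tzero}^tP\mu(X_s)\,ds+\int_{\tzero}^tP\sigma(X_s)\,dW_s$ (using the integrability assumed in the introductory setup together with $\|P\|<\infty$), whereas $Y$ is by hypothesis a solution of the ``Galerkin SDE'' $Y_t=PX_{\tzero}+\int_{\tzero}^tP\mu(Y_s)\,ds+\int_{\tzero}^tP\sigma(Y_s)\,dW_s$ on the state space $P(H)\subseteq D$.

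I would then apply the general distance-type perturbation estimate, Theorem~\ref{thm:norm2_expectation_max0} (Theorem~\ref{thm:perturbation.estimate} being its special case), to the SDE with coefficients $P\mu$ and $P\sigma$ restricted to $P(H)$, with the roles of ``solution'' and ``perturbation'' \emph{swapped} relative to the introductory setup: the SDE solution is taken to be $Y$, the perturbing It\^o process to be $PX$ (so its drift process is $s\mapsto P\mu(X_s)$ and its diffusion process is $s\mapsto P\sigma(X_s)$), and the distance function is $V(v,w)=\|v-w\|^p_H$; since $Y_{\tzero}=PX_{\tzero}$ the initial perturbation error vanishes. Here lies what I expect to be the main obstacle: one must check that the third inequality in~\eqref{eq:Galerkin_assumption.intro}, evaluated at $(x,y)=(X_s,Y_s)$ so that $Px=PX_s$, is \emph{exactly} the input that Theorem~\ref{thm:norm2_expectation_max0} demands. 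After rearranging its left-hand side by symmetry of $\langle\cdot,\cdot\rangle_H$, its four summands are the local monotonicity inner-product term for the pair $(Y,PX)$, the monotonicity diffusion term $\tfrac{(p-1)(1+\varepsilon)}{2}\|P\sigma(Y_s)-P\sigma(PX_s)\|^2_{\HS(U,H)}$, minus the drift-consistency term $\langle Y_s-PX_s,P\mu(X_s)-P\mu(PX_s)\rangle_H$, and the diffusion-consistency term $\tfrac{(p-1)(1+1/\varepsilon)}{2}\|P\sigma(X_s)-P\sigma(PX_s)\|^2_{\HS(U,H)}$ -- precisely the combination that the perturbation formula~\eqref{eq:perturbation.formula.intro} produces as the $ds$-integrand when $V(v,w)=\|v-w\|^p_H$. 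Hence one may take the integrating-factor process $\chi$ equal to the bracket $\big[\,c+\tfrac{U_0(X_s)}{q_0Te^{\alpha T}}+\tfrac{\hat U_0(Y_s)}{\hat q_0Te^{\hat\alpha T}}+\tfrac{U_1(X_s)}{q_1e^{\alpha T}}+\tfrac{\hat U_1(Y_s)}{\hat q_1e^{\hat\alpha T}}\,\big]$ (plus the $p$-dependent constants coming from the free parameters of the estimate), which renders the bivariate-generator part of that integrand nonpositive and leaves only the source process $\varphi(X)$. Getting every sign and the $(1+\varepsilon)$, $(1+1/\varepsilon)$ weights right after the swap, and checking this choice of $\chi$ against~\eqref{eq:perturbation.formula.intro}, is the delicate point.

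The two Lyapunov-type inequalities in~\eqref{eq:Galerkin_assumption.intro} -- namely $\mathcal G_{\mu,\sigma}U_0+\tfrac12\|\sigma^*\nabla U_0\|^2_H+U_1\le\alpha U_0+\beta$ and its hatted counterpart for $(P\mu,P\sigma,\hat U_0,\hat U_1)$ -- together with $\E[e^{U_0(X_0)}+e^{\hat U_0(Y_0)}]<\infty$ feed the exponential-moment estimate already available (It\^o's formula applied to $e^{U_0(X)}$ and to $e^{\hat U_0(Y)}$ followed by Gronwall): they yield $\E[\exp(\tfrac1{q_0T}\int_{\tzero}^TU_0(X_s)\,ds)]\le|\E[e^{U_0(X_0)}]|^{1/q_0}e^{(\cdots)}$, $\E[\exp(\tfrac1{q_1}\int_{\tzero}^TU_1(X_s)\,ds)]\le|\E[e^{U_0(X_0)}]|^{1/q_1}e^{(\cdots)}$, and the analogues for $\hat U_0,\hat U_1$ with $X_0,X_s$ replaced by $Y_0,Y_s$; the normalizing factors $q_0Te^{\alpha T}$, $q_1e^{\alpha T}$, $\hat q_0Te^{\hat\alpha T}$, $\hat q_1e^{\hat\alpha T}$ built into~\eqref{eq:Galerkin_assumption.intro} are exactly what collapses the exponents to $U_0(X_0)$ and $\hat U_0(Y_0)$ raised to the powers $\tfrac1{q_0},\tfrac1{q_1},\tfrac1{\hat q_0},\tfrac1{\hat q_1}$.

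Finally I would assemble the bound by H\"older. Inserting the above into the perturbation estimate, the $L^q$-norm of the exponential integrating factor splits -- by the generalized H\"older inequality with exponents $p,q_0,q_1,\hat q_0,\hat q_1$, admissible since $\tfrac1p+\tfrac1{q_0}+\tfrac1{q_1}+\tfrac1{\hat q_0}+\tfrac1{\hat q_1}=\tfrac1r$ -- into the product of $\|\varphi(X)\|_{L^p([\tzero,T]\times\Omega;\R)}$, the factors $|\E[e^{U_0(X_0)}]|^{1/q_0}$, $|\E[e^{U_0(X_0)}]|^{1/q_1}$, $|\E[e^{\hat U_0(Y_0)}]|^{1/\hat q_0}$, $|\E[e^{\hat U_0(Y_0)}]|^{1/\hat q_1}$, and a deterministic constant, the latter collecting (together with the powers $T^{1/2-1/p}$ and $e^{1/2-1/p}$ and the $c$-, $\beta$-, $\hat\beta$-contributions) into $T^{1/2-1/p}\exp\big(\tfrac12-\tfrac1p+\int_0^T(c+\sum_{i=0}^1[\tfrac{\beta}{q_ie^{\alpha s}}+\tfrac{\hat\beta}{\hat q_ie^{\hat\alpha s}}])\,ds\big)$. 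Taking $\sup_{t\in[\tzero,T]}$ (after, if needed, first working at a fixed $t$ with a localizing stopping time and then passing to the limit) and combining with the first paragraph yields the assertion; apart from the hypothesis-matching flagged above, everything here is routine given the perturbation estimate and the exponential-moment lemma.
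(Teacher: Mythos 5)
Your proposal is correct and takes essentially the same route as the paper: Theorem~\ref{thm:Galerkin2} is proved there via Proposition~\ref{prop:Galerkin2} (through Corollaries~\ref{cor:different_coefficients} and~\ref{cor:Galerkin}), which rests on exactly your ingredients -- the triangle inequality splitting off $(I-P)X$, the application of Theorem~\ref{thm:norm2_expectation_max0} with $Y$ as the solution of the SDE with coefficients $P\mu,P\sigma$ and $PX$ as the perturbing It\^o process (zero initial error), the exponential weight taken to be the bracket from the third assumed inequality plus the constant $\frac{(1/2-1/p)}{T}$, and the Jensen/H\"older/exponential-moment bookkeeping that collapses the integrating factor to $\big|\E[e^{U_0(X_0)}]\big|^{1/q_0+1/q_1}\big|\E[e^{\hat U_0(Y_0)}]\big|^{1/\hat q_0+1/\hat q_1}$ times the stated deterministic constant. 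The only step you leave implicit is the Young-inequality absorption of $\|Y-PX\|_H^{p-2}\,|\varphi(X)|^2$ against $-\tfrac{(1/2-1/p)}{T}\|Y-PX\|_H^p$, which is what produces the factor $T^{(\frac12-\frac1p)}\left\|\varphi(X)\right\|_{L^p([\tzero,T]\times\Omega;\R)}$ and is carried out explicitly in the paper's proof of Proposition~\ref{prop:Galerkin2}.
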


As a third
application of 
Theorem~\ref{thm:perturbation.estimate},
we study SDEs with small noise
(cf., e.g., Theorem 1.2 in Freidlin \& 
Wentzell~\cite{FreidlinWentzell2012}
for the case of globally Lipschitz continuous coefficients).
In particular,
Corollary~\ref{cor:small_noise2} below can be applied
to a number of nonlinear
%ODEs and PDEs
ordinary and partial differential equations
perturbed by a small 
noise term such as the examples in Subsections~\ref{ssec:Lorenz}--\ref{ssec:Langevin.dynamics}
as well as the examples in Subsections~\ref{ssec:Cahn_Hilliard}--\ref{ssec:stochastic.Burgers.equation}.
We refer the reader to 
Section~\ref{ssec:small.noise} for more details.

\subsection{Notation}

Throughout this article the following notation is used.
For two sets $ A $ and $ B $ we denote by
$ \mathcal{M}( A, B ) $ the set of all mappings
from $ A $ to $ B $.
In addition,
for two measurable spaces
$
  ( A, \mathcal{A} )
$
and
$
  ( B, \mathcal{B} )
$
we denote by 
$
  \mathcal{L}^0( A; B )
$
the set of all $ \mathcal{A} $/$ \mathcal{B} $-measurable
functions.
Furthermore, for a natural number $ d \in \N $ we define
\begin{multline}
  \mathcal{C}^3_{ \mathcal{D} }( \R^d, \R )
  :=
%\\
  \bigcup_{ p, c \in [3,\infty) }
  \left\{ 
    f \in C^2( \R^d, \R )
    \colon
    \begin{array}{c}
      f'' \text{ is locally Lipschitz continuous and for}
    \\
      \text{all $ i \in \{ 1, 2, 3 \} $ and $ \lambda_{ \R^d } $-almost all $ x \in \R^d $ it}
    \\
      \text{holds that }
      \| f^{ (i) }(x) \|_{
        L^{ (i) }( \R^d, \R )
      }
      \leq
      c
      \,
      | f( x ) |^{ [ 1 - i / p ] }
    \end{array}
  \right\}
  .
\end{multline}
Moreover, for a natural number $ d \in \N $
and a metric space $ ( E, d_E ) $
we denote by
\begin{multline}
  \mathcal{C}^1_{ \mathcal{P} }( \R^d, E )
  :=
  \Big\{ 
    f \in C( \R^d, E ) 
  \colon
\\
  \Big(
    \exists \, c \in [0,\infty)
    \colon
    \forall \, x, y \in \R^d \colon
      d_E( f(x), f(y) )
    \leq
      c \left( 1 + \| x \|^c + \| y \|^c \right)
      \left\| x - y \right\|
  \Big)
  \Big\}
\end{multline}
the space of locally Lipschitz continuous functions from $ \R^d $ to $ E $
with locally Lipschitz constants that grow at most polynomially.
In addition, 
for 
separable $ \R $-Hilbert spaces
$
  ( H, \left< \cdot , \cdot \right>_H, \left\| \cdot \right\|_H )
$
and
$
  ( U, \left< \cdot , \cdot \right>_U, \left\| \cdot \right\|_U )
$,
an open set $ O \subseteq H $, 
a non-empty set $ \mathcal{O} \subseteq O $
and functions
$ 
  \mu \in \mathcal{M}( \mathcal{O}, H ) 
$,
$
  \sigma \in \mathcal{M}( 
    \mathcal{O}, HS( U, H ) 
  )
$
we define linear operators
$  	
  \mathcal{G}_{ \mu, \sigma }
  \colon
  C^2( O, \R )
  \to
  \mathcal{M}( \mathcal{O}, \R )
$,
$  
  G_{ \sigma }
  \colon
  C^1( O, \R )
  \to
  \mathcal{M}( \mathcal{O}, \R^{ 1 \times m } )
$,
$  
  \overline{ \mathcal{G} }_{ \mu, \sigma }
  \colon
  C^2( O^2, \R )
  \to
  \mathcal{M}( \mathcal{O}^2, \R )
$
and
$  
  \overline{ G }_{ \sigma }
  \colon
  C^1( O^2, \R )
  \to
  \mathcal{M}( \mathcal{O}^2, U^* )
$
by
\begin{align}
\label{eq:generator}
  ( \mathcal{G}_{ \mu, \sigma } \phi)( x )
& :=
  \phi'(x) \,
  \mu( x )
  +
  \tfrac{ 1 }{ 2 }
  \text{tr}\big(
    \sigma(x) \sigma(x)^* 
    ( \text{Hess}_x \phi )( x )
  \big)
  ,
\\[1ex]
\label{eq:noise_operator}
  ( G_{ \sigma } \psi )( x )
& :=
  \psi'(x) \, \sigma(x)
  ,
\\
\label{eq:extended_generator}
  ( \overline{ \mathcal{G} }_{ \mu, \sigma } \bar{ \phi } )( x, y )
& :=
  \big(
    \tfrac{ \partial }{ \partial_x }
    \bar{ \phi }
  \big)(x,y) \, 
  \mu( x )
  +
  \big(
    \tfrac{ \partial }{ \partial_y }
    \bar{ \phi }
  \big)(x,y) \, 
  \mu( y )
  +
  \tfrac{ 1 }{ 2 }
  \smallsum\limits_{ i = 1 }^m
  \big(
    \tfrac{ \partial^2 }{ \partial x^2 }
    \bar{ \phi }
  \big)(x,y) 
  \big( 
    \sigma_i( x ) , 
    \sigma_i( x )
  \big)
\nonumber
\\ & \quad +
  \smallsum\limits_{ i = 1 }^m
  \big(
    \tfrac{ \partial }{ \partial_y }
    \tfrac{ \partial }{ \partial_x }
    \bar{ \phi }
  \big)(x,y)\big( 
    \sigma_i( x ) , 
    \sigma_i( y )
  \big)
  +
  \tfrac{ 1 }{ 2 }
  \smallsum\limits_{ i = 1 }^m
  \big(
    \tfrac{ \partial^2 }{ \partial y^2 }
    \bar{ \phi }
  \big)(x,y) 
  \big(
    \sigma_i( y ) , 
    \sigma_i( y )
  \big)
  ,
\\
\label{eq:extended_noise_operator}
  ( \overline{ G }_{ \sigma } \bar{ \psi } )( x, y )
& :=
  \big(
    \tfrac{ \partial }{ \partial_x }
    \bar{ \psi }
  \big)(x,y) \, 
  \sigma( x )
  +
  \big(
    \tfrac{ \partial }{ \partial_y }
    \bar{ \psi }
  \big)(x,y) \, 
  \sigma( y )
\end{align}
for all $ x, y \in \mathcal{O} $,
$ \phi \in C^2( O, \R ) $,
$ \bar{ \phi } \in C^2( O^2, \R ) $,
$ \psi \in C^1( O , \R ) $,
$ \bar{ \psi } \in C^1( O^2 , \R ) $.
We call the 
linear operator $ \mathcal{G}_{ \mu, \sigma } $
defined in \eqref{eq:generator} \emph{generator},
we call the linear operator $ G_{ \sigma } $
defined in \eqref{eq:noise_operator} \emph{noise operator},
we call the linear operator $ \overline{\mathcal{G}}_{ \mu, \sigma } $
defined in \eqref{eq:extended_generator} \emph{extended generator}
(cf.\ Ichikawa~\cite{Ichikawa1984} and Maslowski~\cite{Maslowski1986})
and we call the linear operator $ \overline{G}_{ \sigma } $
defined in \eqref{eq:extended_noise_operator} \emph{extended noise operator}
(cf., e.g., Cox et al.~\cite{CoxHutzenthalerJentzen2013}).
In addition, for a real number $ T \in (0,\infty) $
we denote by
$
  \mathcal{P}_T 
  =
  \cup_{ n \in \N }
  \{ 
    ( t_0 , t_1, \dots, t_n ) \in \R^{ n + 1 }
    \colon
    0 =  t_0 < t_1 < \dots < t_n = T
  \}
$
the set of all partitions of the interval $ [ 0, T ] $.
Moreover, if
$ T \in (0,\infty) $
is a real number
and if $ ( \Omega, \mathcal{F}, \P ) $
is a probability space
with a normal filtration $ ( \mathcal{F}_t )_{ t \in [0,T] } $
(see, e.g., Definition~2.1.11 in Pr\'{e}v\^{o}t \& R\"{o}ckner~\cite{PrevotRoeckner2007}),
then we say that the quadrupel
$
  ( \Omega, \mathcal{F}, \P, ( \mathcal{F}_t )_{ t \in [0,T] } )
$
is a \emph{stochastic basis}.
Furthermore, for a real number
$  T \in [0,\infty) $,
a stochastic basis
$
  ( 
    \Omega, \mathcal{F}, \P, ( \mathcal{F}_t )_{ t \in [ \tzero ,T] } 
  )
$
and
adapted and product measurable stochastic processes
$ \chi \colon [  \tzero , T ] \times \Omega \to \R $
and
$
  \zeta \colon [  \tzero , T ] \times \Omega \to U^* = L( U, \R ) 
$
with
$
  \int_{  \tzero  }^T
  | \chi_s |
  +
  \| \zeta_s \|^2_{ U^* }
  \, ds
  < \infty
$
$ \P $-a.s.\ we 
denote by
% define
% the \emph{stochastic exponential}
$
  \Psi[ \chi, \zeta ] 
$
the equivalence class (with respect to 
indistinguishability) of adapted $ \R $-valued stochastic processes 
on $ [ \tzero ,T] $
with continuous sample paths
satisfying
\begin{equation}
  \Psi[ \chi, \zeta ]_t
=
  \exp\!\left(
    \smallint_{  \tzero  }^t
    \chi_s
    -
    \tfrac{ 1 }{ 2 }
    \left\| \zeta_s 
    \right\|^2_{ U^* }
    ds
    +
    \smallint_{  \tzero  }^t
    \zeta_s \, dW_s 
  \right)
\end{equation}
$ \P $-a.s.\ for all
$ t \in [ \tzero , T] $.
Throughout this article we also often calculate and 
formulate expressions in the extended real numbers
$ [ - \infty, \infty ] = \R \cup \{ - \infty, \infty \} $.
In particular, we frequently use the convention
$
  \frac{ 0 }{ 0 } = 0 \cdot \infty = 0
$.
Furthermore, for a real number $ a \in \R $ we denote by
$
  a^+ := \max( a, 0 ) 
$
the nonnegative part of $ a $.
In addition, 
for a real number $ T \in [ 0, \infty ) $,
a stochastic basis
$ 
  ( \Omega, \mathcal{F}, \P, ( \mathcal{F}_t )_{ t \in [0,T] } )
$
and a stopping time $ \tau \colon \Omega \to [0,T] $
we denote by
$ 
  \llbracket 0 , \tau \rrbracket 
  :=
  \left\{
    ( t, \omega ) \in [0,T] \times \Omega 
    \colon
    t \leq \tau( \omega )
  \right\}
$
the probabilistic interval from $ 0 $ to $ \tau $
(see, e.g., Definition~3.1 in K\"{u}hn~\cite{Kuehn2004}).

\subsection{Setting}
\label{sec:setting}

Throughout this article the following setting is frequently used.
Let 
$
  ( H, \left< \cdot , \cdot \right>_H, \left\| \cdot \right\|_H )
$
and
$
  ( U, \left< \cdot , \cdot \right>_U, 
$
$
  \left\| \cdot \right\|_U )
$
be separable $ \R $-Hilbert spaces,
let $ O \subseteq H $ be an open set,
let $ \mathcal{O} \in \mathcal{B}( O ) $,
$ T \in ( 0, \infty ) $,
let 
$ 
  ( \Omega, \mathcal{F}, \P, 
    ( \mathcal{F}_t )_{ t \in [  \tzero  , T ] } ) 
$
be a stochastic basis,
let
$
  ( W_t )_{ t \in [  \tzero , T ] }
$
be a cylindrical $ I_U $-Wiener process
with respect to $ ( \mathcal{F}_t )_{ t \in [  \tzero , T ] } $
and let 
$ e_k \in U $,
$ k \in \N $,
be an orthonormal basis of $ U $.

\section{A perturbation theory for stochastic differential equations (SDEs)}
\label{sec:perturbation_theory}

\subsection{It\^{o}'s formula and an exponential integrating factor}

\begin{lemma}
\label{lem:perturbation_formula}
Assume the setting in Subsection~\ref{sec:setting},
let $ V \in C^2( O, \R ) $
and 
let 
$ X \colon [  \tzero  , T ] \times \Omega \to \mathcal{O} $,
$ a \colon [  \tzero , T ] \times \Omega \to H $,
$ b \colon [  \tzero , T ] \times \Omega \to HS( U, H ) $,
$ \chi \colon [  \tzero , T ] \times \Omega \to \R $,
%and
$ \zeta \colon [  \tzero , T ] \times \Omega \to U^* $
be predictable stochastic processes
with 
$
  \int_{  \tzero  }^T
  \| a_s \|_H
  +
  \| b_s \|_{ HS( U, H ) }^2
  +
  | \chi_s |
  +
  \| \zeta_s \|^2_{ U^* }
  \,
  ds
  < \infty
$
$ \P $-a.s.\ and
$
  X_t 
  = 
  X_{  \tzero  } 
  +
  \int_{  \tzero  }^t a_s \, ds
  +
  \int_{  \tzero  }^t b_s \, dW_s
$
$ \P $-a.s.\ for all
$ t \in [ \tzero ,T] $.
Then
\begin{equation}
\label{eq:first_perturbation}
\begin{split}
&
  \frac{
    V( X_t )
  }{
    \Psi[ 
      \chi , \zeta
    ]_t
  }
=
  V( X_{  \tzero  } )
  +
  \int_{  \tzero  }^t
  \tfrac{
    V'( X_s ) \,
      b_s
    - V( X_s ) \zeta_s
  }{
    \Psi[ 
      \chi , \zeta
    ]_s
  }
  \, dW_s
\\ & \quad
  +
  \int_{  \tzero  }^t
  \tfrac{
    V'( X_s ) \,
      a_s 
    +
    \frac{ 1 }{ 2 }
    \operatorname{trace}\left(
      b_s^* 
      \,
      ( \operatorname{Hess} V)( X_s )
      \,
      b_s
    \right)
    +
    \operatorname{trace}\left(
      \zeta_s^*
      \left[ 
        V( X_s ) \zeta_s
        -
        V'( X_s ) \, b_s
      \right]    
    \right)
    - 
    V( X_s ) \chi_s
  }{
    \Psi[ 
      \chi , \zeta
    ]_s
  }
  \, ds
\end{split}
\end{equation}
$ \P $-a.s.\ for all 
$ t \in [  \tzero , T ] $.
\end{lemma}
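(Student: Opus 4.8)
The plan is to apply the infinite-dimensional It\^o formula to the process $Z_t := V(X_t) \, / \, \Psi[\chi,\zeta]_t$, written as a product of the two semimartingales $t \mapsto V(X_t)$ and $t \mapsto \Psi[\chi,\zeta]_t^{-1}$, and then to simplify the resulting drift and diffusion terms using the explicit stochastic differential equations satisfied by each factor. First I would record the differential of $V(X_t)$: since $X$ is an It\^o process with drift $a$ and diffusion $b$, It\^o's formula (in the Hilbert-space formulation, e.g.\ as in Pr\'ev\^ot \& R\"ockner or Da Prato \& Zabczyk) gives
\begin{equation*}
  d V(X_t) = V'(X_t)\, a_t \, dt + \tfrac12 \operatorname{trace}\!\big( b_t^*\, (\operatorname{Hess} V)(X_t)\, b_t \big) dt + V'(X_t)\, b_t \, dW_t .
\end{equation*}
Next I would record the differential of $R_t := \Psi[\chi,\zeta]_t^{-1} = \exp\!\big( -\!\int_{\tzero}^t \chi_s - \tfrac12\|\zeta_s\|_{U^*}^2\, ds - \!\int_{\tzero}^t \zeta_s\, dW_s \big)$. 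This is again an exponential of a continuous semimartingale; applying It\^o's formula to $\exp$ of that semimartingale, the quadratic-variation correction from the $-\int \zeta_s\,dW_s$ term contributes $+\tfrac12\|\zeta_s\|_{U^*}^2 R_s\, ds$, which combines with the $+\tfrac12\|\zeta_s\|_{U^*}^2\, ds$ already in the exponent to yield
\begin{equation*}
  d R_t = R_t\big( \|\zeta_t\|_{U^*}^2 - \chi_t \big) dt - R_t\, \zeta_t\, dW_t .
\end{equation*}

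Then I would apply the It\^o product rule $d(V(X_t) R_t) = R_t\, dV(X_t) + V(X_t)\, dR_t + d\langle V(X_\cdot), R_\cdot\rangle_t$, where the cross-variation term is computed from the two stochastic integrands: the $dW$-integrand of $V(X_\cdot)$ is $V'(X_t)\,b_t$ and that of $R_\cdot$ is $-R_t\,\zeta_t$, so
\begin{equation*}
  d\langle V(X_\cdot), R_\cdot\rangle_t = -R_t\, \operatorname{trace}\!\big( \zeta_t^*\, V'(X_t)\, b_t \big) dt .
\end{equation*}
Collecting the three contributions, the $dW_t$-part is $R_t\big( V'(X_t)\,b_t - V(X_t)\,\zeta_t \big)$, which matches the stated diffusion term once one divides by $\Psi[\chi,\zeta]_s$; the $dt$-part is
\begin{equation*}
  R_t\Big[ V'(X_t)\,a_t + \tfrac12\operatorname{trace}\!\big(b_t^*(\operatorname{Hess}V)(X_t)b_t\big) + V(X_t)\big(\|\zeta_t\|_{U^*}^2 - \chi_t\big) - \operatorname{trace}\!\big(\zeta_t^* V'(X_t) b_t\big)\Big],
\end{equation*}
and one checks that $V(X_t)\|\zeta_t\|_{U^*}^2 - \operatorname{trace}(\zeta_t^* V'(X_t) b_t) = \operatorname{trace}\!\big(\zeta_t^*[V(X_t)\zeta_t - V'(X_t)b_t]\big)$, which reproduces exactly the drift term in~\eqref{eq:first_perturbation}. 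Integrating from $\tzero$ to $t$ and using $R_{\tzero} = 1$ finishes the identity.

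The main obstacle is purely a matter of care rather than depth: one must verify the integrability hypotheses needed to legitimately apply It\^o's formula and the product rule in the infinite-dimensional, non-square-integrable setting --- in particular that the local martingale parts are genuine local martingales and that all the $ds$-integrands appearing are $\P$-a.s.\ absolutely integrable on $[\tzero,T]$. This follows by a standard localization argument: one introduces stopping times $\tau_n \uparrow T$ that bound $\|X_s\|_H$, $\int_{\tzero}^s \|a_u\|_H + \|b_u\|_{HS(U,H)}^2 + |\chi_u| + \|\zeta_u\|_{U^*}^2\, du$, and the relevant suprema of $V$ and its derivatives on the (compact) range of $X$, proves the identity on $\llbracket \tzero, \tau_n\rrbracket$ where everything is bounded, and then lets $n\to\infty$ using the $\P$-a.s.\ finiteness assumptions in the hypothesis together with the continuity of sample paths. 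The algebraic cancellation identified above is the only genuinely computational ingredient and is entirely elementary.
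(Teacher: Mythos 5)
Your proposal is correct and follows essentially the same route as the paper: the paper applies It\^o's formula directly to $V(X_t)/\Psi[\chi,\zeta]_t$ and then invokes exactly the elementary identity $V(X_s)\|\zeta_s\|^2_{U^*}-\operatorname{trace}(\zeta_s^* V'(X_s) b_s)=\operatorname{trace}\!\left(\zeta_s^*\left[V(X_s)\zeta_s-V'(X_s)b_s\right]\right)$ that you use, while your factorization into $dV(X_t)$, $dR_t$ and the It\^o product rule with the cross-variation term is just a spelled-out version of that single application of It\^o's formula (and your localization remark covers the integrability side). No gaps.
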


\begin{proof}[Proof
of Lemma~\ref{lem:perturbation_formula}]
Applying 
It\^{o}'s formula 
to the process
$
  \frac{ V( X_t ) }{ \Psi[ \chi, \zeta ]_t } 
$,
$ t \in [  \tzero , T ] $,
results in
\begin{equation}
\begin{split}
&
  \frac{ V( X_t ) }{
    \Psi[ \chi, \zeta ]_t
  }
\\ & =
  V( X_{  \tzero  } )
  +
  \int_{  \tzero  }^t
  \frac{
    V'( X_s )
    \, b_s
    -
    V( X_s ) \, \zeta_s
  }{
    \Psi[ \chi, \zeta ]_s
  }
  \,
  dW_s
\\ & \quad 
  +
  \int_{  \tzero  }^t
  \frac{
    V'( X_s )
    \, a_s
    +
    \tfrac{ 1 }{ 2 }
    \operatorname{trace}\!\left(
      ( b_s )^*
      ( \operatorname{Hess} V)( X_s )
      \,
      b_s
    \right)
    -
    V( X_s ) 
    \left[ 
      \chi_s - 
      \tfrac{ 1 }{ 2 } 
      \| \zeta_s \|^2_{ U^* }
    \right]
  }{
    \Psi[ \chi, \zeta ]_s
  }
  \,
  ds
\\ & \quad 
  +
  \int_{  \tzero  }^t
  \frac{
    \frac{ 1 }{ 2 }
    V( X_s ) \, \| \zeta_s \|^2_{ U^* }
    -
    \operatorname{trace}\!\left(
      \zeta_s^*
      \,
      V'( X_s )
      \,
      b_s
    \right)
  }{
    \Psi[ \chi, \zeta ]_s
  }
  \,
  ds
\end{split}
\label{eq:ito_formel}
\end{equation}
$ \P $-a.s.\ for all
$ t \in [ \tzero ,T] $.
Combining this with 
the elementary identity
\begin{equation}
\begin{split}
&
  V( X_s ) \, \| \zeta_s \|^2_{ U^* }  
    -
    \operatorname{trace}\!\left(
      \zeta_s^*
      \,
      V'( X_s )
      \,
      b_s
    \right)
  =
  V( X_s ) \, \| \zeta_s \|^2_{ HS( U, \R ) }  
    -
    \operatorname{trace}\!\left(
      \zeta_s^*
      \,
      V'( X_s )
      \,
      b_s
    \right)
\\ & =
  \operatorname{trace}\!\left(
    \zeta_s^* V( X_s ) \zeta_s
  \right)
    -
    \operatorname{trace}\!\left(
      \zeta_s^*
      \,
      V'( X_s )
      \,
      b_s
    \right)
  =
    \operatorname{trace}\!\left(
      \zeta_s^*
      \,
      \left[ 
        V( X_s )
        \zeta_s
        -
        V'( X_s )
        \,
        b_s
      \right]
    \right)
\end{split}
\end{equation}
for all $ s \in [ \tzero ,T] $
completes the proof of Lemma~\ref{lem:perturbation_formula}.
\end{proof}

In the next lemma, Lemma~\ref{lem:perturbation_formula2}, 
we present a slightly different formulation of Lemma~\ref{lem:perturbation_formula2},
that is, we add and substract in \eqref{eq:first_perturbation} 
the generator in \eqref{eq:generator} and the 
noise operator in \eqref{eq:noise_operator}.
Lemma~\ref{lem:perturbation_formula2} is thus an immediate
consequence of Lemma~\ref{lem:perturbation_formula} and 
its proof is therefore omitted.

\begin{lemma}
\label{lem:perturbation_formula2}
Assume the setting in Subsection~\ref{sec:setting},
let $ V \in C^2( O, \R ) $,
$ 
  \mu \in 
  \mathcal{L}^0( \mathcal{O}; H )
$,
$
  \sigma \in 
  \mathcal{L}^0( \mathcal{O}; HS( U, H ) )
$,
let 
$ X \colon [  \tzero  , T ] \times \Omega \to \mathcal{O} $,
$ a \colon [  \tzero , T ] \times \Omega \to H $,
$ b \colon [  \tzero , T ] \times \Omega \to HS( U, H ) $,
$ \chi \colon [  \tzero , T ] \times \Omega \to \R $,
%and
$ \zeta \colon [  \tzero , T ] \times \Omega \to U^* $
be predictable stochastic processes
with 
$
  \int_{  \tzero  }^T
  \| a_s \|_H
  +
  \| b_s \|_{ HS( U, H ) }^2
  +
  | \chi_s |
  +
  \| \zeta_s \|^2_{ U^* }
  +
  \| \mu( X_s ) \|_H
  +
  \| \sigma( X_s ) \|^2_{ HS( U, H ) }
  \,
  ds
  < \infty
$
$ \P $-a.s.\ and
$
  X_t 
  = 
  X_{  \tzero  } 
  +
  \int_{  \tzero  }^t a_s \, ds
  +
  \int_{  \tzero  }^t b_s \, dW_s
$
$ \P $-a.s.\ for all
$ t \in [ \tzero ,T] $.
Then
\begin{equation}
\begin{split}
  \frac{
    V( X_t )
  }{
    \Psi[ 
      \chi, \zeta
    ]_t
  }
& =
  V( X_{  \tzero  } )
  +
  \int_{  \tzero  }^t
  \tfrac{
    ( \mathcal{G}_{ \mu, \sigma } V)( 
      X_s
    )
    - 
    \chi_s V( X_s )
    +
    \operatorname{trace}\left(
      \zeta_s^*
      \left[ 
        V( X_s ) \zeta_s
        -
        V'( X_s ) \, b_s
      \right]    
    \right)
  }{
    \Psi[ 
      \chi, \zeta
    ]_s
  }
  \, ds
\\ & \quad
  +
  \int_{  \tzero  }^t
  \tfrac{
    V'( X_s ) 
    \left[
      a_s 
      -
      \mu( X_s )
    \right]
    +
    \frac{ 1 }{ 2 }
    \operatorname{trace}\left(
      \left[ 
        b_s + 
        \sigma( 
          X_s 
        ) 
      \right]^*
      ( \operatorname{Hess} V)( X_s )
      \left[ 
        b_s
        -
        \sigma( 
          X_s 
        )
      \right]
    \right)
  }{
    \Psi[ 
      \chi, \zeta
    ]_s
  }
  \, ds
\\ & \quad 
  +
  \int_{  \tzero  }^t
  \tfrac{
    V'( X_s ) 
    \left[ 
      b_s
      - 
      \sigma( 
        X_s 
      )
    \right]
    +
    ( G_{ \sigma } V)( 
      X_s 
    )
    - V( X_s ) \zeta_s
  }{
    \Psi[ 
      \chi, \zeta
    ]_s
  }
  \, dW_s
\end{split}
\end{equation}
$ \P $-a.s.\ for all 
$ t \in [  \tzero , T ] $.
\end{lemma}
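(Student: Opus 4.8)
The plan is to deduce Lemma~\ref{lem:perturbation_formula2} from Lemma~\ref{lem:perturbation_formula} by the device announced just before the statement: insert $ 0 = \mu(X_s) - \mu(X_s) $ and $ 0 = \sigma(X_s) - \sigma(X_s) $ into the integrands of~\eqref{eq:first_perturbation} and regroup the resulting terms so that the generator $ \mathcal{G}_{\mu,\sigma} $ of~\eqref{eq:generator} and the noise operator $ G_\sigma $ of~\eqref{eq:noise_operator} appear. First I would observe that the hypotheses of the present lemma contain those of Lemma~\ref{lem:perturbation_formula}; the extra assumed integrability of $ s \mapsto \|\mu(X_s)\|_H + \|\sigma(X_s)\|^2_{HS(U,H)} $ serves only to make the newly introduced terms $ ds $-integrable $ \P $-a.s. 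Hence~\eqref{eq:first_perturbation} holds $ \P $-a.s.\ for all $ t \in [\tzero,T] $, and it suffices to rewrite the two integrands of~\eqref{eq:first_perturbation}.

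For the stochastic integrand, adding and subtracting $ V'(X_s)\sigma(X_s) $ and invoking~\eqref{eq:noise_operator} gives
\[
  V'(X_s)\,b_s - V(X_s)\,\zeta_s
  =
  V'(X_s)\,[b_s - \sigma(X_s)]
  + (G_\sigma V)(X_s)
  - V(X_s)\,\zeta_s ,
\]
which is precisely the $ dW_s $-integrand claimed in Lemma~\ref{lem:perturbation_formula2}.

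For the finite-variation integrand I would treat the first-order and the second-order parts separately. Adding and subtracting $ V'(X_s)\mu(X_s) $ rewrites $ V'(X_s)\,a_s $ as $ V'(X_s)\,[a_s - \mu(X_s)] + V'(X_s)\mu(X_s) $, and $ V'(X_s)\mu(X_s) $ is the first summand of $ (\mathcal{G}_{\mu,\sigma}V)(X_s) $ in~\eqref{eq:generator}. It then remains to check the purely algebraic identity
\[
  \tfrac12 \operatorname{trace}\!\big( \sigma(X_s)\sigma(X_s)^* (\operatorname{Hess} V)(X_s) \big)
  + \tfrac12 \operatorname{trace}\!\big( [b_s + \sigma(X_s)]^*\, (\operatorname{Hess} V)(X_s)\,[b_s - \sigma(X_s)] \big)
  = \tfrac12 \operatorname{trace}\!\big( b_s^*\, (\operatorname{Hess} V)(X_s)\, b_s \big) ,
\]
which glues the second-order term of~\eqref{eq:first_perturbation} together with the second summand of $ (\mathcal{G}_{\mu,\sigma}V)(X_s) $ and the bilinear correction term appearing in the statement. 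This identity follows by expanding the middle trace into its four pieces $ b_s^* A b_s $, $ -b_s^* A \sigma(X_s) $, $ \sigma(X_s)^* A b_s $, $ -\sigma(X_s)^* A \sigma(X_s) $ with $ A := (\operatorname{Hess} V)(X_s) $, observing that the two cross terms cancel because $ \operatorname{trace}(b_s^* A \sigma(X_s)) = \operatorname{trace}(\sigma(X_s)^* A b_s) $ (by symmetry of $ A $ and cyclicity of the trace), and then cancelling $ \operatorname{trace}(\sigma(X_s)^* A \sigma(X_s)) = \operatorname{trace}(\sigma(X_s)\sigma(X_s)^* A) $ against the first summand. The remaining terms $ \operatorname{trace}(\zeta_s^*[V(X_s)\zeta_s - V'(X_s)\,b_s]) $ and $ -\chi_s V(X_s) $ of~\eqref{eq:first_perturbation} are carried over verbatim. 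Substituting these rearrangements back into~\eqref{eq:first_perturbation} yields exactly the stated formula.

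I do not expect any genuine difficulty; the only two points needing a line of justification are that the cyclic and adjoint manipulations of the traces are legitimate — which holds since $ b_s, \sigma(X_s) \in HS(U,H) $ and $ (\operatorname{Hess} V)(X_s) \in L(H) $, so that every operator whose trace is taken is a product of two Hilbert--Schmidt operators (possibly composed with a bounded one) and hence trace class — and that the rearranged finite-variation integrand is $ ds $-integrable $ \P $-a.s., which follows from the added integrability hypothesis on $ \mu(X) $ and $ \sigma(X) $ together with the boundedness of $ V' $ and $ \operatorname{Hess} V $ on the (compact) range of the continuous path $ [\tzero,T] \ni s \mapsto X_s(\omega) $ for $ \P $-a.e.\ $ \omega $.
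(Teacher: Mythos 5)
Your proposal is correct and follows exactly the route the paper indicates: Lemma~\ref{lem:perturbation_formula2} is obtained from Lemma~\ref{lem:perturbation_formula} by adding and subtracting the generator \eqref{eq:generator} and the noise operator \eqref{eq:noise_operator}, which is why the paper omits the proof as an immediate consequence. Your verification of the trace identity (using symmetry of the Hessian, cyclicity, and the Hilbert--Schmidt/trace-class structure) and of the integrability of the regrouped integrands simply fills in the routine details the authors left out.
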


\subsection{A perturbation formula}

In the next result, Proposition~\ref{prop:perturbation_formula},
we formulate the special case of Lemma~\ref{lem:perturbation_formula2}
where the stochastic process $ ( X_t )_{ \in [  \tzero , T] } $ 
in Lemma~\ref{lem:perturbation_formula2} is the pairing 
of two stochastic processes $ X = ( X^1, X^2 ) $.
% This is the subject of 
% Proposition~\ref{prop:perturbation_formula}.
% Proposition~\ref{prop:perturbation_formula} is a special case
% of Lemma~\ref{lem:perturbation_formula2} and thus omitted.

\begin{prop}
\label{prop:perturbation_formula}
Assume the setting in Subsection~\ref{sec:setting},
let $ 
  V = 
  ( V(x_1,x_2) )_{ (x_1, x_2) \in O^2 } 
  \in C^2( O^2, \R ) 
$,
$
  \mu \in \mathcal{L}^0( \mathcal{O}; H ) 
$,
$
  \sigma \in 
  \mathcal{L}^0( \mathcal{O}; HS( U, H ) )
$,
let
$ X^i \colon [  \tzero  , T ] \times \Omega \to \mathcal{O} $,
$ a^i \colon [  \tzero , T ] \times \Omega \to H $,
$ b^i \colon [  \tzero , T ] \times \Omega \to HS( U, H ) $,
$ i \in \{ 1, 2 \} $,
$ \chi \colon [  \tzero , T ] \times \Omega \to \R $,
$ \zeta \colon [  \tzero , T ] \times \Omega \to U^* $
be predictable stochastic processes
with 
$
  \int_{  \tzero  }^T
  \| a_s^i \|_H
  +
  \| b_s^i \|_{ HS( U, H ) }^2
  +
  | \chi_s |
  +
  \| \zeta_s \|^2_{ U^* }
  +
  \| \mu( X_s^i ) \|_H
  +
  \| \sigma( X_s^i ) \|_{ HS( U, H ) }^2
  \,
  ds
  < \infty
$
$ \P $-a.s.\ and
$
  X_t^i 
  = 
  X^i_{  \tzero  } 
  +
  \int_{  \tzero  }^t a^i_s \, ds
  +
  \int_{  \tzero  }^t b^i_s \, dW_s
$
$ \P $-a.s.\ for all
$ t \in [ \tzero ,T] $,
$ i \in \{ 1, 2 \} $.
Then
\begin{equation}
\begin{split}
&
  \frac{
    V( X^1_t, X^2_t )
  }{
    \Psi[ 
      \chi, \zeta
    ]_t
  }
=
  V( X^1_{  \tzero  }, X^2_{  \tzero  } )
  +
  \smallint\limits_{  \tzero  }^t
  \tfrac{
    \sum_{ i = 1 }^2
    (
      \partial_{ x_i }
      V
    )( X^1_s, X^2_s ) 
    \left[ 
      b^i_s
      - 
      \sigma( 
        X^i_s
      )
    \right]
    +
    ( 
      \overline{G}_{ \sigma } V
    )( 
      X^1_s ,
      X^2_s 
    )
    - V( X^1_s, X^2_s ) \zeta_s
  }{
    \Psi[ 
      \chi, \zeta
    ]_s
  }
  \, dW_s
\\ & \quad
  +
  \smallint\limits_{  \tzero  }^t
%   \int_{  \tzero  }^t
  \tfrac{
    ( 
      \overline{\mathcal{G}}_{ \mu, \sigma } 
      V
    )( 
      X^1_s ,
      X^2_s 
    )
    - 
    \chi_s V( X^1_s, X^2_s )
    +
    \operatorname{trace}\left(
      \zeta_s^*
      \left[ 
        V( X^1_s, X^2_s ) \zeta_s
        -
        \sum_{ i = 1 }^2
        ( \partial_{ x_i } V )( X^1_s, X^2_s ) 
        \, b_s^i
      \right]    
    \right)
  }{
    \Psi[ 
      \chi, \zeta
    ]_s
  }
  \, ds
\\ & \quad
  +
  \smallint\limits_{  \tzero  }^t
%  \int_{  \tzero  }^t
  \tfrac{
    \sum_{ i = 1 }^2
    ( \partial_{ x_i } V )( X^1_s, X^2_s ) 
    \left[
      a^i_s 
      -
      \mu( 
        X^i_s 
      )
    \right]
    +
    \frac{ 1 }{ 2 }
    \sum_{ i = 1 }^2
    \operatorname{trace}\left(
      \left[ 
        b^i_s + 
        \sigma( 
          X^i_s 
        )
      \right]^*
      ( \operatorname{Hess}_{ x_i } V)( X^1_s, X^2_s )
      \left[ 
        b^i_s
        -
        \sigma( 
          X^i_s 
        )
      \right]
    \right)
  }{
    \Psi[ 
      \chi, \zeta
    ]_s
  }
  \, ds
\\ & \quad
  +
  \smallsum\limits_{ k = 1 }^{ \infty }
  \smallint\limits_{  \tzero  }^t
  \tfrac{
    \sum_{ i = 1 }^2
    \left(
      \partial_{ x_i } \partial_{ x_{ 3 - i } }
      V
    \right)( X^1_s, X^2_s ) 
    \left(
      \left[ 
        b_s^i 
        + 
        \sigma( 
          X^i_s
        )
      \right] 
      e_k
      ,
      \left[ 
        b_s^{ 3 - i }
        -
        \sigma( 
          X^{ 3 - i }_s
        )
      \right] 
      e_k
    \right)
%     +
%     \left(
%       \partial_{ x_2 } \partial_{ x_1 }
%       V
%     \right)( X^1_s, X^2_s ) 
%     \left(
%       [ b_s^2 + \sigma( X_s^2 ) ] e_k
%       ,
%       [ b_s^1 - \sigma( X_s^1 ) ] e_k
%     \right)
  }{
    2 \,
    \Psi[ 
      \chi, \zeta
    ]_s
  }
  \, ds
\end{split}
\end{equation}
$ \P $-a.s.\ for all 
$ t \in [  \tzero , T ] $.
\end{prop}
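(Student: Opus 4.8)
The plan is to deduce Proposition~\ref{prop:perturbation_formula} directly from Lemma~\ref{lem:perturbation_formula2} by applying that lemma to a suitably enlarged Hilbert space that encodes the pair $(X^1,X^2)$ as a single process. Concretely, I would work on the product Hilbert space $\hat H := H \times H$ with the inner product $\langle (v_1,v_2),(w_1,w_2)\rangle_{\hat H} = \langle v_1,w_1\rangle_H + \langle v_2,w_2\rangle_H$ and the open set $\hat O := O^2 \subseteq \hat H$, $\hat{\mathcal O} := \mathcal O^2$. The noise space $U$ stays the same, and $HS(U,\hat H)$ is naturally identified with $HS(U,H)^2$. One sets $\hat X := (X^1,X^2)$, $\hat a := (a^1,a^2)$, $\hat b := (b^1,b^2)$, and defines $\hat\mu \in \mathcal L^0(\hat{\mathcal O};\hat H)$ by $\hat\mu(x_1,x_2) := (\mu(x_1),\mu(x_2))$ and $\hat\sigma \in \mathcal L^0(\hat{\mathcal O}; HS(U,\hat H))$ by $\hat\sigma(x_1,x_2) u := (\sigma(x_1)u,\sigma(x_2)u)$. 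The function $V \in C^2(O^2,\R) = C^2(\hat O,\R)$ needs no change, and $\chi,\zeta$ are carried over verbatim. Then $\hat X_t = \hat X_{\tzero} + \int_{\tzero}^t \hat a_s\,ds + \int_{\tzero}^t \hat b_s\,dW_s$ holds $\P$-a.s., and the integrability hypothesis of Lemma~\ref{lem:perturbation_formula2} is exactly the sum over $i\in\{1,2\}$ of the hypotheses assumed here, so Lemma~\ref{lem:perturbation_formula2} applies to $(\hat H,\hat O,V,\hat\mu,\hat\sigma,\hat X,\hat a,\hat b,\chi,\zeta)$.

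\textbf{Key steps.} With the lemma invoked, the remaining work is purely algebraic: I would rewrite each of the three integrands produced by Lemma~\ref{lem:perturbation_formula2} in terms of the first- and second-order partial derivatives of $V$ in the two slots $x_1,x_2$. For the first-order terms, $V'(\hat X_s)[\,\cdot\,]$ decomposes as $\sum_{i=1}^2 (\partial_{x_i}V)(X^1_s,X^2_s)[\,\cdot\,]$, which immediately turns $V'(\hat X_s)[\hat a_s-\hat\mu(\hat X_s)]$ into $\sum_i (\partial_{x_i}V)(X^1_s,X^2_s)[a^i_s-\mu(X^i_s)]$, and turns the $dW_s$-integrand and the $G_{\hat\sigma}$/$\zeta_s$ terms into the stated drift-free expression involving $\overline G_\sigma$ (whose definition~\eqref{eq:extended_noise_operator} is precisely $\hat\sigma$ composed with the two partials). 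The generator term $(\mathcal G_{\hat\mu,\hat\sigma}V)(\hat X_s)$ expands via~\eqref{eq:generator}: the drift part gives $(\partial_{x_1}V)\mu(X^1)+(\partial_{x_2}V)\mu(X^2)$ and the $\tfrac12\operatorname{tr}(\hat\sigma\hat\sigma^*\operatorname{Hess}V)$ part, since $\operatorname{Hess}V$ on $\hat H$ is the block matrix with blocks $\partial_{x_i}\partial_{x_j}V$, produces the two diagonal second-order terms $\tfrac12\sum_i(\partial_{x_i}^2V)(\sigma_i(X^i),\sigma_i(X^i))$ together with the mixed term $\sum_i(\partial_{x_2}\partial_{x_1}V)(\sigma(X^1)e_k,\sigma(X^2)e_k)$ — i.e.\ exactly $\overline{\mathcal G}_{\mu,\sigma}V$ as in~\eqref{eq:extended_generator}. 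The Hessian term $\tfrac12\operatorname{tr}([\hat b_s+\hat\sigma(\hat X_s)]^*\operatorname{Hess}V\,[\hat b_s-\hat\sigma(\hat X_s)])$ likewise splits along the block structure into the two diagonal $(\operatorname{Hess}_{x_i}V)$ contributions plus the off-diagonal cross terms, and it is precisely the off-diagonal cross terms that recombine into the final $\sum_k \tfrac{1}{2}\sum_i (\partial_{x_i}\partial_{x_{3-i}}V)([b^i_s+\sigma(X^i_s)]e_k,[b^{3-i}_s-\sigma(X^{3-i}_s)]e_k)$ line of the claimed formula. Finally the $\operatorname{trace}(\zeta_s^*[\,\cdots\,])$ term transcribes directly once $V'(\hat X_s)\hat b_s$ is written as $\sum_i(\partial_{x_i}V)(X^1_s,X^2_s)b^i_s$.

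\textbf{Main obstacle.} There is no real analytic difficulty here — everything rests on Lemma~\ref{lem:perturbation_formula2}, which is already granted. The one place demanding care is the bookkeeping of the second-order block-Hessian terms: one must be scrupulous that $\operatorname{Hess}_{\hat H}V$ at a point $(x_1,x_2)$, viewed as a symmetric bilinear form on $\hat H = H\times H$, decomposes as $\sum_{i,j}(\partial_{x_i}\partial_{x_j}V)$ acting on the respective components, and that the summation over the orthonormal basis $e_k$ of $U$ in the trace correctly reassembles the mixed second derivatives into the $\overline{\mathcal G}_{\mu,\sigma}$ term and into the separate cross-term line, with the factor $\tfrac12$ and the symmetry $\partial_{x_1}\partial_{x_2}V = \partial_{x_2}\partial_{x_1}V$ used to collapse the two index choices $i\in\{1,2\}$ where appropriate. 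Since this is a routine (if slightly tedious) identification of two ways of writing the same trace, and since Proposition~\ref{prop:perturbation_formula} is stated as essentially an unpacking of Lemma~\ref{lem:perturbation_formula2}, the proof is short and I would present it at this level of detail.
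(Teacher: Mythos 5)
Your proposal is correct and coincides with the paper's own (implicit) argument: the paper introduces Proposition~\ref{prop:perturbation_formula} precisely as the special case of Lemma~\ref{lem:perturbation_formula2} in which the process is the pairing $X=(X^1,X^2)$ on the product Hilbert space, which is exactly your construction of $(\hat H,\hat O,\hat\mu,\hat\sigma,\hat X,\hat a,\hat b)$. Your block-Hessian bookkeeping (diagonal blocks giving the $\operatorname{Hess}_{x_i}V$ terms, off-diagonal blocks giving the cross-term line, and $\mathcal G_{\hat\mu,\hat\sigma}V=\overline{\mathcal G}_{\mu,\sigma}V$, $G_{\hat\sigma}V=\overline G_{\sigma}V$) is the same unpacking the paper leaves to the reader.
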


Next we formulate the special case
of Proposition~\ref{prop:perturbation_formula}
where the stochastic process $ ( X^1_t )_{ t \in [ \tzero , T] } $ 
in Proposition~\ref{prop:perturbation_formula}
is a solution process of 
the SDE
with drift coefficient $ \mu $ and 
diffusion coefficient $ \sigma $.

\begin{corollary}
\label{cor:perturbation}
Assume the setting in Subsection~\ref{sec:setting},
let $ V = ( V(x, y) )_{ (x, y) \in O^2 } \in C^2( O^2, \R ) $,
$
  \mu \in \mathcal{L}^0( \mathcal{O}; H ) 
$,
$
  \sigma \in 
  \mathcal{L}^0( \mathcal{O}; HS( U, H ) )
$,
let
$ X, Y \colon [  \tzero  , T ] \times \Omega \to \mathcal{O} $,
$ a \colon [  \tzero , T ] \times \Omega \to H $,
$ b \colon [  \tzero , T ] \times \Omega \to HS( U, H ) $,
$ \chi \colon [  \tzero , T ] \times \Omega \to \R $,
%and
$ \zeta \colon [  \tzero , T ] \times \Omega \to U^* $
be predictable stochastic processes
with 
$
  \int_{  \tzero  }^T
  \| a_s \|_H
  +
  \| b_s \|_{ HS( U, H ) }^2
  +
  | \chi_s |
  +
  \| \zeta_s \|^2_{ U^* }
  +
  \| \mu( X_s ) \|_H
  +
  \| \sigma( X_s ) \|^2_{ HS( U, H ) }
  +
  \| \mu( Y_s ) \|_H
  +
  \| \sigma( Y_s ) \|^2_{ HS( U, H ) }
  \,
  ds
  < \infty
$
$ \P $-a.s.,
%\ and 
%let
%$ X, Y \colon [  \tzero  , T ] \times \Omega \to \mathcal{O} $
%be adapted stochastic processes with c.s.p.\ satisfying
$
  X_t 
  = 
  X_{  \tzero  } 
  +
  \int_{  \tzero  }^t \mu( X_s ) \, ds
  +
  \int_{  \tzero  }^t \sigma( X_s ) \, dW_s
$
$ \P $-a.s.\ and
$
  Y_t 
  = 
  Y_{  \tzero  } 
  +
  \int_{  \tzero  }^t a_s \, ds
  +
  \int_{  \tzero  }^t b_s \, dW_s
$
$ \P $-a.s.\ for all
$ t \in [ \tzero ,T] $.
Then
\begin{equation}
\label{eq:cor_perturbation}
\begin{split}
&
  \frac{
    V( X_t, Y_t )
  }{
    \Psi[ 
      \chi, \zeta
    ]_t
  }
=
  V( X_{  \tzero  }, Y_{  \tzero  } )
  +
  \smallint\limits_{  \tzero  }^t
  \tfrac{
    (
      \partial_y
      V
    )( X_s, Y_s ) 
    \left[ 
      b_s
      - \sigma( Y_s  )
    \right]
    +
    ( \overline{G}_{ \sigma } V)( X_s , Y_s  )
    - V( X_s, Y_s ) \zeta_s
  }{
    \Psi[ 
      \chi, \zeta
    ]_s
  }
  \, dW_s
\\ & 
  +
  \smallint\limits_{  \tzero  }^t
%   \int_{  \tzero  }^t
  \tfrac{
    ( \overline{\mathcal{G}}_{ \mu, \sigma } V)( X_s , Y_s )
    - 
    \chi_s V( X_s, Y_s )
    +
    \operatorname{trace}\left(
      \zeta_s^*
      \left[ 
        V( X_s, Y_s ) \zeta_s
        -
        ( \partial_x V )( X_s, Y_s ) 
        \, \sigma( X_s )
        -
        ( \partial_y V )( X_s, Y_s ) 
        \, b_s
      \right]    
    \right)
  }{
    \Psi[ 
      \chi, \zeta
    ]_s
  }
  \, ds
\\ & 
  +
  \smallint\limits_{  \tzero  }^t
%  \int_{  \tzero  }^t
  \tfrac{
    ( \partial_y V )( X_s, Y_s ) 
    \left[
      a_s 
      -
      \mu( Y_s )
    \right]
    +
    \frac{ 1 }{ 2 }
    \operatorname{trace}\left(
      \left[ 
        b_s + \sigma( Y_s ) 
      \right]^*
      ( \operatorname{Hess}_y V)( X_s, Y_s )
      \left[ 
        b_s
        -
        \sigma( Y_s )
      \right]
    \right)
  }{
    \Psi[ 
      \chi, \zeta
    ]_s
  }
  \, ds
\\ & 
  +
  \smallsum\limits_{ k = 1 }^{ \infty }
  \smallint\limits_{  \tzero  }^t
%  \int\limits_{  \tzero  }^t
  \tfrac{
    \left(
%       \frac{ \partial^2 }{
        \partial_x \partial_y 
%       }
      V
    \right)( X_s, Y_s ) 
    \left(
      \sigma( X_s ) e_k
      ,
      [ b_s - \sigma( Y_s ) ] e_k
    \right)
  }{
    \Psi[ 
      \chi, \zeta
    ]_s
  }
  \, ds
\end{split}
\end{equation}
$ \P $-a.s.\ for all 
$ t \in [  \tzero , T ] $.
\end{corollary}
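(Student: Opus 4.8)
The plan is to derive Corollary~\ref{cor:perturbation} as the special case of Proposition~\ref{prop:perturbation_formula} in which the first component process $X^1$ is a solution of the SDE with coefficients $\mu,\sigma$. Concretely, I would apply Proposition~\ref{prop:perturbation_formula} with the substitutions $X^1 := X$, $X^2 := Y$, $a^1 := \mu(X)$, $b^1 := \sigma(X)$, $a^2 := a$, $b^2 := b$, and with the same $V$, $\mu$, $\sigma$, $\chi$, $\zeta$. First I would check that these choices meet the hypotheses of Proposition~\ref{prop:perturbation_formula}: the integrability condition there requires $\int_{\tzero}^T \|a^i_s\|_H + \|b^i_s\|_{HS(U,H)}^2 + |\chi_s| + \|\zeta_s\|_{U^*}^2 + \|\mu(X^i_s)\|_H + \|\sigma(X^i_s)\|_{HS(U,H)}^2\,ds < \infty$ $\P$-a.s.\ for $i\in\{1,2\}$, and this follows immediately from the integrability assumption of Corollary~\ref{cor:perturbation}, since for $i=1$ we have $a^1=\mu(X)$ and $b^1=\sigma(X)$ (so those terms are just $\|\mu(X_s)\|_H$ and $\|\sigma(X_s)\|_{HS(U,H)}^2$, which appear in the hypothesis, together with $\|\mu(X^1_s)\|_H$ and $\|\sigma(X^1_s)\|^2_{HS(U,H)}$ which are the same), and for $i=2$ we have the assumed bounds on $a$, $b$, $\mu(Y)$, $\sigma(Y)$. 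The representations $X_t = X_\tzero + \int_\tzero^t \mu(X_s)\,ds + \int_\tzero^t \sigma(X_s)\,dW_s$ and $Y_t = Y_\tzero + \int_\tzero^t a_s\,ds + \int_\tzero^t b_s\,dW_s$ are exactly the hypotheses $X^i_t = X^i_\tzero + \int_\tzero^t a^i_s\,ds + \int_\tzero^t b^i_s\,dW_s$ for $i=1,2$.

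Then I would simplify the resulting identity term by term. The key simplification is that with $a^1 = \mu(X)$ and $b^1 = \sigma(X)$, the ``$i=1$'' contributions to the local perturbation terms vanish: $b^1_s - \sigma(X^1_s) = \sigma(X_s) - \sigma(X_s) = 0$ and $a^1_s - \mu(X^1_s) = \mu(X_s) - \mu(X_s) = 0$. Hence in the Wiener integral the sum $\sum_{i=1}^2 (\partial_{x_i} V)(X^1_s,X^2_s)[b^i_s - \sigma(X^i_s)]$ collapses to the single $i=2$ term $(\partial_y V)(X_s,Y_s)[b_s - \sigma(Y_s)]$; similarly in the third $ds$-integral the sum over $i$ of $(\partial_{x_i}V)[a^i_s - \mu(X^i_s)]$ collapses to $(\partial_y V)(X_s,Y_s)[a_s - \mu(Y_s)]$, and in the Hessian trace term the $i=1$ summand vanishes because it contains the factor $[b^1_s - \sigma(X^1_s)] = 0$, leaving only $\tfrac12\operatorname{trace}([b_s+\sigma(Y_s)]^*(\operatorname{Hess}_y V)(X_s,Y_s)[b_s-\sigma(Y_s)])$. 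In the last (mixed second derivative) line, the sum $\sum_{i=1}^2 (\partial_{x_i}\partial_{x_{3-i}}V)(\cdot)([b^i_s + \sigma(X^i_s)]e_k, [b^{3-i}_s - \sigma(X^{3-i}_s)]e_k)$ has its $i=2$ term killed by the factor $[b^1_s - \sigma(X^1_s)]e_k = 0$, so only the $i=1$ term $(\partial_x\partial_y V)(X_s,Y_s)([\sigma(X_s)+\sigma(X_s)]e_k, [b_s - \sigma(Y_s)]e_k) = 2(\partial_x\partial_y V)(X_s,Y_s)(\sigma(X_s)e_k, [b_s-\sigma(Y_s)]e_k)$ survives, and the prefactor $\tfrac{1}{2\Psi[\chi,\zeta]_s}$ cancels the factor $2$, yielding exactly the last line of~\eqref{eq:cor_perturbation}. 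Finally, in the drift-type $ds$-integral, the trace term $\operatorname{trace}(\zeta_s^*[V\zeta_s - \sum_{i=1}^2(\partial_{x_i}V)b^i_s])$ becomes $\operatorname{trace}(\zeta_s^*[V(X_s,Y_s)\zeta_s - (\partial_x V)(X_s,Y_s)\sigma(X_s) - (\partial_y V)(X_s,Y_s)b_s])$ once we substitute $b^1_s = \sigma(X_s)$, $b^2_s = b_s$, matching~\eqref{eq:cor_perturbation}. The extended-generator and extended-noise-operator terms $(\overline{\mathcal{G}}_{\mu,\sigma}V)(X_s,Y_s)$, $(\overline{G}_\sigma V)(X_s,Y_s)$ and the terms $-\chi_s V$, $-V\zeta_s$ carry over verbatim.

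There is essentially no obstacle here: Corollary~\ref{cor:perturbation} is a pure specialization plus bookkeeping, and indeed the excerpt's phrasing (``the special case of Proposition~\ref{prop:perturbation_formula} where \dots $X^1$ is a solution process of the SDE'') signals that the intended proof is precisely ``apply Proposition~\ref{prop:perturbation_formula}, verify hypotheses, and simplify.'' The only point requiring a small amount of care is the mixed-derivative line: one must correctly observe that the two-term sum over $i\in\{1,2\}$ of products $(\partial_{x_i}\partial_{x_{3-i}}V)(\dots)$ reduces (after the $i=2$ term vanishes) to a single term in which $b^1_s + \sigma(X^1_s) = 2\sigma(X_s)$, and that this factor of $2$ is absorbed by the $2$ in the denominator $2\,\Psi[\chi,\zeta]_s$; a reader checking the symmetry of $\operatorname{Hess}$ and of the mixed partials should be convinced that no factor is lost. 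Given all of this, the proof can legitimately be stated in one or two lines, exactly as a corollary-by-specialization.
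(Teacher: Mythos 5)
Your proposal is correct and coincides with the paper's intended argument: Corollary~\ref{cor:perturbation} is presented as the specialization of Proposition~\ref{prop:perturbation_formula} obtained by taking $X^1=X$, $X^2=Y$, $a^1=\mu(X)$, $b^1=\sigma(X)$, $a^2=a$, $b^2=b$, and your verification of the hypotheses and the term-by-term simplifications (including the cancellation of the factor $2$ in the mixed-derivative line) is exactly the bookkeeping the paper leaves to the reader.
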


Note in the setting of Corollary~\ref{cor:perturbation}
that if $ Y $ is also a solution of the SDE with drift
coefficient $ \mu $ and diffusion coefficient $ \sigma $
too
and if $ \chi $ and $ \zeta $ are appropriate
(see Proposition~2.12
in Cox et al.~\cite{CoxHutzenthalerJentzen2013}),
then Corollary~\ref{cor:perturbation} essentially reduces to
%is a special case of 
Proposition~2.12 in Cox et al.~\cite{CoxHutzenthalerJentzen2013}
and can be used to study the regularity of solutions of SDEs in
the initial value.
The next result, Proposition~\ref{prop:main_perturbation},
formulates the special case of Corollary~\ref{cor:perturbation}
where the process $ \zeta \equiv 0 $ vanishes.

\begin{prop}
\label{prop:main_perturbation}
Assume the setting in Subsection~\ref{sec:setting},
let $ V = ( V(x, y) )_{ ( x, y ) \in O^2 } \in C^2( O^2, \R ) $,
$
  \mu \in \mathcal{L}^0( \mathcal{O}; H ) 
$,
$
  \sigma \in 
  \mathcal{L}^0( \mathcal{O}; HS( U, H ) )
$,
let
$ 
  X, Y \colon [  \tzero  , T ] \times \Omega \to \mathcal{O} 
$,
$ a \colon [  \tzero , T ] \times \Omega \to H $,
$ b \colon [  \tzero , T ] \times \Omega \to HS( U, H ) $,
$ \chi \colon [  \tzero , T ] \times \Omega \to \R $
be predictable stochastic processes
with 
$
  \int_{  \tzero  }^T
  \| a_s \|_H
  +
  \| b_s \|_{ HS( U, H ) }^2
  +
  | \chi_s |
  +
  \| \mu( X_s ) \|_H
  +
  \| \sigma( X_s ) \|_{ HS( U, H ) }^2
  +
  \| \mu( Y_s ) \|_H
  +
  \| \sigma( Y_s ) \|_{ HS( U, H ) }^2
  \,
  ds
  < \infty
$
$ \P $-a.s.,
$
  X_t 
  = 
  X_{  \tzero  } 
  +
  \int_{  \tzero  }^t \mu( X_s ) \, ds
  +
  \int_{  \tzero  }^t \sigma( X_s ) \, dW_s
$
$ \P $-a.s.\ and
$
  Y_t 
  = 
  Y_{  \tzero  } 
  +
  \int_{  \tzero  }^t a_s \, ds
  +
  \int_{  \tzero  }^t b_s \, dW_s
$
$ \P $-a.s.\ for all
$ t \in [ \tzero ,T] $.
Then
\begin{equation}
\label{eq:main_perturbation}
\begin{split}
&
  \frac{
    V( X_t, Y_t )
  }{
    \exp\!\big(
      \int_{  \tzero  }^t \chi_s \, ds
    \big)
  }
=
  V( X_{  \tzero  }, Y_{  \tzero  } )
  +
  \smallint\limits_{  \tzero  }^t
  \tfrac{
    (
        \partial_{ y }
      V
    )( X_s, Y_s ) 
    \left[ 
      b_s
      - \sigma( Y_s )
    \right]
    +
    ( \overline{G}_{ \sigma } V)( X_s , Y_s )
  }{
    \exp(
      \int_{  \tzero  }^s \chi_u \, du
    )
  }
  \, dW_s
\\ & \quad
  +
  \smallint\limits_{  \tzero  }^t
%   \int_{  \tzero  }^t
  \tfrac{
    ( \overline{\mathcal{G}}_{ \mu, \sigma } V)( X_s , Y_s )
    - 
    \chi_s V( X_s, Y_s )
    +
    \sum_{ k = 1 }^{ \infty }
    \left(
      \partial_x \partial_y 
      V
    \right)( X_s, Y_s ) 
    \left(
      \sigma( X_s ) e_k
      ,
      [ b_s - \sigma( Y_s ) ] e_k
    \right)
  }{
    \exp(
      \int_{  \tzero  }^s \chi_u \, du
    )
  }
  \, ds
\\ & \quad
  +
  \smallint\limits_{  \tzero  }^t
%  \int_{  \tzero  }^t
  \tfrac{
    ( \partial_y V )( X_s, Y_s ) 
    \left[
      a_s 
      -
      \mu( Y_s )
    \right]
    +
    \frac{ 1 }{ 2 }
    \operatorname{trace}\left(
      \left[ 
        b_s + \sigma( Y_s ) 
      \right]^*
      ( \operatorname{Hess}_{ y } V)( X_s, Y_s )
      \left[ 
        b_s
        -
        \sigma( Y_s )
      \right]
    \right)
  }{
    \exp(
      \int_{  \tzero  }^s \chi_u \, du
    )
  }
  \, ds
\end{split}
\end{equation}
$ \P $-a.s.\ for all 
$ t \in [  \tzero , T ] $.
\end{prop}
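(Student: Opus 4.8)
The plan is to obtain Proposition~\ref{prop:main_perturbation} as the special case $\zeta\equiv 0$ of Corollary~\ref{cor:perturbation}. First I would check that all hypotheses of Corollary~\ref{cor:perturbation} are fulfilled under this choice of $\zeta$. The regularity assumptions on $V,\mu,\sigma$ and the two It\^o representations $X_t=X_{\tzero}+\int_{\tzero}^t\mu(X_s)\,ds+\int_{\tzero}^t\sigma(X_s)\,dW_s$ and $Y_t=Y_{\tzero}+\int_{\tzero}^t a_s\,ds+\int_{\tzero}^t b_s\,dW_s$ are assumed verbatim in Proposition~\ref{prop:main_perturbation}, and the integrability requirement of Corollary~\ref{cor:perturbation}, namely $\int_{\tzero}^T \|a_s\|_H+\|b_s\|^2_{HS(U,H)}+|\chi_s|+\|\zeta_s\|^2_{U^*}+\|\mu(X_s)\|_H+\|\sigma(X_s)\|^2_{HS(U,H)}+\|\mu(Y_s)\|_H+\|\sigma(Y_s)\|^2_{HS(U,H)}\,ds<\infty$ $\P$-a.s., reduces for $\zeta\equiv 0$ exactly to the integrability hypothesis imposed in Proposition~\ref{prop:main_perturbation} since $\|\zeta_s\|_{U^*}=0$.

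Second, I would simplify the exponential integrating factor. By the definition of the process $\Psi[\chi,\zeta]$, the choice $\zeta\equiv 0$ yields $\Psi[\chi,0]_t=\exp\!\big(\smallint_{\tzero}^t\chi_s-\tfrac12\|\zeta_s\|^2_{U^*}\,ds+\smallint_{\tzero}^t\zeta_s\,dW_s\big)=\exp\!\big(\smallint_{\tzero}^t\chi_s\,ds\big)$ $\P$-a.s.\ for all $t\in[\tzero,T]$, so that the left-hand side and all denominators of~\eqref{eq:cor_perturbation} turn into the exponential integrating factors occurring in~\eqref{eq:main_perturbation}. Moreover, inserting $\zeta_s=0$ into~\eqref{eq:cor_perturbation} annihilates the term $-V(X_s,Y_s)\zeta_s$ in the stochastic integrand and the entire trace term $\operatorname{trace}\!\big(\zeta_s^*[\,V(X_s,Y_s)\zeta_s-(\partial_x V)(X_s,Y_s)\sigma(X_s)-(\partial_y V)(X_s,Y_s)b_s\,]\big)$ in the first Lebesgue integrand of~\eqref{eq:cor_perturbation}.

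Third, it remains to absorb the final series of Lebesgue integrals in~\eqref{eq:cor_perturbation}, that is $\smallsum_{k=1}^\infty\smallint_{\tzero}^t \big[(\partial_x\partial_y V)(X_s,Y_s)(\sigma(X_s)e_k,[b_s-\sigma(Y_s)]e_k)\big]\big/\exp\!\big(\smallint_{\tzero}^s\chi_u\,du\big)\,ds$, into the first Lebesgue integral of~\eqref{eq:main_perturbation}. This amounts to interchanging the $k$-summation with the time integral, which I would justify by Tonelli's theorem together with the Cauchy--Schwarz-type estimate $\smallsum_{k=1}^\infty|(\partial_x\partial_y V)(X_s,Y_s)(\sigma(X_s)e_k,[b_s-\sigma(Y_s)]e_k)|\leq \|(\partial_x\partial_y V)(X_s,Y_s)\|\,\|\sigma(X_s)\|_{HS(U,H)}\,\|b_s-\sigma(Y_s)\|_{HS(U,H)}$, whose right-hand side is $\P\otimes\lambda$-integrable on $[\tzero,t]$ $\P$-a.s.: indeed $s\mapsto(X_s,Y_s)$ has continuous paths, hence $\sup_{s\in[\tzero,t]}\|(\partial_x\partial_y V)(X_s,Y_s)\|<\infty$ $\P$-a.s.\ by continuity of $V''$, and $\int_{\tzero}^t\|\sigma(X_s)\|^2_{HS(U,H)}\,ds<\infty$, $\int_{\tzero}^t\|b_s-\sigma(Y_s)\|^2_{HS(U,H)}\,ds<\infty$ $\P$-a.s.\ by the assumed integrability, so Cauchy--Schwarz in time finishes the point. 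Collecting the surviving contributions then reproduces~\eqref{eq:main_perturbation} verbatim.

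Since each of these three steps is either a direct specialization of an already established identity or an elementary rearrangement, I do not expect any genuine obstacle; the only point meriting a line of care is the Fubini/Tonelli interchange of the $k$-sum and the $ds$-integral in the third step, and even this is routine under the stated integrability assumptions.
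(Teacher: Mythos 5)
Your proposal is exactly the paper's route: Proposition~\ref{prop:main_perturbation} is stated there as the special case $\zeta\equiv 0$ of Corollary~\ref{cor:perturbation}, with $\Psi[\chi,0]_t=\exp(\smallint_{\tzero}^t\chi_s\,ds)$ and the $\zeta$-terms vanishing, and no further proof is written out. Your additional Fubini/Tonelli justification for moving the $k$-sum inside the $ds$-integral (via the Cauchy--Schwarz bound by $\|(\partial_x\partial_y V)(X_s,Y_s)\|\,\|\sigma(X_s)\|_{HS(U,H)}\,\|b_s-\sigma(Y_s)\|_{HS(U,H)}$) is correct and merely makes explicit a step the paper leaves implicit.
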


\subsection{Perturbation estimates}
\label{ssec:perturbation.estimates}

Our central goal is to estimate 
the quantity 
$
  \sup_{ t \in [  \tzero , T ] }
  \| V( X_t, Y_t ) \|_{ L^r( \Omega; \R ) }
$
for some $ r \in (0,\infty) $
in \eqref{eq:main_perturbation} in 
Proposition~\ref{prop:main_perturbation}.
To do so, we apply in 
the next lemma
a localization argument 
together with H\"{o}lder's inequality to \eqref{eq:main_perturbation}.

\begin{lemma}
\label{lem:generalV_perturbation_estimate}
Assume the setting in Subsection~\ref{sec:setting},
let $ V = ( V(x, y) )_{ ( x, y ) \in O^2 } \in C^2( O^2, [0,\infty) ) $,
$
  \mu \in \mathcal{L}^0( \mathcal{O}; H ) 
$,
$
  \sigma \in 
  \mathcal{L}^0( \mathcal{O}; HS( U, H ) )
$,
let
$ \tau \colon \Omega \to [  \tzero , T ] $
be a stopping time,
let
$ 
  X, Y \colon [  \tzero  , T ] \times \Omega \to \mathcal{O} 
$
be adapted stochastic processes with continuous sample paths (c.s.p.),
let
$ a \colon [  \tzero , T ] \times \Omega \to H $,
$ b \colon [  \tzero , T ] \times \Omega \to HS( U, H ) $,
$ \chi \colon [  \tzero , T ] \times \Omega \to \R $
be predictable stochastic processes
with 
$
  \int_{  \tzero  }^T
  | \chi_s |
  +
  \| a_s \|_H
  +
  \| b_s \|_{ HS( U, H ) }^2
  +
  \| \mu( X_s ) \|_H
  +
  \| \sigma( X_s ) \|_{ HS( U, H ) }^2
  +
  \| \mu( Y_s ) \|_H
  +
  \| \sigma( Y_s ) \|_{ HS( U, H ) }^2
  \,
  ds
  < \infty
$
$ \P $-a.s.,
$
  X_t 
  = 
  X_{  \tzero  } 
  +
  \int_{  \tzero  }^t \mu( X_s ) \, ds
  +
  \int_{  \tzero  }^t \sigma( X_s ) \, dW_s
$
$ \P $-a.s.\ and
$
  Y_t 
  = 
  Y_{  \tzero  } 
  +
  \int_{  \tzero  }^t a_s \, ds
  +
  \int_{  \tzero  }^t b_s \, dW_s
$
$ \P $-a.s.\ for all
$ t \in [ \tzero ,T] $.
Then it holds for all $ p \in (0,1] $ that
{\small
\begin{align}
&
  \left\|
    V( X_{ \tau }, Y_{ \tau } )
  \right\|_{
    L^p( \Omega; \R )
  }
\leq
  \left\|
    \exp\!\left(
      \smallint_{  \tzero  }^{ \tau }
        \chi_s 
      \,
      ds
    \right)
  \right\|_{
    L^{
      p / ( 1 - p ) 
    }( \Omega; \R )
  }
  \sup\Bigg\{
  \E\bigg[
    V( X_{  \tzero  }, Y_{  \tzero  } )
    +
  \smallint\limits_{  \tzero  }^{ \nu \wedge \tau }
  \Big[
    ( \overline{\mathcal{G}}_{ \mu, \sigma } V)( X_s , Y_s )
\nonumber
\\ &
\nonumber
    - 
    \chi_s V( X_s, Y_s )
    +
    \smallsum_{ k = 1 }^{ \infty }
    \left(
      \partial_x \partial_y 
      V
    \right)\!( X_s, Y_s ) 
    \left(
      \sigma( X_s ) e_k
      ,
      [ b_s - \sigma( Y_s ) ] e_k
    \right)
  +
    ( \partial_y V )( X_s, Y_s ) 
    \left[
      a_s 
      -
      \mu( Y_s )
    \right]
\\ & 
    +
    \tfrac{ 1 }{ 2 }
    \operatorname{trace}\!\big(
      \left[ 
        b_s + \sigma( Y_s ) 
      \right]^*
      ( \operatorname{Hess}_{ y } V)( X_s, Y_s )
      \left[ 
        b_s
        -
        \sigma( Y_s )
      \right]
    \big)
  \Big]
  \exp\!\big(
    - \smallint\nolimits_{  \tzero  }^{ s } \chi_u \, du
  \big)
  \, ds
  \bigg]
  \colon
  \substack{ 
    \nu \text{ stopping}
    \\
    \text{time such that}
  }
\nonumber
\\ &
      \smallsum\limits_{ i = 0 }^2
      \,
      \smallint\limits_{  \tzero  }^{ \nu }
      \left\|
        V^{ (i) }( X_s, Y_s )
      \right\|_{
        L^{ (i) }( H^2, \R )
      }^2
      \Big[
        | \chi_s |
        +
        \| a_s \|_H
        +
        \| b_s \|^2_{ HS( U, H ) }
        +
        \| \mu( X_s ) \|_H
        +
        \| \mu( Y_s ) \|_H
\label{eq:perturbation_general}
\\ &
\nonumber
        +
        \|
          \sigma( X_s )
        \|^2_{ HS( U, H ) }
        +
        \|
          \sigma( Y_s )
        \|^2_{ HS( U, H ) }
      \Big]
      \, ds
    \in L^{ \infty }( \Omega; \R ) 
  \Bigg\}
\nonumber
\end{align}}\end{lemma}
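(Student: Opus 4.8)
\emph{Proof plan.} The plan is to insert the pathwise identity of Proposition~\ref{prop:main_perturbation} into a localization argument that removes the stochastic integral, and then to conclude with Fatou's lemma and H\"older's inequality. Write $S \in [0,\infty]$ for the supremum on the right-hand side of~\eqref{eq:perturbation_general}. Since the trivial stopping time $\nu = \tzero$ satisfies the admissibility condition (the associated integral vanishes and $0 \in L^{\infty}(\Omega;\R)$), this supremum is taken over a non-empty set; if $S = \infty$ there is nothing to prove, so I assume $S < \infty$, which (taking $\nu = \tzero$) already forces $\E[ V( X_{\tzero}, Y_{\tzero} ) ] < \infty$. Applying Proposition~\ref{prop:main_perturbation} (whose hypotheses hold here, noting $C^2( O^2, [0,\infty) ) \subseteq C^2( O^2, \R )$) yields the pathwise decomposition
\[
  \tfrac{ V( X_t, Y_t ) }{ \exp( \int_{\tzero}^t \chi_s \, ds ) }
  = V( X_{\tzero}, Y_{\tzero} ) + M_t + A_t
\]
$\P$-a.s.\ for all $t \in [\tzero,T]$, where $M$ denotes the stochastic integral in~\eqref{eq:main_perturbation} and $A_t = \int_{\tzero}^t [\,\cdots] \exp( - \int_{\tzero}^s \chi_u \, du ) \, ds$ is the sum of the two Lebesgue integrals in~\eqref{eq:main_perturbation}; in particular $A_{ \nu \wedge \tau }$ is precisely the integral inside the expectation in~\eqref{eq:perturbation_general}.

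For the localization, set $\Lambda_s := |\chi_s| + \|a_s\|_H + \|b_s\|^2_{HS(U,H)} + \|\mu(X_s)\|_H + \|\mu(Y_s)\|_H + \|\sigma(X_s)\|^2_{HS(U,H)} + \|\sigma(Y_s)\|^2_{HS(U,H)}$ and, for $n \in \N$,
\[
  \nu_n := \inf\!\Big\{ t \in [\tzero,T] \colon \int_{\tzero}^t \Lambda_s \big( 1 + \smallsum_{i=0}^2 \| V^{(i)}( X_s, Y_s ) \|^2_{ L^{(i)}( H^2, \R ) } \big) \, ds \geq n \Big\} \wedge T
\]
with $\inf \emptyset := \infty$. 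Since $X, Y$ have continuous sample paths and $V \in C^2( O^2, \R )$, the process $s \mapsto 1 + \smallsum_{i=0}^2 \| V^{(i)}( X_s, Y_s ) \|^2_{ L^{(i)}( H^2, \R ) }$ is $\P$-a.s.\ continuous, hence $\P$-a.s.\ bounded on $[\tzero,T]$; together with $\int_{\tzero}^T \Lambda_s \, ds < \infty$ $\P$-a.s.\ this shows that the integrand defining $\nu_n$ is $\P$-a.s.\ integrable on $[\tzero,T]$, so that $\nu_n = T$ for all $n$ large enough $\P$-a.s., and in particular $\nu_n \uparrow T$ $\P$-a.s. By construction $\smallsum_{i=0}^2 \int_{\tzero}^{\nu_n} \| V^{(i)}( X_s, Y_s ) \|^2_{ L^{(i)}( H^2, \R ) } \Lambda_s \, ds \leq n$, so every $\nu_n$ is admissible in the supremum defining $S$, and moreover $\exp( - \int_{\tzero}^s \chi_u \, du ) \leq \exp( \int_{\tzero}^s |\chi_u| \, du ) \leq e^n$ for all $s \leq \nu_n$. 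Bounding the Hilbert--Schmidt norm of the integrand of $M$ by a constant times $\| V^{(1)}( X_s, Y_s ) \|^2_{ L^{(1)}( H^2, \R ) } \Lambda_s$ and the modulus of the integrand of $A$ by a constant times $\Lambda_s \smallsum_{i=0}^2 \| V^{(i)}( X_s, Y_s ) \|_{ L^{(i)}( H^2, \R ) }$ (using the explicit formulas~\eqref{eq:extended_generator} and~\eqref{eq:extended_noise_operator} and, for the latter, a Cauchy--Schwarz inequality in time), the bound $e^n$ on the weight shows that $M_{ \cdot \wedge \nu_n \wedge \tau }$ is a square-integrable martingale and that $\E[ |A_{ \nu_n \wedge \tau }| ] < \infty$.

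Consequently $\E[ M_{ \nu_n \wedge \tau } ] = 0$, so evaluating the decomposition at $t = \nu_n \wedge \tau$ and taking expectations gives, for every $n \in \N$,
\[
  \E\!\Big[ \tfrac{ V( X_{ \nu_n \wedge \tau }, Y_{ \nu_n \wedge \tau } ) }{ \exp( \int_{\tzero}^{ \nu_n \wedge \tau } \chi_s \, ds ) } \Big]
  = \E[ V( X_{\tzero}, Y_{\tzero} ) ] + \E[ A_{ \nu_n \wedge \tau } ] \leq S .
\]
Letting $n \to \infty$ and using $\nu_n \wedge \tau \to \tau$ $\P$-a.s., the continuity of the paths of $X, Y$, the continuity of $V$, and $\int_{\tzero}^T |\chi_s| \, ds < \infty$ $\P$-a.s., the expression inside the expectation on the left converges $\P$-a.s.\ to $V( X_\tau, Y_\tau ) \exp( - \int_{\tzero}^\tau \chi_s \, ds )$, so Fatou's lemma (applicable since $V \geq 0$) yields $\E[ V( X_\tau, Y_\tau ) \exp( - \int_{\tzero}^\tau \chi_s \, ds ) ] \leq S$.

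Finally, to pass to the $L^p$-estimate: for $p = 1$ one simply bounds $\exp( \int_{\tzero}^\tau \chi_s \, ds )$ by its $L^{\infty}(\Omega;\R)$-norm; for $p \in (0,1)$ one writes $V( X_\tau, Y_\tau )^p = \big( V( X_\tau, Y_\tau ) \exp( - \int_{\tzero}^\tau \chi_s \, ds ) \big)^p \big( \exp( \int_{\tzero}^\tau \chi_s \, ds ) \big)^p$, applies H\"older's inequality with exponents $\tfrac{1}{p}$ and $\tfrac{1}{1-p}$, and takes the $p$-th root, which gives
\[
  \| V( X_\tau, Y_\tau ) \|_{ L^p( \Omega; \R ) }
  \leq \E\!\big[ V( X_\tau, Y_\tau ) \exp( - {\textstyle\int_{\tzero}^\tau} \chi_s \, ds ) \big] \cdot \big\| \exp( {\textstyle\int_{\tzero}^\tau} \chi_s \, ds ) \big\|_{ L^{ p/(1-p) }( \Omega; \R ) } \leq S \cdot \big\| \exp( {\textstyle\int_{\tzero}^\tau} \chi_s \, ds ) \big\|_{ L^{ p/(1-p) }( \Omega; \R ) } ,
\]
i.e.\ the assertion (both cases being covered uniformly via the convention $1/0 = \infty$). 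The hard part will be the localization step: the stopping times $\nu_n$ have to be chosen so that, simultaneously, the admissibility condition in~\eqref{eq:perturbation_general} holds, the exponential weight $\exp( - \int_{\tzero}^{\cdot} \chi_u \, du )$ together with all the numerators in~\eqref{eq:main_perturbation} are controlled tightly enough to make $M_{ \cdot \wedge \nu_n \wedge \tau }$ a genuine mean-zero martingale and $A_{ \nu_n \wedge \tau }$ integrable, and $\nu_n \uparrow T$; getting these three requirements to coexist is what dictates the precise form of $\nu_n$ above, whereas the pointwise estimates on $\overline{\mathcal{G}}_{\mu,\sigma} V$, $\overline{G}_{\sigma} V$ and the Hessian traces are routine.
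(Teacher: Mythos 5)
Your proposal is correct and follows essentially the same route as the paper's proof: apply the pathwise perturbation formula of Proposition~\ref{prop:main_perturbation}, localize with stopping times that cap the very quantity appearing in the admissibility condition of~\eqref{eq:perturbation_general}, take expectations so the stochastic integral drops out, and conclude with Fatou's lemma and the H\"older estimate $\|V(X_\tau,Y_\tau)\|_{L^p}\leq \E[V(X_\tau,Y_\tau)e^{-\int_{\tzero}^\tau\chi_s\,ds}]\,\|e^{\int_{\tzero}^\tau\chi_s\,ds}\|_{L^{p/(1-p)}}$. The only deviation is cosmetic: your localizing times carry the extra additive term $1$ (so that $\int_{\tzero}^{\nu_n}|\chi_s|\,ds\leq n$ as well), which makes explicit the integrability and zero-mean verifications that the paper's choice of $\tau_n$ leaves implicit.
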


\begin{proof}[Proof
of Lemma~\ref{lem:generalV_perturbation_estimate}]
Throughout this proof let 
$ \tau_n \colon \Omega \to [  \tzero , T ] $,
$ n \in \N $,
be stopping times given by
\begin{multline}
  \tau_n
:=
  \inf\!\bigg(
    \left\{ \tau \right\}
    \cup
    \Big\{ 
      t \in [  \tzero , T ]
      \colon
      \smallsum\limits_{ i = 0 }^2
      \smallint\limits_{  \tzero  }^{ t }
      \left\|
        V^{ (i) }( X_s, Y_s )
      \right\|_{
        L^{ (i) }( H^2, \R )
      }^2
      \Big[
        | \chi_s |
        +
        \| a_s \|_H
        +
        \| b_s \|^2_{ HS( U, H ) }
\\
        +
        \| \mu( X_s ) \|_H
        +
        \| \mu( Y_s ) \|_H
        +
        \|
          \sigma( X_s )
        \|^2_{ HS( U, H ) }
        +
        \|
          \sigma( Y_s )
        \|^2_{ HS( U, H ) }
      \Big]
      \,
      ds
      \geq n
    \Big\}
  \bigg)
\end{multline}
for all $ n \in \N $.
Proposition~\ref{prop:main_perturbation}
then implies
\begin{equation}
\label{eq:general_perturbation_proof}
\begin{split}
&
  \frac{
    V( X_{ \tau_n } , Y_{ \tau_n } )
  }{
    \exp\!\big(
      \int_{  \tzero  }^{ \tau_n } \chi_s \, ds
    \big)
  }
=
  V( X_{  \tzero  }, Y_{  \tzero  } )
  +
  \smallint\limits_{  \tzero  }^{ \tau_n }
  \tfrac{
    (
        \partial_{ y }
      V
    )( X_s, Y_s ) 
    \left[ 
      b_s
      - \sigma( Y_s )
    \right]
    +
    ( \overline{G}_{ \sigma } V)( X_s , Y_s )
  }{
    \exp(
      \int_{  \tzero  }^s \chi_u \, du
    )
  }
  \, dW_s
\\ & \quad
  +
  \smallint\limits_{  \tzero  }^{ \tau_n }
  \tfrac{
    ( \overline{\mathcal{G}}_{ \mu, \sigma } V)( X_s , Y_s )
    - 
    \chi_s V( X_s, Y_s )
    +
    \sum_{ k = 1 }^{ \infty }
    \left(
      \partial_x \partial_y 
      V
    \right)( X_s, Y_s ) 
    \left(
      \sigma( X_s ) e_k
      ,
      [ b_s - \sigma( Y_s ) ] e_k
    \right)
  }{
    \exp(
      \int_{  \tzero  }^s \chi_u \, du
    )
  }
  \, ds
\\ & \quad
  +
  \smallint\limits_{  \tzero  }^{ \tau_n }
  \tfrac{
    ( \partial_y V )( X_s, Y_s ) 
    \left[
      a_s 
      -
      \mu( Y_s )
    \right]
    +
    \frac{ 1 }{ 2 }
    \operatorname{trace}\left(
      \left[ 
        b_s + \sigma( Y_s ) 
      \right]^*
      ( \operatorname{Hess}_{ y } V)( X_s, Y_s )
      \left[ 
        b_s
        -
        \sigma( Y_s )
      \right]
    \right)
  }{
    \exp(
      \int_{  \tzero  }^s \chi_u \, du
    )
  }
  \, ds
\end{split}
\end{equation}
$ \P $-a.s.\ for all 
$ n \in \N $.
Taking expectations in \eqref{eq:general_perturbation_proof}
shows that for all $ n \in \N $ it holds that
\begin{equation}
\label{eq:general_perturbation_proof2}
\begin{split}
&
  \E\!\left[
  \frac{
    V( X_{ \tau_n } , Y_{ \tau_n } )
  }{
    \exp\!\big(
      \int_{  \tzero  }^{ \tau_n } \chi_s \, ds
    \big)
  }
  \right]
=
  \E\big[
    V( X_{  \tzero  }, Y_{  \tzero  } )
  \big]
\\ & \quad
  +
  \E\bigg[
  \smallint\limits_{  \tzero  }^{ \tau_n }
  \tfrac{
    ( \overline{\mathcal{G}}_{ \mu, \sigma } V)( X_s , Y_s )
    - 
    \chi_s V( X_s, Y_s )
    +
    \sum_{ k = 1 }^{ \infty }
    \left(
      \partial_x \partial_y 
      V
    \right)( X_s, Y_s ) 
    \left(
      \sigma( X_s ) e_k
      ,
      [ b_s - \sigma( Y_s ) ] e_k
    \right)
  }{
    \exp(
      \int_{  \tzero  }^s \chi_u \, du
    )
  }
  \, ds
\\ & \quad
  +
  \smallint\limits_{  \tzero  }^{ \tau_n }
  \tfrac{
    ( \partial_y V )( X_s, Y_s ) 
    \left[
      a_s 
      -
      \mu( Y_s )
    \right]
    +
    \frac{ 1 }{ 2 }
    \operatorname{trace}\left(
      \left[ 
        b_s + \sigma( Y_s ) 
      \right]^*
      ( \operatorname{Hess}_{ y } V)( X_s, Y_s )
      \left[ 
        b_s
        -
        \sigma( Y_s )
      \right]
    \right)
  }{
    \exp(
      \int_{  \tzero  }^s \chi_u \, du
    )
  }
  \, ds
  \bigg]
  .
\end{split}
\end{equation}
Next note that H\"{o}lder's inequality proves that
for all $ p \in (0,1] $ it holds that
\begin{equation}
\label{eq:general_perturbation_proof3}
\begin{split}
  \left\|
    V( X_{ \tau }, Y_{ \tau } )
  \right\|_{
    L^p( \Omega; \R )
  }
& =
  \left\|
    \frac{
      V( X_{ \tau }, Y_{ \tau } )
    }{
      \exp(
        \int_{  \tzero  }^{ \tau }
          \chi_s 
        \,
        ds
      )
    }
    \exp\!\left(
      \smallint_{  \tzero  }^{ \tau }
        \chi_s 
      \,
      ds
    \right)
  \right\|_{
    L^p( \Omega; \R )
  }
\\ & \leq
  \left\|
    \frac{
      V( X_{ \tau }, Y_{ \tau } )
    }{
      \exp(
        \int_{  \tzero  }^{ \tau }
          \chi_s 
        \,
        ds
      )
    }
  \right\|_{
    L^1( \Omega; \R )
  }
  \left\|
    \exp\!\left(
      \smallint_{  \tzero  }^{ \tau }
        \chi_s 
      \,
      ds
    \right)
  \right\|_{
    L^{ p / ( 1 - p ) }( \Omega; \R )
  }
\\ & =
  \E\!\left[
    \frac{
      V( X_{ \tau }, Y_{ \tau } )
    }{
      \exp(
        \int_{  \tzero  }^{ \tau }
          \chi_s 
        \,
        ds
      )
    }
  \right]
  \left\|
    \exp\!\left(
      \smallint_{  \tzero  }^{ \tau }
        \chi_s 
      \,
      ds
    \right)
  \right\|_{
    L^{ p / ( 1 - p ) }( \Omega; \R )
  }
  .
\end{split}
\end{equation}
Combining \eqref{eq:general_perturbation_proof2} and 
\eqref{eq:general_perturbation_proof3} with Fatou's lemma
completes the proof of 
Lemma~\ref{lem:generalV_perturbation_estimate}.
\end{proof}

If the right-hand side of \eqref{eq:perturbation_general}
is further estimated in an appropriate way, then a 
more compact statement can be obtained.
This is the subject of the next corollary.

\begin{corollary}
\label{cor:perturbation_estimate2}
Assume the setting in Subsection~\ref{sec:setting},
let $ V = ( V(x, y) )_{ ( x, y ) \in O^2 } \in C^2( O^2, [0,\infty) ) $,
$
  \mu \in \mathcal{L}^0( \mathcal{O}; H ) 
$,
$
  \sigma \in 
  \mathcal{L}^0( \mathcal{O}; HS( U, H ) )
$,
let
$ \tau \colon \Omega \to [  \tzero , T ] $
be a stopping time,
let
$ 
  X, Y \colon [  \tzero  , T ] \times \Omega \to \mathcal{O} 
$
be adapted stochastic processes with c.s.p.,
let
$ a \colon [  \tzero , T ] \times \Omega \to H $,
$ b \colon [  \tzero , T ] \times \Omega \to HS( U, H ) $,
$ \chi \colon [  \tzero , T ] \times \Omega \to [0,\infty) $
be predictable stochastic processes
with 
$
  \int_{  \tzero  }^T
  \| a_s \|_H
  +
  \| b_s \|_{ HS( U, H ) }^2
  +
  \| \mu( X_s ) \|_H
  +
  \| \sigma( X_s ) \|_{ HS( U, H ) }^2
  +
  \| \mu( Y_s ) \|_H
  +
  \| \sigma( Y_s ) \|_{ HS( U, H ) }^2
  +
  \chi_s
  \,
  ds
  < \infty
$
$ \P $-a.s.,
$
  X_t 
  = 
  X_{  \tzero  } 
  +
  \int_{  \tzero  }^t \mu( X_s ) \, ds
  +
  \int_{  \tzero  }^t \sigma( X_s ) \, dW_s
$
$ \P $-a.s.\ and
$
  Y_t 
  = 
  Y_{  \tzero  } 
  +
  \int_{  \tzero  }^t a_s \, ds
  +
  \int_{  \tzero  }^t b_s \, dW_s
$
$ \P $-a.s.\ for all
$ t \in [ \tzero ,T] $.
Then it holds for all $ p \in (0,1] $ that
{\small
\begin{align}
\label{cor:perturbation_general2}
&
  \left\|
    V( X_{ \tau }, Y_{ \tau } )
  \right\|_{
    L^p( \Omega; \R )
  }
\leq
  \left\|
    \exp\!\left(
      \smallint_{  \tzero  }^{ \tau }
        \chi_s 
      \,
      ds
    \right)
  \right\|_{
    L^{
      p / ( 1 - p ) 
    }( \Omega; \R )
  }
  \E\Big[
    V( X_{  \tzero  }, Y_{  \tzero  } )
    +
  \smallint\limits_{  \tzero  }^{ \tau }
  \Big[
    ( \overline{\mathcal{G}}_{ \mu, \sigma } V)( X_s , Y_s )
\nonumber
\\ &
\nonumber
    - 
    \chi_s V( X_s, Y_s )
    +
    \smallsum_{ k = 1 }^{ \infty }
    \left(
      \partial_x \partial_y 
      V
    \right)\!( X_s, Y_s ) 
    \left(
      \sigma( X_s ) e_k
      ,
      [ b_s - \sigma( Y_s ) ] e_k
    \right)
  +
    ( \partial_y V )( X_s, Y_s ) 
    \left[
      a_s 
      -
      \mu( Y_s )
    \right]
\\ & 
    +
    \tfrac{ 1 }{ 2 }
    \operatorname{trace}\!\big(
      \left[ 
        b_s + \sigma( Y_s ) 
      \right]^*
      ( \operatorname{Hess}_{ y } V)( X_s, Y_s )
      \left[ 
        b_s
        -
        \sigma( Y_s )
      \right]
    \big)
  \Big]^+
  ds
  \Big]
  .
\end{align}}\end{corollary}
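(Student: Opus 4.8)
The plan is to deduce Corollary~\ref{cor:perturbation_estimate2} directly from Lemma~\ref{lem:generalV_perturbation_estimate}, the only new input being the strengthened hypothesis that $ \chi $ is $ [0,\infty) $-valued. First I would observe that the assumptions of Corollary~\ref{cor:perturbation_estimate2} are a special case of those of Lemma~\ref{lem:generalV_perturbation_estimate} (indeed $ \chi \geq 0 $ is in particular $ \R $-valued and $ \int_{\tzero}^T |\chi_s| \, ds = \int_{\tzero}^T \chi_s \, ds < \infty $ $ \P $-a.s.). Hence the estimate \eqref{eq:perturbation_general} is available, and it remains only to bound the supremum on its right-hand side by the expectation on the right-hand side of \eqref{cor:perturbation_general2}.

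The key point is that the nonnegativity of $ \chi $ forces $ \exp( - \smallint\nolimits_{\tzero}^{s} \chi_u \, du ) \in (0,1] $ for all $ s \in [\tzero,T] $ ($ \P $-a.s.). Combining this with the elementary inequality $ z \, c \leq z^{+} $, valid for all $ z \in \R $ and all $ c \in (0,1] $, I would bound the bracketed integrand of \eqref{eq:perturbation_general} times $ \exp( - \smallint\nolimits_{\tzero}^{s} \chi_u \, du ) $ pointwise (in $ (s,\omega) $) by the positive part of that same bracketed integrand; since this positive part is nonnegative, enlarging the domain of integration from $ [\tzero,\nu\wedge\tau] $ to $ [\tzero,\tau] $ can only increase the integral. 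This yields, for every stopping time $ \nu\colon\Omega\to[\tzero,T] $ admissible in the supremum in \eqref{eq:perturbation_general},
\[
  \E\!\left[
    V( X_{\tzero}, Y_{\tzero} )
    +
    \smallint\limits_{\tzero}^{\nu \wedge \tau}
    \big[ \cdots \big] \,
    \exp\!\big( - \smallint\nolimits_{\tzero}^{s} \chi_u \, du \big)
    \, ds
  \right]
  \leq
  \E\!\left[
    V( X_{\tzero}, Y_{\tzero} )
    +
    \smallint\limits_{\tzero}^{\tau}
    \big[ \cdots \big]^{+}
    \, ds
  \right] ,
\]
where $ [\cdots] $ abbreviates the integrand appearing in \eqref{eq:perturbation_general}. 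As the right-hand side is independent of $ \nu $, passing to the supremum over all admissible $ \nu $ and invoking Lemma~\ref{lem:generalV_perturbation_estimate} gives \eqref{cor:perturbation_general2}.

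I do not expect any serious obstacle: the argument is essentially ``$ \exp(-\smallint\chi) \le 1 $, pass to positive parts, enlarge the interval''. The only points meriting a line of care are that the passage to positive parts and to the larger interval is legitimate because the dominating integrand $ [\cdots]^{+} $ is nonnegative and measurable, so that $ \E\big[ \smallint_{\tzero}^{\tau} [\cdots]^{+} \, ds \big] $ is well defined in $ [0,\infty] $ (and \eqref{cor:perturbation_general2} is trivially true when it equals $ +\infty $), and that the supremum in \eqref{eq:perturbation_general} is over a non-empty set, since the constant stopping time $ \nu \equiv \tzero $ is admissible.
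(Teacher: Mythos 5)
Your argument is correct and is exactly the route the paper intends: Corollary~\ref{cor:perturbation_estimate2} is presented there as the result of ``further estimating'' the right-hand side of \eqref{eq:perturbation_general} in Lemma~\ref{lem:generalV_perturbation_estimate}, which is precisely your chain $\exp\!\big(-\smallint_{\tzero}^{s}\chi_u\,du\big)\le 1$ (using $\chi\ge 0$), passage to positive parts, enlargement of the integration interval from $[\tzero,\nu\wedge\tau]$ to $[\tzero,\tau]$, and taking the supremum over the admissible stopping times $\nu$. Your remarks on well-definedness of the expectation and non-emptiness of the family of admissible $\nu$ are the right points of care, so nothing is missing.
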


Lemma~\ref{lem:generalV_perturbation_estimate}
can be used to study the regularity of solutions of SDEs
with respect to the initial values.
This is illustrates in the next result, Corollary~\ref{cor:perturbation_initial}.
Corollary~\ref{cor:perturbation_initial} follows immediately from
Lemma~\ref{lem:generalV_perturbation_estimate}
and its proof is thus omitted.

\begin{corollary}
\label{cor:perturbation_initial}
Assume the setting in Subsection~\ref{sec:setting},
let $ V \in C^2( O^2, [0,\infty) ) $,
$
  \mu \in \mathcal{L}^0( \mathcal{O}; H ) 
$,
$
  \sigma \in 
  \mathcal{L}^0( \mathcal{O}; HS( U, H ) )
$,
let
$ \tau \colon \Omega \to [  \tzero , T ] $
be a stopping time,
let
$ 
  X, Y \colon [  \tzero  , T ] \times \Omega \to \mathcal{O} 
$,
$
  \chi \colon [  \tzero , T ] \times \Omega \to \R
$
be predictable stochastic processes
with 
$
  \int_{  \tzero  }^T
  | \chi_s |
  +
  \| \mu( X_s ) \|_H
  +
  \| \sigma( X_s ) \|_{ HS( U, H ) }^2
  +
  \| \mu( Y_s ) \|_H
  +
  \| \sigma( Y_s ) \|_{ HS( U, H ) }^2
  \,
  ds
  < \infty
$
$ \P $-a.s.,
$
  \int\nolimits_{  \tzero  }^{ \tau }
    \big[
      ( \overline{\mathcal{G}}_{ \mu, \sigma } V)( X_s , Y_s )
      - 
      \chi_s V( X_s, Y_s )
    \big]^+
  \, ds
  \leq 0
$
$ \P $-a.s.,
$
  X_t 
  = 
  X_{  \tzero  } 
  +
  \int_{  \tzero  }^t \mu( X_s ) \, ds
  +
  \int_{  \tzero  }^t \sigma( X_s ) \, dW_s
$
$ \P $-a.s.\ and
$
  Y_t 
  = 
  Y_{  \tzero  } 
  +
  \int_{  \tzero  }^t \mu( Y_s ) \, ds
  +
  \int_{  \tzero  }^t \sigma( Y_s ) \, dW_s
$
$ \P $-a.s.\ for all
$ t \in [ \tzero ,T] $.
Then it holds for all $ p \in (0,1] $ that
\begin{equation}
\begin{split}
&
  \left\|
    V( X_{ \tau }, Y_{ \tau } )
  \right\|_{
    L^p( \Omega; \R )
  }
\leq
  \E\big[
    V( X_{  \tzero  }, Y_{  \tzero  } )
  \big]
  \left\|
    \exp\!\left(
      \smallint\nolimits_{  \tzero  }^{ \tau }
      \chi_s \,
      ds
    \right)
  \right\|_{
    L^{
      p / ( 1 - p ) 
    }( \Omega; \R )
  }
  .
\end{split}
\end{equation}
\end{corollary}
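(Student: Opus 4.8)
The plan is to deduce Corollary~\ref{cor:perturbation_initial} from Lemma~\ref{lem:generalV_perturbation_estimate} by specializing the It\^o process $ Y $ in that lemma to be a solution of the \emph{same} SDE as $ X $. Concretely, I would apply Lemma~\ref{lem:generalV_perturbation_estimate} with the drift and diffusion processes $ a = ( \mu( Y_s ) )_{ s \in [ \tzero, T ] } $ and $ b = ( \sigma( Y_s ) )_{ s \in [ \tzero, T ] } $. The integrability hypothesis of Corollary~\ref{cor:perturbation_initial}, together with the assumption that $ Y $ solves $ Y_t = Y_{ \tzero } + \int_{ \tzero }^t \mu( Y_s ) \, ds + \int_{ \tzero }^t \sigma( Y_s ) \, dW_s $, ensures that all hypotheses of Lemma~\ref{lem:generalV_perturbation_estimate} are met; in particular $ X $ and $ Y $ possess continuous sample paths because they satisfy these integral equations, and $ \int_{ \tzero }^T \| a_s \|_H + \| b_s \|_{ HS( U, H ) }^2 \, ds < \infty $ $ \P $-a.s.

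With this choice the three ``local perturbation'' contributions on the right-hand side of~\eqref{eq:perturbation_general} vanish identically: the mixed second-derivative term $ ( \partial_x \partial_y V )( X_s, Y_s )( \sigma( X_s ) e_k, [ b_s - \sigma( Y_s ) ] e_k ) $ and the Hessian-trace term both carry the factor $ b_s - \sigma( Y_s ) = 0 $, while the first-order term carries the factor $ a_s - \mu( Y_s ) = 0 $. Hence the supremum on the right-hand side of~\eqref{eq:perturbation_general} collapses to
\[
  \sup\!\Big\{
    \E\big[
      V( X_{ \tzero }, Y_{ \tzero } )
      +
      \int_{ \tzero }^{ \nu \wedge \tau }
      \big[
        ( \overline{\mathcal{G}}_{ \mu, \sigma } V )( X_s, Y_s )
        - \chi_s V( X_s, Y_s )
      \big]
      \exp\!\big( - \textstyle\int_{ \tzero }^s \chi_u \, du \big)
      \, ds
    \big]
  \Big\}
\]
over the admissible localizing stopping times $ \nu $ in Lemma~\ref{lem:generalV_perturbation_estimate}.

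Finally I would invoke the standing hypothesis $ \int_{ \tzero }^{ \tau } [ ( \overline{\mathcal{G}}_{ \mu, \sigma } V )( X_s, Y_s ) - \chi_s V( X_s, Y_s ) ]^+ \, ds \leq 0 $ $ \P $-a.s. Since the positive part is pointwise nonnegative, this integral is in fact zero $ \P $-a.s., so that $ ( \overline{\mathcal{G}}_{ \mu, \sigma } V )( X_s, Y_s ) - \chi_s V( X_s, Y_s ) \leq 0 $ for Lebesgue-almost every $ s \in [ \tzero, \tau ] $, $ \P $-a.s. Multiplying this nonpositive integrand by the strictly positive factor $ \exp( - \int_{ \tzero }^s \chi_u \, du ) $ and integrating over $ [ \tzero, \nu \wedge \tau ] \subseteq [ \tzero, \tau ] $ preserves the sign, so the displayed supremum is at most $ \E[ V( X_{ \tzero }, Y_{ \tzero } ) ] $. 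Substituting this bound into~\eqref{eq:perturbation_general} then yields, for every $ p \in ( 0, 1 ] $, the claimed estimate
\[
  \| V( X_{ \tau }, Y_{ \tau } ) \|_{ L^p( \Omega; \R ) }
  \leq
  \E\big[ V( X_{ \tzero }, Y_{ \tzero } ) \big]
  \,
  \big\| \exp\!\big( \textstyle\int_{ \tzero }^{ \tau } \chi_s \, ds \big) \big\|_{ L^{ p / ( 1 - p ) }( \Omega; \R ) }
  .
\]

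I do not anticipate a genuine difficulty here: the argument is essentially bookkeeping once Lemma~\ref{lem:generalV_perturbation_estimate} is available. The only points demanding a little care are checking that the substitution $ a = \mu( Y ) $, $ b = \sigma( Y ) $ really transports all hypotheses of the lemma (predictability, continuity of sample paths, the $ ds $-integrability condition) and that the convention $ p / ( 1 - p ) = \infty $ at $ p = 1 $ is handled consistently in both statements.
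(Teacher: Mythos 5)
Your proposal is correct and follows exactly the route the paper intends: the paper omits the proof, stating that Corollary~\ref{cor:perturbation_initial} follows immediately from Lemma~\ref{lem:generalV_perturbation_estimate}, and your specialization $a=\mu(Y)$, $b=\sigma(Y)$ (so that all local perturbation terms in \eqref{eq:perturbation_general} vanish and the generator term is nonpositive by hypothesis) is precisely that argument.
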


Corollary~\ref{cor:perturbation_initial} is a quite similar statement
to 
Proposition~2.17 in 
Cox et al.~\cite{CoxHutzenthalerJentzen2013}
in the case $ p = 1 $ in the setting of the proposition.
As Proposition~2.17 in 
Cox et al.~\cite{CoxHutzenthalerJentzen2013},
Corollary~\ref{cor:perturbation_initial} can now be used to study
the regularity with respect to the initial value for a number of nonlinear 
SDEs from the literature
(such as the Duffing-van der Pol oscillator, the Cox-Ingersoll-Ross process
and the Cahn-Hilliard-Cook equation); see Section~4 in Cox et al.~\cite{CoxHutzenthalerJentzen2013}
for a list of examples.

\subsection{Perturbation estimates
in the case of Hilbert space distances}
\label{sec:perturbation_estimate2}

This subsection investigates the special case
of Proposition~\ref{prop:main_perturbation}
where $ V \in C^2( O^2 , \R ) $ 
satisfies
$ 
  V(x,y) = \| x - y \|_H^p
$
for all $ x, y \in O $ and some $ p \in [2,\infty) $.

\begin{prop}
\label{prop:norm2}
Assume the setting in Subsection~\ref{sec:setting},
let
$ \mu \in \mathcal{L}^0( \mathcal{O} ; H ) $,
$ \sigma \in \mathcal{L}^0( \mathcal{O} ; HS( U, H ) ) $,
let
$ X, Y \colon [  \tzero , T ] \times \Omega \to \mathcal{O} $,
$ a \colon [  \tzero , T ] \times \Omega \to H $,
$ b \colon [  \tzero , T ] \times \Omega \to HS( U, H ) $,
$ \chi \colon [  \tzero , T ] \times \Omega \to \R $
be predictable stochastic processes
with 
$
  \int_{  \tzero  }^T
  \| a_s \|_H
  +
  \| b_s \|_{ HS( U, H ) }^2
  +
  | \chi_s |
  +
  \| \mu( X_s ) \|_H
  +
  \| \sigma( X_s ) \|^2_{ HS( U, H ) }
  +
  \| \mu( Y_s ) \|_H
  +
  \| \sigma( Y_s ) \|^2_{ HS( U, H ) }
  \,
  ds
  < \infty
$
$ \P $-a.s.,
$
  X_t 
  = 
  X_{  \tzero  } 
  +
  \int_{  \tzero  }^t \mu( X_s ) \, ds
  +
  \int_{  \tzero  }^t \sigma( X_s ) \, dW_s
$
$ \P $-a.s.\ and
$
  Y_t 
  = 
  Y_{  \tzero  } 
  +
  \int_{  \tzero  }^t a_s \, ds
  +
  \int_{  \tzero  }^t b_s \, dW_s
$
$ \P $-a.s.\ for all
$ t \in [ \tzero ,T] $.
Then
\begin{equation}
\label{eq:prop_norm2}
\begin{split}
&
  \frac{
    \left\| X_t - Y_t \right\|^p_H
  }{
    \exp(
      \int_{  \tzero  }^t \chi_s \, ds
    )
  }
\leq
  \left\| X_{  \tzero  } - Y_{  \tzero  } \right\|^p_H
  +
  \smallint\limits_{  \tzero  }^t
    \big\langle
    \tfrac{
      p
      \,
      \left\|
        X_s - Y_s 
      \right\|^{ ( p - 2 ) }_H
      \,
      \left[
        X_s - Y_s 
      \right]
    }{
      \exp(
        \int_{  \tzero  }^s \chi_u \, du
      )
    }
      ,
      \left[
      \sigma( X_s ) 
      - 
      b_s
      \right]
      dW_s
    \big\rangle_H
\\ & \quad
  +
  \smallint\limits_{  \tzero  }^t
  \tfrac{
    p
    \,
    \left\|
      X_s - Y_s 
    \right\|^{ ( p - 2 ) }_H
  \left[ 
    \left<
      X_s - Y_s
      ,
      \mu( Y_s )
      -
      a_s 
    \right>_H
    +
    \frac{ ( p - 1 ) \, ( 1 + 1 / \varepsilon ) 
    }{ 2 }
    \,
      \left\|
        b_s
        -
        \sigma( Y_s )
      \right\|^2_{ HS( U, H ) }
  \right]
    - 
    \chi_s \left\| X_s - Y_s \right\|^p_H
  }{
    \exp(
      \int_{  \tzero  }^s \chi_u \, du
    )
  }
  \, ds
\\ & \quad
  +
  \smallint\limits_{  \tzero  }^t
  \tfrac{
    p 
    \,
    \left\| X_s - Y_s \right\|^{ ( p - 2 ) }_H
    \left[
    \left< X_s - Y_s, \mu( X_s ) - \mu( Y_s ) \right>_H
    +
    \frac{ ( p - 1 ) \, ( 1 + \varepsilon ) 
    }{ 2 }
    \,
    \left\|
      \sigma( X_s ) - \sigma( Y_s )
    \right\|^2_{ HS( U, H ) }
    \right]
  }{
    \exp(
      \int_{  \tzero  }^s \chi_u \, du
    )
  }
  \, ds
\end{split}
\end{equation}
$ \P $-a.s.\ for all 
$ t \in [  \tzero , T ] $,
$ \varepsilon \in [0,\infty] $,
$ p \in [2,\infty) $.
\end{prop}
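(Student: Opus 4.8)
The plan is to apply Proposition~\ref{prop:main_perturbation} with the distance-type function $V = ( V(x,y) )_{ (x,y) \in O^2 }$ given by $V(x,y) = \| x - y \|_H^p$. Since $p \in [2,\infty)$, the map $H \ni z \mapsto \| z \|_H^p \in \R$ is twice continuously differentiable, with derivative $z \mapsto \bigl( h \mapsto p \| z \|_H^{ p - 2 } \langle z, h \rangle_H \bigr)$ and Hessian $z \mapsto p ( p - 2 ) \| z \|_H^{ p - 4 } z z^* + p \| z \|_H^{ p - 2 } \operatorname{id}_H$ (with the convention $\tfrac00 = 0$ at $z = 0$); hence $V \in C^2( O^2, \R )$, and the integrability assumptions of Proposition~\ref{prop:main_perturbation} coincide with the ones made here. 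A direct computation then yields, for $( x, y ) \in O^2$ and $h, h_1, h_2 \in H$, that $( \partial_x V )(x,y) h = p \| x - y \|_H^{ p - 2 } \langle x - y, h \rangle_H = - ( \partial_y V )(x,y) h$, that $( \partial_x \partial_y V )(x,y)(h_1,h_2) = - p ( p - 2 ) \| x - y \|_H^{ p - 4 } \langle x - y, h_1 \rangle_H \langle x - y, h_2 \rangle_H - p \| x - y \|_H^{ p - 2 } \langle h_1, h_2 \rangle_H$, and that $( \operatorname{Hess}_y V )(x,y) = p ( p - 2 ) \| x - y \|_H^{ p - 4 } ( x - y )( x - y )^* + p \| x - y \|_H^{ p - 2 } \operatorname{id}_H$.

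First I would match the martingale term: substituting these derivatives into $( \partial_y V )( X_s, Y_s )[ b_s - \sigma( Y_s ) ] + ( \overline{G}_\sigma V )( X_s, Y_s )$ and using $- [ b_s - \sigma( Y_s ) ] + \sigma( X_s ) - \sigma( Y_s ) = \sigma( X_s ) - b_s$ shows that the $dW_s$-integrand in~\eqref{eq:main_perturbation} is the functional $u \mapsto p \| X_s - Y_s \|_H^{ p - 2 } \langle X_s - Y_s , [ \sigma( X_s ) - b_s ] u \rangle_H$, which is exactly the stochastic integral on the right-hand side of~\eqref{eq:prop_norm2}. Likewise $( \partial_y V )( X_s, Y_s )[ a_s - \mu( Y_s ) ] = p \| X_s - Y_s \|_H^{ p - 2 } \langle X_s - Y_s, \mu( Y_s ) - a_s \rangle_H$ and $\chi_s V( X_s, Y_s ) = \chi_s \| X_s - Y_s \|_H^p$ already appear in the form needed in~\eqref{eq:prop_norm2}.

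The core of the proof is the algebraic identity that the three remaining summands in the $ds$-integrand of~\eqref{eq:main_perturbation}, namely $( \overline{\mathcal{G}}_{ \mu, \sigma } V )( X_s, Y_s )$ from~\eqref{eq:extended_generator}, the series $\sum_k ( \partial_x \partial_y V )( X_s, Y_s )( \sigma( X_s ) e_k, [ b_s - \sigma( Y_s ) ] e_k )$, and $\tfrac12 \operatorname{trace}\bigl( [ b_s + \sigma( Y_s ) ]^* ( \operatorname{Hess}_y V )( X_s, Y_s ) [ b_s - \sigma( Y_s ) ] \bigr)$, add up exactly to $p \| Z_s \|_H^{ p - 2 } \langle Z_s, \mu( X_s ) - \mu( Y_s ) \rangle_H + \tfrac12 p ( p - 2 ) \| Z_s \|_H^{ p - 4 } \sum_k \langle Z_s, [ \sigma( X_s ) - b_s ] e_k \rangle_H^2 + \tfrac12 p \| Z_s \|_H^{ p - 2 } \| \sigma( X_s ) - b_s \|^2_{ HS( U, H ) }$, where $Z_s := X_s - Y_s$. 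Writing out $\overline{\mathcal{G}}_{ \mu, \sigma } V$ with the derivatives above, its first-order part is $p \| Z_s \|_H^{ p - 2 } \langle Z_s, \mu( X_s ) - \mu( Y_s ) \rangle_H$, while each of its three second-order contributions, the cross series, and the trace term expand --- in the orthonormal basis $( e_k )$ --- into sums over $k$ of inner products of the vectors $\sigma( X_s ) e_k$, $\sigma( Y_s ) e_k$, $b_s e_k$ (and of the scalars $\langle Z_s, \sigma( X_s ) e_k \rangle_H$, $\langle Z_s, \sigma( Y_s ) e_k \rangle_H$, $\langle Z_s, b_s e_k \rangle_H$); the elementary identity $\langle a - b, a - b \rangle - 2 \langle a, c - b \rangle + \langle c + b, c - b \rangle = \langle a - c, a - c \rangle$ then collapses the combination into the asserted $\sigma( X_s ) - b_s$ expression. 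This bookkeeping is elementary but constitutes the main obstacle.

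Finally I would estimate the two second-order pieces. Splitting $\sigma( X_s ) - b_s = [ \sigma( X_s ) - \sigma( Y_s ) ] - [ b_s - \sigma( Y_s ) ]$ and using the elementary inequality $( \xi - \eta )^2 \le ( 1 + \varepsilon ) \xi^2 + ( 1 + 1 / \varepsilon ) \eta^2$ (equivalent to $( \sqrt{\varepsilon}\,\xi + \eta/\sqrt{\varepsilon} )^2 \ge 0$ for $\varepsilon \in (0,\infty)$) --- componentwise in $\sum_k \langle Z_s, [ \sigma( X_s ) - b_s ] e_k \rangle_H^2$ and then in $\| \sigma( X_s ) - b_s \|^2_{ HS( U, H ) }$ --- together with the Cauchy--Schwarz bound $\sum_k \langle Z_s, M e_k \rangle_H^2 \le \| Z_s \|_H^2 \| M \|^2_{ HS( U, H ) }$, one obtains that $\tfrac12 p ( p - 2 ) \| Z_s \|_H^{ p - 4 } \sum_k \langle Z_s, [ \sigma( X_s ) - b_s ] e_k \rangle_H^2 + \tfrac12 p \| Z_s \|_H^{ p - 2 } \| \sigma( X_s ) - b_s \|^2_{ HS( U, H ) }$ is bounded by $p \| Z_s \|_H^{ p - 2 } \bigl[ \tfrac{ ( p - 1 )( 1 + \varepsilon ) }{ 2 } \| \sigma( X_s ) - \sigma( Y_s ) \|^2_{ HS( U, H ) } + \tfrac{ ( p - 1 )( 1 + 1 / \varepsilon ) }{ 2 } \| b_s - \sigma( Y_s ) \|^2_{ HS( U, H ) } \bigr]$, since $\tfrac12 p ( p - 2 )( 1 + \varepsilon ) + \tfrac12 p ( 1 + \varepsilon ) = p \cdot \tfrac{ ( p - 1 )( 1 + \varepsilon ) }{ 2 }$ and analogously with $1 + 1/\varepsilon$; the boundary cases $\varepsilon \in \{ 0, \infty \}$ are covered by the conventions $\tfrac00 = 0 \cdot \infty = 0$. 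Substituting everything back into~\eqref{eq:main_perturbation}, regrouping the surviving terms into the two $ds$-integrands of~\eqref{eq:prop_norm2}, and noting that the pointwise bound on the integrand is preserved after division by the strictly positive factor $\exp( \int_{ \tzero }^s \chi_u \, du )$ and integration in $s$ from $\tzero$ to $t$, gives~\eqref{eq:prop_norm2} and completes the proof.
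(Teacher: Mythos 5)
Your proposal is correct, and it follows the paper's overall skeleton (apply Proposition~\ref{prop:main_perturbation} with $V(x,y)=\|x-y\|_H^p$, then estimate the quadratic terms with Cauchy--Schwarz and an $\varepsilon$-weighted Young-type inequality), but the middle of the argument is organized genuinely differently. You first prove the exact collapse identity: the extended generator $(\overline{\mathcal{G}}_{\mu,\sigma}V)(X_s,Y_s)$, the mixed-derivative series and the $\operatorname{Hess}_y$-trace term sum \emph{exactly} to $p\|Z_s\|_H^{p-2}\langle Z_s,\mu(X_s)-\mu(Y_s)\rangle_H+\tfrac{p(p-2)}{2}\|Z_s\|_H^{p-4}\sum_k\langle Z_s,[\sigma(X_s)-b_s]e_k\rangle_H^2+\tfrac{p}{2}\|Z_s\|_H^{p-2}\|\sigma(X_s)-b_s\|_{HS(U,H)}^2$ (I checked the bookkeeping; this is just the It\^o quadratic form one gets by applying It\^o's formula directly to $\|X-Y\|_H^p$ with diffusion $\sigma(X)-b$), and only then split $\sigma(X_s)-b_s=[\sigma(X_s)-\sigma(Y_s)]-[b_s-\sigma(Y_s)]$ via $(\xi-\eta)^2\le(1+\varepsilon)\xi^2+(1+1/\varepsilon)\eta^2$ together with $\sum_k\langle Z_s,Me_k\rangle_H^2\le\|Z_s\|_H^2\|M\|_{HS(U,H)}^2$, the coefficient count $\tfrac{p(p-2)}{2}+\tfrac{p}{2}=\tfrac{p(p-1)}{2}$ giving the stated constants. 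The paper never performs this collapse: it keeps the terms expressed in the variables $b-\sigma(Y)$ and $\sigma(Y)-\sigma(X)$ (using the derivative computations of Remark~2.13 and Example~2.15 in Cox et al.~\cite{CoxHutzenthalerJentzen2013}), bounds the resulting cross trace $\operatorname{trace}([\sigma(Y_s)-\sigma(X_s)]^*[X_s-Y_s][X_s-Y_s]^*[b_s-\sigma(Y_s)])$ by the Cauchy--Schwarz and Schatten-norm H\"older estimates, and only at the last step applies $ab\le\tfrac{\varepsilon}{2}a^2+\tfrac{1}{2\varepsilon}b^2$ to distribute the cross term; both routes land on the same $\tfrac{(p-1)(1+\varepsilon)}{2}$ and $\tfrac{(p-1)(1+1/\varepsilon)}{2}$ coefficients. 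Your version is somewhat more self-contained (no Schatten--H\"older step, no appeal to the external computations) at the price of carrying out the derivative bookkeeping explicitly, and your remarks on the $C^2$-regularity of $\|\cdot\|_H^p$ for $p\in[2,\infty)$ and on the conventions for $\varepsilon\in\{0,\infty\}$ cover the edge cases that the paper handles through the indicators $\mathbbm{1}_{\{X_s\neq Y_s\}}$ and the convention $\tfrac{0}{0}=0\cdot\infty=0$.
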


\begin{proof}[Proof of
Proposition~\ref{prop:norm2}]
Throughout this proof 
let $ p \in [2,\infty) $
be a real number and let $ V \in C^2( O^2 , \R ) $ 
be given by
$ 
  V(x,y) = \| x - y \|_H^p
$
for all $ x, y \in O $. 
Then observe that Remark~2.13
in Cox et al.~\cite{CoxHutzenthalerJentzen2013}
proves that for all $ s \in [ \tzero , T] $
it holds that
\begin{equation}
\begin{split}
&
    \smallsum_{ k = 1 }^{ \infty }
    \left(
      \partial_x \partial_y 
      V
    \right)\!( X_s, Y_s ) 
    \big(
      \sigma( X_s ) e_k
      ,
      \left[ 
        b_s - \sigma( Y_s ) 
      \right] e_k
    \big)
\\ & =
    - 
  \sum_{ k = 1 }^{ \infty }
    p
    \left\| X_s - Y_s \right\|^{ ( p - 2 ) }_H
    \left<
      \sigma( X_s ) e_k
      ,
      \left[ 
        b_s - \sigma( Y_s )
      \right]
      e_k
    \right>_H
\\ & 
  -
  \sum_{ k = 1 }^{ \infty }
    \mathbbm{1}_{
      \{ X_s \neq Y_s \}
    }
    \,
    p \left( p - 2 \right)
    \left\| X_s - Y_s \right\|^{ ( p - 4 ) }_H
    \left< X_s - Y_s , \sigma( X_s ) e_k \right>_H
    \left< X_s - Y_s , \left[ b_s - \sigma( Y_s ) \right] e_k \right>_H
\\ & =
    - 
    p
    \left\| X_s - Y_s \right\|^{ ( p - 2 ) }_H
    \operatorname{trace}\!\big(
      \sigma( X_s )^{ * } 
      \left[ 
        b_s - \sigma( Y_s )
      \right]
    \big)
\\ & 
  -
    \mathbbm{1}_{
      \{ X_s \neq Y_s \}
    }
    \,
    p \left( p - 2 \right)
    \left\| X_s - Y_s \right\|^{ ( p - 4 ) }_H
    \operatorname{trace}\!\big(
      \sigma( X_s )^*
      \left[ X_s - Y_s \right]
      \left[ X_s - Y_s \right]^*
      \left[
        b_s - \sigma( Y_s )
      \right]
    \big)
\end{split}
\label{eq:norm2_term1}
\end{equation}
and
\begin{equation}
\label{eq:norm2_term2}
\begin{split}
&
    \tfrac{ 1 }{ 2 }
    \operatorname{trace}\!\left(
      \left[ 
        b_s + \sigma( Y_s ) 
      \right]^*
      ( \operatorname{Hess}_{ y } V)( X_s, Y_s )
      \left[ 
        b_s
        -
        \sigma( Y_s )
      \right]
    \right)
\\ & =
  \tfrac{ p }{ 2 }
  \left\| X_s - Y_s \right\|^{ ( p - 2 ) }_H
    \operatorname{trace}\!\left(
      \left[ 
        b_s + \sigma( Y_s ) 
      \right]^*
      \left[ 
        b_s
        -
        \sigma( Y_s )
      \right]
    \right)
\\ &
  +
  \mathbbm{1}_{
    \{ X_s \neq Y_s \}
  }
  \,
  \tfrac{ p \left( p - 2 \right) }{ 2 }
  \left\| X_s - Y_s \right\|^{ ( p - 4 ) }_H
    \operatorname{trace}\!\left(
      \left[ 
        b_s + \sigma( Y_s ) 
      \right]^*
      \left[ X_s - Y_s \right]
      \left[ X_s - Y_s \right]^*
      \left[ 
        b_s
        -
        \sigma( Y_s )
      \right]
    \right)
  .
\end{split}
\end{equation}
Combining 
\eqref{eq:main_perturbation}
in Proposition~\ref{prop:main_perturbation}
together with 
\eqref{eq:norm2_term1}, \eqref{eq:norm2_term2}
and
%a minor generalization of Example~2.15
Remark~2.13
in Cox et al.~\cite{CoxHutzenthalerJentzen2013}
implies that
% 
% 
% 
% 
% Assume the setting in Subsection~\ref{sec:setting},
% let 
% $ X, Y \colon [  \tzero  , T ] \times \Omega \to \mathcal{O} $,
% $ a \colon [  \tzero , T ] \times \Omega \to H $,
% $ b \colon [  \tzero , T ] \times \Omega \to HS( U, H ) $
% and
% $ \chi \colon [  \tzero , T ] \times \Omega \to \R $
% be predictable stochastic processes
% with 
% $
%   \int_{  \tzero  }^T
%   \| a_s \|_H
%   +
%   \| b_s \|_{ HS( U, H ) }^2
%   +
%   | \chi_s |
%   +
%   \| \mu( X_s ) \|_H
%   +
%   \| \sigma( X_s ) \|^2_{ HS( U, H ) }
%   +
%   \| \mu( Y_s ) \|_H
%   +
%   \| \sigma( Y_s ) \|^2_{ HS( U, H ) }
%   \,
%   ds
%   < \infty
% $
% $ \P $-a.s.,
% $
%   X_t 
%   = 
%   X_{  \tzero  } 
%   +
%   \int_{  \tzero  }^t \mu( X_s ) \, ds
%   +
%   \int_{  \tzero  }^t \sigma( X_s ) \, dW_s
% $
% $ \P $-a.s.\ and
% $
%   Y_t 
%   = 
%   Y_{  \tzero  } 
%   +
%   \int_{  \tzero  }^t a_s \, ds
%   +
%   \int_{  \tzero  }^t b_s \, dW_s
% $
% $ \P $-a.s.\ for all
% $ t \in [ \tzero ,T] $
% and let 
% $ e_k \in U $,
% $ k \in \N $,
% be an orthonormal basis of $ U $.
% Then
\begin{align}
  \frac{
    \left\| X_t - Y_t \right\|^p_H
  }{
    \exp(
      \int_{  \tzero  }^t \chi_s \, ds
    )
  }
& =
  \left\| X_{  \tzero  } - Y_{  \tzero  } \right\|^p_H
  +
  \smallint\limits_{  \tzero  }^t
    p
    \left\|
      X_s - Y_s 
    \right\|^{ ( p - 2 ) }_H
    \big\langle
    \tfrac{
      X_s - Y_s 
    }{
      \exp(
        \int_{  \tzero  }^s \chi_u \, du
      )
    }
      ,
      \left[
      \sigma( X_s ) 
      - 
      b_s
      \right]
      dW_s
    \big\rangle_H
\nonumber
\\ & 
  +
  \smallint\limits_{  \tzero  }^t
  \tfrac{
    ( \overline{\mathcal{G}}_{ \mu, \sigma } V)( X_s, Y_s )
    - 
    \chi_s \left\| X_s - Y_s \right\|^p_H
    +
    p
    \,
    \left\|
      X_s - Y_s 
    \right\|^{ ( p - 2 ) }_H
    \left<
      X_s - Y_s
      ,
      \mu( Y_s )
      -
      a_s 
    \right>_H
  }{
    \exp(
      \int_{  \tzero  }^s \chi_u \, du
    )
  }
  \, ds
\nonumber
\\ & 
  +
  \smallint\limits_{  \tzero  }^t
  \tfrac{
    \frac{ p }{ 2 }
    \,
    \left\|
      X_s - Y_s 
    \right\|^{ ( p - 2 ) }_H
    \operatorname{trace}\left(
      \left[ 
        b_s + \sigma( Y_s ) - 2 \sigma( X_s ) 
      \right]^*
      \left[ 
        b_s
        -
        \sigma( Y_s )
      \right]
    \right)
  }{
    \exp(
      \int_{  \tzero  }^s \chi_u \, du
    )
  }
  \, ds
\\ & 
\nonumber
  +
  \smallint\limits_{  \tzero  }^t
  \tfrac{
    \mathbbm{1}_{
      \{ X_s \neq Y_s \}
    }
    \,
    \frac{ p \, \left( p - 2 \right) }{ 2 }
    \,
    \left\| X_s - Y_s \right\|^{ ( p - 4 ) }_H
    \operatorname{trace}\left(
      \left[ 
        b_s + \sigma( Y_s ) - 2 \sigma( X_s )
      \right]^*
      \left[ X_s - Y_s \right]
      \left[ X_s - Y_s \right]^*
      \left[ 
        b_s
        -
        \sigma( Y_s )
      \right]
    \right)
  }{
    \exp(
      \int_{  \tzero  }^s \chi_u \, du
    )
  }
  \, ds
\end{align}
$ \P $-a.s.\ for all 
$ t \in [  \tzero , T ] $.
This, in turn, shows that
\begin{equation}
\begin{split}
&
  \frac{
    \left\| X_t - Y_t \right\|^p_H
  }{
    \exp(
      \int_{  \tzero  }^t \chi_s \, ds
    )
  }
=
  \left\| X_{  \tzero  } - Y_{  \tzero  } \right\|^p_H
  +
  \smallint\limits_{  \tzero  }^t
    \big\langle
    \tfrac{
      p
      \,
      \left\|
        X_s - Y_s 
      \right\|^{ ( p - 2 ) }_H
      \,
      \left[
        X_s - Y_s 
      \right]
    }{
      \exp(
        \int_{  \tzero  }^s \chi_u \, du
      )
    }
      ,
      \left[
      \sigma( X_s ) 
      - 
      b_s
      \right]
      dW_s
    \big\rangle_H
\\ & 
  +
  \smallint\limits_{  \tzero  }^t
  \tfrac{
    ( \overline{\mathcal{G}}_{ \mu, \sigma } V)( X_s, Y_s )
    - 
    \chi_s \left\| X_s - Y_s \right\|^p_H
    +
    \mathbbm{1}_{
      \{ X_s \neq Y_s \}
    }
    \,
    \frac{ 
      p \, \left( p - 2 \right) 
    }{ 2 }
    \,
    \left\| X_s - Y_s \right\|^{ ( p - 4 ) }_H
    \,
    \left\|
      \left[ 
        b_s
        -
        \sigma( Y_s )
      \right]^*
      \left[ X_s - Y_s \right]
    \right\|^2_{
      U
    }
  }{
    \exp(
      \int_{  \tzero  }^s \chi_u \, du
    )
  }
  \, ds
\\ & 
  +
  \smallint\limits_{  \tzero  }^t
  \tfrac{
    p
    \,
    \left\|
      X_s - Y_s 
    \right\|^{ ( p - 2 ) }_H
  \left[ 
    \left<
      X_s - Y_s
      ,
      \mu( Y_s )
      -
      a_s 
    \right>_H
    +
    \frac{ 1 }{ 2 }
      \left\|
        b_s
        -
        \sigma( Y_s )
      \right\|^2_{ HS( U, H ) }
    +
    \operatorname{trace}\left(
      \left[ 
        \sigma( Y_s ) - \sigma( X_s ) 
      \right]^*
      \left[ 
        b_s
        -
        \sigma( Y_s )
      \right]
    \right)
  \right]
  }{
    \exp(
      \int_{  \tzero  }^s \chi_u \, du
    )
  }
  \, ds
\\ & 
  +
  \smallint\limits_{  \tzero  }^t
  \tfrac{
    \mathbbm{1}_{
      \{ X_s \neq Y_s \}
    }
    \,
    p \, \left( p - 2 \right) 
    \,
    \left\| X_s - Y_s \right\|^{ ( p - 4 ) }_H
    \operatorname{trace}\left(
      \left[ 
        \sigma( Y_s ) - \sigma( X_s )
      \right]^*
      \left[ X_s - Y_s \right]
      \left[ X_s - Y_s \right]^*
      \left[ 
        b_s
        -
        \sigma( Y_s )
      \right]
    \right)
  }{
    \exp(
      \int_{  \tzero  }^s \chi_u \, du
    )
  }
  \, ds
\end{split}
\end{equation}
$ \P $-a.s.\ for all 
$ t \in [  \tzero , T ] $.
A straightforward generalization 
of Example~2.15 in Cox et
al.~\cite{CoxHutzenthalerJentzen2013}
hence shows that
\begin{align}
&
  \frac{
    \left\| X_t - Y_t \right\|^p_H
  }{
    \exp(
      \int_{  \tzero  }^t \chi_s \, ds
    )
  }
=
  \left\| X_{  \tzero  } - Y_{  \tzero  } \right\|^p_H
  +
  \smallint\limits_{  \tzero  }^t
    \big\langle
    \tfrac{
      p
      \,
      \left\|
        X_s - Y_s 
      \right\|^{ ( p - 2 ) }_H
      \,
      \left[
        X_s - Y_s 
      \right]
    }{
      \exp(
        \int_{  \tzero  }^s \chi_u \, du
      )
    }
      ,
      \left[
      \sigma( X_s ) 
      - 
      b_s
      \right]
      dW_s
    \big\rangle_H
\nonumber
\\ & 
%\nonumber
  +
  \smallint\limits_{  \tzero  }^t
  \tfrac{
    \mathbbm{1}_{
      \{ X_s \neq Y_s \}
    }
    \,
    \frac{ 
      p \, \left( p - 2 \right) 
    }{ 2 }
    \,
    \left\| X_s - Y_s \right\|^{ ( p - 4 ) }_H
    \,
    \left[
    \left\|
      [ 
        \sigma( X_s ) - \sigma( Y_s ) 
      ]^*
      ( X_s - Y_s )
    \right\|^2_{ U }
    +
    \left\|
      \left[ 
        b_s
        -
        \sigma( Y_s )
      \right]^*
      \left[ X_s - Y_s \right]
    \right\|^2_{
      U
    }
    \right]
  }{
    \exp(
      \int_{  \tzero  }^s \chi_u \, du
    )
  }
  \, ds
\\ & 
\nonumber
  +
  \smallint\limits_{  \tzero  }^t
  \tfrac{
    p 
    \,
    \left\| X_s - Y_s \right\|^{ ( p - 2 ) }_H
    \left[
    \left< X_s - Y_s, \mu( X_s ) - \mu( Y_s ) \right>_H
    +
    \frac{ 1 }{ 2 }
    \left\|
      \sigma( X_s ) - \sigma( Y_s )
    \right\|^2_{ HS( U, H ) }
    \right]
    - 
    \chi_s \left\| X_s - Y_s \right\|^p_H
  }{
    \exp(
      \int_{  \tzero  }^s \chi_u \, du
    )
  }
  \, ds
\\ & 
\nonumber
  +
  \smallint\limits_{  \tzero  }^t
  \tfrac{
    p
    \,
    \left\|
      X_s - Y_s 
    \right\|^{ ( p - 2 ) }_H
  \left[ 
    \left<
      X_s - Y_s
      ,
      \mu( Y_s )
      -
      a_s 
    \right>_H
    +
    \frac{ 1 }{ 2 }
      \left\|
        b_s
        -
        \sigma( Y_s )
      \right\|^2_{ HS( U, H ) }
    +
    \operatorname{trace}\left(
      \left[ 
        \sigma( Y_s ) - \sigma( X_s ) 
      \right]^*
      \left[ 
        b_s
        -
        \sigma( Y_s )
      \right]
    \right)
  \right]
  }{
    \exp(
      \int_{  \tzero  }^s \chi_u \, du
    )
  }
  \, ds
\\ & 
\nonumber
  +
  \smallint\limits_{  \tzero  }^t
  \tfrac{
    \mathbbm{1}_{
      \{ X_s \neq Y_s \}
    }
    \,
    p \, \left( p - 2 \right) 
    \,
    \left\| X_s - Y_s \right\|^{ ( p - 4 ) }_H
    \operatorname{trace}\left(
      \left[ 
        \sigma( Y_s ) - \sigma( X_s )
      \right]^*
      \left[ X_s - Y_s \right]
      \left[ X_s - Y_s \right]^*
      \left[ 
        b_s
        -
        \sigma( Y_s )
      \right]
    \right)
  }{
    \exp(
      \int_{  \tzero  }^s \chi_u \, du
    )
  }
  \, ds
\end{align}
$ \P $-a.s.\ for all 
$ t \in [  \tzero , T ] $.
The Cauchy-Schwarz inequality
in the Hilbert space $ HS( U, H ) $
(see, e.g., Remark~B.0.4 and Proposition~B.0.8
in Pr\'{e}v\^{o}t \& R\"{o}ckner~\cite{PrevotRoeckner2007})
and the H\"{o}lder estimate for Schatten norms
(see, e.g., Remark~B.0.6
in Pr\'{e}v\^{o}t \& R\"{o}ckner~\cite{PrevotRoeckner2007})
therefore proves that
\begin{equation}
\begin{split}
&
  \frac{
    \left\| X_t - Y_t \right\|^p_H
  }{
    \exp(
      \int_{  \tzero  }^t \chi_s \, ds
    )
  }
\leq
  \left\| X_{  \tzero  } - Y_{  \tzero  } \right\|^p_H
  +
  \smallint\limits_{  \tzero  }^t
    \big\langle
    \tfrac{
      p
      \,
      \left\|
        X_s - Y_s 
      \right\|^{ ( p - 2 ) }_H
      \,
      \left[
        X_s - Y_s 
      \right]
    }{
      \exp(
        \int_{  \tzero  }^s \chi_u \, du
      )
    }
      ,
      \left[
      \sigma( X_s ) 
      - 
      b_s
      \right]
      dW_s
    \big\rangle_H
\\ & 
  +
  \smallint\limits_{  \tzero  }^t
  \tfrac{
    p 
    \,
    \left\| X_s - Y_s \right\|^{ ( p - 2 ) }_H
    \left[
    \left< X_s - Y_s, \mu( X_s ) - \mu( Y_s ) \right>_H
    +
    \frac{ ( p - 1 ) }{ 2 }
    \left\|
      \sigma( X_s ) - \sigma( Y_s )
    \right\|^2_{ HS( U, H ) }
    \right]
    - 
    \chi_s \left\| X_s - Y_s \right\|^p_H
  }{
    \exp(
      \int_{  \tzero  }^s \chi_u \, du
    )
  }
  \, ds
\\ &
  +
  \smallint\limits_{  \tzero  }^t
  \tfrac{
    p
    \,
    \left\|
      X_s - Y_s 
    \right\|^{ ( p - 2 ) }_H
  \left[ 
    \left<
      X_s - Y_s
      ,
      \mu( Y_s )
      -
      a_s 
    \right>_H
    +
    \frac{ ( p - 1 ) }{ 2 }
      \left\|
        b_s
        -
        \sigma( Y_s )
      \right\|^2_{ HS( U, H ) }
    +
    \operatorname{trace}\left(
      \left[ 
        \sigma( Y_s ) - \sigma( X_s ) 
      \right]^*
      \left[ 
        b_s
        -
        \sigma( Y_s )
      \right]
    \right)
  \right]
  }{
    \exp(
      \int_{  \tzero  }^s \chi_u \, du
    )
  }
  \, ds
\\ & 
  +
  \smallint\limits_{  \tzero  }^t
  \tfrac{
    \mathbbm{1}_{
      \{ X_s \neq Y_s \}
    }
    \,
    p \, \left( p - 2 \right) 
    \,
    \left\| X_s - Y_s \right\|^{ ( p - 4 ) }_H
      \left\| 
        \sigma( Y_s ) - \sigma( X_s )
      \right\|_{ HS( U, H ) }
      \left\|
        \left[ X_s - Y_s \right]
        \left[ X_s - Y_s \right]^*
      \right\|_{ L(H) }
      \left\|
        b_s
        -
        \sigma( Y_s )
      \right\|_{ HS( U, H ) }
  }{
    \exp(
      \int_{  \tzero  }^s \chi_u \, du
    )
  }
  \, ds
\end{split}
\end{equation}
$ \P $-a.s.\ for all 
$ t \in [  \tzero , T ] $.
Again the Cauchy-Schwarz inequality
in the Hilbert space $ HS( U, H ) $
(see, e.g., Remark~B.0.4 and Proposition~B.0.8
in Pr\'{e}v\^{o}t \& R\"{o}ckner~\cite{PrevotRoeckner2007})
hence shows that
\begin{equation}
\begin{split}
&
  \frac{
    \left\| X_t - Y_t \right\|^p_H
  }{
    \exp(
      \int_{  \tzero  }^t \chi_s \, ds
    )
  }
\leq
  \left\| X_{  \tzero  } - Y_{  \tzero  } \right\|^p_H
  +
  \smallint\limits_{  \tzero  }^t
    \big\langle
    \tfrac{
      p
      \,
      \left\|
        X_s - Y_s 
      \right\|^{ ( p - 2 ) }_H
      \,
      \left[
        X_s - Y_s 
      \right]
    }{
      \exp(
        \int_{  \tzero  }^s \chi_u \, du
      )
    }
      ,
      \left[
      \sigma( X_s ) 
      - 
      b_s
      \right]
      dW_s
    \big\rangle_H
\\ & 
  +
  \smallint\limits_{  \tzero  }^t
  \tfrac{
    p 
    \,
    \left\| X_s - Y_s \right\|^{ ( p - 2 ) }_H
    \left[
    \left< X_s - Y_s, \mu( X_s ) - \mu( Y_s ) \right>_H
    +
    \frac{ ( p - 1 ) }{ 2 }
    \left\|
      \sigma( X_s ) - \sigma( Y_s )
    \right\|^2_{ HS( U, H ) }
    +
    \left<
      X_s - Y_s
      ,
      \mu( Y_s )
      -
      a_s 
    \right>_H
    \right]
    - 
    \chi_s \left\| X_s - Y_s \right\|^p_H
  }{
    \exp(
      \int_{  \tzero  }^s \chi_u \, du
    )
  }
  \, ds
\\ &
  +
  \smallint\limits_{  \tzero  }^t
  \tfrac{
    p
    \,
    \left\|
      X_s - Y_s 
    \right\|^{ ( p - 2 ) }_H
  \left[ 
    \frac{ ( p - 1 ) }{ 2 }
      \left\|
        b_s
        -
        \sigma( Y_s )
      \right\|^2_{ HS( U, H ) }
    +
    \left( p - 1 \right) 
    \,
      \left\| 
        \sigma( Y_s ) - \sigma( X_s )
      \right\|_{ HS( U, H ) }
    \,
      \left\|
        b_s
        -
        \sigma( Y_s )
      \right\|_{ HS( U, H ) }
  \right]
  }{
    \exp(
      \int_{  \tzero  }^s \chi_u \, du
    )
  }
  \, ds
\end{split}
\end{equation}
$ \P $-a.s.\ for all 
$ t \in [  \tzero , T ] $.
The estimate 
$ 
  a b \leq 
  \frac{ \varepsilon }{ 2 } a^2 
  + 
  \frac{ 1 }{ 2 \varepsilon } b^2 
$
for all $ a, b \in \R $, $ \varepsilon \in [ 0, \infty ] $
hence implies that
\begin{equation}
\begin{split}
&
  \frac{
    \left\| X_t - Y_t \right\|^p_H
  }{
    \exp(
      \int_{  \tzero  }^t \chi_s \, ds
    )
  }
\leq
  \left\| X_{  \tzero  } - Y_{  \tzero  } \right\|^p_H
  +
  \smallint\limits_{  \tzero  }^t
    \big\langle
    \tfrac{
      p
      \,
      \left\|
        X_s - Y_s 
      \right\|^{ ( p - 2 ) }_H
      \,
      \left[
        X_s - Y_s 
      \right]
    }{
      \exp(
        \int_{  \tzero  }^s \chi_u \, du
      )
    }
      ,
      \left[
      \sigma( X_s ) 
      - 
      b_s
      \right]
      dW_s
    \big\rangle_H
\\ & \quad
  +
  \smallint\limits_{  \tzero  }^t
  \tfrac{
    p
    \,
    \left\|
      X_s - Y_s 
    \right\|^{ ( p - 2 ) }_H
  \left[ 
    \left<
      X_s - Y_s
      ,
      \mu( Y_s )
      -
      a_s 
    \right>_H
    +
    \frac{ ( p - 1 ) \, ( 1 + 1 / \varepsilon ) 
    }{ 2 }
    \,
      \left\|
        b_s
        -
        \sigma( Y_s )
      \right\|^2_{ HS( U, H ) }
  \right]
  }{
    \exp(
      \int_{  \tzero  }^s \chi_u \, du
    )
  }
  \, ds
\\ & \quad
  +
  \smallint\limits_{  \tzero  }^t
  \tfrac{
    p 
    \,
    \left\| X_s - Y_s \right\|^{ ( p - 2 ) }_H
    \left[
    \left< X_s - Y_s, \mu( X_s ) - \mu( Y_s ) \right>_H
    +
    \frac{ ( p - 1 ) \, ( 1 + \varepsilon ) }{ 2 }
    \,
    \left\|
      \sigma( X_s ) - \sigma( Y_s )
    \right\|^2_{ HS( U, H ) }
    \right]
    - 
    \chi_s \, \left\| X_s - Y_s \right\|^p_H
  }{
    \exp(
      \int_{  \tzero  }^s \chi_u \, du
    )
  }
  \, ds
\end{split}
\end{equation}
$ \P $-a.s.\ for all 
$ t \in [  \tzero , T ] $
and all
$ \varepsilon \in [0,\infty] $.
This completes the 
proof 
of Proposition~\ref{prop:norm2}.
\end{proof}
The next result, Theorem~\ref{thm:norm2_expectation_max0}, further develops 
our theory of perturbations for SDEs.
In particular, we apply a localization argument to the 
right-hand side 
of \eqref{eq:prop_norm2}, then take expectations on both sides and thereafter
apply H\"{o}lder's inequality.
\begin{theorem}
\label{thm:norm2_expectation_max0}
Assume the setting in Subsection~\ref{sec:setting},
let
$ \mu \in \mathcal{L}^0( \mathcal{O} ; H ) $,
$ \sigma \in \mathcal{L}^0( \mathcal{O} ; HS( U, H ) ) 
$,
$ \varepsilon \in [0,\infty] $,
$
  p \in [2,\infty)
$,
let
$
  \tau \colon \Omega \to [  \tzero , T ] 
$
be a stopping time,
let
$ X, Y \colon [  \tzero , T ] \times \Omega \to \mathcal{O} $
be adapted stochastic processes with c.s.p.,
let
$ a \colon [  \tzero , T ] \times \Omega \to H $,
$ b \colon [  \tzero , T ] \times \Omega \to HS( U, H ) $,
$ \chi \colon [  \tzero , T ] \times \Omega \to \R $
be predictable stochastic processes
with 
$
  \int_{  \tzero  }^T
  \| a_s \|_H
  +
  \| b_s \|_{ HS( U, H ) }^2
  +
  \| \mu( X_s ) \|_H
  +
  \| \sigma( X_s ) \|^2_{ HS( U, H ) }
  +
  \| \mu( Y_s ) \|_H
  +
  \| \sigma( Y_s ) \|^2_{ HS( U, H ) }
  \,
  ds
  < \infty
$
$ \P $-a.s.,
$
  X_t 
  = 
  X_{  \tzero  } 
  +
  \int_{  \tzero  }^t \mu( X_s ) \, ds
  +
  \int_{  \tzero  }^t \sigma( X_s ) \, dW_s
$
$ \P $-a.s.,
$
  Y_t 
  = 
  Y_{  \tzero  } 
  +
  \int_{  \tzero  }^t a_s \, ds
  +
  \int_{  \tzero  }^t b_s \, dW_s
$
$ \P $-a.s.\ for all
$ t \in [ \tzero ,T] $
and
\begin{equation}
\label{eq:perturbation_estimate_unique_assumption}
    \smallint_{  \tzero  }^{ \tau } 
      \Big[
        \tfrac{
          \left< 
            X_s - Y_s , \mu( X_s ) - \mu( Y_s ) 
          \right>_H
          +
          \frac{ ( p - 1 ) \, ( 1 + \varepsilon ) }{ 2 }
          \left\|
            \sigma( X_s ) - \sigma( Y_s )
          \right\|^2_{ HS( U, H ) }
        }{
          \left\| X_s - Y_s \right\|^2_H
        }
        +
        \chi_s 
    \Big]^+
    ds
  < \infty
\end{equation}
$ \P $-a.s.
Then it holds
for all 
$ r, q \in (0,\infty] $
with $ \frac{ 1 }{ p } + \frac{ 1 }{ q } = \frac{ 1 }{ r } $
that
{\small
\begin{align}
\label{eq:norm2_expectation}
&
  \left\|
    X_{ \tau } - Y_{ \tau } 
  \right\|_{
    L^r( \Omega; H )
  }
\leq
  \left\|
    \exp\!\left(
      \smallint_{  \tzero  }^{ \tau } 
      \Big[
        \tfrac{
          \left< 
            X_s - Y_s , \mu( X_s ) - \mu( Y_s ) 
          \right>_H
          +
          \frac{ ( p - 1 ) \, ( 1 + \varepsilon ) }{ 2 }
          \left\|
            \sigma( X_s ) - \sigma( Y_s )
          \right\|^2_{ HS( U, H ) }
        }{
          \left\| X_s - Y_s \right\|^2_H
        }
        +
        \chi_s 
      \Big]^+
      ds
    \right)
  \right\|_{
    L^q( \Omega; \R )
  }
\nonumber
\\ & 
  \Big[
    \big\|
      \,
      p
      \,
      \|
        X - Y 
      \|^{ ( p - 2 ) }_H
  \big[ 
    \langle
      X - Y
      ,
      \mu( Y )
      -
      a 
    \rangle_H
    +
    \tfrac{ ( p - 1 ) \, ( 1 + 1 / \varepsilon ) 
    }{ 2 }
      \|
        b
        -
        \sigma( Y )
      \|^2_{ HS( U, H ) }
    -
    \chi
    \|
      X - Y
    \|_H^2
  \big]^+
  \big\|_{
    L^1( \llbracket  \tzero , \tau \rrbracket ; \R )
  }^{ 1 / p }
\nonumber
\\ &
    +
    \| X_{  \tzero  } - Y_{  \tzero  } \|_{ L^p( \Omega; H ) }
  \Big]
  .
\end{align}
}
\end{theorem}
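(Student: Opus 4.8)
The plan is to deduce the estimate from Proposition~\ref{prop:norm2}, applied with a carefully chosen integrating-factor process, and then to run a localisation argument, Fatou's lemma, and two applications of H\"older's inequality. Throughout I may assume that the right-hand side of~\eqref{eq:norm2_expectation} is finite, since otherwise there is nothing to prove.

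\emph{Step 1 (the integrating factor).} First I would introduce the predictable process $\hat\chi\colon[0,T]\times\Omega\to[0,\infty)$ given by
\[
  \hat\chi_s
  :=
  p\left[
    \frac{\langle X_s-Y_s,\mu(X_s)-\mu(Y_s)\rangle_H+\frac{(p-1)(1+\eps)}{2}\|\sigma(X_s)-\sigma(Y_s)\|^2_{HS(U,H)}}{\|X_s-Y_s\|^2_H}+\chi_s
  \right]^+\mathbbm{1}_{\llbracket 0,\tau\rrbracket}(s)
\]
(with the convention $\tfrac00=0$). Predictability holds because $\llbracket 0,\tau\rrbracket$ is a predictable set and $X,Y$ are adapted with continuous sample paths, and assumption~\eqref{eq:perturbation_estimate_unique_assumption} guarantees $\int_0^T\hat\chi_s\,ds<\infty$ $\P$-a.s.; hence Proposition~\ref{prop:norm2} is applicable with $\chi$ there replaced by $\hat\chi$ (and the given $\eps$, $p$).

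\emph{Step 2 (pathwise bound, localisation, expectation).} In the resulting inequality~\eqref{eq:prop_norm2} I would exploit that, for $s\in\llbracket 0,\tau\rrbracket$, the definition of $\hat\chi_s$ gives $\hat\chi_s\ge p\big(\|X_s-Y_s\|^{-2}_H[\langle X_s-Y_s,\mu(X_s)-\mu(Y_s)\rangle_H+\tfrac{(p-1)(1+\eps)}{2}\|\sigma(X_s)-\sigma(Y_s)\|^2_{HS(U,H)}]+\chi_s\big)$; this permits merging the last two $ds$-integrands and, using $\exp(\int_0^s\hat\chi_u\,du)\ge1$ and passage to the positive part, bounding their sum by $\big[p\|X_s-Y_s\|^{(p-2)}_H\big(\langle X_s-Y_s,\mu(Y_s)-a_s\rangle_H+\tfrac{(p-1)(1+1/\eps)}{2}\|b_s-\sigma(Y_s)\|^2_{HS(U,H)}-\chi_s\|X_s-Y_s\|^2_H\big)\big]^+$. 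Next I would choose stopping times $\rho_n\uparrow\tau$ reducing the stochastic integral to a true (hence mean-zero) martingale, its quadratic variation being finite on $\llbracket 0,\tau\rrbracket$ by the standing integrability hypotheses and the continuity of $X,Y$; then I would evaluate the pathwise inequality at $t=\rho_n$, take expectations (the martingale term drops out), enlarge the $ds$-integral from $\rho_n$ to $\tau$ (its integrand being nonnegative), and let $n\to\infty$ via Fatou's lemma, arriving at
\[
  \E\!\left[\|X_\tau-Y_\tau\|^p_H\exp\!\Big(-\!\int_0^\tau\!\hat\chi_s\,ds\Big)\right]
  \le R
  :=\E\big[\|X_0-Y_0\|^p_H\big]+\E\!\left[\int_0^\tau\!\big[p\|X_s-Y_s\|^{(p-2)}_H(\cdots)\big]^+ds\right].
\]

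\emph{Step 3 (H\"older and conclusion).} Since $\tfrac1p+\tfrac1q=\tfrac1r$ forces $r\le p$ and makes $\tfrac pr,\tfrac qr$ H\"older-conjugate, I would write $\|X_\tau-Y_\tau\|^r_H=\big(\|X_\tau-Y_\tau\|^p_H\,e^{-\int_0^\tau\hat\chi_s\,ds}\big)^{r/p}\,e^{(r/p)\int_0^\tau\hat\chi_s\,ds}$ and apply H\"older's inequality (with the obvious modification if $q=\infty$), together with Step~2 and the identity $\tfrac qp\hat\chi_s=q\big[\cdots+\chi_s\big]^+$ valid on $\llbracket 0,\tau\rrbracket$, to obtain
\[
  \|X_\tau-Y_\tau\|^p_{L^r(\Omega;H)}\le R\cdot\left\|\exp\!\Big(\int_0^\tau\!\big[\cdots+\chi_s\big]^+ds\Big)\right\|^p_{L^q(\Omega;\R)}.
\]
Taking $p$-th roots, invoking subadditivity of $t\mapsto t^{1/p}$ (valid as $p\ge1$) to split $R^{1/p}$ as $\|X_0-Y_0\|_{L^p(\Omega;H)}$ plus the $(1/p)$-th power of the $L^1(\llbracket 0,\tau\rrbracket;\R)$-norm of the local-error integrand, and recalling that $\E\big[\int_0^\tau f_s\,ds\big]=\|f\|_{L^1(\llbracket 0,\tau\rrbracket;\R)}$ for nonnegative $f$, would then yield precisely~\eqref{eq:norm2_expectation}.

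The conceptual crux is the choice of $\hat\chi$ in Step~1, which is exactly what makes the monotonicity-type term cancel against the integrating factor; the one point demanding genuine care is matching the H\"older exponents in Step~3, while the localisation and the predictability/integrability verifications are routine. Indeed, the substantive It\^{o}-calculus computation has already been packaged in Proposition~\ref{prop:norm2}.
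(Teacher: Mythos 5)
Your proposal is correct and follows essentially the same route as the paper: the same auxiliary process $\hat\chi = p\,\mathbbm{1}_{\llbracket \tzero,\tau\rrbracket}[\cdots+\chi]^+$, an application of Proposition~\ref{prop:norm2} with this integrating factor, absorption of the monotonicity terms via $[\,x\,]^+\geq x$, localization of the stochastic integral through stopping times built from its quadratic variation, expectation plus Fatou, and the same final H\"older step with conjugate exponents $p/r$, $q/r$ followed by subadditivity of $t\mapsto t^{1/p}$. The only differences are cosmetic (you discard the factor $\exp(\int_{\tzero}^s\hat\chi_u\,du)\geq 1$ before taking expectations, the paper after Fatou), so no gap.
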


\begin{proof}[Proof
of Theorem~\ref{thm:norm2_expectation_max0}]
Throughout this proof let
$ 
  \hat{ \chi } \colon 
  [ \tzero , T] \times \Omega \to [0,\infty) 
$
be a predictable stochastic process
given by
\begin{equation}
  \hat{ \chi }_t
  =
  p 
  \,
%  \mathbbm{1}_{
%    \{ X_t \neq Y_t \}
%  }
  \mathbbm{1}_{
    \{ t \leq \tau \}
  }
  \left[
  \tfrac{
    \left< X_t - Y_t , \mu( X_t ) - \mu( Y_t ) \right>_H
    +
    \frac{ ( p - 1 ) \, ( 1 + \varepsilon ) }{ 2 }
    \left\|
      \sigma( X_t ) - \sigma( Y_t )
    \right\|^2_{ HS( U, H ) }
  }{
    \left\| X_t - Y_t \right\|^2_H
  }
  +
  \chi_t
  \right]^+
\end{equation}
for all $ t \in [  \tzero , T ] $
and let 
$ \tau_n \colon \Omega \to [  \tzero , T ] $,
$ n \in \N $,
be a sequence of stopping times given by
\begin{equation}
  \tau_n
  =
  \inf\!\Bigg(
    \{ 
      \tau
    \}
    \cup
    \Bigg\{ 
      t \in [  \tzero  , T ] 
      \colon
      \smallint_{  \tzero  }^t
        \left\| X_s - Y_s \right\|_H^{ 2 ( p - 1 ) }
        \left\|
          \sigma( X_s ) 
          - 
          b_s
        \right\|^2_{ HS( U, H ) }
      ds
      \geq 
      n
    \Bigg\}
  \Bigg)
\end{equation}
for all $ n \in \N $.
%and let $ \hat{ \chi }^n \colon [  \tzero , T ] \times \Omega \to \R $,
%$ n \in \N $,
%be predictable stochastic processes
%given by
%$
%  \hat{ \chi }^n_t 
%= 
%  \mathbbm{1}_{
%    \{ t \leq \tau_n \}
%  }
%  \hat{ \chi }_t
%$
%for all $ t \in [  \tzero , T ] $,
%$ n \in \N $.
%Note for all $ n \in \N $ that
%$
%  \int_{  \tzero  }^T | \hat{ \chi }_t^n | \, dt 
%  =
%  \int_{  \tzero  }^{ \tau_n }
%  | \hat{ \chi }_t | \, dt
%  \leq n
%  < \infty
%$.
Observe that
assumption~\eqref{eq:perturbation_estimate_unique_assumption}
ensures that
$
  \int_0^{ T }
  | \hat{ \chi }_s | \, ds
  < \infty
$
$ \P $-a.s.
Proposition~\ref{prop:norm2}
then implies that
\begin{equation}
\begin{split}
&
  \frac{
    \left\| X_t - Y_t \right\|^p_H
  }{
    \exp(
      \int_{  \tzero  }^t \hat{ \chi }_s \, ds
    )
  }
\leq
  \left\| X_{  \tzero  } - Y_{  \tzero  } \right\|^p_H
  +
  \smallint\limits_{  \tzero  }^t
    \big\langle
    \tfrac{
      p
      \,
      \left\|
        X_s - Y_s 
      \right\|^{ ( p - 2 ) }_H
      \,
      \left[
        X_s - Y_s 
      \right]
    }{
      \exp(
        \int_{  \tzero  }^s \hat{ \chi }_u \, du
      )
    }
      ,
      \left[
      \sigma( X_s ) 
      - 
      b_s
      \right]
      dW_s
    \big\rangle_H
\\ & \quad
  +
  \smallint\limits_{  \tzero  }^t
  \tfrac{
    p
    \,
    \left\|
      X_s - Y_s 
    \right\|^{ ( p - 2 ) }_H
  \left[ 
    \left<
      X_s - Y_s
      ,
      \mu( Y_s )
      -
      a_s 
    \right>_H
    +
    \frac{ ( p - 1 ) \, ( 1 + 1 / \varepsilon ) }{ 2 }
      \left\|
        b_s
        -
        \sigma( Y_s )
      \right\|^2_{ HS( U, H ) }
  \right]
    - 
    \hat{ \chi }_s \left\| X_s - Y_s \right\|^p_H
  }{
    \exp(
      \int_{  \tzero  }^s \hat{ \chi }_u \, du
    )
  }
  \, ds
\\ & \quad
  +
  \smallint\limits_{  \tzero  }^t
  \tfrac{
    p 
    \,
    \left\| X_s - Y_s \right\|^{ ( p - 2 ) }_H
    \left[
    \left< X_s - Y_s, \mu( X_s ) - \mu( Y_s ) \right>_H
    +
    \frac{ ( p - 1 ) \, ( 1 + \varepsilon ) }{ 2 }
    \left\|
      \sigma( X_s ) - \sigma( Y_s )
    \right\|^2_{ HS( U, H ) }
    \right]
  }{
    \exp(
      \int_{  \tzero  }^s \hat{ \chi }_u \, du
    )
  }
  \, ds
\end{split}
\end{equation}
$ \P $-a.s.\ for all 
$ t \in [  \tzero , T ] $.
This implies that
\begin{equation}
\begin{split}
&
  \frac{
    \left\| X_{ \tau_n } - Y_{ \tau_n } \right\|^p_H
  }{
    \exp(
      \int_{  \tzero  }^{ \tau_n } \hat{ \chi }_s \, ds
    )
  }
\leq
  \left\| X_{  \tzero  } - Y_{  \tzero  } \right\|^p_H
  +
  \smallint\limits_{  \tzero  }^{ \tau_n }
    \big\langle
    \tfrac{
      p
      \,
      \left\|
        X_s - Y_s 
      \right\|^{ ( p - 2 ) }_H
      \,
      \left[
        X_s - Y_s 
      \right]
    }{
      \exp(
        \int_{  \tzero  }^s \hat{ \chi }_u \, du
      )
    }
      ,
      \left[
      \sigma( X_s ) 
      - 
      b_s
      \right]
      dW_s
    \big\rangle_H
\\ & \quad
  +
  \smallint\limits_{  \tzero  }^{ \tau_n }
  \tfrac{
    p
    \,
    \left\|
      X_s - Y_s 
    \right\|^{ ( p - 2 ) }_H
  \left[ 
    \left<
      X_s - Y_s
      ,
      \mu( Y_s )
      -
      a_s 
    \right>_H
    +
    \frac{ ( p - 1 ) \, ( 1 + 1 / \varepsilon ) }{ 2 }
      \left\|
        b_s
        -
        \sigma( Y_s )
      \right\|^2_{ HS( U, H ) }
  \right]
    - 
    \hat{ \chi }_s \left\| X_s - Y_s \right\|^p_H
  }{
    \exp(
      \int_{  \tzero  }^s \hat{ \chi }_u \, du
    )
  }
  \, ds
\\ & \quad
  +
  \smallint\limits_{  \tzero  }^{ \tau_n }
  \tfrac{
    p 
    \,
    \left\| X_s - Y_s \right\|^{ ( p - 2 ) }_H
    \left[
    \left< X_s - Y_s, \mu( X_s ) - \mu( Y_s ) \right>_H
    +
    \frac{ ( p - 1 ) \, ( 1 + \varepsilon ) }{ 2 }
    \left\|
      \sigma( X_s ) - \sigma( Y_s )
    \right\|^2_{ HS( U, H ) }
    \right]
  }{
    \exp(
      \int_{  \tzero  }^s \hat{ \chi }_u \, du
    )
  }
  \, ds
\end{split}
\end{equation}
$ \P $-a.s.\ for all 
$ n \in \N $.
The definition of $ \hat{ \chi } $
hence gives that
\begin{equation}
\label{eq:norm2_perturbation_est_exp}
\begin{split}
&
  \frac{
    \left\| X_{ \tau_n } - Y_{ \tau_n } \right\|^p_H
  }{
    \exp(
      \int_{  \tzero  }^{ \tau_n } \hat{ \chi }_s \, ds
    )
  }
\leq
  \left\| X_{  \tzero  } - Y_{  \tzero  } \right\|^p_H
  +
  \smallint\limits_{  \tzero  }^{ \tau_n }
    \big\langle
    \tfrac{
      p
      \,
      \left\|
        X_s - Y_s 
      \right\|^{ ( p - 2 ) }_H
      \,
      \left[
        X_s - Y_s 
      \right]
    }{
      \exp(
        \int_{  \tzero  }^s \hat{ \chi }_u \, du
      )
    }
      ,
      \left[
      \sigma( X_s ) 
      - 
      b_s
      \right]
      dW_s
    \big\rangle_H
\\ & \quad
  +
  \smallint\limits_{  \tzero  }^{ \tau_n }
  \tfrac{
    p
    \,
    \left\|
      X_s - Y_s 
    \right\|^{ ( p - 2 ) }_H
  \left[ 
    \left<
      X_s - Y_s
      ,
      \mu( Y_s )
      -
      a_s 
    \right>_H
    +
    \frac{ ( p - 1 ) \, ( 1 + 1 / \varepsilon ) }{ 2 }
      \left\|
        b_s
        -
        \sigma( Y_s )
      \right\|^2_{ HS( U, H ) }
    -
    \chi_s
    \left\|
      X_s - Y_s
    \right\|_H^2
  \right]^+
%     +
%       \rho^{
%         ( p / 2 - 1 )
%       }
%       \,
%       [ 
%         p + \frac{ 1 }{ \varepsilon } - 1 
%       ]^{ \frac{ p }{ 2 } } 
%       \,
%       \left\|
%         b_s
%         -
%         \sigma( Y_s )
%       \right\|^p_{ HS( U, H ) }
%  \right]
  }{
    \exp(
      \int_{  \tzero  }^s \hat{ \chi }_u \, du
    )
  }
  \, ds
\end{split}
\end{equation}
$ \P $-a.s.\ for all 
$ n \in \N $.
Taking expectations in \eqref{eq:norm2_perturbation_est_exp}
proves that for all $ n \in \N $ it holds that
\begin{equation}
\begin{split}
&
  \E\!\left[
  \frac{
    \left\| X_{ \tau_n } - Y_{ \tau_n } \right\|^p_H
  }{
    \exp(
      \int_{  \tzero  }^{ \tau_n } \hat{ \chi }_s \, ds
    )
  }
  \right]
\leq
  \E\big[ 
    \| X_{  \tzero  } - Y_{  \tzero  } \|^p_H
  \big]
\\ & \quad
  +
  \E\!\left[
  \smallint\limits_{  \tzero  }^{ \tau_n }
  \tfrac{
    p
    \,
    \left\|
      X_s - Y_s 
    \right\|^{ ( p - 2 ) }_H
  \left[ 
    \left<
      X_s - Y_s
      ,
      \mu( Y_s )
      -
      a_s 
    \right>_H
    +
    \frac{ ( p - 1 ) \, ( 1 + 1 / \varepsilon ) }{ 2 }
      \left\|
        b_s
        -
        \sigma( Y_s )
      \right\|^2_{ HS( U, H ) }
    -
    \chi_s
    \left\|
      X_s - Y_s
    \right\|_H^2
  \right]^+
%     +
%       \rho^{
%         ( p / 2 - 1 )
%       }
%       \,
%       [ 
%         p + \frac{ 1 }{ \varepsilon } - 1 
%       ]^{ \frac{ p }{ 2 } } 
%       \,
%       \left\|
%         b_s
%         -
%         \sigma( Y_s )
%       \right\|^p_{ HS( U, H ) }
  }{
    \exp(
      \int_{  \tzero  }^s \hat{ \chi }_u \, du
    )
  }
  \, ds
  \right]
  .
\end{split}
\end{equation}
Fatou's lemma hence implies that
\begin{equation}
\begin{split}
&
  \E\!\left[
  \frac{
    \left\| X_{ \lim_{ n \to \infty } \tau_n } - Y_{ \lim_{ n \to \infty } \tau_n } \right\|^p_H
  }{
    \exp(
      \int_{  \tzero  }^{ \lim_{ n \to \infty } \tau_n } \hat{ \chi }_s \, ds
    )
  }
  \right]
\leq
  \E\big[ 
    \| X_{  \tzero  } - Y_{  \tzero  } \|^p_H
  \big]
\\ & \quad
  +
  \liminf_{ n \to \infty }
  \E\!\left[
  \smallint\limits_{  \tzero  }^{ \tau_n }
  \tfrac{
    p
    \,
    \left\|
      X_s - Y_s 
    \right\|^{ ( p - 2 ) }_H
  \left[ 
    \left<
      X_s - Y_s
      ,
      \mu( Y_s )
      -
      a_s 
    \right>_H
    +
    \frac{ ( p - 1 ) \, ( 1 + 1 / \varepsilon )}{ 2 }
      \left\|
        b_s
        -
        \sigma( Y_s )
      \right\|^2_{ HS( U, H ) }
    -
    \chi_s
    \left\|
      X_s - Y_s
    \right\|_H^2
  \right]^+
  }{
    \exp(
      \int_{  \tzero  }^s \hat{ \chi }_u \, du
    )
  }
  \, ds
  \right]
  .
\end{split}
\end{equation}
Monotonicity therefore shows that
\begin{equation}
\begin{split}
&
  \E\!\left[
  \frac{
    \left\| X_{ \tau } - Y_{ \tau } \right\|^p_H
  }{
    \exp(
      \int_{  \tzero  }^{ \tau } \hat{ \chi }_s \, ds
    )
  }
  \right]
\leq
  \E\big[ 
    \| X_{  \tzero  } - Y_{  \tzero  } \|^p_H
  \big]
\\ & \quad
  +
  \E\!\left[
  \smallint\limits_{  \tzero  }^{ \tau }
  \tfrac{
    p
    \,
    \left\|
      X_s - Y_s 
    \right\|^{ ( p - 2 ) }_H
  \left[ 
    \left<
      X_s - Y_s
      ,
      \mu( Y_s )
      -
      a_s 
    \right>_H
    +
    \frac{ ( p - 1 ) \, ( 1 + 1 / \varepsilon ) }{ 2 }
      \left\|
        b_s
        -
        \sigma( Y_s )
      \right\|^2_{ HS( U, H ) }
    -
    \chi_s
    \left\|
      X_s - Y_s
    \right\|_H^2
  \right]^+
  }{
    \exp(
      \int_{  \tzero  }^s \hat{ \chi }_u \, du
    )
  }
  \, ds
  \right]
  .
\end{split}
\end{equation}
This and the fact that $ \hat{ \chi } \geq 0 $ yield that
\begin{equation}
\begin{split}
&
  \E\!\left[
  \frac{
    \left\| X_{ \tau } - Y_{ \tau } \right\|^p_H
  }{
    \exp(
      \int_{  \tzero  }^{ \tau } \hat{ \chi }_s \, ds
    )
  }
  \right]
\leq
  \E\big[
    \| X_{  \tzero  } - Y_{  \tzero  } \|^p_H
  \big]
  +
  \E\bigg[
  \smallint\limits_{  \tzero  }^{ \tau }
    p
    \left\|
      X_s - Y_s 
    \right\|^{ ( p - 2 ) }_H
\\ & 
  \cdot
  \big[ 
    \left<
      X_s - Y_s
      ,
      \mu( Y_s )
      -
      a_s 
    \right>_H
    +
    \tfrac{ ( p - 1 ) \, ( 1 + 1 / \varepsilon ) }{ 2 }
      \left\|
        b_s
        -
        \sigma( Y_s )
      \right\|^2_{ HS( U, H ) }
    -
    \chi_s
    \left\|
      X_s - Y_s
    \right\|_H^2
  \big]^+
  \, ds
  \bigg]
  .
\end{split}
\end{equation}
H\"{o}lder's inequality hence proves that
for all $ q \in (0,\infty] $, $ r \in (0,p] $
with $ \frac{ 1 }{ p } + \frac{ 1 }{ q } = \frac{ 1 }{ r } $
it holds that
\begin{equation}
\begin{split}
&
  \left\|
    X_{ \tau } - Y_{ \tau } 
  \right\|_{
    L^r( \Omega; H )
  }^p
  =
  \left\|
    \frac{
      \left\|
        X_{ \tau } - Y_{ \tau }
      \right\|_H
    }{
      \exp(
        \frac{ 1 }{ p }
        \int_{  \tzero  }^{ \tau } \hat{ \chi }_s \, ds
      )
    }
    \,
      \exp\!\left(
        \tfrac{ 1 }{ p } \smallint_{  \tzero  }^{ \tau } \hat{ \chi }_s \, ds
      \right)
  \right\|_{
    L^r( \Omega; \R )
  }^p
\\ & \leq
  \left\|
    \frac{
      \left\|
        X_{ \tau } - Y_{ \tau }
      \right\|_H
    }{
      \exp(
        \frac{ 1 }{ p }
        \int_{  \tzero  }^{ \tau } \hat{ \chi }_s \, ds
      )
    }
  \right\|_{
    L^p( \Omega; \R )
  }^p
  \left\|
      \exp\!\left(
        \tfrac{ 1 }{ p } \smallint_{  \tzero  }^{ \tau } \hat{ \chi }_s \, ds
      \right)
  \right\|_{
    L^q( \Omega; \R )
  }^p
\\ & \leq
  \left\|
      \exp\!\left(
        \tfrac{ 1 }{ p } \smallint_{  \tzero  }^{ \tau } \hat{ \chi }_s \, ds
      \right)
  \right\|_{
    L^q( \Omega; \R )
  }^p
  \E\bigg[
    \| X_{  \tzero  } - Y_{  \tzero  } \|^p_H
    +
  \smallint\limits_{  \tzero  }^{ \tau }
    p
    \,
    \left\|
      X_s - Y_s 
    \right\|^{ ( p - 2 ) }_H
\\ & \cdot
  \Big[ 
    \left<
      X_s - Y_s
      ,
      \mu( Y_s )
      -
      a_s 
    \right>_H
    +
    \tfrac{ ( p - 1 ) \, ( 1 + 1 / \varepsilon ) }{ 2 }
      \left\|
        b_s
        -
        \sigma( Y_s )
      \right\|^2_{ HS( U, H ) }
    -
    \chi_s
    \left\|
      X_s - Y_s
    \right\|_H^2
  \Big]^+
  \, ds
  \bigg]
  .
\end{split}
\end{equation}
This implies \eqref{eq:norm2_expectation} 
and the proof of 
Theorem~\ref{thm:norm2_expectation_max0}
is thus completed.
\end{proof}

The next corollary, Corollary~\ref{cor:different_coefficients},
uses 
Theorem~\ref{thm:norm2_expectation_max0} 
to study 
the difference of solutions processes of two semilinear SPDEs with 
possibly different coefficient functions.

\begin{corollary}
\label{cor:different_coefficients}
Assume the setting in Subsection~\ref{sec:setting},
let $ A \colon D(A) \subseteq H \to H $
be a densely defined linear operator with $ \mathcal{O} \subseteq D(A) $,
let
$ F_1, F_2 \in \mathcal{L}^0( \mathcal{O} ; H ) $,
$ B_1, B_2 \in \mathcal{L}^0( \mathcal{O} ; HS( U, H ) ) $,
$ \varepsilon \in [0,\infty] $,
$ p \in [2,\infty) $,
let
$ X^1, X^2 \colon [  \tzero , T ] \times \Omega \to \mathcal{O} $,
$
  \hat{X} \colon [0,T] \times \Omega \to H
$,
$ \chi \colon [  \tzero , T ] \times \Omega \to \R $
be predictable stochastic processes
with 
$
  \int_{  \tzero  }^T
  \| A X_s^j \|_H
  +
  \| A \hat{X}_s \|_H
  +
  \| F_i( X_s^j ) \|_H
  +
  \| B_i( X_s^j ) \|^2_{ HS( U, H ) }
  +
  \| F_2( \hat{X}_s ) \|_H
  +
  \| B_2( \hat{X}_s ) \|^2_{ HS( U, H ) }
  \,
  ds
  < \infty
$
$ \P $-a.s.,
$
  X_t^i 
  = 
  X_{  \tzero  }^i
  +
  \int_{  \tzero  }^t A X_s^i + F_i( X_s^i ) \, ds
  +
  \int_{  \tzero  }^t B_i( X_s^i ) \, dW_s
$
$ \P $-a.s.,
$
  \hat{X}_t
  =
  X_{  \tzero  }^2
  +
  \int_{  \tzero  }^t A \hat{X}_s + F_2( X_s^1 ) \, ds
  +
  \int_{  \tzero  }^t B_2( X_s^1 ) \, dW_s
$
$ \P $-a.s.\ for all
$ t \in [ \tzero ,T] $,
$ (i, j) \in \{ 1, 2 \}^2 \backslash \{ ( 1, 2 ) \} $
and
\begin{equation}
      \smallint_{  \tzero  }^T 
      \Big[
        \tfrac{
          \langle 
            X^2_s - \hat{X}_s , 
            A [ X^2_s - \hat{X}_s ] 
            + F_2( X^2_s ) - F_2( \hat{X}_s ) 
          \rangle_H
          +
          \frac{ ( p - 1 ) \, ( 1 + \varepsilon ) }{ 2 }
          \,
          \|
            B_2( X^2_s ) - B_2( \hat{X}_s )
          \|^2_{ HS( U, H ) }
        }{
          \| X^2_s - \hat{X}_s \|^2_H
        }
        +
        \chi_s 
      \Big]^+
      ds
  < \infty
\end{equation}
$ \P $-a.s. Then it holds
for all 
$ t \in [0,T] $,
$ r, q \in (0,\infty] $
with 
$ 
  \frac{ 1 }{ p } + \frac{ 1 }{ q } 
  = \frac{ 1 }{ r } 
$
that
% {\small
\begin{align}
\label{eq:different_coefficients}
&
  \|
    X^1_t - X^2_t
  \|_{
    L^r( \Omega; H )
  }
\leq
  \|
    X^1_t - \hat{X}_t
  \|_{
    L^r( \Omega; H )
  }
  +
    \big\|
      \,
      p
      \,
      \|
        X^2 - \hat{X} 
      \|^{ ( p - 2 ) }_H
  \big[ 
    \langle
      X^2 - \hat{X}
      ,
      F_2( \hat{X} )
      -
      F_2( X^1 )
    \rangle_H
\nonumber
\\ & \quad +
% \nonumber
%   \Big[
    \tfrac{ ( p - 1 ) \, ( 1 + 1 / \varepsilon )
    }{ 2 }
    \,
      \|
        B_2( X^1 )
        -
        B_2( \hat{X} )
      \|^2_{ HS( U, H ) }
    -
    \chi
    \,
    \|
      X^2 - \hat{X}
    \|_H^2
  \big]^+
  \big\|_{
    L^1( [0,t] \times \Omega ; \R )
  }^{ 1 / p }
%   \Big]
\\ & \quad
  \cdot
  \left\|
    \exp\!\left(
      \smallint_{  \tzero  }^{ t } 
      \Big[
        \tfrac{
          \langle 
            X^2_s - \hat{X}_s , A [ X^2_s - \hat{X}_s ] + F_2( X^2_s ) - F_2( \hat{X}_s ) 
          \rangle_H
          +
          \frac{ ( p - 1 ) \, ( 1 + \varepsilon ) }{ 2 }
          \,
          \|
            B_2( X^2_s ) - B_2( \hat{X}_s )
          \|^2_{ HS( U, H ) }
        }{
          \| X^2_s - \hat{X}_s \|^2_H
        }
        +
        \chi_s 
      \Big]^+
      ds
    \right)
  \right\|_{
    L^q( \Omega; \R )
  }
\nonumber
  .
\end{align}
% }
\end{corollary}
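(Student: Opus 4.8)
The plan is to combine the triangle inequality with a single application of Theorem~\ref{thm:norm2_expectation_max0}. First, the triangle inequality in $ L^r( \Omega; H ) $ yields for all $ t \in [0,T] $ and all $ r \in (0,\infty] $ that
\[
  \| X^1_t - X^2_t \|_{ L^r( \Omega; H ) }
  \leq
  \| X^1_t - \hat{X}_t \|_{ L^r( \Omega; H ) }
  +
  \| X^2_t - \hat{X}_t \|_{ L^r( \Omega; H ) }
  .
\]
The first summand on the right-hand side already appears on the right-hand side of~\eqref{eq:different_coefficients}, so the task reduces to estimating $ \| X^2_t - \hat{X}_t \|_{ L^r( \Omega; H ) } $.

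Next I would apply Theorem~\ref{thm:norm2_expectation_max0} with the following choices: the drift coefficient is the function $ \mathcal{O} \ni x \mapsto A x + F_2( x ) \in H $ (well defined because $ \mathcal{O} \subseteq D(A) $), the diffusion coefficient is $ B_2 $, the ``solution process'' $ X $ in Theorem~\ref{thm:norm2_expectation_max0} is $ X^2 $, the perturbation process $ Y $ in Theorem~\ref{thm:norm2_expectation_max0} is $ \hat{X} $ with drift process $ a_s = A \hat{X}_s + F_2( X^1_s ) $ and diffusion process $ b_s = B_2( X^1_s ) $, and the stopping time $ \tau $ is the constant $ t $. The integrability hypotheses of Theorem~\ref{thm:norm2_expectation_max0} follow from the integrability assumptions of Corollary~\ref{cor:different_coefficients} (using, for instance, $ \| A x + F_2( x ) \|_H \leq \| A x \|_H + \| F_2( x ) \|_H $), and after inserting $ A X^2_s + F_2( X^2_s ) - A \hat{X}_s - F_2( \hat{X}_s ) = A [ X^2_s - \hat{X}_s ] + F_2( X^2_s ) - F_2( \hat{X}_s ) $ into the numerator and $ B_2( X^2_s ) - B_2( \hat{X}_s ) $ for the difference of the diffusion coefficients, hypothesis~\eqref{eq:perturbation_estimate_unique_assumption} of Theorem~\ref{thm:norm2_expectation_max0} becomes precisely the finiteness assumption imposed in Corollary~\ref{cor:different_coefficients}.

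It then remains to simplify the resulting instance of~\eqref{eq:norm2_expectation}. Since $ \hat{X}_0 = X^2_0 $, the term $ \| X_{ \tzero } - Y_{ \tzero } \|_{ L^p( \Omega; H ) } $ vanishes; since $ \mu( \hat{X}_s ) - a_s = [ A \hat{X}_s + F_2( \hat{X}_s ) ] - [ A \hat{X}_s + F_2( X^1_s ) ] = F_2( \hat{X}_s ) - F_2( X^1_s ) $ and $ b_s - \sigma( \hat{X}_s ) = B_2( X^1_s ) - B_2( \hat{X}_s ) $, the $ L^1( \llbracket \tzero, \tau \rrbracket; \R ) $-factor and the exponential $ L^q $-factor in~\eqref{eq:norm2_expectation} coincide with the corresponding factors in~\eqref{eq:different_coefficients} (note also $ \llbracket 0, t \rrbracket = [0,t] \times \Omega $ for the constant stopping time $ \tau = t $). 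Combining these observations with the triangle inequality above produces~\eqref{eq:different_coefficients}.

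The step I expect to be the main obstacle is the bookkeeping around the unbounded operator $ A $, namely verifying that $ x \mapsto A x + F_2( x ) $ is an admissible coefficient in the abstract It\^o framework underlying Theorem~\ref{thm:norm2_expectation_max0} and that the processes $ A X^2 $ and $ A \hat{X} $ agree with the evaluations of this coefficient along $ X^2 $ and $ \hat{X} $, together with checking the remaining state-space, measurability, predictability, and path-continuity requirements of Theorem~\ref{thm:norm2_expectation_max0}. Once these formalities are in place, the remainder of the argument is purely notational, since all of the stochastic-analytic content is already contained in Theorem~\ref{thm:norm2_expectation_max0}.
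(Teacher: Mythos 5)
Your proposal is exactly the paper's (omitted) argument: the paper states that Corollary~\ref{cor:different_coefficients} follows from the triangle inequality together with an application of Theorem~\ref{thm:norm2_expectation_max0} to the process $X^2$ with perturbation process $\hat{X}$, which is precisely your choice of $\mu(x)=Ax+F_2(x)$, $\sigma=B_2$, $a_s=A\hat{X}_s+F_2(X^1_s)$, $b_s=B_2(X^1_s)$, $\tau=t$, and $\hat{X}_0=X^2_0$ killing the initial-value term. The identifications of the exponential factor and the $L^1$-factor are correct, so this is the same proof.
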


Corollary~\ref{cor:different_coefficients}
follows immediately from
the triangle inequality and
from an application of
Theorem~\ref{thm:norm2_expectation_max0}
to the stochastic process 
$ X_t^2 $, $ t \in [0,T] $,
with the perturbation process
$ \hat{X}_t $, $ t \in [0,T] $,
and its proof is thus omitted.
In a number of cases it is convenient 
to further estimate 
the right-hand side of \eqref{eq:norm2_expectation}
in an appropriate way.
This is the subject of the next corollary
of Theorem~\ref{thm:norm2_expectation_max0}.

\begin{corollary}
\label{cor:norm2}
Assume the setting in Subsection~\ref{sec:setting},
let 
$ \mu \in \mathcal{L}^0( \mathcal{O}, H ) $, 
$ \sigma \in \mathcal{L}^0( \mathcal{O}, HS( U, H ) ) $,
$ \varepsilon \in [0,\infty] $,
$ p \in [2,\infty) $,
let $ \tau \colon \Omega \to [  \tzero  , T ] $ 
be a stopping time,
let
$ X, Y \colon [  \tzero , T ] \times \Omega \to \mathcal{O} $
be adapted stochastic processes with c.s.p.,
let
$ a \colon [  \tzero , T ] \times \Omega \to H $,
$ b \colon [  \tzero , T ] \times \Omega \to HS( U, H ) $
be predictable stochastic processes
with 
$
  \int_{  \tzero  }^T
  \| a_s \|_H
  +
  \| b_s \|_{ HS( U, H ) }^2
  +
  \| \mu( X_s ) \|_H
  +
  \| \sigma( X_s ) \|^2_{ HS( U, H ) }
  +
  \| \mu( Y_s ) \|_H
  +
  \| \sigma( Y_s ) \|^2_{ HS( U, H ) }
  \,
  ds
  < \infty
$
$ \P $-a.s.,
$
  X_t 
  = 
  X_{  \tzero  } 
  +
  \int_{  \tzero  }^t \mu( X_s ) \, ds
  +
  \int_{  \tzero  }^t \sigma( X_s ) \, dW_s
$
$ \P $-a.s.,
$
  Y_t 
  = 
  Y_{  \tzero  } 
  +
  \int_{  \tzero  }^t a_s \, ds
  +
  \int_{  \tzero  }^t b_s \, dW_s
$
$ \P $-a.s.\ for all
$ t \in [ \tzero ,T] $
and
\begin{equation}
    \smallint_{  \tzero  }^{ \tau }
  \Big[
  \tfrac{
    \left< 
      X_s - Y_s , \mu( X_s ) - \mu( Y_s ) 
    \right>_H
    +
    \frac{ ( p - 1 ) \, ( 1 + \varepsilon ) }{ 2 }
    \left\|
      \sigma( X_s ) - \sigma( Y_s )
    \right\|^2_{ HS( U, H ) }
  }{
    \left\| X_s - Y_s \right\|^2_H
  }
  \Big]^+
    ds
  < \infty
\end{equation}
$ \P $-a.s. Then it holds for all 
$ \delta, \rho, r \in (0,\infty) $,
$ q \in (0,\infty] $
with 
$ \frac{ 1 }{ p } + \frac{ 1 }{ q } = \frac{ 1 }{ r } $
that
{\footnotesize
\begin{align}
&
  \big\| X_{ \tau } - Y_{ \tau }
  \big\|_{
    L^r( \Omega; H )
  }
  \leq
  \left\|
  \exp\!\left(
    \smallint_{  \tzero  }^{ \tau }
  \bigg[
  \tfrac{
    \left< 
      X_s - Y_s , \mu( X_s ) - \mu( Y_s ) 
    \right>_H
    +
    \frac{ ( p - 1 ) \, ( 1 + \varepsilon ) }{ 2 }
    \left\|
      \sigma( X_s ) - \sigma( Y_s )
    \right\|^2_{ HS( U, H ) }
  }{
    \left\| X_s - Y_s \right\|^2_H
  }
  +
  \tfrac{
      ( 1 - \frac{ 1 }{ p } ) 
  }{
      \delta
  }
      +
  \tfrac{
      ( \frac{ 1 }{ 2 } - \frac{ 1 }{ p } )
  }{
      \rho
  }
  \bigg]^+
  \!
    ds
  \right)
  \right\|_{
    L^q( \Omega; \R )
  }
\nonumber
\\ & \cdot
\nonumber
  \left[
    \big\| X_{  \tzero  } - Y_{  \tzero  } 
    \big\|_{ L^p( \Omega; H ) }
  +
    \delta^{
      ( 1 - \frac{ 1 }{ p } )
    }
      \left\|
        a -
        \mu( Y )
      \right\|_{ L^p( \llbracket  \tzero , \tau \rrbracket ; H ) }
    +
      \rho^{ ( \frac{ 1 }{ 2 } - \frac{ 1 }{ p } ) }
      \sqrt{ ( p - 1 ) ( 1 + 1 / \varepsilon ) }
        \left\|
          b -
          \sigma( Y )
        \right\|_{ 
          L^p( \llbracket  \tzero  , \tau \rrbracket ; HS( U, H ) )
        }
  \right]
  .
\end{align}}
\end{corollary}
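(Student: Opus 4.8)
The plan is to obtain Corollary~\ref{cor:norm2} as a special case of Theorem~\ref{thm:norm2_expectation_max0} by picking the auxiliary process $\chi$ there to be a cleverly chosen deterministic constant and then cleaning up the right-hand side with Young's inequality. Concretely, I would apply Theorem~\ref{thm:norm2_expectation_max0} with the constant process $\chi_s \equiv \frac{(1-1/p)}{\delta} + \frac{(\frac12-1/p)}{\rho}$, $s\in[\tzero,T]$. First I would check the hypotheses: the integrability assumptions on $a,b,\mu(X),\sigma(X),\mu(Y),\sigma(Y)$ and the It\^o representations of $X$ and $Y$ are assumed verbatim in Corollary~\ref{cor:norm2}; the process $\chi$ is a nonnegative constant, so $\int_{\tzero}^T|\chi_s|\,ds<\infty$; and since $[a+b]^+\le[a]^++b$ for $b\ge0$, condition~\eqref{eq:perturbation_estimate_unique_assumption} of Theorem~\ref{thm:norm2_expectation_max0} follows from the corresponding hypothesis of Corollary~\ref{cor:norm2} together with finiteness of $\int_{\tzero}^T\chi_s\,ds$. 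With this choice of $\chi$ the exponential-moment factor in~\eqref{eq:norm2_expectation} is exactly the one claimed in Corollary~\ref{cor:norm2}, so everything reduces to estimating the $L^1(\llbracket\tzero,\tau\rrbracket;\R)$-factor in~\eqref{eq:norm2_expectation}.

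To bound that factor I would fix $s$ and abbreviate $Z=\|X_s-Y_s\|_H$, $A=\|a_s-\mu(Y_s)\|_H$ and $B=\|b_s-\sigma(Y_s)\|_{HS(U,H)}$. Using $c\,[z]^+=[cz]^+$ for $c\ge0$ and the Cauchy--Schwarz inequality $\langle X_s-Y_s,\mu(Y_s)-a_s\rangle_H\le ZA$ together with monotonicity of $[\cdot]^+$, and noting $p\chi=\frac{p-1}{\delta}+\frac{p-2}{2\rho}$, the integrand of the $L^1$-factor is dominated by $\big[\,pZ^{p-1}A-\frac{p-1}{\delta}Z^p+\frac{p(p-1)(1+1/\varepsilon)}{2}Z^{p-2}B^2-\frac{p-2}{2\rho}Z^p\,\big]^+$. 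Here I would split the two $Z^p$-terms so that the first is absorbed against the drift term and the second against the diffusion term, and the key step is two applications of Young's inequality with matched exponents and weights: $pZ^{p-1}A\le\frac{p-1}{\delta}Z^p+\delta^{\,p-1}A^p$ (exponents $\frac{p}{p-1}$ and $p$, weight $\delta^{-(p-1)/p}$), and $\frac{p(p-1)(1+1/\varepsilon)}{2}Z^{p-2}B^2\le\frac{p-2}{2\rho}Z^p+[(p-1)(1+1/\varepsilon)]^{p/2}\rho^{(p-2)/2}B^p$ (exponents $\frac{p}{p-2}$ and $\frac p2$, weight $[\rho(p-1)(1+1/\varepsilon)]^{-(p-2)/p}$; for $p=2$ this is a trivial identity and the $\rho$-term disappears). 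Adding the two estimates the $Z^p$-terms cancel, and since the remaining right-hand side is nonnegative the integrand is bounded above by $\delta^{\,p-1}A^p+[(p-1)(1+1/\varepsilon)]^{p/2}\rho^{(p-2)/2}B^p$.

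Finally I would integrate this bound over $\llbracket\tzero,\tau\rrbracket$ with respect to $ds\otimes\P$, obtaining $\delta^{\,p-1}\|a-\mu(Y)\|_{L^p(\llbracket\tzero,\tau\rrbracket;H)}^p+[(p-1)(1+1/\varepsilon)]^{p/2}\rho^{(p-2)/2}\|b-\sigma(Y)\|_{L^p(\llbracket\tzero,\tau\rrbracket;HS(U,H))}^p$, take the $(1/p)$-th power, and apply $(u+v)^{1/p}\le u^{1/p}+v^{1/p}$ for $u,v\in[0,\infty)$ to split it into $\delta^{(1-1/p)}\|a-\mu(Y)\|_{L^p(\llbracket\tzero,\tau\rrbracket;H)}+\rho^{(\frac12-1/p)}\sqrt{(p-1)(1+1/\varepsilon)}\,\|b-\sigma(Y)\|_{L^p(\llbracket\tzero,\tau\rrbracket;HS(U,H))}$ (using $(p-2)/(2p)=\frac12-\frac1p$). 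Inserting this together with $\|X_{\tzero}-Y_{\tzero}\|_{L^p(\Omega;H)}$ into the bracket on the right-hand side of~\eqref{eq:norm2_expectation} then yields precisely the claimed inequality. I do not anticipate a real obstacle: the whole argument is a direct specialization of Theorem~\ref{thm:norm2_expectation_max0}, and the only delicate point is the bookkeeping of constants in the two Young inequalities, so that the coefficients of the absorbed $Z^p$-terms sum exactly to $p\chi$ and the leftover coefficients come out as $\delta^{1-1/p}$ and $\rho^{1/2-1/p}\sqrt{(p-1)(1+1/\varepsilon)}$ after extracting the $p$-th root.
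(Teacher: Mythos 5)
Your proposal is correct and is essentially the paper's own argument: the paper likewise applies Theorem~\ref{thm:norm2_expectation_max0} with the constant process $\chi\equiv\frac{(1-1/p)}{\delta}+\frac{(1/2-1/p)}{\rho}$ and then uses Cauchy--Schwarz plus two weighted Young inequalities so that the absorbed $\|X-Y\|_H^p$-terms cancel exactly against $p\chi\|X-Y\|_H^p$, leaving the $\delta^{1-1/p}$- and $\rho^{1/2-1/p}\sqrt{(p-1)(1+1/\varepsilon)}$-weighted $L^p$-norms after extracting the $p$-th root. Your bookkeeping of exponents and constants, and your verification of hypothesis~\eqref{eq:perturbation_estimate_unique_assumption}, are accurate.
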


\begin{proof}[Proof
of Corollary~\ref{cor:norm2}]
Throughout this proof let 
$ \rho, \delta \in (0,\infty) $
be arbitrary and let
$ \chi \colon [  \tzero , T ] \times \Omega \to \R $
be given by
\begin{equation}
  \chi_t(\omega) 
  = 
  \frac{ 
    ( 1 - \frac{ 1 }{ p } ) 
  }{
    \delta
  }
  +
  \frac{ 
    ( \frac{ 1 }{ 2 } - \frac{ 1 }{ p } ) 
  }{
    \rho
  }
\end{equation}
for all $ t \in [  \tzero , T ] $,
$ \omega \in \Omega $.
Theorem~\ref{thm:norm2_expectation_max0}
then implies that for all 
$ r, q \in (0,\infty] $
with $ \frac{ 1 }{ p } + \frac{ 1 }{ q } = \frac{ 1 }{ r } $
it holds that
\begin{equation}
\begin{split}
&
  \left\|
    X_{ \tau } - Y_{ \tau }
  \right\|_{
    L^r( \Omega; H )
  }
\\ & \leq
  \left\|
    \exp\!\left(
      \smallint_{  \tzero  }^{ \tau } 
      \Big[
        \tfrac{
          \left< 
            X_s - Y_s , \mu( X_s ) - \mu( Y_s ) 
          \right>_H
          +
          \frac{ ( p - 1 ) \, ( 1 + \varepsilon ) }{ 2 }
          \left\|
            \sigma( X_s ) - \sigma( Y_s )
          \right\|^2_{ HS( U, H ) }
        }{
          \left\| X_s - Y_s \right\|^2_H
        }
        +
        \chi_s 
      \Big]^+
      ds
    \right)
  \right\|_{
    L^q( \Omega; \R )
  }
\\ & \quad \cdot
  \bigg[
    \left\| 
      X_{  \tzero  } - Y_{  \tzero  } 
    \right\|_{ L^p( \Omega; H ) }
    +
  \Big\|
    \Big[
    \tfrac{ p }{ 2 } \,
      \rho^{
        ( \frac{ 2 }{ p } - 1 )
      }
      \,
    \|
      X - Y 
    \|^{ ( p - 2 ) }_H
      \left( p - 1 \right)
      \left( 1 + 1 / \varepsilon \right)
      \rho^{
        ( 1 - \frac{ 2 }{ p } )
      }
      \left\|
        b
        -
        \sigma( Y )
      \right\|_{ HS( U, H ) }^2
\\ & \qquad
    +
    p
    \,
    \delta^{
      ( 1 - p ) / p
    }
    \,
    \|
      X - Y 
    \|^{ ( p - 1 ) }_H
    \,
    \delta^{
      ( p - 1 ) / p
    }
    \,
      \|
        \mu( Y )
        -
        a 
      \|_H
      - 
      p \, \chi \, \| X - Y \|^p_H
    \Big]^+
  \Big\|_{
    L^1( \llbracket  \tzero , \tau \rrbracket ; \R )
  }^{ 1 / p }
  \bigg]
  .
\end{split}
\end{equation}
Young's inequality 
hence proves that for all
$ r, q \in (0,\infty] $
with $ \frac{ 1 }{ p } + \frac{ 1 }{ q } = \frac{ 1 }{ r } $
it holds that
\begin{equation}
\begin{split}
&
  \left\|
    X_{ \tau } - Y_{ \tau }
  \right\|_{
    L^r( \Omega; H )
  }
\\ & \leq
  \left\|
    \exp\!\left(
      \smallint_{  \tzero  }^{ \tau } 
      \Big[
        \tfrac{
          \left< 
            X_s - Y_s , \mu( X_s ) - \mu( Y_s ) 
          \right>_H
          +
          \frac{ ( p - 1 ) \, ( 1 + \varepsilon ) }{ 2 }
          \left\|
            \sigma( X_s ) - \sigma( Y_s )
          \right\|^2_{ HS( U, H ) }
        }{
          \left\| X_s - Y_s \right\|^2_H
        }
        +
        \tfrac{
          ( 1 - \frac{ 1 }{ p } )
        }{ \delta }
        +
        \tfrac{
          ( \frac{ 1 }{ 2 } - \frac{ 1 }{ p } )
        }{ \rho }
      \Big]^+
      ds
    \right)
  \right\|_{
    L^q( \Omega; \R )
  }
\\ & \cdot
  \bigg[
    \left\| 
      X_{  \tzero  } - Y_{  \tzero  } 
    \right\|_{ L^p( \Omega; H ) }
    +
  \Big\|
    \Big[
      \rho^{
        ( \frac{ p }{ 2 } - 1 )
      }
      \left[ 
        \left( p - 1 \right)
        \left( 1 + 1 / \varepsilon \right)
      \right]^{ \frac{ p }{ 2 } } 
      \left\|
        b
        -
        \sigma( Y )
      \right\|_{ HS( U, H ) }^p
\\ & 
    +
    \delta^{ ( p - 1 ) }
    \,
      \|
        \mu( Y )
        -
        a 
      \|_H^p
      +
      \tfrac{ ( p - 2 ) }{ 2 \, \rho } \,
      \| X - Y \|^p_H
      +
      \tfrac{ ( p - 1 ) }{ \delta } \,
      \| X - Y \|^p_H
      - 
      p \, \chi \, \| X - Y \|^p_H
    \Big]^+
  \Big\|_{
    L^1( \llbracket  \tzero , \tau \rrbracket ; \R )
  }^{ 1 / p }
  \bigg]
  .
\end{split}
\end{equation}
This and the definition of $ \chi $
completes the proof
of Corollary~\ref{cor:norm2}.
\end{proof}

\section{Applications of the perturbation theory for SDEs}

\subsection{Numerical approximations of SODEs}
\label{sec:SODEs}

This subsection uses Corollary~\ref{cor:norm2}
to establish strong convergence rates for the
stopped-tamed Euler-Maruyama method in 
\cite{HutzenthalerJentzenWang2013}
(see (6) in \cite{HutzenthalerJentzenWang2013}).
To do so,
%In this analysis 
the following elementary lemma is used.

\begin{lemma}
\label{lem:psi_estimate}
Let $ d \in \N $ and let
$ \psi \colon \R^d \to \R^d $
be given by
$
  \psi( v ) = \frac{ v }{ 1 + \| v \|^2_{ \R^d } }
$
for all $ v \in \R^d $.
Then it holds
for all $ v \in \R^d $ 
that
$
  \| \psi'( v ) \|_{ L( \R^d ) }
  \leq 
  3
$,
that
$
  \left\| 
    \psi'( v ) - I_{ \R^d }
  \right\|_{
    L( \R^d )
  }
\leq
  3 
  \left[
    1 
    \wedge
    \left\| v \right\|_{ \R^d }
  \right]^2
$
and 
that
$
  \sup_{ 
    u \in \R^d 
    ,
    \,
    \| u \|_{ \R^d } \leq 1
  }
  \left\|
    \psi''( v )( u, u )
  \right\|_{
    \R^d
  }
\leq
  14
  \left[
    1 
    \wedge
    \left\| v \right\|_{ \R^d }
  \right]
$.
\end{lemma}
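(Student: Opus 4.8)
The plan is to compute $\psi'$ and $\psi''$ explicitly by the quotient rule and then reduce all three assertions to a handful of elementary one-variable inequalities. Writing $g(v) := 1 + \| v \|^2_{ \R^d }$ for $v \in \R^d$, the first step is to record the identities
\begin{equation}
  \psi'(v)
  =
  \tfrac{ 1 }{ g(v) } \, I_{ \R^d }
  -
  \tfrac{ 2 }{ g(v)^2 } \, v \langle v , \cdot \rangle_{ \R^d }
  ,
  \qquad
  \psi'(v) - I_{ \R^d }
  =
  - \tfrac{ \| v \|^2_{ \R^d } }{ g(v) } \, I_{ \R^d }
  -
  \tfrac{ 2 }{ g(v)^2 } \, v \langle v , \cdot \rangle_{ \R^d }
  ,
\end{equation}
where $v \langle v , \cdot \rangle_{ \R^d } \in L( \R^d )$ denotes the symmetric rank-one map $h \mapsto \langle v , h \rangle_{ \R^d } v$, whose operator norm equals $\| v \|^2_{ \R^d }$. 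The first claim then follows from the triangle inequality together with the crude bounds $\tfrac{ 1 }{ g(v) } \leq 1$ and $\tfrac{ 2 \| v \|^2_{ \R^d } }{ g(v)^2 } \leq \tfrac{ 2 \| v \|^2_{ \R^d } }{ g(v) } \leq 2$, which give $\| \psi'(v) \|_{ L( \R^d ) } \leq 3$.

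For the second claim I would apply the triangle inequality to the second identity above and estimate its two summands separately by $[ 1 \wedge \| v \|_{ \R^d } ]^2$ and $2 [ 1 \wedge \| v \|_{ \R^d } ]^2$. For the first summand this uses $\tfrac{ t }{ 1 + t } \leq \min( t , 1 )$ for $t \geq 0$ (applied with $t = \| v \|^2_{ \R^d }$); for the second it uses the two bounds $\tfrac{ 2 \| v \|^2_{ \R^d } }{ g(v)^2 } \leq 2 \| v \|^2_{ \R^d }$ and, distinguishing the case $\| v \|_{ \R^d } \geq 1$ where $g(v)^2 \geq \| v \|^4_{ \R^d }$, also $\tfrac{ 2 \| v \|^2_{ \R^d } }{ g(v)^2 } \leq 2$. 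Adding the two estimates yields $\| \psi'(v) - I_{ \R^d } \|_{ L( \R^d ) } \leq 3 [ 1 \wedge \| v \|_{ \R^d } ]^2$.

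For the third claim I would differentiate once more to obtain, for all $u, v \in \R^d$,
\begin{equation}
  \psi''(v)(u,u)
  =
  - \tfrac{ 4 \langle v , u \rangle_{ \R^d } \, u + 2 \| u \|^2_{ \R^d } \, v }{ g(v)^2 }
  +
  \tfrac{ 8 \langle v , u \rangle^2_{ \R^d } \, v }{ g(v)^3 }
  ,
\end{equation}
so that for $\| u \|_{ \R^d } \leq 1$ the Cauchy--Schwarz inequality gives $\| \psi''(v)(u,u) \|_{ \R^d } \leq \tfrac{ 6 \| v \|_{ \R^d } }{ g(v)^2 } + \tfrac{ 8 \| v \|^3_{ \R^d } }{ g(v)^3 }$. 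The claim then reduces to the elementary inequalities $\tfrac{ 6 t }{ ( 1 + t^2 )^2 } \leq 6 ( 1 \wedge t )$ and $\tfrac{ 8 t^3 }{ ( 1 + t^2 )^3 } \leq 8 ( 1 \wedge t )$ for $t \geq 0$, each proved by using $( 1 + t^2 ) \geq 1$ when $t \leq 1$ and $( 1 + t^2 ) \geq t^2$ when $t \geq 1$; summing gives the bound $14 [ 1 \wedge \| v \|_{ \R^d } ]$. The only mildly delicate point is the bookkeeping in the second-derivative computation and the matching of each of the resulting rational expressions to the target form $C \, [ 1 \wedge \| v \|_{ \R^d } ]$ or $C \, [ 1 \wedge \| v \|_{ \R^d } ]^2$; no individual step is difficult.
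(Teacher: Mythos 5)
Your proposal is correct and follows essentially the same route as the paper: the same decomposition $\psi'(v)-I_{\R^d} = -\tfrac{\|v\|^2}{1+\|v\|^2}I_{\R^d} - \tfrac{2}{(1+\|v\|^2)^2}\,v\langle v,\cdot\rangle_{\R^d}$ with the triangle inequality, and the same intermediate bound $\tfrac{6\|v\|}{(1+\|v\|^2)^2}+\tfrac{8\|v\|^3}{(1+\|v\|^2)^3}$ for the second derivative (the paper cites these computations from an earlier reference and then merges the two terms into $\tfrac{14\|v\|}{(1+\|v\|^2)^2}$ before invoking $1\wedge\|v\|$, whereas you bound each term by $C\,[1\wedge\|v\|]$ separately — an immaterial difference). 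All the elementary inequalities you invoke check out.
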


\begin{proof}[Proof of Lemma~\ref{lem:psi_estimate}]
Observe that, for example,
%(76)--(77) in \cite{HutzenthalerJentzenWang2013} 
%prove that
Section 2.3 in \cite{HutzenthalerJentzenWang2013} 
shows that
for all $ z \in \R^d $ 
it holds that
\begin{equation}
  \| \psi'( z ) \|_{ L( \R^d ) }
  \leq 
  3
  ,
\end{equation}
that
\begin{equation}
\label{eq:psi_est1}
\begin{split}
&
  \left\| 
    \psi'( z ) - I_{ \R^d }
  \right\|_{
    L( \R^d )
  }
\leq
  \left\| 
    \frac{ I_{ \R^d } }{
      1 + \| z \|^2_{ \R^d }
    }
    - 
    I_{ \R^d }
  \right\|_{
    L( \R^d )
  }
  +
  \left\|
    \frac{
      2 z z^*
    }{
      \left[ 
        1 + \| z \|_{ \R^d }^2 
      \right]^2
    }
  \right\|_{ L( \R^d ) }
\\ & =
  \left| 
    \frac{ 1 }{
      1 + \| z \|^2_{ \R^d }
    }
    - 
    1
  \right|
  +
    \frac{
      2 \left\| z \right\|^2_{ \R^d }
    }{
      \left[ 
        1 + \| z \|_{ \R^d }^2
      \right]^2
    }
=
    \frac{
      \left\| z \right\|^2_{ \R^d }
    }{
        1 + \| z \|_{ \R^d }^2
    }
  +
    \frac{
      2 \left\| z \right\|^2_{ \R^d }
    }{
      \left[ 
        1 + \| z \|_{ \R^d }^2
      \right]^2
    }
\\ & \leq
    \frac{
      3 \left\| z \right\|^2_{ \R^d }
    }{
        1 + \| z \|_{ \R^d }^2
    }
\leq
  3 
  \left[
    1 
    \wedge
    \left\| z \right\|_{ \R^d }
  \right]^2
\end{split}
\end{equation}
and that
\begin{equation}
\label{eq:psi_est2}
\begin{split}
  \sup_{ 
    u \in \R^d 
    ,
    \,
    \| u \|_{ \R^d } \leq 1
  }
  \left\|
    \psi''( z )( u, u )
  \right\|_{
    \R^d
  }
& \leq
  \frac{
    8 \, \| z \|^3_{ \R^d }
  }{
    \left[ 
      1 + \| z \|_{ \R^d }^2
    \right]^3
  }
  +
  \frac{
    6 \, \| z \|_{ \R^d }
  }{
    \left[
      1 + \| z \|^2_{ \R^d }
    \right]^2
  }
\\ & \leq
  \frac{
    14 \, \| z \|_{ \R^d }
  }{
    \left[
      1 + \| z \|^2_{ \R^d }
    \right]^2
  }
\leq
  14
  \left[
    1 
    \wedge
    \left\| z \right\|_{ \R^d }
  \right]
  .
\end{split}
\end{equation}
The proof of Lemma~\ref{lem:psi_estimate}
is thus completed.
\end{proof}

We now use Lemma~\ref{lem:psi_estimate}
together with Corollary~\ref{cor:norm2}
to prove a suitable strong convergence rate estimate
(see \eqref{eq:strong_convergence_rate_estimate} below)
for the stopped-tamed Euler-Maruyama
approximations in \cite{HutzenthalerJentzenWang2013}.

\begin{lemma}
\label{lem:ST}
Let $ d, m, n \in \N $, 
$ 0 = t_0 < t_1 < \ldots < t_n = T < \infty $,
$ \mathcal{O} \in \mathcal{B}( \R^d ) $,
$ \phi \in \mathcal{L}^0( \R^d ; \R ) $,
$ 
  \mu \in \mathcal{L}^0( \R^d ; \R^d )
$,
$
  \sigma \in \mathcal{L}^0( \R^d ; \R^{ d \times m } )
$
satisfy 
$
  \left\| 
    \mu( x )
    -
    \mu( y )
  \right\|_{ \R^d }
  \vee
  \left\| 
    \sigma( x )
    -
    \sigma( y )
  \right\|_{ HS( \R^m, \R^d ) }
  \leq
$
$
  (
    \phi( x ) + \phi( y )
  )
$
$
  \left\| x - y \right\|_{ \R^d }
$
for all $ x, y \in \R^d $,
let 
$ 
  ( 
    \Omega, \mathcal{F}, \P, 
    ( \mathcal{F}_t )_{ t \in [ 0 , T ] } 
  ) 
$
be a stochastic basis,
let
$
  W \colon [0,T] \times \Omega \to \R^m
$
be a standard 
$ ( \mathcal{F}_t )_{ t \in [0,T] } $-Brownian motion,
let
$ 
  X, Y \colon [ 0 , T ] \times \Omega \to \R^d 
$
be adapted stochastic processes with 
c.s.p.\ satisfying
$
  \int_{0}^T
  \| \mu( X_s ) \|_{ \R^d }
  +
  \| \sigma( X_s ) \|_{ \R^{ d \times m } }^2
  \, ds
$
$ \P $-a.s.,
$
  X_t = 
  X_0 + \int_{0}^t \mu( X_s ) \, ds +
  \int_{0}^t \sigma( X_s ) \, dW_s
$
$ \P $-a.s.\ for all $ t \in [0,T] $,
$ Y_0 = X_0 $
and
\begin{equation}
\label{eq:def_stopped_EM}
  Y_t
  = 
  Y_{ t_k }
  +
  \mathbbm{1}_{
    \{
      Y_{ t_k } \in \mathcal{O}
    \}
  }
  \left[
  \tfrac{
    \mu( Y_{ t_k } ) \left( t - t_k \right)
    +
    \sigma( Y_{ t_k } )
    ( W_t - W_{ t_k } )
  }{
    1 +
    \|
      \mu( Y_{ t_k } ) \left( t - t_k \right)
      +
      \sigma( Y_{ t_k } )
      ( W_t - W_{ t_k } )
    \|^2_{ \R^d }
  }
  \right]
\end{equation}
for all $ t \in [ t_k, t_{ k + 1 } ] $,
$ k \in \{ 0, 1, \dots, n - 1 \} $
and let $ \tau \colon \Omega \to [ 0, T ] $
be given by
$
  \tau 
= 
  \inf\!\big( 
    \{ T \} 
    \cup
    \{
      t \in \{ t_0 , t_1, \dots, t_n \}
      \colon
      Y_t \notin \mathcal{O}
    \}
  \big)
$.
Then it holds 
for all 
stopping times 
$
  \nu \colon \Omega \to [0,T]
$
and all
$ \varepsilon, r \in (0,\infty) $,
$ p \in [2,\infty) $,
$ q, u, v \in (0,\infty] $
with 
$
  \frac{ 1 }{ p } + \frac{ 1 }{ q } = \frac{ 1 }{ r }
$
and
$
  \frac{ 1 }{ u } + \frac{ 1 }{ v } = \frac{ 1 }{ p }
$
that
\begin{align}
\label{eq:strong_convergence_rate_estimate}
&
  \big\| 
    X_{ \nu \wedge \tau } - Y_{ \nu \wedge \tau }
  \big\|_{
    L^r( \Omega; \R^d )
  }
\leq
  \sup_{ s \in [0,T] }
  \max\!\left(
    1,
  \sqrt{T}
  \,
  \big\|
    \mu(
      Y_s
    )
  \big\|_{
    L^v( \Omega; \R^d )
  }
  +
  v \,
    \|
      \sigma(
        Y_s
      )
    \|_{
      L^v( \Omega; HS( \R^m, \R^d ) )
    }
  \right)
\nonumber
\\ & \cdot 
  30 \, p \left( 1 + \tfrac{ 1 }{ \varepsilon } \right) 
  e^T
  \left\|
  \exp\!\left(
    \smallint_{0}^{ \nu \wedge \tau }
  \Big[
  \tfrac{
    \left< 
      X_s - Y_s , \mu( X_s ) - \mu( Y_s ) 
    \right>_{ \R^d }
    +
    \frac{ ( p - 1 ) \, ( 1 + \varepsilon ) }{ 2 }
    \left\|
      \sigma( X_s ) - \sigma( Y_s )
    \right\|^2_{ HS( \R^m , \R^d ) }
  }{
    \left\| X_s - Y_s \right\|^2_{ \R^d }
  }
  \Big]^+
    ds
  \right)
  \right\|_{
    L^q( \Omega; \R )
  }
\\ & \cdot
\nonumber
  \left[ 
  \sup_{ s \in [0,T] }
    \big\|
      \|
        \mu(
          Y_s
        )
      \|_{ \R^d }
      +
      [
        1 \vee
        \|
          \sigma(
            Y_s
          )
        \|_{ HS( \R^m, \R^d ) }
      ]^2
      +
      |
        \phi(
          Y_s
        )
      |
    \big\|_{
      L^u( \Omega; \R )
    }
  \right]
  \left[ 
    \max_{ 0 \leq k \leq n - 1 }
    | t_{ k + 1 } - t_k |
  \right]^{ \frac{ 1 }{ 2 } }
  .
\end{align}
\end{lemma}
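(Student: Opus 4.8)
We may and do assume that the right-hand side of \eqref{eq:strong_convergence_rate_estimate} is finite, since otherwise there is nothing to prove; then, in particular, the $L^q(\Omega;\R)$-norm of the exponential appearing there is finite, so that $\int_0^{\nu\wedge\tau}[\,\cdot\,]^+\,ds<\infty$ $\P$-a.s., and the remaining integrability hypotheses of Corollary~\ref{cor:norm2} (pathwise integrability of $\|\mu(Y_\cdot)\|_{\R^d}$ and $\|\sigma(Y_\cdot)\|^2_{HS}$) follow from Tonelli's theorem and finiteness of the supremum-factors, using $v\ge p\ge 2$. The plan is: write $Y$ as an It\^o process, apply Corollary~\ref{cor:norm2} to $(X,Y)$ with the stopping time $\nu\wedge\tau$, and estimate the two resulting local-error terms with Lemma~\ref{lem:psi_estimate}. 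Let $\psi$ be the map from Lemma~\ref{lem:psi_estimate} and, for $s\in[t_k,t_{k+1}]$, put $\xi^k_s:=\mu(Y_{t_k})(s-t_k)+\sigma(Y_{t_k})(W_s-W_{t_k})$, so that $Y_s=Y_{t_k}+\mathbbm{1}_{\{Y_{t_k}\in\mathcal O\}}\,\psi(\xi^k_s)$ and $d\xi^k_s=\mu(Y_{t_k})\,ds+\sigma(Y_{t_k})\,dW_s$. Applying It\^o's formula to $s\mapsto\psi(\xi^k_s)$ on each subinterval and patching the pieces (note $\psi(0)=0$) yields $Y_t=Y_0+\int_0^t a_s\,ds+\int_0^t b_s\,dW_s$ $\P$-a.s.\ for $t\in[0,T]$, where for $s\in[t_k,t_{k+1}]$
\[
  a_s=\mathbbm{1}_{\{Y_{t_k}\in\mathcal O\}}\!\left(\psi'(\xi^k_s)\mu(Y_{t_k})+\tfrac12\operatorname{trace}\!\big(\sigma(Y_{t_k})\sigma(Y_{t_k})^*(\operatorname{Hess}\psi)(\xi^k_s)\big)\right),\qquad b_s=\mathbbm{1}_{\{Y_{t_k}\in\mathcal O\}}\psi'(\xi^k_s)\sigma(Y_{t_k});
\]
by Lemma~\ref{lem:psi_estimate} these processes are bounded on each subinterval, so $Y$ fits the framework of Subsection~\ref{sec:setting} with $O=\mathcal O=H=\R^d$, $U=\R^m$.

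Next I would apply Corollary~\ref{cor:norm2} to $X$, $Y$, the above $a,b$, the stopping time $\nu\wedge\tau$, and $\delta=\rho=2$, so that $\tfrac{1-1/p}{\delta}+\tfrac{1/2-1/p}{\rho}=\tfrac34-\tfrac1p=:c\in[\tfrac14,\tfrac34)$. Since $Y_0=X_0$, the initial-value term vanishes. Bounding $[A+c]^+\le[A]^++c$ inside the exponential lets one pull the factor $\exp(c(\nu\wedge\tau))$ out of the $L^q$-norm; together with the power $T^{1/p}$ arising below, this factor is $\le e^T$ (since $c<\tfrac34$ and $\ln T\le T/2$), while $\delta^{1-1/p}$, $\rho^{1/2-1/p}\sqrt{(p-1)(1+1/\varepsilon)}$ and all numerical constants produced below get absorbed into $30\,p\,(1+1/\varepsilon)$. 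It thus remains to show that $\|a-\mu(Y)\|_{L^p(\llbracket 0,\nu\wedge\tau\rrbracket;\R^d)}$ and $\|b-\sigma(Y)\|_{L^p(\llbracket 0,\nu\wedge\tau\rrbracket;HS(\R^m,\R^d))}$ are each at most a numerical constant times $T^{1/p}[\max_{0\le k\le n-1}|t_{k+1}-t_k|]^{1/2}$ times the product of the two supremum-factors on the right-hand side of \eqref{eq:strong_convergence_rate_estimate}.

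For this, fix $k$ and $s\in(t_k,t_{k+1}]$. On $\{s\le\nu\wedge\tau\}$ one has $Y_{t_k}\in\mathcal O$ (since $\tau$ is the first grid point at which $Y$ leaves $\mathcal O$ and $\tau\ge s>t_k$ forces $\tau>t_k$), so the indicators above equal $1$ there. Decomposing $a_s-\mu(Y_s)=[\psi'(\xi^k_s)-I]\mu(Y_{t_k})+[\mu(Y_{t_k})-\mu(Y_s)]+\tfrac12\operatorname{trace}(\sigma(Y_{t_k})\sigma(Y_{t_k})^*(\operatorname{Hess}\psi)(\xi^k_s))$ and $b_s-\sigma(Y_s)=[\psi'(\xi^k_s)-I]\sigma(Y_{t_k})+[\sigma(Y_{t_k})-\sigma(Y_s)]$, and invoking Lemma~\ref{lem:psi_estimate}, the hypothesis $\|\mu(x)-\mu(y)\|\vee\|\sigma(x)-\sigma(y)\|\le(\phi(x)+\phi(y))\|x-y\|$, and $\|Y_s-Y_{t_k}\|_{\R^d}=\|\psi(\xi^k_s)\|_{\R^d}\le[1\wedge\|\xi^k_s\|_{\R^d}]$, one obtains on $\{s\le\nu\wedge\tau\}$
\[
  \|a_s-\mu(Y_s)\|_{\R^d}\vee\|b_s-\sigma(Y_s)\|_{HS}\le 7\,[1\wedge\|\xi^k_s\|_{\R^d}]\;G^k_s,\qquad G^k_s:=\|\mu(Y_{t_k})\|+[1\vee\|\sigma(Y_{t_k})\|]^2+\phi(Y_{t_k})+\phi(Y_s).
\]

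Raising this to the $p$-th power, discarding the (bounded) indicator on the right, and applying H\"older's inequality on $\Omega$ with exponents $(v/p,u/p)$ gives $\big\|\,\|a_s-\mu(Y_s)\|\,\big\|_{L^p(\Omega)}\vee\big\|\,\|b_s-\sigma(Y_s)\|\,\big\|_{L^p(\Omega)}\le 7\,\big\|[1\wedge\|\xi^k_s\|]\big\|_{L^v(\Omega)}\,\|G^k_s\|_{L^u(\Omega)}$. Since $\|\xi^k_s\|\le\|\mu(Y_{t_k})\|(s-t_k)+\|\sigma(Y_{t_k})\|_{HS}\|W_s-W_{t_k}\|_{\R^m}$, conditioning on $\mathcal F_{t_k}$ and using a standard Brownian-increment moment estimate $\mathbb E[\|W_s-W_{t_k}\|^v_{\R^m}]^{1/v}\le v(s-t_k)^{1/2}$ together with $[1\wedge\|\xi^k_s\|]\le1$ yields $\big\|[1\wedge\|\xi^k_s\|]\big\|_{L^v(\Omega)}\le\min\{1,(s-t_k)^{1/2}(\sqrt T\|\mu(Y_{t_k})\|_{L^v}+v\|\sigma(Y_{t_k})\|_{L^v})\}\le(s-t_k)^{1/2}\sup_{r\in[0,T]}\max(1,\sqrt T\|\mu(Y_r)\|_{L^v}+v\|\sigma(Y_r)\|_{L^v})$, while $\|G^k_s\|_{L^u(\Omega)}\le2\sup_{r\in[0,T]}\||\mu(Y_r)|+[1\vee\|\sigma(Y_r)\|]^2+|\phi(Y_r)|\|_{L^u}$. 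Consequently $\int_{t_k}^{t_{k+1}}\mathbb E[\mathbbm{1}_{\{s\le\nu\wedge\tau\}}\|a_s-\mu(Y_s)\|^p]\,ds\le C^p\int_{t_k}^{t_{k+1}}(s-t_k)^{p/2}\,ds\le C^p|t_{k+1}-t_k|^{1+p/2}$ (with $C$ equal to $14$ times the product of the two suprema), and the same bound holds for $b$; summing over $k$ (so $\sum_k|t_{k+1}-t_k|=T$) and taking $p$-th roots produces the bound required at the end of the previous paragraph, completing the proof. The main obstacle is exactly this last part: the decomposition must be arranged so that every term carries one factor $[1\wedge\|\xi^k_s\|]$ (which yields the $(s-t_k)^{1/2}$ gain and involves $\mu,\sigma$ only linearly) and one factor of at-most-linear growth in $\mu,\sigma,\phi$, so that the $(v/p,u/p)$-H\"older split routes these cleanly into the two supremum-factors of the statement, and the numerous numerical constants and powers of $T$ have to be collapsed into $30\,p\,(1+1/\varepsilon)\,e^T$.
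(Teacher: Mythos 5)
Your overall route coincides with the paper's: represent the tamed scheme as an It\^o process by applying It\^o's formula to $\psi(\xi^k_s)$, apply Corollary~\ref{cor:norm2} at the stopping time $\nu\wedge\tau$ (the initial term vanishing since $Y_0=X_0$), bound the residuals $a-\mu(Y)$ and $b-\sigma(Y)$ by $[1\wedge\|\xi^k_s\|]$ times an at-most-linear expression in $\mu,\sigma,\phi$ via Lemma~\ref{lem:psi_estimate} and the $\phi$-Lipschitz hypothesis, split with H\"older exponents $(v/p,u/p)$, and harvest the factor $(s-t_k)^{1/2}$ from the increment moment; your bookkeeping of the indicator, of the a.s.\ integrability hypotheses, and of the constants ($\delta=\rho=2$, absorption into $30\,p\,(1+1/\varepsilon)\,e^T$) is fine.

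The genuine gap is the moment estimate for the increment. You bound $\|\xi^k_s\|\le\|\mu(Y_{t_k})\|(s-t_k)+\|\sigma(Y_{t_k})\|_{HS}\|W_s-W_{t_k}\|_{\R^m}$ and then invoke $\E\big[\|W_s-W_{t_k}\|^v_{\R^m}\big]^{1/v}\le v\,(s-t_k)^{1/2}$. This inequality is false in general: the left-hand side equals $(s-t_k)^{1/2}\,\|\,\|Z\|_{\R^m}\|_{L^v(\Omega;\R)}$ with $Z\sim N(0,I_{\R^m})$, which by Jensen is at least $\sqrt{m}\,(s-t_k)^{1/2}$, so the claim fails whenever $m>v^2$. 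Worse, any route through $\|\sigma(Y_{t_k})\|_{HS}\,\|W_s-W_{t_k}\|_{\R^m}$ inevitably produces a constant of order $\sqrt{m}$, which is incompatible with the dimension-free factor $v\,\|\sigma(Y_s)\|_{L^v(\Omega;HS(\R^m,\R^d))}$ in \eqref{eq:strong_convergence_rate_estimate}. The paper instead estimates $Z_t$ (your $\xi^k_s$) directly with the Hilbert--Schmidt-norm based moment inequality for stochastic integrals (Lemma~7.7 in Da Prato \& Zabczyk), which yields $\|Z_t\|_{L^v(\Omega;\R^d)}\le\big[\sqrt{T}\,\|\mu(Y_{\lfloor t\rfloor})\|_{L^v(\Omega;\R^d)}+\sqrt{v(v-1)/2}\,\|\sigma(Y_{\lfloor t\rfloor})\|_{L^v(\Omega;HS(\R^m,\R^d))}\big]\big[\max_k|t_{k+1}-t_k|\big]^{1/2}$ with the dimension-free constant $\sqrt{v(v-1)/2}\le v$; equivalently, conditionally on $\mathcal{F}_{t_k}$ the vector $\sigma(Y_{t_k})(W_s-W_{t_k})$ is centered Gaussian with covariance of trace $(s-t_k)\|\sigma(Y_{t_k})\|^2_{HS(\R^m,\R^d)}$, and one must use a trace-based Gaussian moment bound rather than the operator-norm-times-$\|W\|$ bound. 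With this replacement the remainder of your argument goes through and reproduces the stated estimate.
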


\begin{proof}[Proof 
of Lemma~\ref{lem:ST}]
Throughout this proof let 
$ e^{ (m) }_1 = ( 1, 0, \dots, 0 ) , \dots, e^{ (m) } = ( 0, \dots, 0, 1 ) \in \R^m $
be the Euclidean orthonormal basis of the $ \R^m $,
let 
$ 
  \lfloor \cdot 
  \rfloor_{ t_0 , t_1, \dots, t_n }
  \colon
  [0,T] \to \{ t_0 , t_1, \dots, t_n \}
$
be given by
$
  \lfloor t \rfloor_{ t_0 , t_1 , \dots, t_n }
$
$
  =
  \max\!\big(
    [0,t] \cap \{ t_0 , t_1, \dots, t_n \}
  \big)
$
for all $ t \in [ 0,T ] $,
let $ \nu \colon \Omega \to [0,T] $
be a stopping time,
let $ \psi \colon \R^d \to \R^d $
be given by
$
  \psi( v ) = \frac{ v }{ 1 + \| v \|^2_{ \R^d } }
$
for all $ v \in \R^d $
and let
$ Z \colon [0,T] \times \Omega \to \R^d $,
$ a \colon [0,T] \times \Omega \to \R^d $,
$ b \colon [0,T] \times \Omega \to HS( \R^m, \R^d ) $
be given by
\begin{equation}
  Z_t
  =
    \mu\big( 
      Y_{ 
        \lfloor t \rfloor_{ t_0 , t_1, \dots, t_n }
      }
    \big)
    \left( 
      t - \lfloor t \rfloor_{ t_0 , t_1, \dots, t_n }
    \right)
    +
    \sigma\big( 
      Y_{ 
        \lfloor t \rfloor_{ t_0 , t_1, \dots, t_n }
      }
    \big)
    \big( 
      W_t
      -
      W_{ 
        \lfloor t \rfloor_{ t_0 , t_1, \dots, t_n }
      }
    \big)
  ,
\end{equation}
\begin{equation}
\begin{split}
&
  a_t 
= 
  \psi'\!\left(
    Z_t
  \right)
    \mu( 
      Y_{ 
        \lfloor t \rfloor_{ t_0 , t_1, \dots, t_n }
      }
    )
  +
  \tfrac{ 1 }{ 2 }
  \smallsum\limits_{ j = 1 }^{ m }
  \psi''\!\left(
    Z_t
  \right)\!
  \left(
    \sigma( 
      Y_{ 
        \lfloor t \rfloor_{ t_0 , t_1, \dots, t_n }
      }
    )
    \,
    e_j^{ (m) }
    ,
    \sigma( 
      Y_{ 
        \lfloor t \rfloor_{ t_0 , t_1, \dots, t_n }
      }
    )
    \,
    e_j^{ (m) }
  \right)
\end{split}
\end{equation}
and
$
  b_t = 
  \psi'\!\left(
    Z_t
  \right)
    \sigma( 
      Y_{ 
        \lfloor t \rfloor_{ t_0 , t_1, \dots, t_n }
      }
    )
$
for all $ t \in [0,T] $.
W.l.o.g.\ we assume that
\begin{equation}
\label{eq:wlog_numeric}
    \smallint_{0}^{ \nu \wedge \tau }
  \Big[
  \tfrac{
    \left< 
      X_s - Y_s , \mu( X_s ) - \mu( Y_s ) 
    \right>_{ \R^d }
    +
    \frac{ ( p - 1 ) \, ( 1 + \varepsilon ) }{ 2 }
    \left\|
      \sigma( X_s ) - \sigma( Y_s )
    \right\|^2_{ HS( \R^m , \R^d ) }
  }{
    \left\| X_s - Y_s \right\|^2_{ \R^d }
  }
  \Big]^+
    ds
  < \infty
\end{equation}
$ \P $-a.s.\ (otherwise estimate 
\eqref{eq:strong_convergence_rate_estimate} 
is clear).
It\^{o}'s formula then proves that
$
  Y_t
  =
  Y_{ t \wedge \tau }
  =
  X_0 
  +
  \int_{0}^t
  \mathbbm{1}_{
    \{ s < \tau \}
  }
  \,
  a_s \, ds
  +
  \int_{0}^t
  \mathbbm{1}_{
    \{ s < \tau \}
  }
  \,
  b_s \, dW_s
$
$ \P $-a.s.\ for all $ t \in [0,T] $.
Next note that
Lemma~7.7 in Da Prato \& Zabczyk~\cite{dz92}
proves that
for all $ r \in [2,\infty) $ it holds that
\begin{equation}
\label{eq:Z_estimate}
\begin{split}
&
  \left\|
    Z_t
  \right\|_{
    L^r( \Omega; \R^d )
  }
\\ & \leq
  \sup_{ u \in [0,T] }
  \left[
  \sqrt{ T }
  \,
  \big\|
    \mu(
      Y_u
    )
  \big\|_{
    L^r( \Omega; \R^d )
  }
  +
  \tfrac{
    \sqrt{ r \, \left( r - 1 \right) }
  }{
    \sqrt{ 2 }
  }
    \|
      \sigma(
        Y_u
      )
    \|_{
      L^r( \Omega; HS( \R^m, \R^d ) )
    }
  \right]
  \left[ 
    \max_{ 0 \leq k \leq n - 1 }
    | t_{ k + 1 } - t_k |
  \right]^{ \frac{ 1 }{ 2 } }
  .
\end{split}
\end{equation}
Combining 
Lemma~\ref{lem:psi_estimate}
with the estimate 
$
  \frac{ 
    \| v \|_{ \R^d } 
  }{
    1 + \| v \|_{ \R^d }^2 
  }
  \leq 
  \| v \|_{ \R^d }
$
for all $ v \in \R^d $
shows that
for all $ t \in [0,T] $ it holds that
\begin{equation}
\label{eq:mu_est}
\begin{split}
&
  \big\|
    a_t 
    -
    \mu( 
      Y_t 
    )
  \big\|_{ \R^d }
\\ & \leq
  \big\|
    \psi'( Z_t ) - I_{ \R^d }
  \big\|_{ L( \R^d ) }
  \,
  \big\|
    \mu\big(
      Y_{ 
        \lfloor t \rfloor_{ t_0 , t_1, \dots, t_n }
      }
    \big)
  \big\|_{ \R^d }
  +
  \big\|
    \mu\big( 
      Y_{
        \lfloor t \rfloor_{ t_0 , t_1 , \dots, t_n }
      }
    \big)
    -
    \mu\big( Y_t \big)
  \big\|_{ \R^d }
\\ &
  +
  \tfrac{ 1 }{ 2 }
  \,
  \big\|
    \sigma\big(
      Y_{
        \lfloor t \rfloor_{ t_0 , t_1, \dots, t_n }
      }
    \big)
  \big\|_{ HS( \R^m, \R^d ) }^2
  \left[
  \sup\nolimits_{ 
    u \in \R^d 
    ,
    \,
    \| u \|_{ \R^d } \leq 1
  }
  \left\|
    \psi''( Z_t )( u, u )
  \right\|_{
    \R^d
  }
  \right]
\\ & \leq
  \| Z_t \|_{ \R^d }
  \left[ 
  \,
  3
  \,
  \big\|
    \mu\big(
      Y_{ 
        \lfloor t \rfloor_{ t_0 , t_1, \dots, t_n }
      }
    \big)
  \big\|_{ \R^d }
  +
  7 \,
  \big\|
    \sigma\big(
      Y_{
        \lfloor t \rfloor_{ t_0 , t_1, \dots, t_n }
      }
    \big)
  \big\|_{ HS( \R^m, \R^d ) }^2
  +
  \big|
    \phi\big(
      Y_t
    \big)
    +
    \phi\big(
      Y_{
        \lfloor t \rfloor_{ t_0 , t_1, \dots, t_n }
      }
    \big)
  \big|
  \,
  \right]
\end{split}
\end{equation}
and
\begin{equation}
\label{eq:sigma_est}
\begin{split}
&
  \big\|
    b_t
    -
    \sigma( Y_t )
  \big\|_{ HS( \R^m, \R^d ) }
\\ & \leq
  \big\|
    \psi'( Z_t ) - I_{ \R^d }
  \big\|_{ L( \R^d ) }
  \,
  \big\|
    \sigma\big(
      Y_{ 
        \lfloor t \rfloor_{ t_0 , t_1, \dots, t_n }
      }
    \big)
  \big\|_{ HS( \R^m, \R^d ) }
  +
  \big\|
    \sigma\big( 
      Y_{
        \lfloor t \rfloor_{ t_0 , t_1, \dots, t_n }
      }
    \big)
    -
    \sigma\big( 
      Y_t
    \big)
  \big\|_{ 
    HS( \R^m, \R^d )
  }
\\ & \leq
  \| Z_t \|_{ \R^d }
  \,
  \Big[
  3 \,
  \big\|
    \sigma\big(
      Y_{ 
        \lfloor t \rfloor_{ t_0 , t_1, \dots, t_n }
      }
    \big)
  \big\|_{ HS( \R^m, \R^d ) }
  +
  \big|
    \phi\big(
      Y_t
    \big)
    +
    \phi\big(
      Y_{
        \lfloor t \rfloor_{ t_0 , t_1, \dots, t_n }
      }
    \big)
  \big|
  \,
  \Big]
  .
\end{split}
\end{equation}
Combining \eqref{eq:mu_est} and \eqref{eq:sigma_est}
with \eqref{eq:Z_estimate}
proves that
for all $ p, u \in [2,\infty) $,
$ v \in (2,\infty] $
with $ \frac{ 1 }{ u } + \frac{ 1 }{ v } = \frac{ 1 }{ p } $
it holds that
\begin{equation}
\label{eq:residuum_mu}
\begin{split}
&
  \left\|
    a - \mu( Y )
  \right\|_{
    L^p( 
      \llbracket 0,\tau \rrbracket ; \R^d
    )
  }
\leq
  14 \, T^{ \frac{ 1 }{ p } }
  \left[ 
  \sup_{ s \in [0,T] }
    \big\|
      \|
        \mu(
          Y_s
        )
      \|_{ \R^d }
      +
      \|
        \sigma(
          Y_s
        )
      \|_{ HS( \R^m, \R^d ) }^2
      +
      |
        \phi(
          Y_s
        )
      |
    \big\|_{
      L^v( \Omega; \R )
    }
  \right]
\\ & \quad
  \cdot
  \sup_{ s \in [0,T] }
  \left[
  \sqrt{ T }
  \,
  \big\|
    \mu(
      Y_s
    )
  \big\|_{
    L^u( \Omega; \R^d )
  }
  +
  \tfrac{
    \sqrt{ u \, \left( u - 1 \right) } \,
    \|
      \sigma(
        Y_s
      )
    \|_{
      L^u( \Omega; HS( \R^m, \R^d ) )
    }
  }{
    \sqrt{ 2 }
  }
  \right]
  \left[ 
    \max_{ 0 \leq k \leq n - 1 }
    | t_{ k + 1 } - t_k |
  \right]^{ \frac{ 1 }{ 2 } }
\end{split}
\end{equation}
and
\begin{equation}
\label{eq:residuum_sigma}
\begin{split}
&
  \left\|
    b - \sigma( Y )
  \right\|_{
    L^p( 
      \llbracket 0 , \tau \rrbracket ; HS( \R^m, \R^d )
    )
  }
\leq
  6 \, T^{ \frac{ 1 }{ p } }
  \left[
    \sup_{ s \in [0,T] }
    \big\|
      \|
        \sigma(
          Y_s
        )
      \|_{
        HS( \R^m , \R^d )
      }
      +
      |
        \phi( Y_s )
      |
    \big\|_{ 
      L^v( \Omega; \R ) 
    }
  \right]
%   \left\|
%     Z_t
%   \right\|_{ L^r( \Omega; \R^d ) }
\\ & \quad
  \cdot
  \sup_{ s \in [0,T] }
  \left[
  \sqrt{ T }
  \,
  \big\|
    \mu(
      Y_s
    )
  \big\|_{
    L^u( \Omega; \R^d )
  }
  +
  \tfrac{
    \sqrt{ u \, \left( u - 1 \right) }
    \,
    \|
      \sigma(
        Y_s
      )
    \|_{
      L^u( \Omega; HS( \R^m, \R^d ) )
    }
  }{
    \sqrt{ 2 }
  }
  \right]
  \left[ 
    \max_{ 0 \leq k \leq n - 1 }
    | t_{ k + 1 } - t_k |
  \right]^{ \frac{ 1 }{ 2 } }
  .
\end{split}
\end{equation}
Corollary~\ref{cor:norm2}
together with 
\eqref{eq:wlog_numeric},
\eqref{eq:residuum_mu}
and \eqref{eq:residuum_sigma} implies that
for all 
$ \varepsilon, \delta, \rho, r \in (0,\infty) $,
$ p \in [2,\infty) $,
$ q, u, v \in (0,\infty] $
with 
$
  \frac{ 1 }{ p } + \frac{ 1 }{ q } = \frac{ 1 }{ r }
$
and
$
  \frac{ 1 }{ u } + \frac{ 1 }{ v } = \frac{ 1 }{ p }
$
it holds that
\begin{align}
\label{eq:stopped_tamed_proof1}
&
  \big\| X_{ \nu \wedge \tau } - Y_{ \nu \wedge \tau }
  \big\|_{
    L^r( \Omega; \R^d )
  }
\\ & \leq
  \left\|
  \exp\!\left(
    \smallint_{0}^{ \nu \wedge \tau }
  \bigg[
  \tfrac{
    \left< 
      X_s - Y_s , \mu( X_s ) - \mu( Y_s ) 
    \right>_{ \R^d }
    +
    \frac{ ( p - 1 ) \, ( 1 + \varepsilon ) }{ 2 }
    \left\|
      \sigma( X_s ) - \sigma( Y_s )
    \right\|^2_{ HS( \R^m, \R^d ) }
  }{
    \left\| X_s - Y_s \right\|^2_{ \R^d }
  }
  +
  \tfrac{
      ( 1 - \frac{ 1 }{ p } ) 
  }{
      \delta
  }
      +
  \tfrac{
      ( \frac{ 1 }{ 2 } - \frac{ 1 }{ p } )
  }{
      \rho
  }
  \bigg]^+
  \!
    ds
  \right)
  \right\|_{
    L^q( \Omega; \R )
  }
\nonumber
\\ & \cdot
\nonumber
    6 \, T^{ \frac{ 1 }{ p } }
  \Bigg[
    \frac{ 
      7 
      \delta^{
        ( 1 - \frac{ 1 }{ p } )
      }
    }{ 3 }
  \bigg[ 
    \sup_{ s \in [0,T] }
    \big\|
      \|
        \mu(
          Y_s
        )
      \|_{ \R^d }
      +
      \|
        \sigma(
          Y_s
        )
      \|_{ HS( \R^m, \R^d ) }^2
      +
      |
        \phi(
          Y_s
        )
      |
    \big\|_{
      L^v( \Omega; \R )
    }
  \bigg]
\\ &
    +
      \rho^{ ( \frac{ 1 }{ 2 } - \frac{ 1 }{ p } ) }
      \sqrt{ ( p - 1 ) \, ( 1 + 1 / \varepsilon ) }
    \bigg[
      \sup_{ s \in [0,T] }
      \big\|
      \|
        \sigma(
          Y_s
        )
      \|_{
        HS( \R^m , \R^d )
      }
      +
      |
        \phi( Y_s )
      |
      \big\|_{ 
        L^v( \Omega; \R ) 
      }
    \bigg]
  \Bigg]
\nonumber
\\ &
\nonumber
  \cdot
  \sup_{ s \in [0,T] }
  \left[
  \sqrt{ T }
  \,
  \big\|
    \mu(
      Y_s
    )
  \big\|_{
    L^u( \Omega; \R^d )
  }
  +
  \tfrac{
    \sqrt{ u \, \left( u - 1 \right) }
    \,
    \|
      \sigma(
        Y_s
      )
    \|_{
      L^u( \Omega; HS( \R^m, \R^d ) )
    }
  }{
    \sqrt{ 2 }
  }
  \right]
  \left[ 
    \max_{ 0 \leq k \leq n - 1 }
    | t_{ k + 1 } - t_k |
  \right]^{ \frac{ 1 }{ 2 } }
  .
\end{align}
The choice $ \rho = 1 $ and $ \delta = 2 $ in 
\eqref{eq:stopped_tamed_proof1}
shows that
for all 
$ \varepsilon, r \in (0,\infty) $,
$ p \in [2,\infty) $,
$ q, u, v \in (0,\infty] $
with 
$
  \frac{ 1 }{ p } + \frac{ 1 }{ q } = \frac{ 1 }{ r }
$
and
$
  \frac{ 1 }{ u } + \frac{ 1 }{ v } = \frac{ 1 }{ p }
$
it holds that
\begin{align}
&
  \big\| X_{ \nu \wedge \tau } - Y_{ \nu \wedge \tau }
  \big\|_{
    L^r( \Omega; \R^d )
  }
\\ & \leq
  \left\|
  \exp\!\left(
    \smallint_{0}^{ \nu \wedge \tau }
  \bigg[
  \tfrac{
    \left< 
      X_s - Y_s , \mu( X_s ) - \mu( Y_s ) 
    \right>_{ \R^d }
    +
    \frac{ ( p - 1 ) \, ( 1 + \varepsilon ) }{ 2 }
    \left\|
      \sigma( X_s ) - \sigma( Y_s )
    \right\|^2_{ HS( \R^m, \R^d ) }
  }{
    \left\| X_s - Y_s \right\|^2_{ \R^d }
  }
  +
  1
  -
  \tfrac{ 3 }{ 2 p }
  \bigg]^+
  \!
    ds
  \right)
  \right\|_{
    L^q( \Omega; \R )
  }
\nonumber
\\ & \cdot
\nonumber
    6 \, T^{ \frac{ 1 }{ p } }
  \Bigg[
    \frac{ 
      7 
      \cdot
      2^{
        ( 1 - \frac{ 1 }{ p } )
      }
    }{ 3 }
    +
      \sqrt{ ( p - 1 ) \, ( 1 + 1 / \varepsilon ) }
  \Bigg]
\\ &
  \cdot
  \bigg[ 
    \sup_{ s \in [0,T] }
    \big\|
      \|
        \mu(
          Y_s
        )
      \|_{ \R^d }
      +
      [
      1 \vee
      \|
        \sigma(
          Y_s
        )
      \|_{ HS( \R^m, \R^d ) }
      ]^2
      +
      |
        \phi(
          Y_s
        )
      |
    \big\|_{
      L^v( \Omega; \R )
    }
  \bigg]
\nonumber
\\ &
\nonumber
  \cdot
  \sup_{ s \in [0,T] }
  \left[
  \sqrt{ T }
  \,
  \big\|
    \mu(
      Y_s
    )
  \big\|_{
    L^u( \Omega; \R^d )
  }
  +
  \tfrac{
    \sqrt{ u \, \left( u - 1 \right) }
    \,
    \|
      \sigma(
        Y_s
      )
    \|_{
      L^u( \Omega; HS( \R^m, \R^d ) )
    }
  }{
    \sqrt{ 2 }
  }
  \right]
  \left[ 
    \max_{ 0 \leq k \leq n - 1 }
    | t_{ k + 1 } - t_k |
  \right]^{ \frac{ 1 }{ 2 } }
  .
\end{align}
This implies
that
for all 
$ \varepsilon, r \in (0,\infty) $,
$ p \in [2,\infty) $,
$ q, u, v \in (0,\infty] $
with 
$
  \frac{ 1 }{ p } + \frac{ 1 }{ q } = \frac{ 1 }{ r }
$
and
$
  \frac{ 1 }{ u } + \frac{ 1 }{ v } = \frac{ 1 }{ p }
$
it holds that
\begin{align}
&
  \big\| X_{ \nu \wedge \tau } - Y_{ \nu \wedge \tau }
  \big\|_{
    L^r( \Omega; \R^d )
  }
\\ & \leq
  \left\|
  \exp\!\left(
  T 
  \left[
    1
    -
    \tfrac{ 3 }{ 2 p }
  \right]
  +
    \smallint_{0}^{ \nu \wedge \tau }
  \bigg[
  \tfrac{
    \left< 
      X_s - Y_s , \mu( X_s ) - \mu( Y_s ) 
    \right>_{ \R^d }
    +
    \frac{ ( p - 1 ) \, ( 1 + \varepsilon ) }{ 2 }
    \left\|
      \sigma( X_s ) - \sigma( Y_s )
    \right\|^2_{ HS( \R^m, \R^d ) }
  }{
    \left\| X_s - Y_s \right\|^2_{ \R^d }
  }
  \bigg]^+
  \!
    ds
  \right)
  \right\|_{
    L^q( \Omega; \R )
  }
\nonumber
\\ &
  \cdot
  30 \, p \left( 1 + \tfrac{ 1 }{ \varepsilon } \right) T^{ \frac{ 1 }{ p } }
  \bigg[ 
    \sup_{ s \in [0,T] }
    \big\|
      \|
        \mu(
          Y_s
        )
      \|_{ \R^d }
      +
      [
      1 \vee
      \|
        \sigma(
          Y_s
        )
      \|_{ HS( \R^m, \R^d ) }
      ]^2
      +
      |
        \phi(
          Y_s
        )
      |
    \big\|_{
      L^v( \Omega; \R )
    }
  \bigg]
\nonumber
\\ &
\nonumber
  \cdot
  \sup_{ s \in [0,T] }
  \left[
  \sqrt{ T }
  \,
  \big\|
    \mu(
      Y_s
    )
  \big\|_{
    L^u( \Omega; \R^d )
  }
  +
  \tfrac{
    \sqrt{ u \, \left( u - 1 \right) }
    \,
    \|
      \sigma(
        Y_s
      )
    \|_{
      L^u( \Omega; HS( \R^m, \R^d ) )
    }
  }{
    \sqrt{ 2 }
  }
  \right]
  \left[ 
    \max_{ 0 \leq k \leq n - 1 }
    | t_{ k + 1 } - t_k |
  \right]^{ \frac{ 1 }{ 2 } }
  .
\end{align}
This completes the proof of Lemma~\ref{lem:ST}.
\end{proof}

Lemma~\ref{lem:ST} is only of use if the right-hand side
of \eqref{eq:strong_convergence_rate_estimate} is finite.
The next result (Proposition~\ref{prop:ST2}), in particular, provides
sufficient conditions to ensure that the right-hande side
of \eqref{eq:strong_convergence_rate_estimate} is finite 
and thereby establish strong convergence rates 
for the 
stopped-tamed Euler-Maruyama approximations
in \cite{HutzenthalerJentzenWang2013}.

\begin{prop}
\label{prop:ST2}
Let $ d, m \in \N $, 
$ r, \varepsilon, c, T \in (0,\infty) $, 
$ q_0, q_1 \in (0,\infty] $,
$ \alpha \in [0,\infty) $,
$ p \in [2,\infty) $,
$
  U_0 \in \mathcal{C}^3_{ \mathcal{D} }( \R^d, [0,\infty) )
$,
$ 
  U_1 \in \mathcal{C}^1_{ \mathcal{P} }( \R^d, [0,\infty) ) 
$,
$ 
  \mu \in \mathcal{C}^1_{ \mathcal{P} }( \R^d , \R^d )
$,
$
  \sigma \in \mathcal{C}^1_{ \mathcal{P} }( \R^d , \R^{ d \times m } )
$
satisfy 
$
  \frac{ 1 }{ p } + \frac{ 1 }{ q_0 } + \frac{ 1 }{ q_1 } = \frac{ 1 }{ r } 
$,
$ \| x \|^{ 1 / c } \leq c \left( 1 + U_0( x ) \right) $
and
{\small
\begin{equation*}
\begin{array}{c}
  ( \mathcal{G}_{ \mu, \sigma } U_0 )( x )
  +
  \tfrac{ 1 }{ 2 } 
  \, \| \sigma( x )^* ( \nabla U_0 )( x ) \|^2_{ \R^d } 
  +
  U_1( x )
\leq
  \alpha \, U_0( x )
  +
  c
  ,
\\[1ex]
    \left< 
      x - y , \mu( x ) - \mu( y ) 
    \right>_{ \R^d }
    +
    \tfrac{ ( p - 1 ) \, ( 1 + \varepsilon ) }{ 2 }
    \left\|
      \sigma( x ) - \sigma( y )
    \right\|^2_{ HS( \R^m , \R^d ) }
\leq
  \left[
    c
    +
    \tfrac{
      U_0( x ) + U_0( y )
    }{
      2 q_0 T e^{ \alpha T }
    }
    +
    \tfrac{
      U_1( x ) + U_1( y )
    }{
      2 q_1 e^{ \alpha T }
    }
  \right]
    \left\| x - y \right\|^2_{ \R^d }
\end{array}
\end{equation*}}for 
all $ x, y \in \R^d $,
let 
$ 
  ( 
    \Omega, \mathcal{F}, \P, 
    ( \mathcal{F}_t )_{ t \in [ 0 , T ] } 
  ) 
$
be a stochastic basis,
let
$
  W \colon [0,T] \times \Omega \to \R^m
$
be a standard 
$ ( \mathcal{F}_t )_{ t \in [0,T] } $-Brownian motion,
let
$ 
  X \colon [ 0, T ] \times \Omega \to \R^d 
$
and 
$ 
  Y^{ \theta } \colon [ 0 , T ] \times \Omega \to \R^d 
$,
$ \theta \in \mathcal{P}_T $,
be adapted stochastic processes with 
c.s.p.,
$
  \E\big[
    e^{ U_0( X_0 ) }
  \big]
  < \infty
$,
$
  X_t = 
  X_0 + \int_{0}^t \mu( X_s ) \, ds +
  \int_{0}^t \sigma( X_s ) \, dW_s
$
$ \P $-a.s.\ for all $ t \in [0,T] $
and
$ Y_0^{ \theta } = X_0 $
and
\begin{equation}
  Y_t^{ \theta }
  = 
  Y_{ t_k }^{ \theta }
  +
  \mathbbm{1}_{
    \left\{
      \| Y_{ t_k }^{ \theta } \|_{ \R^d }
      \,
      < 
      \,
      \exp(
        \,
        | 
          \!
          \ln( 
            \max_{ 0 \leq i \leq n - 1 }
            t_{ i + 1 } - t_i 
          ) 
        |^{ 1 / 2 }
        \,
      )
    \right\}
  }
  \left[
  \tfrac{
    \mu( Y_{ t_k }^{ \theta } ) \left( t - t_k \right)
    +
    \sigma( Y_{ t_k }^{ \theta } )
    ( W_t - W_{ t_k } )
  }{
    1 +
    \|
      \mu( Y_{ t_k }^{ \theta } ) \left( t - t_k \right)
      +
      \sigma( Y_{ t_k }^{ \theta } )
      ( W_t - W_{ t_k } )
    \|^2_{ \R^d }
  }
  \right]
\end{equation}
for all $ t \in [ t_k, t_{ k + 1 } ] $,
$ k \in \{ 0, 1, \dots, n - 1 \} $,
$ \theta = ( t_0 , \dots, t_n ) \in \mathcal{P}_T $,
$ n \in \N $.
Then there exists a real number $ C \in [0,\infty) $
such that 
for all $ n \in \N $
and all
$ \theta = (  t_0 , t_1, \dots, t_n ) \in \mathcal{P}_T $
it holds that
\begin{align}
&
  \sup_{ t \in [0,T] }
  \big\| 
    X_t - Y^{ \theta }_t
  \big\|_{
    L^r( \Omega; \R^d )
  }
\leq
  C
  \,
  \big[
    \max\nolimits_{ 
      k \in \{ 0, 1, \dots, n - 1 \} 
    }
    \left| t_{ k + 1 } - t_k \right|
  \big]^{
    \frac{ 1 }{ 2 }
  }
  \,
  .
\end{align}
\end{prop}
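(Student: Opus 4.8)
The plan is to deduce Proposition~\ref{prop:ST2} from Lemma~\ref{lem:ST} (which rests on Corollary~\ref{cor:norm2}) by specializing, for each partition $\theta=(t_0,\dots,t_n)\in\mathcal{P}_T$, the set $\mathcal{O}$ in Lemma~\ref{lem:ST} to the open ball $\{x\in\R^d\colon\|x\|_{\R^d}<R_\theta\}$ with $R_\theta:=\exp(|\ln(\max_{0\le i\le n-1}(t_{i+1}-t_i))|^{1/2})$, so that the process $Y$ and the exit time $\tau$ there coincide with $Y^\theta$ and with $\tau_\theta:=\inf(\{T\}\cup\{t_k\colon\|Y^\theta_{t_k}\|_{\R^d}\ge R_\theta\})$, and to the function $\phi(x)=c(1+\|x\|_{\R^d}^c)\in\mathcal{C}^1_{\mathcal P}(\R^d,[0,\infty))$, with $c$ enlarged so that this $\phi$ is a valid (polynomially growing) local-Lipschitz majorant for the $\mathcal{C}^1_{\mathcal P}$-functions $\mu$ and $\sigma$. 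The right-hand side of \eqref{eq:strong_convergence_rate_estimate} is then the mesh size to the power $\tfrac12$ times an exponential-moment factor times several $L^u$/$L^v$-moment factors involving $\mu(Y^\theta)$, $\sigma(Y^\theta)$ and $\phi(Y^\theta)$; hence it suffices to bound all these factors uniformly in $\theta$ (and, for the step below, uniformly over the stopping time $\nu$). The common engine is exponential integrability of $X$ and of $Y^\theta$. For $X$, the Lyapunov hypothesis $(\mathcal{G}_{\mu,\sigma}U_0)(x)+\tfrac12\|\sigma(x)^*(\nabla U_0)(x)\|^2_{\R^d}+U_1(x)\le\alpha U_0(x)+c$ together with It\^o's formula and $e^{-\alpha t}\le 1$ shows that $\exp\!\big(e^{-\alpha t}U_0(X_t)+\int_0^t e^{-\alpha s}U_1(X_s)\,ds-c\int_0^t e^{-\alpha s}\,ds\big)$ is a nonnegative local martingale, hence a supermartingale, so that $\sup_{t\in[0,T]}\E\big[\exp(e^{-\alpha t}U_0(X_t)+\int_0^t e^{-\alpha s}U_1(X_s)\,ds)\big]\le e^{cT}\,\E[e^{U_0(X_0)}]<\infty$. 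For $Y^\theta$ the analogous bound, \emph{uniform in $\theta$}, is the exponential-integrability property of the stopped-tamed Euler--Maruyama scheme from \cite{HutzenthalerJentzenWang2013}, which I will invoke; this is the main technical input of the proof.

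Granting these exponential moments, the remaining estimates are essentially routine. First, since $\mu,\sigma,\phi$ grow at most polynomially and $\|x\|_{\R^d}^{1/c}\le c(1+U_0(x))$, every polynomial moment of $X_s$ and of $Y^\theta_s$ is dominated (using $e^{-\alpha s}\ge e^{-\alpha T}$ and $\mathrm{poly}(z)\le Ce^{z}$ for $z\ge0$) by $\E[\exp(e^{-\alpha s}U_0(\cdot_s))]$ and is thus bounded uniformly in $s$ and $\theta$; taking, e.g., $u=v=2p$ (admissible since $p\ge2$) this bounds all the $L^u$/$L^v$-factors in \eqref{eq:strong_convergence_rate_estimate}. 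Second, for the exponential factor, the local monotonicity hypothesis gives, on $\llbracket 0,\nu\wedge\tau_\theta\rrbracket$,
\[
  \Big[\tfrac{\langle X_s-Y^\theta_s,\mu(X_s)-\mu(Y^\theta_s)\rangle_{\R^d}+\frac{(p-1)(1+\varepsilon)}{2}\|\sigma(X_s)-\sigma(Y^\theta_s)\|^2_{HS(\R^m,\R^d)}}{\|X_s-Y^\theta_s\|^2_{\R^d}}\Big]^+\le c+\tfrac{U_0(X_s)+U_0(Y^\theta_s)}{2q_0Te^{\alpha T}}+\tfrac{U_1(X_s)+U_1(Y^\theta_s)}{2q_1e^{\alpha T}},
\]
so the exponent is at most $cT$ plus $\frac{1}{2q_0Te^{\alpha T}}\int_0^T(U_0(X_s)+U_0(Y^\theta_s))\,ds+\frac{1}{2q_1e^{\alpha T}}\int_0^T(U_1(X_s)+U_1(Y^\theta_s))\,ds$. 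Putting $\tfrac1q=\tfrac1{q_0}+\tfrac1{q_1}$ (so $\tfrac1p+\tfrac1q=\tfrac1r$ and $q_0/q,q_1/q\ge1$), I would apply H\"older's inequality with exponents $q_0/q$ and $q_1/q$, then Jensen's inequality on the normalized measure $ds/T$ on $[0,T]$ and convexity of $\exp$, and finally $e^{-\alpha T}\le e^{-\alpha s}$ with $U_0\ge0$; this bounds $\|\exp(\int_0^{\nu\wedge\tau_\theta}[\cdots]^+\,ds)\|_{L^q}$ by $e^{cT}$ times a power of the exponential moments of $X$ and $Y^\theta$, uniformly in $\theta$ and $\nu$ (when $q_0=q_1=\infty$ one has $r=p$ and the exponent is $\le cT$ pathwise, so the $L^\infty$-norm is $e^{cT}$). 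Feeding these uniform bounds into \eqref{eq:strong_convergence_rate_estimate} with $\nu=t$ produces a constant $C_1\in[0,\infty)$, independent of $t$ and $\theta$, with $\|X_{t\wedge\tau_\theta}-Y^\theta_{t\wedge\tau_\theta}\|_{L^r(\Omega;\R^d)}\le C_1[\max_k(t_{k+1}-t_k)]^{1/2}$.

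It remains to remove the stopping. For each $t$, on $\{\tau_\theta\ge t\}$ one has $X_t-Y^\theta_t=X_{t\wedge\tau_\theta}-Y^\theta_{t\wedge\tau_\theta}$, already controlled, while H\"older's inequality gives $\|(X_t-Y^\theta_t)\mathbbm{1}_{\{\tau_\theta<t\}}\|_{L^r}\le\|X_t-Y^\theta_t\|_{L^{2r}}\,\P(\tau_\theta<T)^{1/(2r)}$, with $\|X_t-Y^\theta_t\|_{L^{2r}}$ uniformly bounded by the polynomial-moment bounds above. Since $Y^\theta$ can leave the ball of radius $R_\theta$ only at a grid point, $\P(\tau_\theta<T)\le\P(\sup_{0\le k\le n}\|Y^\theta_{t_k}\|_{\R^d}\ge R_\theta)$; using $U_0(x)\ge\tfrac1c\|x\|_{\R^d}^{1/c}-1$ and the maximal inequality for the nonnegative supermartingale $\exp(e^{-\alpha t_k}U_0(Y^\theta_{t_k})+\int_0^{t_k}e^{-\alpha s}U_1(Y^\theta_s)\,ds)$ (whose expectation is bounded uniformly in $\theta$), this probability is at most $C\exp(-e^{-\alpha T}(\tfrac1cR_\theta^{1/c}-1))$. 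As $R_\theta^{1/c}=\exp(\tfrac1c|\ln h|^{1/2})$ with $h:=\max_k(t_{k+1}-t_k)\in(0,T]$ grows faster than every power of $|\ln h|$ when $h\downarrow0$, this bound decays faster than every power of $h$ for small $h$ and is $\le1\le h_0^{-1/2}h^{1/2}$ for $h$ bounded below; in either regime it is $\le C'h^{1/2}$. Combining the three contributions yields $\sup_{t\in[0,T]}\|X_t-Y^\theta_t\|_{L^r(\Omega;\R^d)}\le C[\max_k(t_{k+1}-t_k)]^{1/2}$ with $C$ independent of $\theta$, which is the assertion. The main obstacle is the uniform-in-$\theta$ exponential integrability of the stopped-tamed scheme imported from \cite{HutzenthalerJentzenWang2013}, with the $\P(\tau_\theta<T)$ tail bound a close second.
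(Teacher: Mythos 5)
Your proposal follows essentially the same route as the paper's proof: specialize Lemma~\ref{lem:ST} to the exit time of the ball of radius $\exp(|\ln h|^{1/2})$ (with $h=\max_k|t_{k+1}-t_k|$) and a polynomially growing majorant $\phi$, take $u=v=2p$, bound the exponential factor through the local monotonicity hypothesis followed by H\"older and the Jensen-type estimate, control the exponential moments of $X$ (your Lyapunov/supermartingale argument is exactly what Corollary~2.4 in Cox et al.~\cite{CoxHutzenthalerJentzen2013} provides) and of $Y^\theta$ uniformly in $\theta$ via Corollary~2.9 in \cite{HutzenthalerJentzenWang2013}, and finally remove the stopping by H\"older together with a super-polynomially small bound on $\P[\tau_\theta<T]$. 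The one step you should adjust is the tail estimate: the discrete-time process $\exp\big(e^{-\alpha t_k}U_0(Y^\theta_{t_k})+\int_0^{t_k}e^{-\alpha s}U_1(Y^\theta_s)\,ds\big)$ is not known to be a supermartingale for the stopped-tamed scheme (establishing such exponential bounds, with error terms, is precisely the content of \cite{HutzenthalerJentzenWang2013}), so the maximal inequality is not justified as stated; it is also unnecessary, because once $\|Y^\theta_{t_k}\|_{\R^d}$ exceeds the threshold the indicator kills all further increments and $Y^\theta$ freezes, whence $\{\tau_\theta<T\}\subseteq\{\|Y^\theta_T\|_{\R^d}\ge \exp(|\ln h|^{1/2})\}$, and Markov's inequality at the terminal time combined with the uniform exponential moment bound you already invoke yields the required decay faster than any power of $h$ — this is exactly how the paper argues, using in addition the elementary estimate $\tfrac{1}{4!}x^4\le e^x$ to make the decay quantitative.
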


\begin{proof}[Proof
of Proposition~\ref{prop:ST2}]
Throughout this proof let
$ q \in (0,\infty] $ be given by
$
  \frac{ 1 }{ q } = \frac{ 1 }{ q_0 } + \frac{ 1 }{ q_1 }
$
and 
let
$
  \tau_{ \theta } \colon \Omega \to [ 0, T ]
$,
$ \theta \in \mathcal{P}_T $,
be mappings given by
\begin{equation}
  \tau_{ \theta }
= 
  \inf\!\left( 
    \{ T \}
    \cup
    \left\{
      t \in \{  t_0 , t_1, \dots, t_n \}
      \colon
      \| Y_t^{ \theta } \|_{ \R^d } 
      \geq
      \exp\!\left(
        \left| 
          \ln\!\left( 
            \max_{ i \in \{ 0, 1, \dots, n - 1 \} }
            \left[
              t_{ i + 1 } - t_i 
            \right]
          \right) 
        \right|^{ 1 / 2 }
      \right)
    \right\}
  \right)
\end{equation}
for all $ \theta = ( t_0, t_1, \dots, t_n ) \in \mathcal{P}_T $ and all $ n \in \N $.
Next note that the assumptions
$ \mu \in \mathcal{C}^1_{ \mathcal{P} }( \R^d, \R^d ) $
and
$ \sigma \in \mathcal{C}^1_{ \mathcal{P} }( \R^d, \R^{ d \times m } ) $
ensure that there exists a real number 
$ 
  \hat{c} \in 
  \big[
    1 + \| \mu( 0 ) \|_{ \R^d } + \| \sigma( 0 ) \|_{ HS( \R^m, \R^d ) }
    ,
    \infty
  \big) 
$
such that for all $ x, y \in \R^d $ it holds that
\begin{equation}
  \left\| 
    \mu( x )
    -
    \mu( y )
  \right\|_{ \R^d }
  \vee
  \left\| 
    \sigma( x )
    -
    \sigma( y )
  \right\|_{ HS( \R^m, \R^d ) }
  \leq
  \hat{c}
  \left(
    1 + \| x \|^{ \hat{c} } + 
    \| y \|^{ \hat{c} }
  \right)
  \left\| x - y \right\|_{ \R^d } 
  .
\end{equation}
In view of this, let $ \phi \in C( \R^d , \R ) $
be given by
$
  \phi( x ) 
  = 
  4 \,
  | \hat{c} |^2 
  \,
  \big[ 
    1 + \| x \| 
  \big]^{ ( 2 \hat{c} + 2 ) }
$
for all $ x \in \R^d $
and note that for all $ x , y \in \R^d $
it holds that
$
  \max\{
  1 ,
  \| \mu( x ) \|_{ \R^d } ,
  \|
    \sigma(x)
  \|^2_{ HS( \R^m, \R^d ) }
  \}
  \leq 
  \phi( x )
$
and
$
  \left\| 
    \mu( x )
    -
    \mu( y )
  \right\|_{ \R^d }
  \vee
  \left\| 
    \sigma( x )
    -
    \sigma( y )
  \right\|_{ HS( \R^m, \R^d ) }
  \leq
  \left(
    \phi( x ) + \phi( y )
  \right)
  \left\| x - y \right\|_{ \R^d } 
$.
We can thus apply 
Lemma~\ref{lem:ST}
to obtain that
for all 
$ n \in \N $,
$ \theta = ( t_0 , t_1, \dots, t_n ) \in \mathcal{P}_T $,
$ u, v \in (0,\infty] $
with 
$
  \frac{ 1 }{ u } + \frac{ 1 }{ v } = \frac{ 1 }{ p }
$
it holds that
\begin{align}
\label{eq:numeric_proof_1}
&
  \sup_{ t \in [0,T] }
  \big\| 
    X_{ t \wedge \tau_{ \theta } } - Y^{ \theta }_{ t \wedge \tau_{ \theta } }
  \big\|_{
    L^r( \Omega; \R^d )
  }
\leq
  \sup_{ s \in [0,T] }
  \max\!\left( 1,
  \sqrt{T}
  \,
  \big\|
    \phi(
      Y^{ \theta }_s
    )
  \big\|_{
    L^v( \Omega; \R )
  }
  +
  v \,
    \|
      \phi(
        Y^{ \theta }_s
      )
    \|_{
      L^v( \Omega; \R )
    }
  \right)
\nonumber
\\ & \quad \cdot 
  30 \,
  p
  \left[
    1 + \tfrac{ 1 }{ \varepsilon }
  \right]
  e^T
  \left\|
  \exp\!\left(
    \smallint_{0}^{ \tau_{ \theta } }
  \Big[
  \tfrac{
    \langle 
      X_s - Y^{ \theta }_s , \mu( X_s ) - \mu( Y^{ \theta }_s ) 
    \rangle_{ \R^d }
    +
    \frac{ ( p - 1 ) \, ( 1 + \varepsilon ) }{ 2 }
    \|
      \sigma( X_s ) - \sigma( Y^{ \theta }_s )
    \|^2_{ HS( \R^m , \R^d ) }
  }{
    \| 
      X_s - Y^{ \theta }_s 
    \|^2_{ \R^d }
  }
  \Big]^+
    ds
  \right)
  \right\|_{
    L^q( \Omega; \R )
  }
\nonumber
\\ & \quad \cdot
  \left[ 
  \sup_{ s \in [0,T] }
    \big\|
      3
      \phi(
        Y^{ \theta }_s
      )
    \big\|_{
      L^u( \Omega; \R )
    }
  \right]
  \left[ 
    \max_{ 0 \leq k \leq n - 1 }
    | t_{ k + 1 } - t_k |
  \right]^{ \frac{ 1 }{ 2 } }
  .
\end{align}
The choice $ u = v = 2 p $ in \eqref{eq:numeric_proof_1}
shows 
that
for all 
$ n \in \N $,
$ \theta = ( t_0 , t_1, \dots, t_n ) \in \mathcal{P}_T $
it holds that
\begin{align}
&
  \sup_{ t \in [0,T] }
  \big\| 
    X_{ t \wedge \tau_{ \theta } } - Y^{ \theta }_{ t \wedge \tau_{ \theta } }
  \big\|_{
    L^r( \Omega; \R^d )
  }
\leq
  360 
  \,
  p^2 
  \left[
    1 + \tfrac{ 1 }{ \varepsilon }
  \right]
  e^{ 2 T }
  \left[ 
  \sup_{ s \in [0,T] }
    \big\|
      \phi(
        Y^{ \theta }_s
      )
    \big\|_{
      L^{ 2 p }( \Omega; \R )
    }^2
  \right]
  \left[ 
    \max_{ 0 \leq k \leq n - 1 }
    | t_{ k + 1 } - t_k |
  \right]^{ \frac{ 1 }{ 2 } }
\nonumber
\\ & \quad \cdot 
  \left\|
  \exp\!\left(
    \smallint_{0}^{ \tau_{ \theta } }
  \Big[
  \tfrac{
    \left< 
      X_s - Y^{ \theta }_s , \mu( X_s ) - \mu( Y^{ \theta }_s ) 
    \right>_{ \R^d }
    +
    \frac{ ( p - 1 ) \, ( 1 + \varepsilon ) }{ 2 }
    \left\|
      \sigma( X_s ) - \sigma( Y^{ \theta }_s )
    \right\|^2_{ HS( \R^m , \R^d ) }
  }{
    \left\| X_s - Y^{ \theta }_s \right\|^2_{ \R^d }
  }
  \Big]^+
    ds
  \right)
  \right\|_{
    L^q( \Omega; \R )
  }
  .
\end{align}
The assumptions of Proposition~\ref{prop:ST2}
hence imply 
that
for all 
$ n \in \N $,
$ \theta = ( t_0 , t_1, \dots, t_n ) \in \mathcal{P}_T $
it holds that
\begin{equation}
\begin{split}
&
  \sup_{ t \in [0,T] }
  \big\| 
    X_{ t \wedge \tau_{ \theta } } - Y^{ \theta }_{ t \wedge \tau_{ \theta } }
  \big\|_{
    L^r( \Omega; \R^d )
  }
\leq
  360
  \, p^2
  \left[
    1 + \tfrac{ 1 }{ \varepsilon }
  \right]
  e^{ ( 2 + c ) T }
  \left[ 
    \max_{ 0 \leq k \leq n - 1 }
    | t_{ k + 1 } - t_k |
  \right]^{ \frac{ 1 }{ 2 } }
\\ & \quad \cdot 
  \left[  
  \sup_{ s \in [0,T] }
    \big\|
      \phi(
        Y^{ \theta }_s
      )
    \big\|_{
      L^{ 2 p }( \Omega; \R )
    }^2
  \right]
  \left\|
  \exp\!\left(
    \smallint_{0}^{ \tau_{ \theta } }
    \tfrac{
      U_0( X_s ) + U_0( Y^{ \theta }_s )
    }{
      2 q_0 T e^{ \alpha T }
    }
    +
    \tfrac{
      U_1( X_s ) + U_1( Y^{ \theta }_s )
    }{
      2 q_1 e^{ \alpha T }
    }
    \,
    ds
  \right)
  \right\|_{
    L^q( \Omega; \R )
  }
  .
\end{split}
\end{equation}
H\"{o}lder's inequality hence shows that
for all 
$ n \in \N $,
$ \theta = ( t_0 , t_1, \dots, t_n ) \in \mathcal{P}_T $
it holds that
\begin{equation}
\begin{split}
&
  \sup_{ t \in [0,T] }
  \big\| 
    X_{ t \wedge \tau_{ \theta } } - Y^{ \theta }_{ t \wedge \tau_{ \theta } }
  \big\|_{
    L^r( \Omega; \R^d )
  }
\leq
  360
  \, p^2
  \left[
    1 + \tfrac{ 1 }{ \varepsilon }
  \right]
  e^{ ( 2 + c ) T }
  \left[ 
    \max_{ 0 \leq k \leq n - 1 }
    | t_{ k + 1 } - t_k |
  \right]^{ \frac{ 1 }{ 2 } }
\\ & \cdot 
  \left[  
  \sup_{ s \in [0,T] }
    \big\|
      \phi(
        Y^{ \theta }_s
      )
    \big\|_{
      L^{ 2 p }( \Omega; \R )
    }
  \right]
  \left\|
  \exp\!\left(
    \smallint_{0}^{ \tau_{ \theta } }
    \tfrac{
      U_0( X_s ) 
    }{
      2 q_0 T e^{ \alpha T }
    }
    \,
    ds
  \right)
  \right\|_{
    L^{ 2 q_0 }( \Omega; \R )
  }
  \left\|
  \exp\!\left(
    \smallint_{0}^{ \tau_{ \theta } }
    \tfrac{
      U_0( Y^{ \theta }_s )
    }{
      2 q_0 T e^{ \alpha T }
    }
    \,
    ds
  \right)
  \right\|_{
    L^{ 2 q_0 }( \Omega; \R )
  }
\\ & \cdot
  \left\|
  \exp\!\left(
    \smallint_{0}^{ \tau_{ \theta } }
    \tfrac{
      U_1( X_s ) 
    }{
      2 q_1 e^{ \alpha T }
    }
    \,
    ds
  \right)
  \right\|_{
    L^{ 2 q_1 }( \Omega; \R )
  }
  \left\|
  \exp\!\left(
    \smallint_{0}^{ \tau_{ \theta } }
    \tfrac{
      U_1( Y^{ \theta }_s ) 
    }{
      2 q_1 e^{ \alpha T }
    }
    \,
    ds
  \right)
  \right\|_{
    L^{ 2 q_1 }( \Omega; \R )
  }
  .
\end{split}
\end{equation}
A simple consequence of Jensen's inequality 
(see, e.g., inequality~(19) in Li~\cite{Li1994} and 
Lemma~2.21 in Cox et al.~\cite{CoxHutzenthalerJentzen2013})
hence shows that
for all 
$ n \in \N $,
$ \theta = ( t_0 , t_1, \dots, t_n ) \in \mathcal{P}_T $
it holds that
\begin{equation}
\begin{split}
&
  \sup_{ t \in [0,T] }
  \big\| 
    X_{ t \wedge \tau_{ \theta } } - Y^{ \theta }_{ t \wedge \tau_{ \theta } }
  \big\|_{
    L^r( \Omega; \R^d )
  }
\leq
  360
  \, p^2
  \left[
    1 + \tfrac{ 1 }{ \varepsilon }
  \right]
  e^{ ( 2 + c ) T }
  \left[ 
    \max_{ 0 \leq k \leq n - 1 }
    | t_{ k + 1 } - t_k |
  \right]^{ \frac{ 1 }{ 2 } }
\\ & \cdot 
  \left[  
  \sup_{ s \in [0,T] }
    \big\|
      \phi(
        Y^{ \theta }_s
      )
    \big\|_{
      L^{ 2 p }( \Omega; \R )
    }
  \right]
  \sup_{ s \in [0,T] }
  \left|
  \E\!\left[
  \exp\!\left(
    \tfrac{
      U_0( X_s ) 
    }{
      e^{ \alpha s }
    }
  \right)
  \right]
  \right|^{
    1 / ( 2 q_0 )
  }
  \sup_{ s \in [0,T] }
  \left|
  \E\!\left[
  \exp\!\left(
    \tfrac{
      U_0( Y^{ \theta }_s )
    }{
      e^{ \alpha s }
    }
  \right)
  \right]
  \right|^{
    1 / ( 2 q_0 )
  }
\\ & \cdot
  \sup_{ t \in [0,T] }
  \left|
  \E\!\left[
  \exp\!\left(
    \tfrac{
      U_0( X_t ) 
    }{
      e^{ \alpha t }
    }
    +
    \smallint_{0}^{ t \wedge \tau_{ \theta } }
    \tfrac{
      U_1( X_s ) 
    }{
      e^{ \alpha s }
    }
    \,
    ds
  \right)
  \right]
  \right|^{
    \frac{ 1 }{ 2 q_1 }
  }
  \sup_{ t \in [0,T] }
  \left|
  \E\!\left[
  \exp\!\left(
    \tfrac{
      U_0( Y^{ \theta }_t ) 
    }{
      e^{ \alpha t }
    }
    +
    \smallint_{0}^{ t \wedge \tau_{ \theta } }
    \tfrac{
      U_1( Y^{ \theta }_s ) 
    }{
      e^{ \alpha s }
    }
    \,
    ds
  \right)
  \right]
  \right|^{
    \frac{ 1 }{ 2 q_1 }
  }
  .
\end{split}
\end{equation}
Therefore, we obtain that
for all 
$ n \in \N $,
$ \theta = ( t_0 , t_1, \dots, t_n ) \in \mathcal{P}_T $
it holds that
\begin{equation}
\begin{split}
&
  \sup_{ t \in [0,T] }
  \big\| 
    X_{ t \wedge \tau_{ \theta } } - Y^{ \theta }_{ t \wedge \tau_{ \theta } }
  \big\|_{
    L^r( \Omega; \R^d )
  }
\leq
  360
  \, p^2
  \left[
    1 + \tfrac{ 1 }{ \varepsilon }
  \right]
  e^{ ( 2 + c ) T }
  \left[ 
    \max_{ 0 \leq k \leq n - 1 }
    | t_{ k + 1 } - t_k |
  \right]^{ \frac{ 1 }{ 2 } }
\\ & \cdot 
  \left[  
  \sup_{ s \in [0,T] }
    \big\|
      \phi(
        Y^{ \theta }_s
      )
    \big\|_{
      L^{ 2 p }( \Omega; \R )
    }
  \right]
  \exp\!\left(
    \smallint_{0}^T
    \tfrac{
      c
    }{
      q_0 e^{ \alpha s }
    }
    \,
    ds
    +
    \smallint_{0}^T
    \tfrac{
      c
    }{
      q_1 e^{ \alpha s }
    }
    \,
    ds
  \right)
\\ & \cdot
  \sup_{ s \in [0,T] }
  \left|
  \E\!\left[
  \exp\!\left(
    \tfrac{
      U_0( X_s ) 
    }{
      e^{ \alpha s }
    }
    +
    \smallint_{0}^s
    \tfrac{
      U_1( X_s ) - c
    }{
      e^{ \alpha u }
    }
    \, du
  \right)
  \right]
  \right|^{
    \frac{ 1 }{ 2 q }
  }
  \sup_{ s \in [0,T] }
  \left|
  \E\!\left[
  \exp\!\left(
    \tfrac{
      U_0( Y^{ \theta }_s )
    }{
      e^{ \alpha s }
    }
    +
    \smallint_{0}^{ s \wedge \tau_{ \theta } }
    \tfrac{
      U_1( Y^{ \theta }_s ) - c
    }{
      e^{ \alpha u }
    }
    \, du
  \right)
  \right]
  \right|^{
    \frac{ 1 }{ 2 q }
  }
  .
\end{split}
\end{equation}
Combining this with 
Corollary~2.4 in
Cox et al.~\cite{CoxHutzenthalerJentzen2013}
and
Corollary~2.9 in
\cite{HutzenthalerJentzenWang2013}
implies that there exists a real number
$ \tilde{C} \in [0,\infty) $
such that 
for all $ n \in \N $,
$ \theta = ( t_0 , t_1, \dots, t_n ) \in \mathcal{P}_T $
it holds that
\begin{equation}
\begin{split}
&
  \sup_{ t \in [0,T] }
  \big\| 
    X_{ t \wedge \tau_{ \theta } } - Y^{ \theta }_{ t \wedge \tau_{ \theta } }
  \big\|_{
    L^r( \Omega; \R^d )
  }
\leq
  \tilde{C}
  \left[ 
    \max_{ 0 \leq k \leq n - 1 }
    | t_{ k + 1 } - t_k |
  \right]^{ \frac{ 1 }{ 2 } }
  .
\end{split}
\end{equation}
This and H\"{o}lder's inequality prove that
for all $ n \in \N $,
$ \theta = ( t_0 , t_1, \dots, t_n ) \in \mathcal{P}_T $
it holds that
\begin{equation}
\label{eq:proof_numeric_tau}
\begin{split}
&
  \sup_{ t \in [0,T] }
  \left\| 
    X_t - Y^{ \theta }_t
  \right\|_{
    L^r( \Omega; \R^d )
  }
\\ & \leq
  \sup_{ t \in [0,T] }
  \left\| 
    \mathbbm{1}_{
      \{ 
        \tau_{ \theta } < T
      \}
    }
    \left[ 
      X_t - Y^{ \theta }_t
    \right]
  \right\|_{
    L^r( \Omega; \R^d )
  }
  +
  \sup_{ t \in [0,T] }
  \left\| 
    \mathbbm{1}_{
      \{ 
        \tau_{ \theta } = T
      \}
    }
    \left[ 
      X_{ t \wedge \tau_{ \theta } } - Y^{ \theta }_{ t \wedge \tau_{ \theta } }
    \right]
  \right\|_{
    L^r( \Omega; \R^d )
  }
\\ & \leq
  \left\|
    \mathbbm{1}_{
      \{ 
        \tau_{ \theta } < T
      \}
    }
  \right\|_{
    L^{ 2 r }( \Omega; \R^d )
  }
  \left[
    \sup_{ t \in [0,T] }
    \left\| 
      X_t - Y^{ \theta }_t
    \right\|_{
      L^{ 2 r }( \Omega; \R^d ) 
    }
  \right]
  +
  \sup_{ t \in [0,T] }
  \left\| 
    X_{ t \wedge \tau_{ \theta } } - Y^{ \theta }_{ t \wedge \tau_{ \theta } }
  \right\|_{
    L^r( \Omega; \R^d )
  }
\\ & \leq
  \left|
    \P\!\left[
      \tau_{ \theta } < T
    \right]
  \right|^{
    \frac{ 1 }{ 2 r }
  }
  \sup_{ t \in [0,T] }
  \left[
    \left\| 
      X_t 
    \right\|_{
      L^{ 2 r }( \Omega; \R^d ) 
    }
    +
    \left\| 
      Y^{ \theta }_t
    \right\|_{
      L^{ 2 r }( \Omega; \R^d ) 
    }
  \right]
  +
  \tilde{C}
  \left[ 
    \max_{ 0 \leq k \leq n - 1 }
    | t_{ k + 1 } - t_k |
  \right]^{ \frac{ 1 }{ 2 } }
  .
\end{split}
\end{equation}
Next we observe that Markov's inequality together with 
the fact that for all $ x \in \R^d $ it holds that
$
  \tfrac{ 1 }{ c } \| x \|^{ 1 / c } \leq 1 + U_0( x )
$
shows that
for all $ n \in \N $,
$ \theta = ( t_0 , t_1, \dots, t_n ) \in \mathcal{P}_T $
it holds that
\begin{equation}
\begin{split}
  \P\!\left[ 
    \tau_{ \theta }
    < T
  \right]
& \leq
  \P\!\left[ 
    \| Y^{ \theta }_T \| 
    \geq
    \exp\!\left(
      \left| 
        \ln\!\left( 
          \max\nolimits_{ i \in \{ 0, 1, \dots, n - 1 \} }
          t_{ i + 1 } - t_i 
        \right) 
      \right|^{ 1 / 2 }
    \right)
  \right]
\\ & =
  \P\!\left[ 
    \tfrac{ 1 }{ c }
    \,
    \| Y^{ \theta }_T \|^{ 1 / c }
  \geq
    \tfrac{ 1 }{ c }
    \exp\!\left(
      \tfrac{
        \left| 
          \ln\left( 
            \max\nolimits_{ i \in \{ 0, 1, \dots, n - 1 \} }
            t_{ i + 1 } - t_i 
          \right) 
        \right|^{ 1 / 2 }
      }{
        c
      }
    \right)
  \right]
\\ & \leq
  \P\!\left[ 
    \frac{
      1 + U_0( Y^{ \theta }_T )
    }{
      e^{ \alpha T }
    }
    \geq
    \tfrac{ 1 }{ 
      c \, e^{ \alpha T }
    }
    \exp\!\left(
      \tfrac{
        \left| 
          \ln\left( 
            \max\nolimits_{ i \in \{ 0, 1, \dots, n - 1 \} }
            t_{ i + 1 } - t_i 
          \right) 
        \right|^{ 1 / 2 }
      }{
        c
      }
    \right)
  \right]
\\ & =
  \P\!\left[ 
    \exp\!\left(
    \frac{
      1 + U_0( Y^{ \theta }_T )
    }{
      e^{ \alpha T }
    }
    \right)
    \geq
    \exp\!\left(
      \tfrac{ 1 }{ 
        c \, e^{ \alpha T }
      }
      \exp\!\left(
        \tfrac{
          \left| 
            \ln\left( 
              \max\nolimits_{ i \in \{ 0, 1, \dots, n - 1 \} }
              t_{ i + 1 } - t_i 
            \right) 
          \right|^{ 1 / 2 }
        }{
          c
        }
      \right)
    \right)
  \right]
\\ & \leq
  \E\!\left[ 
    \exp\!\left(
    \frac{
      1 + U_0( Y^{ \theta }_T )
    }{
      e^{ \alpha T }
    }
    \right)
  \right]
    \exp\!\left(
    \tfrac{ - 1 }{ 
      c \, e^{ \alpha T }
    }
      \exp\!\left(
        \tfrac{
          \left| 
            \ln\left( 
              \max\nolimits_{ i \in \{ 0, 1, \dots, n - 1 \} }
              t_{ i + 1 } - t_i 
            \right) 
          \right|^{ 1 / 2 }
        }{
          c
        }
      \right)
    \right)
\end{split}
\end{equation}
The estimate $ \frac{ 1 }{ 4! } x^4 \leq e^x $ for all $ x \in [0,\infty) $ 
hence implies that for all $ n \in \N $,
$ \theta = ( t_0 , t_1, \dots, t_n ) \in \mathcal{P}_T $
it holds that
\begin{equation}
\begin{split}
&
  \P\!\left[ 
    \tau_{ \theta }
    < T
  \right]
\\ & \leq
  \E\!\left[ 
    \exp\!\left(
    \tfrac{
      U_0( Y^{ \theta }_T )
    }{
      e^{ \alpha T }
    }
    +
    \smallint_{0}^{ \tau_{ \theta } }
    \tfrac{
      U_1( Y^{ \theta }_s ) - c
    }{
      e^{ \alpha s }
    }
    \,
    ds
    \right)
  \right]
    \exp\!\left(
      c T
    +
      \tfrac{ 1 }{ e^{ \alpha T } }
    -
    \tfrac{ 1 }{ 
      c \, e^{ \alpha T }
    }
      \exp\!\left(
        \tfrac{
          \left| 
            \ln\left( 
              \max\nolimits_{ i \in \{ 0, 1, \dots, n - 1 \} }
              t_{ i + 1 } - t_i 
            \right) 
          \right|^{ 1 / 2 }
        }{
          c
        }
      \right)
    \right)
\\ & \leq
  \E\!\left[ 
    \exp\!\left(
    \tfrac{
      U_0( Y^{ \theta }_T )
    }{
      e^{ \alpha T }
    }
    +
    \smallint_{0}^{ \tau_{ \theta } }
    \tfrac{
      U_1( Y^{ \theta }_s ) - c
    }{
      e^{ \alpha s }
    }
    \,
    ds
    \right)
  \right]
  \exp\!\left(
      c T
    +
      \tfrac{ 1 }{ e^{ \alpha T } }
    -
    \tfrac{ 
          \left| 
            \ln\left( 
              \max\nolimits_{ i \in \{ 0, 1, \dots, n - 1 \} }
              t_{ i + 1 } - t_i 
            \right) 
          \right|^2
    }{ 
      24 \, c^5 \, e^{ \alpha T }
    }
  \right)
  .
\end{split}
\end{equation}
This together with 
Corollary~2.9 in
\cite{HutzenthalerJentzenWang2013}
shows that there exists a real number
$ \hat{C} \in [0,\infty) $
such that
for all $ n \in \N $,
$ \theta = ( t_0 , t_1, \dots, t_n ) \in \mathcal{P}_T $
it holds that
\begin{equation}
  \left|
    \P\!\left[
      \tau_{ \theta } < T
    \right]
  \right|^{
    \frac{ 1 }{ 2 r }
  }
\leq
  \hat{C}
  \left[ 
    \max_{ k \in \{ 0, 1, \dots, n - 1 \} }
    | t_{ k + 1 } - t_k |
  \right]^{ \frac{ 1 }{ 2 } }
  .
\end{equation}
Combining this with \eqref{eq:proof_numeric_tau} implies that
for all $ n \in \N $,
$ \theta = ( t_0 , t_1, \dots, t_n ) \in \mathcal{P}_T $
it holds that
\begin{equation}
\begin{split}
&
  \sup_{ t \in [0,T] }
  \left\| 
    X_t - Y^{ \theta }_t
  \right\|_{
    L^r( \Omega; \R^d )
  }
\\ & \leq
  \hat{C}
  \left[ 
    \max_{ 0 \leq k \leq n - 1 }
    | t_{ k + 1 } - t_k |
  \right]^{ \frac{ 1 }{ 2 } }
  \sup_{ t \in [0,T] }
  \left[
    \left\| 
      X_t 
    \right\|_{
      L^{ 2 r }( \Omega; \R^d ) 
    }
    +
    \left\| 
      Y^{ \theta }_t
    \right\|_{
      L^{ 2 r }( \Omega; \R^d ) 
    }
  \right]
  +
  \tilde{C}
  \left[ 
    \max_{ 0 \leq k \leq n - 1 }
    | t_{ k + 1 } - t_k |
  \right]^{ \frac{ 1 }{ 2 } }
  .
\end{split}
\end{equation}
Combining this again with 
Corollary~2.4 in
Cox et al.~\cite{CoxHutzenthalerJentzen2013}
and
Corollary~2.9 in
\cite{HutzenthalerJentzenWang2013}
completes the proof
of Proposition~\ref{prop:ST2}.
\end{proof}

Proposition~\ref{prop:ST2} establishes under suitable assumptions
strong convergence rates 
for the stopped-tamed Euler-Maruyama approximations 
in \cite{HutzenthalerJentzenWang2013}
in the case of SDEs with possibly non-globally Lipschitz continuous drift
and possibly non-globally Lipschitz continuous diffusion coefficient functions.
A number of SDEs from the literature have a globally Lipschitz continuous diffusion coefficient.
This special case of Proposition~\ref{prop:ST2} is the subject of the next result, Corollary~\ref{cor:ST3}.
Corollary~\ref{cor:ST3} follows immediately from Proposition~\ref{prop:ST2}.
% and its proof is therefore omitted.

\begin{corollary}
\label{cor:ST3}
Let $ d, m \in \N $, 
let $ \sigma \colon \R^d \to \R^{ d \times m } $
be globally Lipschitz continuous,
let
$ c, T \in (0,\infty) $, 
$
  U_0 \in \mathcal{C}^3_{ \mathcal{D} }( \R^d, [0,\infty) )
$,
$ 
  U_1 \in \mathcal{C}^1_{ \mathcal{P} }( \R^d, [0,\infty) ) 
$,
$ 
  \mu \in \mathcal{C}^1_{ \mathcal{P} }( \R^d , \R^d )
$
satisfy 
\begin{eqnarray}
%\begin{array}{c}
\label{eq:ST_ass_exp}
%   \exists \,
%   \alpha \in [0,\infty)
%   \colon
%   \;\;
  \lim_{
    \eta \to \infty
  }
  \sup_{ 
    x \in \R^d
  }
  \left[
  ( \mathcal{G}_{ \mu, \sigma } U_0 )( x )
  +
  \tfrac{ 1 }{ 2 } 
  \, \| \sigma( x )^* ( \nabla U_0 )( x ) \|^2_{ \R^d } 
  +
  U_1( x )
  -
  \eta \, U_0( x )
  \right] 
  < \infty
  ,
\\
\label{eq:ST_ass_rate}
  \forall \, \varepsilon \in (0,\infty)
  \colon
  \sup_{ 
    \substack{
      x, y \in \R^d , \,
%     \\
      x \neq y 
    }
  }
  \left[
  \tfrac{
    \left< 
      x - y , \mu( x ) - \mu( y ) 
    \right>_{ \R^d }
%     +
%     \frac{ ( p + \varepsilon - 1 ) }{ 2 }
%     \left\|
%       \sigma( x ) - \sigma( y )
%     \right\|^2_{ HS( \R^m , \R^d ) }
  }{
    \left\| x - y \right\|^2_{ \R^d }
  }
    -
    \varepsilon 
    \,
    \big( 
      U_0(x) + U_0(y) + U_1(x) + U_1(y) 
    \big)
  \right]
  < \infty 
\\[-3ex]
\nonumber
%\end{array}
\end{eqnarray}
and $ \sup_{ x \in \R^d } \left[ \| x \|^{ 1 / c } - c \, U_0( x ) \right] < \infty $,
let 
$ 
  ( 
    \Omega, \mathcal{F}, \P, 
    ( \mathcal{F}_t )_{ t \in [ 0 , T ] } 
  ) 
$
be a stochastic basis,
let
$
  W \colon [0,T] \times \Omega \to \R^m
$
be a standard 
$ ( \mathcal{F}_t )_{ t \in [0,T] } $-Brownian motion,
let
$ 
  X \colon [ 0, T ] \times \Omega \to \R^d 
$
and 
$ 
  Y^{ \theta } \colon [ 0 , T ] \times \Omega \to \R^d 
$,
$ \theta \in \mathcal{P}_T $,
be adapted stochastic processes with 
c.s.p.,
$
  \E\big[
    e^{ U_0( X_0 ) }
  \big]
  < \infty
$,
$
  X_t = 
  X_0 + \int_{0}^t \mu( X_s ) \, ds +
  \int_{0}^t \sigma( X_s ) \, dW_s
$
$ \P $-a.s.\ for all $ t \in [0,T] $
and
$ Y_0^{ \theta } = X_0 $
and
\begin{equation}
  Y_t^{ \theta }
  = 
  Y_{ t_k }^{ \theta }
  +
  \mathbbm{1}_{
    \left\{
      \| Y_{ t_k }^{ \theta } \|_{ \R^d }
      \,
      < 
      \,
      \exp(
        \,
        | 
          \!
          \ln( 
            \max_{ 0 \leq i \leq n - 1 }
            t_{ i + 1 } - t_i 
          ) 
        |^{ 1 / 2 }
        \,
      )
    \right\}
  }
  \left[
  \tfrac{
    \mu( Y_{ t_k }^{ \theta } ) \left( t - t_k \right)
    +
    \sigma( Y_{ t_k }^{ \theta } )
    ( W_t - W_{ t_k } )
  }{
    1 +
    \|
      \mu( Y_{ t_k }^{ \theta } ) \left( t - t_k \right)
      +
      \sigma( Y_{ t_k }^{ \theta } )
      ( W_t - W_{ t_k } )
    \|^2_{ \R^d }
  }
  \right]
\end{equation}
for all $ t \in [ t_k, t_{ k + 1 } ] $,
$ k \in \{ 0, 1, \dots, n - 1 \} $,
$ \theta = ( t_0 , \dots, t_n ) \in \mathcal{P}_T $,
$ n \in \N $.
Then there exist 
$ C_r \in [0,\infty) $, $ r \in (0,\infty) $,
such that 
for all 
$ r \in (0,\infty) $,
$ n \in \N $,
$ \theta = (  t_0 , t_1, \dots, t_n ) \in \mathcal{P}_T $
it holds that
% \begin{align}
% &
$
  \sup_{ t \in [0,T] }
  \| 
    X_t - Y^{ \theta }_t
  \|_{
    L^r( \Omega; \R^d )
  }
\leq
  C_r
  \,
  \big[
    \max\nolimits_{ 
      k \in \{ 0, 1, \dots, n - 1 \} 
    }
    \left| t_{ k + 1 } - t_k \right|
  \big]^{
    1 / 2
  }
$.
%   \,
%   .
% \end{align}
\end{corollary}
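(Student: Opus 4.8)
The plan is to read \textbf{Corollary~\ref{cor:ST3}} off \textbf{Proposition~\ref{prop:ST2}}: for each fixed $r\in(0,\infty)$ I would manufacture admissible parameters $p,q_0,q_1,\alpha,\varepsilon$ and an enlarged constant $\tilde c$, check the two structural inequalities and the coercivity of $U_0$, and then invoke Proposition~\ref{prop:ST2}. First I would fix $r\in(0,\infty)$, set $p:=\max\{2,r\}+1\in[2,\infty)$ so that $\frac{1}{r}-\frac{1}{p}>0$, put $q_0:=q_1:=\frac{2}{(1/r)-(1/p)}\in(0,\infty)$ so that $\frac{1}{p}+\frac{1}{q_0}+\frac{1}{q_1}=\frac{1}{r}$, and fix $\varepsilon:=1$. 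Since $U_0\geq 0$, the map $\eta\mapsto\sup_{x\in\R^d}\big[(\mathcal{G}_{\mu,\sigma}U_0)(x)+\frac{1}{2}\|\sigma(x)^{*}(\nabla U_0)(x)\|^2_{\R^d}+U_1(x)-\eta\,U_0(x)\big]$ is nonincreasing, so hypothesis~\eqref{eq:ST_ass_exp} lets me select $\alpha\in[0,\infty)$ with $c_1:=\sup_{x\in\R^d}\big[(\mathcal{G}_{\mu,\sigma}U_0)(x)+\frac{1}{2}\|\sigma(x)^{*}(\nabla U_0)(x)\|^2_{\R^d}+U_1(x)-\alpha\,U_0(x)\big]<\infty$; this is exactly the first displayed hypothesis of Proposition~\ref{prop:ST2} (with additive constant any real number $\geq c_1$).

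For the local monotonicity hypothesis of Proposition~\ref{prop:ST2} I would use that $\sigma$ is globally Lipschitz: if $L\in[0,\infty)$ is a Lipschitz constant of $\sigma$, then $\frac{(p-1)(1+\varepsilon)}{2}\|\sigma(x)-\sigma(y)\|^2_{HS(\R^m,\R^d)}\leq c_2\,\|x-y\|^2_{\R^d}$ with $c_2:=\frac{(p-1)(1+\varepsilon)L^2}{2}$ for all $x,y\in\R^d$. Putting $\tilde\varepsilon:=\min\big\{\frac{1}{2q_0Te^{\alpha T}},\,\frac{1}{2q_1e^{\alpha T}}\big\}\in(0,\infty)$ and applying~\eqref{eq:ST_ass_rate} to this particular $\tilde\varepsilon$ yields a real number $c_3:=\sup_{x\neq y}\big[\frac{\langle x-y,\mu(x)-\mu(y)\rangle_{\R^d}}{\|x-y\|^2_{\R^d}}-\tilde\varepsilon\,(U_0(x)+U_0(y)+U_1(x)+U_1(y))\big]<\infty$; because $U_0,U_1\geq 0$ and $\tilde\varepsilon\leq\frac{1}{2q_0Te^{\alpha T}}$, $\tilde\varepsilon\leq\frac{1}{2q_1e^{\alpha T}}$, this gives $\langle x-y,\mu(x)-\mu(y)\rangle_{\R^d}\leq\big[c_3+\frac{U_0(x)+U_0(y)}{2q_0Te^{\alpha T}}+\frac{U_1(x)+U_1(y)}{2q_1e^{\alpha T}}\big]\|x-y\|^2_{\R^d}$ for $x\neq y$ (and trivially for $x=y$). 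Adding this to the $\sigma$-bound produces the local monotonicity hypothesis of Proposition~\ref{prop:ST2} with additive constant $c_2+c_3$.

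It then remains only to reconcile the additive constants and the coercivity constant. Writing $c_4:=\sup_{x\in\R^d}\big[\|x\|^{1/c}-c\,U_0(x)\big]\in[0,\infty)$, I would replace $c$ by $\tilde c:=\max\{c,1,c_1,c_2+c_3,c_4\}\in(0,\infty)$: enlarging $c$ to $\tilde c$ only weakens the two structural inequalities derived above (since $U_0\geq 0$), and, splitting into the cases $\|x\|\leq 1$ and $\|x\|>1$ and using $1/\tilde c\leq 1/c$, it gives $\|x\|^{1/\tilde c}\leq\tilde c\,(1+U_0(x))$ for all $x\in\R^d$. Since a globally Lipschitz map $\sigma$ belongs to $\mathcal{C}^1_{\mathcal{P}}(\R^d,\R^{d\times m})$ (it is continuous and satisfies the required polynomially bounded local Lipschitz estimate with its global Lipschitz constant), and $U_0\in\mathcal{C}^3_{\mathcal{D}}(\R^d,[0,\infty))$, $U_1\in\mathcal{C}^1_{\mathcal{P}}(\R^d,[0,\infty))$, $\mu\in\mathcal{C}^1_{\mathcal{P}}(\R^d,\R^d)$ are already present, all hypotheses of Proposition~\ref{prop:ST2} are met (with the parameters $\varepsilon,q_0,q_1,\alpha,p$ chosen above, the constant $\tilde c$ in place of $c$, and $d,m,r,T,U_0,U_1,\mu,\sigma$ and the probabilistic data as given), whence that proposition furnishes $C_r\in[0,\infty)$ with $\sup_{t\in[0,T]}\|X_t-Y^{\theta}_t\|_{L^r(\Omega;\R^d)}\leq C_r\,[\max_{0\leq k\leq n-1}|t_{k+1}-t_k|]^{1/2}$ for every $n\in\N$ and every $\theta=(t_0,\dots,t_n)\in\mathcal{P}_T$, which is the assertion. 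I expect the only delicate point to be bookkeeping about the order of the choices: $\alpha$ must be pinned down from~\eqref{eq:ST_ass_exp} before $q_0,q_1$, and hence before $\tilde\varepsilon$, so that~\eqref{eq:ST_ass_rate} can be applied to this specific $\tilde\varepsilon$; no circularity arises since $c_1,c_2,c_3,c_4$ never re-enter the determination of $\alpha,p,q_0,q_1,\varepsilon$.
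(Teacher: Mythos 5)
Your reduction is correct and is exactly the route the paper intends: the paper states that Corollary~\ref{cor:ST3} follows immediately from Proposition~\ref{prop:ST2}, and your argument simply makes the parameter bookkeeping explicit (choosing $p>r$, $q_0=q_1$, $\varepsilon=1$, extracting $\alpha$ from the monotone-in-$\eta$ supremum in \eqref{eq:ST_ass_exp}, absorbing the Lipschitz bound on $\sigma$ and the \eqref{eq:ST_ass_rate}-estimate at the specific $\tilde\varepsilon$ into the local monotonicity condition, and enlarging $c$ to a common constant, including the case split for the coercivity exponent). No gaps; this matches the paper's (implicit) proof.
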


We now apply Corollary~\ref{cor:ST3} and Proposition~\ref{prop:ST2} respectively 
to a selection of example SODEs with non-globally monotone coefficients.
In each of these example SODEs,
the particular choice of the functions
of $ U_0 $ and $ U_1 $ 
in Corollary~\ref{cor:ST3}
and the estimates associated with them
are particularly inspired from 
the article Cox 
et al.~\cite{CoxHutzenthalerJentzen2013}
in which regularity with respect to the initial
value for these example SODEs has been analyzed.
The following common setting is used
in our investigations of the example SODEs.

\subsubsection{Setting}
\label{sec:ex_setting}

Throughout Subsection~\ref{sec:SODEs}
the following setting is frequently used.
Let $ d, m \in \N $, $ T \in (0,\infty) $, 
$ 
  \mu \in C( \R^d , \R^d )
$,
$
  \sigma \in C( \R^d , \R^{ d \times m } )
$,
$ x_0 \in \R^d $,
let 
$ 
  ( 
    \Omega, \mathcal{F}, \P, 
    ( \mathcal{F}_t )_{ t \in [ 0 , T ] } 
  ) 
$
be a stochastic basis,
let
$
  W \colon [0,T] \times \Omega \to \R^m
$
be a standard 
$ ( \mathcal{F}_t )_{ t \in [0,T] } $-Brownian motion,
let
$ 
  X \colon [ 0, T ] \times \Omega \to \R^d
$
and 
$ 
  Y^{ \theta } \colon [ 0 , T ] \times \Omega \to \R^d 
$,
$ \theta \in \mathcal{P}_T $,
be adapted stochastic processes with 
c.s.p.,
$
  X_t = 
  x_0 + \int_{0}^t \mu( X_s ) \, ds +
  \int_{0}^t \sigma( X_s ) \, dW_s
$
$ \P $-a.s.\ for all $ t \in [0,T] $
and
$ Y_0^{ \theta } = X_0 $
and
\begin{equation}
  Y_t^{ \theta }
  = 
  Y_{ t_k }^{ \theta }
  +
  \mathbbm{1}_{
    \left\{
      \| Y_{ t_k }^{ \theta } \|_{ \R^d }
      \,
      < 
      \,
      \exp(
        \,
        | 
          \!
          \ln( 
            \max_{ 0 \leq i \leq n - 1 }
            t_{ i + 1 } - t_i 
          ) 
        |^{ 1 / 2 }
        \,
      )
    \right\}
  }
  \left[
  \tfrac{
    \mu( Y_{ t_k }^{ \theta } ) \left( t - t_k \right)
    +
    \sigma( Y_{ t_k }^{ \theta } )
    ( W_t - W_{ t_k } )
  }{
    1 +
    \|
      \mu( Y_{ t_k }^{ \theta } ) \left( t - t_k \right)
      +
      \sigma( Y_{ t_k }^{ \theta } )
      ( W_t - W_{ t_k } )
    \|^2_{ \R^d }
  }
  \right]
\end{equation}
for all $ t \in [ t_k, t_{ k + 1 } ] $,
$ k \in \{ 0, 1, \dots, n - 1 \} $,
$ \theta = ( t_0 , \dots, t_n ) \in \mathcal{P}_T $,
$ n \in \N $.

\subsubsection{Stochastic Lorenz equation with bounded noise}
\label{ssec:Lorenz}

In this subsection assume the setting in
Subsection~\ref{sec:ex_setting},
let
$
  \alpha_1, \alpha_2, \alpha_3 \in [0,\infty)
$
and
assume that
$ d = m = 3 $,
that $ \sigma $ is globally bounded and globally Lipschitz continuous
(e.g., $ \sigma( x ) = \sqrt{ \beta } I_{ \R^3 } $ for all $ x \in \R^3 $
and some $ \beta \in (0,\infty) $; see, for example, Zhou \& E~\cite{ZhouE2010})
and that
$ 
  \mu( x_1, x_2, x_3 ) 
  = 
  \big(
     \alpha_1 \left( x_2 - x_1 \right)
     ,
      \alpha_2 x_1 - x_2 - x_1 x_3 
      ,
      x_1 x_2 - \alpha_3 x_3
  \big)
$ 
for all $ x = ( x_1, x_2, x_3 ) \in \R^3 $.
If $ U_0 \in C( \R^3, [0,\infty) ) $
is given by $ U_0(x) = \| x \|^2_{ \R^3 } $
for all $ x \in \R^3 $,
then 
\begin{equation}
\begin{split}
&
  \lim_{
    \eta \to \infty
  }
  \sup_{ 
    x \in \R^3
  }
  \left[
  ( \mathcal{G}_{ \mu, \sigma } U_0 )( x )
  +
  \tfrac{ 1 }{ 2 } 
  \, \| \sigma( x )^* ( \nabla U_0 )( x ) \|^2_{ \R^3 } 
  -
  \eta \, U_0( x )
  \right] 
\\ & \leq
  \lim_{
    \eta \to \infty
  }
  \sup_{ 
    x \in \R^3
  }
  \left[
  2
  \left< x, \mu(x) \right>_{ \R^3 }
  +
  \big[ 
    2 \,
    \| \sigma(x) \|^2_{ HS( \R^3 ) }
    -
    \eta 
  \big]
  \,
  U_0( x )
  +
  \left\| \sigma(x) \right\|^2_{ HS( \R^3 ) }
  \right] 
  < \infty
\end{split}
\end{equation}
and this proves that \eqref{eq:ST_ass_exp}
is fulfilled.
Moreover, note that for all $ \varepsilon \in (0,\infty) $ it holds that
\begin{equation}
\begin{split}
&
  \sup_{ 
    \substack{
      x, y \in \R^3 , \,
      x \neq y 
    }
  }
  \left[
  \tfrac{
    \left< 
      x - y , \mu( x ) - \mu( y ) 
    \right>_{ \R^3 }
  }{
    \left\| x - y \right\|^2_{ \R^3 }
  }
    -
    \varepsilon 
    \left( 
      \left\| x \right\|^2_{ \R^3 } + 
      \left\| y \right\|^2_{ \R^3 } 
    \right)
  \right]
\\ & 
\leq
  \sup_{ 
    \substack{
      x, y \in \R^3 , \,
      x \neq y 
    }
  }
  \left[
  \tfrac{
    \left\| \mu( x ) - \mu( y ) \right\|_{ \R^3 }
  }{
    \left\| x - y \right\|_{ \R^3 }
  }
    -
    \varepsilon 
    \left( 
      \left\| x \right\|^2_{ \R^3 } + 
      \left\| y \right\|^2_{ \R^3 } 
    \right)
  \right]
  < \infty 
\end{split}
\end{equation}
and this shows that 
\eqref{eq:ST_ass_rate}
is satisfied.
We can thus apply Corollary~\ref{cor:ST3}
to obtain that there exist 
$ C_r \in [0,\infty) $, $ r \in (0,\infty) $,
such that 
for all 
$ r \in (0,\infty) $,
$ n \in \N $,
$ \theta = (  t_0 , t_1, \dots, t_n ) \in \mathcal{P}_T $
it holds that
$
  \sup_{ t \in [0,T] }
  \| 
    X_t - Y^{ \theta }_t
  \|_{
    L^r( \Omega; \R^d )
  }
\leq
  C_r
  \,
  \big[
    \max\nolimits_{ 
      k \in \{ 0, 1, \dots, n - 1 \} 
    }
    \left| t_{ k + 1 } - t_k \right|
  \big]^{
    1 / 2
  }
$.

\subsubsection{Stochastic van der Pol oscillator}
\label{ssec:van.der.Pol}

In this subsection assume the setting in
Subsection~\ref{sec:ex_setting},
let
$
  c, \alpha \in ( 0, \infty )
$,
$
  \gamma, \delta 
  \in [0,\infty)
$,
let
$ g \colon \R \to \R^{ 1 \times m } $
be a globally Lipschtiz continuous function
with
$
  \| g(x)^* \|_{ \R^m }^2
  \leq 
  c 
  \left(
    1 
    +
    x^2
  \right)
$
for all $ x \in \R $
and assume that
$ d = 2 $,
$
  \mu( x )
=
  \left(
    x_2 ,
    \left( \gamma - \alpha ( x_1 )^2 \right)
    x_2
    - \delta x_1
  \right)
$
and
$
  \sigma( x ) u
=
  \left(
    0 ,
    g( x_1 ) u
  \right)
$
for all
$
  x = (x_1, x_2)
  \in \mathbb{R}^2
$,
$ u \in \R^m $.
If 
$ \vartheta \in (0, \frac{ \alpha }{ 2 c } ) $
and if
$ U_0, U_1 \in C( \R^2, [0,\infty) ) $
are given by $ U_0(x) = \frac{ \vartheta }{ 2 } \| x \|^2_{ \R^2 } $
and
$
  U_1( x )
  =
  \vartheta 
  \left[ \alpha - 2 c \vartheta \right] \left( x_1 x_2 \right)^2
$
for all $ x \in \R^2 $,
then it holds that
{\small
\begin{align}
&
  \lim_{
    \eta \to \infty
  }
  \sup_{ 
    x \in \R^2
  }
  \left[
  ( \mathcal{G}_{ \mu, \sigma } U_0 )( x )
  +
  \tfrac{ 1 }{ 2 } 
  \, \| \sigma( x )^* ( \nabla U_0 )( x ) \|^2_{ \R^2 } 
  +
  U_1( x )
  -
  \eta \, U_0( x )
  \right] 
\nonumber
\\ & =
\nonumber
  \vartheta
  \lim_{
    \eta \to \infty
  }
  \sup_{ 
    \substack{
      x = 
      \\
      ( x_1, x_2 ) 
      \\
      \in \R^2
    }
  }
  \left[
    \left( 1 - \delta \right) x_1 x_2
    + \gamma \left( x_2 \right)^2
    - \alpha \left( x_1 x_2 \right)^2
  +
  \tfrac{ 
    \left\| \sigma(x) \right\|^2_{ HS( \R^2 ) }
  }{ 2 }
  +
  2 \vartheta
    \left\| \sigma(x)^* x \right\|^2_{ \R^m }
  +
  \tfrac{
    U_1( x )
  }{ \vartheta }
  - 
  \tfrac{
    \eta
    \left\| x \right\|^2_{ \R^2 }
  }{
    2 
  }
  \right] 
\\ & \leq
  \vartheta
  \lim_{
    \eta \to \infty
  }
  \sup_{ 
    \substack{
      x = 
      \\
      ( x_1, x_2 ) 
      \\
      \in \R^2
    }
  }
  \left[
    \left\| g(x_1)^* \right\|^2_{ \R^m }
    +
    \left( 1 + \gamma + \delta - \tfrac{ \eta }{ 2 } 
    \right) \left\| x \right\|^2_{ \R^2 }
  +
  2 \vartheta \left| x_2 \right|^2
    \left\| g(x_1)^* \right\|^2_{ \R^m }
  +
  \tfrac{
    U_1( x )
  }{ \vartheta }
    - \alpha \left( x_1 x_2 \right)^2
  \right] 
\nonumber
\\ & \leq
%   \vartheta
%   \lim_{
%     \eta \to \infty
%   }
%   \sup_{ 
%     \substack{
%       x = ( x_1, x_2 ) \in \R^2
%     }
%   }
%   \left[
%     c
%     +
%     \left( 1 + \gamma + \delta + c + 2 \vartheta c - \eta \right) \left\| x \right\|^2_{ \R^2 }
%     +
%     \left[ 2 c \vartheta - \alpha \right] \left( x_1 x_2 \right)^2
%   +
%   U_1( x )
%   \right] 
% \\ & =
  \vartheta
  \lim_{
    \eta \to \infty
  }
  \sup_{ 
    \substack{
      x \in \R^2
    }
  }
  \left[
    c
    +
    \left( 1 + \gamma + \delta + c + 2 \vartheta c - \eta 
    \right) \left\| x \right\|^2_{ \R^2 }
  \right] 
  < \infty
\end{align}}and 
% this proves that \eqref{eq:ST_ass_exp}
% is fulfilled.
% Moreover, note that 
Subsection~4.1 in Cox et al.~\cite{CoxHutzenthalerJentzen2013}
ensures that
for all $ \varepsilon \in (0,\infty) $ it holds that
\begin{equation}
\begin{split}
&
  \sup_{ 
    \substack{
      x = ( x_1, x_2 ), 
    \\
      y = ( y_1, y_2 ) 
      \in \R^2 , 
    \\
      x \neq y 
    }
  }
  \left[
  \tfrac{
    \left< 
      x - y , \mu( x ) - \mu( y ) 
    \right>_{ \R^2 }
  }{
    \left\| x - y \right\|^2_{ \R^2 }
  }
    -
    \varepsilon 
    \left( 
      U_1(x)
      +
      U_1(y)
    \right)
  \right]
\\ & 
\leq
  \sup_{ 
    \substack{
      x = ( x_1, x_2 ), 
    \\
      y = ( y_1, y_2 ) 
      \in \R^2 , 
    \\
      x \neq y 
    }
  }
  \left[
  \tfrac{
    \left< 
      x - y , \mu( x ) - \mu( y ) 
    \right>_{ \R^2 }
  }{
    \left\| x - y \right\|^2_{ \R^2 }
  }
    -
    \varepsilon 
    \left[ \alpha - 2 c \vartheta \right]
    \left( 
      \left( x_1 x_2 \right)^2
      +
      \left( y_1 y_2 \right)^2
    \right)
  \right]
  < \infty 
  .
\end{split}
\end{equation}
This shows that 
\eqref{eq:ST_ass_exp}
and
\eqref{eq:ST_ass_rate}
are satisfied.
We can thus apply Corollary~\ref{cor:ST3}
to obtain that there exist 
$ C_r \in [0,\infty) $, $ r \in (0,\infty) $,
such that 
for all 
$ r \in (0,\infty) $,
$ n \in \N $,
$ \theta = (  t_0 , t_1, \dots, t_n ) \in \mathcal{P}_T $
it holds that
$
  \sup_{ t \in [0,T] }
  \| 
    X_t - Y^{ \theta }_t
  \|_{
    L^r( \Omega; \R^d )
  }
\leq
  C_r
  \,
  \big[
    \max\nolimits_{ 
      k \in \{ 0, 1, \dots, n - 1 \} 
    }
    \left| t_{ k + 1 } - t_k \right|
  \big]^{
    1 / 2
  }
$.

\subsubsection{Stochastic Duffing-van
der Pol oscillator}
\label{ssec:stochastic.Duffing.van.der.Pol.oscillator}

In this subsection assume the setting in
Subsection~\ref{sec:ex_setting},
let 
$ \alpha_1, \alpha_2 \in \R $,
$
  \alpha_3, c
  \in (0,\infty)
$,
let $ g \colon \R \to \R^{ 1 \times m } $
be a globally Lipschitz continuous function
with
$
  \| g(x)^* \|^2_{ \R^m }
  \leq
  c \left( 1 + x^2 \right)
$
for all $ x \in \R $
(a common choice for the function $ g $ in the 
stochastic Duffing-van der Pol oscillator is
$
  g(x) u = \beta_1 x u_1 + \beta_2 u_2
$
for all $ x \in \R $,
$ u = ( u_1, u_2 ) \in \R^2 $
and some $ \beta_1, \beta_2 \in \R $;
see, e.g., Schenk-Hopp\'{e}~\cite{SchenkHoppe1996Deterministic})
and assume that
$ d = 2 $,
$ D = \R^2 $,
$
  \mu( x )
=
  \left(
    x_2 ,
    \alpha_2 x_2 - \alpha_1 x_1
    - \alpha_3 ( x_1 )^2 x_2
    - ( x_1 )^3
  \right)
$
and
$
  \sigma( x ) u
=
  \left(
    0 ,
    g( x_1 ) u
  \right)
$
for all
$
  x = (x_1, x_2)
  \in \mathbb{R}^2
$,
$ u \in \R^m $.
If 
$ \vartheta \in ( 0, \frac{ \alpha_3 }{ c } ) 
$
and if
$
  U_0, U_1 \in C( \R^2, [0,\infty) )
$
are given by
$
  U_0( x )
=
  \frac{ \vartheta }{ 2 }
  \big[
    \tfrac{ \left( x_1 \right)^4 }{ 2 }
%     +
%     \alpha_1
%     \left( x_1 \right)^2
    +
    \left( x_2 \right)^2
  \big]
$
and
$
  U_1( x )
  =
  \vartheta 
  \left[ \alpha_3 - c \vartheta \right]
  \left( x_1 x_2 \right)^2
$
for all
$ x = (x_1, x_2) \in \R^2 $,
then it holds that
\begin{equation}
\begin{split}
&
  \lim_{
    \eta \to \infty
  }
  \sup_{ 
    x \in \R^2
  }
  \left[
  ( \mathcal{G}_{ \mu, \sigma } U_0 )( x )
  +
  \tfrac{ 1 }{ 2 } 
  \, \| \sigma( x )^* ( \nabla U_0 )( x ) \|^2_{ \R^2 } 
  +
  U_1( x )
  -
  \eta \, U_0( x )
  \right] 
\\ & =
  \vartheta
  \lim_{
    \eta \to \infty
  }
  \sup_{ 
    \substack{
      x = ( x_1, x_2 ) 
    \\
      \in \R^2
    }
  }
  \left[
  \alpha_2 \left( x_2 \right)^2
  -
  \alpha_1 x_1 x_2
  -
  \alpha_3 \left( x_1 x_2 \right)^2
  +
  \tfrac{ 
    [ 1 + \vartheta ( x_2 )^2 ] 
    \, 
    \| g( x_1 )^* \|^2_{ \R^m } 
  }{ 2 }
  +
  \tfrac{
    U_1( x )
  }{
    \vartheta
  }
  -
  \tfrac{
    \eta \, U_0( x )
  }{ \vartheta }
  \right] 
\\ & \leq
  \vartheta
  \lim_{
    \eta \to \infty
  }
  \sup_{ 
    \substack{
      x = ( x_1, x_2 ) 
    \\
      \in \R^2
    }
  }
  \left[
    \left[ 
      \left| \alpha_1 \right|
      +
      \left| \alpha_2 \right|
    \right] 
    \left\| x \right\|^2_{ \R^2 }
  -
  \alpha_3 \left( x_1 x_2 \right)^2
  +
  \tfrac{ 
    c \,
    [ 1 + \vartheta ( x_2 )^2 ] 
    \, 
    [ 1 + ( x_1 )^2 ]
  }{ 2 }
  +
  \tfrac{
    U_1( x )
  }{
    \vartheta
  }
  -
  \tfrac{
    \eta \, U_0( x )
  }{ \vartheta }
  \right] 
\\ & \leq
  \vartheta
  \lim_{
    \eta \to \infty
  }
  \sup_{ 
    \substack{
      x = ( x_1, x_2 ) 
    \\
      \in \R^2
    }
  }
  \left[
  \tfrac{ c }{ 2 }
  +
  \left[ 
      \left| \alpha_1 \right|
      +
      \left| \alpha_2 \right|
      + \tfrac{ c ( 1 + \vartheta ) }{ 2 } 
  \right] 
  \left\| x \right\|^2_{ \R^2 }
  +
  \left[ 
    c \vartheta
    -
    \alpha_3 
  \right] \left( x_1 x_2 \right)^2
  +
  \tfrac{
    U_1( x )
  }{
    \vartheta
  }
  -
  \tfrac{
    \eta \, U_0( x )
  }{ \vartheta }
  \right] 
\\ & =
  \vartheta
  \lim_{
    \eta \to \infty
  }
  \sup_{ 
    x \in \R^2
  }
  \left[
  \tfrac{ c }{ 2 }
  +
  \left[ 
      \left| \alpha_1 \right|
      +
      \left| \alpha_2 \right|
      + 
      \tfrac{ c ( 1 + \vartheta ) }{ 2 } 
  \right] 
  \left\| x \right\|^2_{ \R^2 }
  -
  \tfrac{
    \eta \, U_0( x )
  }{ \vartheta }
  \right] 
  < \infty
\end{split}
\end{equation}
and it holds 
for all $ \varepsilon \in ( 0, \infty ) $ that
\begin{equation}
\begin{split}
&
  \sup_{ 
    \substack{
      x, y \in \R^2 , \,
      x \neq y 
    }
  }
  \left[
  \tfrac{
    \left< 
      x - y , \mu( x ) - \mu( y ) 
    \right>_{ \R^2 }
  }{
    \left\| x - y \right\|^2_{ \R^2 }
  }
    -
    \varepsilon 
    \,
    \big( 
      U_0(x) + U_0(y) 
    \big)
  \right]
\\ & \leq
  \sup_{ 
    \substack{
      x, y \in \R^2 , \,
      x \neq y 
    }
  }
  \left[
  \tfrac{
    \left\|
      \mu( x ) - \mu( y ) 
    \right\|_{ \R^2 }
  }{
    \left\| x - y \right\|_{ \R^2 }
  }
    -
    \varepsilon 
    \,
    \big( 
      U_0(x) + U_0(y) 
    \big)
  \right]
  < \infty 
  .
\end{split}
\end{equation}
This proves that \eqref{eq:ST_ass_exp} and \eqref{eq:ST_ass_rate}
are fulfilled.
We can thus apply Corollary~\ref{cor:ST3}
to obtain that there exist 
$ C_r \in [0,\infty) $, $ r \in (0,\infty) $,
such that 
for all 
$ r \in (0,\infty) $,
$ n \in \N $,
$ \theta = (  t_0 , t_1, \dots, t_n ) \in \mathcal{P}_T $
it holds that
$
  \sup_{ t \in [0,T] }
  \| 
    X_t - Y^{ \theta }_t
  \|_{
    L^r( \Omega; \R^d )
  }
\leq
  C_r
  \,
  \big[
    \max\nolimits_{ 
      k \in \{ 0, 1, \dots, n - 1 \} 
    }
    \left| t_{ k + 1 } - t_k \right|
  \big]^{
    1 / 2
  }
$.

\subsubsection{Experimental psychology model}
\label{ssec:experimental.psychology}

In this subsection assume the setting in
Subsection~\ref{sec:ex_setting},
let
$ \alpha, \delta \in (0,\infty) $,
$ \beta \in \R $
and assume that
$ d = 2 $, $ m = 1 $,
$ D = \R^2 $,
$
  \mu( x_1, x_2 )
=
  \big(
      ( x_2 )^2
      ( \delta + 4 \alpha x_1 )
      - \frac{ 1 }{ 2 } \beta^2 x_1
    ,
      - x_1 x_2
      ( \delta + 4 \alpha x_1 )
      - \frac{ 1 }{ 2 } \beta^2 x_2
  \big)
$
and
$
  \sigma( x_1, x_2 )
  =
  ( - \beta x_2 , \beta x_1 )
$
for all
$ x = ( x_1 , x_2 ) \in \R^2 $.
If
$ q \in [3,\infty) $
and if
$ U_0 \in C( \R^2, \R ) $
is given by
$
  U_0( x )
  =
  \| x \|^q
$
for all
$
  x \in \R^2
$,
then it holds that 
\begin{equation}
\begin{split}
&
  \lim_{
    \eta \to \infty
  }
  \sup_{ 
    x \in \R^d
  }
  \left[
  ( \mathcal{G}_{ \mu, \sigma } U_0 )( x )
  +
  \tfrac{ 1 }{ 2 } 
  \, \| \sigma( x )^* ( \nabla U_0 )( x ) \|^2_{ \R^d } 
  -
  \eta \, U_0( x )
  \right] 
\\ & \leq
  \lim_{
    \eta \to \infty
  }
  \sup_{ 
    x \in \R^2
  }
  \left[
    q 
    \left\| x \right\|^{ ( q - 2 ) }_{ \R^2 }
    \left< 
      x, \mu( x )
    \right>_{ \R^2 }
    +
    \tfrac{ q \left( q - 1 \right) }{ 2 } 
    \left\| x \right\|^{ ( q - 2 ) }_{ \R^2 }
    \left\| \sigma( x ) \right\|^2_{ HS( \R^2 ) }
    -
    \eta \left\| x \right\|^q
  \right] 
  < \infty
\end{split}
\end{equation}
and for all $ \varepsilon \in ( 0, \infty ) $
it holds that
\begin{equation}
\begin{split}
&
  \sup_{ 
    \substack{
      x, y \in \R^2 , \,
      x \neq y 
    }
  }
  \left[
  \tfrac{
    \left< 
      x - y , \mu( x ) - \mu( y ) 
    \right>_{ \R^2 }
  }{
    \left\| x - y \right\|^2_{ \R^2 }
  }
    -
    \varepsilon 
    \,
    \big(
      \| x \|_{ \R^2 }^q
      +
      \| y \|_{ \R^2 }^q
    \big)
  \right]
\\ & \leq
  \sup_{ 
    \substack{
      x, y \in \R^2 , \,
      x \neq y 
    }
  }
  \left[
  \tfrac{
    \left\|
      \mu( x ) - \mu( y ) 
    \right\|_{ \R^2 }
  }{
    \left\| x - y \right\|_{ \R^2 }
  }
    -
    \varepsilon 
    \,
    \big(
      \| x \|_{ \R^2 }^q
      +
      \| y \|_{ \R^2 }^q
    \big)
  \right]
  < \infty 
  .
\end{split}
\end{equation}
This proves that \eqref{eq:ST_ass_exp} and \eqref{eq:ST_ass_rate}
are fulfilled.
We can thus apply Corollary~\ref{cor:ST3}
to obtain that there exist 
$ C_r \in [0,\infty) $, $ r \in (0,\infty) $,
such that 
for all 
$ r \in (0,\infty) $,
$ n \in \N $,
$ \theta = (  t_0 , t_1, \dots, t_n ) \in \mathcal{P}_T $
it holds that
$
  \sup_{ t \in [0,T] }
  \| 
    X_t - Y^{ \theta }_t
  \|_{
    L^r( \Omega; \R^d )
  }
\leq
  C_r
  \,
  \big[
    \max\nolimits_{ 
      k \in \{ 0, 1, \dots, n - 1 \} 
    }
    \left| t_{ k + 1 } - t_k \right|
  \big]^{
    1 / 2
  }
$.

\subsubsection{Brownian dynamics (Overdamped Langevin dynamics)}
\label{sec:overdamped_Langevin}
\label{ssec:overdamped.Langevin.dynamics}

In this subsection assume the setting in
Subsection~\ref{sec:ex_setting},
let
$ c, \beta \in (0,\infty) $,
$ \theta \in [ 0, \frac{ 2 }{ \beta } ) $,
$
  V \in \mathcal{C}^3_{ \mathcal{D} }( \R^d , [0,\infty) )
$
and assume that
$
  \limsup_{ r \searrow 0 }
  \sup_{ z \in \R^d }
  \frac{ \| z \|^r_{ \R^d } }{ 1 + V(z) }
  < \infty
$,
$ d = m $,
$
  \mu( x ) = - ( \nabla V )( x )
$,
$
  \sigma( x ) =
  \sqrt{ \beta } I_{ \R^d }
$,
$
  ( \triangle V)( x )
\leq
  c
  +
  c \,
  V(x)
  +
  \theta
  \left\|
    ( \nabla V )( x )
  \right\|^2_{ \R^d }
$
for all $ x \in \R^d $
and 
that for all $ \varepsilon \in (0,\infty) $
it holds that
\begin{equation}
  \sup_{ 
    \substack{
      x, y \in \R^d , 
    \\
      x \neq y 
    }
  }
  \left[
  \tfrac{
    \left< 
      x - y , ( \nabla V )( y ) - ( \nabla V )( x ) 
    \right>_{ \R^d }
  }{
    \left\| x - y \right\|^2_{ \R^d }
  }
    -
    \varepsilon 
    \left( 
      V(x) + V(y) 
      + 
      \left\| ( \nabla V )(x) \right\|^2_{ \R^d }  
      + 
      \left\| ( \nabla V )(y) \right\|^2_{ \R^d }  
    \right)
  \right]
  < \infty 
  .
\end{equation}
If $ \vartheta \in (0, \frac{ 2 }{ \beta } - \theta ) $
and if 
$ U_0, U_1 \in C( \R^d, \R ) $
are given by
$
  U_0(x)
  =
  \vartheta \, V(x)
$
and
$
  U_1(x)
  =
  \vartheta
  \,
  (
    1
    -
    \frac{ \beta }{ 2 }
    ( \theta + \vartheta )
  )
  \,
  \|
    ( \nabla V )( x )
  \|^2_{ \R^d }
$
for all $ x \in \R^d $,
then 
\begin{equation}
\begin{split}
&
  \lim_{
    \eta \to \infty
  }
  \sup_{ 
    x \in \R^d
  }
  \left[
  ( \mathcal{G}_{ \mu, \sigma } U_0 )( x )
  +
  \tfrac{ 1 }{ 2 } 
  \, \| \sigma( x )^* ( \nabla U_0 )( x ) \|^2_{ \R^d } 
  +
  U_1( x )
  -
  \eta \, U_0( x )
  \right] 
\\ & =
  \vartheta
  \lim_{
    \eta \to \infty
  }
  \sup_{ 
    x \in \R^d
  }
  \left[
    - \left\| ( \nabla V )( x ) \right\|^2_{ \R^d }
    +
    \tfrac{ \beta }{ 2 }
    \,
    ( \triangle V )( x )
    +
    \tfrac{ \vartheta \beta }{ 2 } 
    \, \| ( \nabla V )( x ) \|^2_{ \R^d } 
    +
    \tfrac{ 
      U_1( x )
    }{
      \vartheta
    }
    -
    \eta \, V( x )
  \right] 
\\ & \leq
  \vartheta
  \lim_{
    \eta \to \infty
  }
  \sup_{ 
    x \in \R^d
  }
  \left[
    \tfrac{ c \beta }{ 2 }
    +
    \left[
      \tfrac{ ( \theta + \vartheta ) \beta }{ 2 }
      - 
      1
    \right] 
    \left\| ( \nabla V )( x ) \right\|^2_{ \R^d }
    +
    \tfrac{ 
      U_1( x )
    }{
      \vartheta
    }
    +
    \left[
      \tfrac{ c \beta }{ 2 }
      -
      \eta 
    \right]  
    V( x )
  \right] 
\\ & =
  \vartheta
  \lim_{
    \eta \to \infty
  }
  \sup_{ 
    x \in \R^d
  }
  \left[
    \tfrac{ c \beta }{ 2 }
    +
    \left[
      \tfrac{ c \beta }{ 2 }
      -
      \eta 
    \right]  
    V( x )
  \right] 
  < \infty
  .
\end{split}
\end{equation}
We can thus apply Corollary~\ref{cor:ST3}
to obtain that there exist 
$ C_r \in [0,\infty) $, $ r \in (0,\infty) $,
such that 
for all 
$ r \in (0,\infty) $,
$ n \in \N $,
$ \theta = (  t_0 , t_1, \dots, t_n ) \in \mathcal{P}_T $
it holds that
$
  \sup_{ t \in [0,T] }
  \| 
    X_t - Y^{ \theta }_t
  \|_{
    L^r( \Omega; \R^d )
  }
\leq
  C_r
  \big[
    \max\nolimits_{ 
      k \in \{ 0, 1, \dots, n - 1 \} 
    }
    \left| t_{ k + 1 } - t_k \right|
  \big]^{
    1 / 2
  }
$.
\begin{remark}[Higher order strong convergence rates for SDEs with possibly non-globally monotone coefficients]
\label{remark:higher_order}
Corollary~\ref{cor:ST3} applies both to SDEs with additive and non-additive noise
%and possibly non-globally monotone coefficients 
and establishes 
the strong convergence rate $ \frac{ 1 }{ 2 } $.
We expect that, in the case of SDEs with additive noise 
(see, e.g., Subsections~\ref{sec:overdamped_Langevin} and \ref{ssec:Langevin.dynamics})
and possibly non-globally monotone coefficients, 
an application of the perturbation theory in Section~\ref{sec:perturbation_theory}
(to be more specific, an application of Proposition~\ref{prop:norm2})
yields the strong convergence rate $ 1 $.
Similarly, 
we expect that 
Proposition~\ref{prop:norm2} 
can be used to establish higher order
strong convergence rates for suitable higher order schemes 
in the case of SDEs with possibly non-globally monotone coefficients.
\end{remark}

\subsubsection{Langevin dynamics and stochastic Duffing oscillator}
\label{ssec:Langevin.dynamics}

In this subsection assume the setting in
Subsection~\ref{sec:ex_setting},
let
$ \gamma \in [0,\infty) $,
$ \beta \in (0,\infty) $,
$ V \in \mathcal{C}_{ \mathcal{D} }^3( \R^m, [0,\infty) ) $
and assume that
$
  \limsup_{ r \searrow 0 }
  \sup_{ z \in \R^m }
  \frac{
    \| z \|^r
  }{
    1 + V( z )
  }
  < \infty
$,
$ d = 2 m $,
$
  \mu( x )
  =
  ( x_2,
  - ( \nabla V )( x_1 ) 
$
$  
  - \gamma x_2 )
$,
$
  \sigma( x ) u
  = ( 0, \sqrt{ \beta } u )
$
for all $ x = ( x_1, x_2 ) \in \R^{ 2 m } $, $ u \in \R^m $
and
that for all $ \varepsilon \in (0,\infty) $
it holds that
\begin{equation}
\label{eq:Langevin_ST_ass_rate}
  \sup_{ 
    \substack{
      x, y \in \R^m , \,
%     \\
      x \neq y 
    }
  }
  \left[
  \tfrac{
    \left\|
      ( \nabla V )( x ) - ( \nabla V )( y )
    \right\|_{ \R^m }
  }{
    \left\| x - y \right\|_{ \R^m }
  }
    -
    \varepsilon 
    \left(
      \left\| x \right\|^2_{ \R^m }
      +
      \left\| y \right\|^2_{ \R^m }
      +
      V(x)
      +
      V(y)
    \right)
  \right]
  < \infty 
  .
\end{equation}
These assumptions are, for example, satisfied in the case of 
\emph{stochastic
Duffing oscillator with additive noise}
(see, e.g., equation~(9) in Datta \& Bhattacharjee~\cite{DattaBhattacharjee2001})
in which $ m = 1 $ and in which $ V $ fulfills $ V( x ) = \frac{ 1 }{ 2 } x^2 + \frac{ \lambda }{ 4 } x^4 $
for all $ x \in \R $
and some $ \lambda \in (0,\infty) $.
If
$
  \vartheta \in
  ( 0, \infty )
$
and if
$
  U_0 \in C( \R^{ 2 m }, \R )
$
is given by
$
  U_0( x ) = \frac{ \vartheta }{ 2 } \, \| x_1 \|^2_{ \R^m } + \vartheta \, V( x_1 ) + \frac{ \vartheta }{ 2 } \, \| x_2 \|^2_{ \R^m }
$
for all $ x = ( x_1, x_2 ) \in \R^{ 2 m } $,
then 
\begin{equation}
\label{eq:Langevin_ST_ass_exp}
\begin{split}
&
  \lim_{
    \eta \to \infty
  }
  \sup_{ 
    x \in \R^d
  }
  \left[
  ( \mathcal{G}_{ \mu, \sigma } U_0 )( x )
  +
  \tfrac{ 1 }{ 2 } 
  \, \| \sigma( x )^* ( \nabla U_0 )( x ) \|^2_{ \R^d } 
  -
  \eta \, U_0( x )
  \right] 
\\ & =
  \lim_{
    \eta \to \infty
  }
  \sup_{ 
    x = ( x_1, x_2 ) \in \R^{ 2 m }
  }
  \left[
  \vartheta
  \left< x_1, x_2 \right>_{ \R^m }
  - \vartheta \gamma \left\| x_2 \right\|^2_{ \R^m }
  + \tfrac{ \vartheta \beta m }{ 2 }
  +
  \tfrac{ \beta \vartheta^2 }{ 2 } 
  \, \| x_2 \|^2_{ \R^m } 
  -
  \eta \, U_0( x )
  \right] 
\\ & \leq 
  \vartheta
  \lim_{
    \eta \to \infty
  }
  \sup_{ 
    x_1, x_2 \in \R^m
  }
  \Big[
    \left[ 
      \tfrac{ 1 }{ 2 }
      -
      \tfrac{ \eta }{ 2 }
    \right] 
    \left\| x_1 \right\|^2_{ \R^m }
    +
    \left[ 
      \tfrac{ 1 }{ 2 }
      +
      \tfrac{ \beta \vartheta }{ 2 } 
      - \gamma 
      - \tfrac{ \eta }{ 2 }
    \right] 
    \left\| x_2 \right\|^2_{ \R^m }
    + 
    \tfrac{ \beta m }{ 2 }
  \Big] 
  < \infty
\end{split}
\end{equation}
(cf.\ Subsection~4.4 in
Cox et al.~\cite{CoxHutzenthalerJentzen2013}).
Inequalities~\eqref{eq:Langevin_ST_ass_rate}
and \eqref{eq:Langevin_ST_ass_exp} show that
\eqref{eq:ST_ass_rate} and \eqref{eq:ST_ass_exp}
are fulfilled.
We can thus apply Corollary~\ref{cor:ST3}
to obtain that there exist 
$ C_r \in [0,\infty) $, $ r \in (0,\infty) $,
such that 
for all 
$ r \in (0,\infty) $,
$ n \in \N $,
$ \theta = (  t_0 , t_1, \dots, t_n ) \in \mathcal{P}_T $
it holds that
$
  \sup_{ t \in [0,T] }
  \| 
    X_t - Y^{ \theta }_t
  \|_{
    L^r( \Omega; \R^d )
  }
\leq
  C_r
  \big[
    \max\nolimits_{ 
      k \in \{ 0, 1, \dots, n - 1 \} 
    }
    \left| t_{ k + 1 } - t_k \right|
  \big]^{
    1 / 2
  }
$
(see also 
Remark~\ref{remark:higher_order} above).

\subsection{Galerkin approximations of stochastic partial differential equations (SPDEs)}
\label{ssec:Galerkin}

The next result, Corollary~\ref{cor:Galerkin},
is useful for the estimation of approximation errors
of Galerkin approximations of solutions
of SPDEs.

\begin{corollary}
\label{cor:Galerkin}
Assume the setting in Subsection~\ref{sec:setting},
let
$ \varepsilon \in [0,\infty] $,
$ p \in [2,\infty) $,
$ \mu \in \mathcal{L}^0( \mathcal{O} ; H ) $,
$ \sigma \in \mathcal{L}^0( \mathcal{O} ; HS( U, H ) ) $,
$ P \in L( H ) $
satisfy 
% $ P^2 = P = P^* $
% $ \| P \|_{ L(H) } \leq 1 $
% and 
$ P( \mathcal{O} ) \subseteq \mathcal{O} $,
let
$ X, Y \colon [  \tzero , T ] \times \Omega \to \mathcal{O} $,
$ \chi \colon [  \tzero , T ] \times \Omega \to \R $
be predictable stochastic processes
with 
$
  \int_{  \tzero  }^T
  \| \mu( X_s ) \|_H
  +
  \| \sigma( X_s ) \|^2_{ HS( U, H ) }
  +
  \| \mu( P X_s ) \|_H
  +
  \| \sigma( P X_s ) \|^2_{ HS( U, H ) }
  +
  \| \mu( Y_s ) \|_H
  +
  \| \sigma( Y_s ) \|^2_{ HS( U, H ) }
  \,
  ds
  < \infty
$
$ \P $-a.s.,
$
  X_t 
  = 
  X_{  \tzero  } 
  +
  \int_{  \tzero  }^t \mu( X_s ) \, ds
  +
  \int_{  \tzero  }^t \sigma( X_s ) \, dW_s
$
$ \P $-a.s.,
$
  Y_t 
  = 
  P X_{  \tzero  } 
  +
  \int_{  \tzero  }^t P \mu( Y_s ) \, ds
  +
  \int_{  \tzero  }^t P \sigma( Y_s ) \, dW_s
$
$ \P $-a.s.\ for all
$ t \in [ \tzero ,T] $
and
\begin{equation}
\label{eq:cor_Galerkin_assumption}
      \smallint_{  \tzero  }^T 
      \Big[
        \tfrac{
          \langle 
            Y_s - P X_s , 
            P \mu( Y_s ) - P \mu( P X_s ) 
          \rangle_H
          +
          \frac{ ( p - 1 ) \, ( 1 + \varepsilon ) }{ 2 }
          \,
          \|
            P \sigma( Y_s ) - P \sigma( P X_s )
          \|^2_{ HS( U, H ) }
        }{
          \| Y_s - P X_s \|^2_H
        }
        +
        \chi_s 
      \Big]^+
      ds
  < \infty
\end{equation}
$ \P $-a.s. 
Then it holds
for all 
$ r, q \in (0,\infty] $
with $ \frac{ 1 }{ p } + \frac{ 1 }{ q } = \frac{ 1 }{ r } $
that
%{\small
\begin{equation}
\label{eq:cor_Galerkin}
\begin{split}
&
  \sup_{ t \in [  \tzero , T] }
  \left\|
    X_t - Y_t
  \right\|_{
    L^r( \Omega; H )
  }
% \\ & 
\leq
  \sup_{ t \in [  \tzero , T] }
  \left\|
    ( I - P ) X_t
  \right\|_{
    L^r( \Omega; H ) 
  }
\\ & \quad 
  + 
  \left\|
    \exp\!\left(
      \smallint_{  \tzero  }^T 
      \Big[
        \tfrac{
          \left< 
            Y_s - P X_s , P \mu( Y_s ) - P \mu( P X_s ) 
          \right>_H
          +
          \frac{ ( p - 1 ) \, ( 1 + \varepsilon ) }{ 2 }
          \left\|
            P \sigma( Y_s ) - P \sigma( P X_s )
          \right\|^2_{ HS( U, H ) }
        }{
          \left\| Y_s - P X_s \right\|^2_H
        }
        +
        \chi_s 
      \Big]^+
      ds
    \right)
  \right\|_{
    L^q( \Omega; \R )
  }
\\ & \quad \cdot
    \Big\|
    \,
    p	
    \,
    \|
      Y - P X 
    \|^{ ( p - 2 ) }_H
    \big[
    \langle
      Y - P X
      ,
      P \mu( P X )
      -
      P \mu( X )
    \rangle_H
\\ & \quad
    +
    \tfrac{ ( p - 1 ) \, ( 1 + 1 / \varepsilon ) }{ 2 }
      \left\|
        P \sigma( X )
        -
        P \sigma( P X )
      \right\|_{ HS( U, H ) }^2
    - \chi \, \| Y - P X \|^2_H
    \big]^+
  \Big\|_{
    L^1( [  \tzero , T ] \times \Omega ; \R )
  }^{ 1 / p }
  .
\end{split}
\end{equation}
%}
\end{corollary}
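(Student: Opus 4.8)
The plan is to obtain Corollary~\ref{cor:Galerkin} from the triangle inequality together with a single application of Theorem~\ref{thm:norm2_expectation_max0}, in which the process $ Y $ plays the role of the exact SDE solution (with drift coefficient $ P\mu $ and diffusion coefficient $ P\sigma $) and the projected process $ PX $ plays the role of the perturbing It\^{o} process. First I would record the elementary splitting
\[
  \| X_t - Y_t \|_{ L^r( \Omega; H ) }
  \leq
  \| ( I - P ) X_t \|_{ L^r( \Omega; H ) }
  +
  \| P X_t - Y_t \|_{ L^r( \Omega; H ) }
\]
valid for every $ t \in [ \tzero, T ] $, so that it only remains to estimate $ \sup_{ t \in [ \tzero, T ] } \| P X_t - Y_t \|_{ L^r( \Omega; H ) } $.

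Next I would observe that, since $ P \in L( H ) $ is a bounded linear operator, it commutes with Bochner and stochastic integration; hence applying $ P $ to the integral equation for $ X $ shows that $ PX $ is an It\^{o} process with $ P X_t = P X_\tzero + \int_\tzero^t P \mu( X_s ) \, ds + \int_\tzero^t P \sigma( X_s ) \, dW_s $ $ \P $-a.s.\ for all $ t \in [ \tzero, T ] $, and that $ PX $ inherits the adaptedness and sample-path regularity of $ X $. Moreover, the assumption $ P( \mathcal{O} ) \subseteq \mathcal{O} $ guarantees that $ PX $ takes values in $ \mathcal{O} $ and that $ P \mu \in \mathcal{L}^0( \mathcal{O}; H ) $, $ P \sigma \in \mathcal{L}^0( \mathcal{O}; HS( U, H ) ) $, while the bound $ \| P v \|_H \leq \| P \|_{ L( H ) } \| v \|_H $ combined with the integrability hypothesis of the corollary supplies the integrability required to invoke Theorem~\ref{thm:norm2_expectation_max0}.

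I would then apply Theorem~\ref{thm:norm2_expectation_max0}, for each fixed $ t \in [ \tzero, T ] $ with the constant stopping time $ \tau \equiv t $, under the identification: the ``exact'' process is $ Y $ with coefficient functions $ P \mu $ and $ P \sigma $; the perturbing It\^{o} process is $ PX $ with local characteristics $ a_s = P \mu( X_s ) $ and $ b_s = P \sigma( X_s ) $; and the process $ \chi $ is kept unchanged. Under this identification the hypothesis \eqref{eq:perturbation_estimate_unique_assumption} becomes exactly \eqref{eq:cor_Galerkin_assumption} (with $ T $ replaced by $ t $), and the conclusion \eqref{eq:norm2_expectation} becomes the claimed estimate for $ \| P X_t - Y_t \|_{ L^r( \Omega; H ) } $; the initial term disappears because $ Y_\tzero = P X_\tzero $, and the local perturbation contributions $ \mu( Y ) - a $ and $ b - \sigma( Y ) $ of the theorem turn into $ P \mu( PX ) - P \mu( X ) $ and $ P \sigma( X ) - P \sigma( PX ) $. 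Since the exponential-moment factor $ \| \exp( \int_\tzero^{ \cdot } [ \ldots ]^+ \, ds ) \|_{ L^q( \Omega; \R ) } $ and the $ L^1( \llbracket \tzero, \cdot \rrbracket; \R ) $-norm on the right-hand side are both nondecreasing in the upper endpoint (the relevant integrands being nonnegative because of the $ [ \, \cdot \, ]^+ $), enlarging the upper endpoint from $ t $ to $ T $ only increases the right-hand side, so the resulting bound holds uniformly in $ t \in [ \tzero, T ] $. Taking the supremum over $ t $ and combining with the triangle splitting yields \eqref{eq:cor_Galerkin}.

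I expect the only delicate point to be the bookkeeping of this dictionary --- in particular, remembering that it is $ Y $, and not $ X $, that plays the role of the ``exact SDE solution'' in Theorem~\ref{thm:norm2_expectation_max0}, so that the discrepancy between $ P \mu $ (and $ P \sigma $) evaluated at $ PX $ versus at $ X $ is what produces the local perturbation terms on the right-hand side of \eqref{eq:cor_Galerkin}. The remaining steps --- verifying that $ PX $ is an It\^{o} process with the stated characteristics, checking the measurability and integrability hypotheses via boundedness of $ P $, and the monotonicity argument passing from a fixed time to the supremum --- are routine.
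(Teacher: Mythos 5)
Your proof is correct and matches the paper's argument in substance: the paper obtains Corollary~\ref{cor:Galerkin} as the special case $A=0$, $F_2 = P\mu$, $B_2 = P\sigma$, $\hat{X} = PX$ of Corollary~\ref{cor:different_coefficients}, which is itself exactly your combination of the triangle inequality with Theorem~\ref{thm:norm2_expectation_max0} applied to $Y$ as the exact solution (coefficients $P\mu$, $P\sigma$) and $PX$ as the perturbing It\^{o} process, the initial term vanishing since $Y_{\tzero} = P X_{\tzero}$. The only difference is that you apply Theorem~\ref{thm:norm2_expectation_max0} directly rather than routing through the intermediate corollary, which changes nothing essential.
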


Corollary~\ref{cor:Galerkin}
is a special case of Corollary~\ref{cor:different_coefficients}
(choose $ D(A) = H $, $ A = 0 $, $ F_1 = \mu $, $ B_1 = \sigma $, 
$ F_2 = P \mu $, $ B_2 = P \sigma $,
$ X^1 = X $, $ X^2 = Y $ and $ \hat{X} = P( X ) $
in the setting of Corollary~\ref{cor:different_coefficients} and
Corollary~\ref{cor:Galerkin} respectively).
If the processes $ X $ and $ Y $ in Corollary~\ref{cor:Galerkin}
satisfy suitable exponential integrability properties
(see Corollary~2.4 in Cox et al.~\cite{CoxHutzenthalerJentzen2013}),
then the right-hand side of \eqref{eq:cor_Galerkin}
can be further estimated in an appropriate way.
This is the subject of the next result.

\begin{prop}
\label{prop:Galerkin2}
Assume the setting in Subsection~\ref{sec:setting},
let 
$ \varepsilon \in [0,\infty] $,
$ r, q_0, q_1, \hat{q}_0, \hat{q}_1 \in (0,\infty] $,
$ 
  c, 
  \alpha, \beta, \hat{\alpha}, \hat{\beta} \in [0,\infty) 
$,
$ p \in [2,\infty) $,
$
  U_0, \hat{U}_0 \in C^2( O, [0,\infty) )
$,
$ 
  U_1, \hat{U}_1 \in C( \mathcal{O}, [0,\infty) ) 
$,
$
  \varphi \in \mathcal{L}^0( \mathcal{O}; \R )
$,
$ \mu \in \mathcal{L}^0( \mathcal{O} ; H ) $,
$ \sigma \in \mathcal{L}^0( \mathcal{O} ; HS( U, H ) ) $,
$ P \in L( H ) $
satisfy 
% $ P^2 = P = P^* $,
% $ \| P \|_{ L(H) } \leq 1 $,
$ P( \mathcal{O} ) \subseteq \mathcal{O} $,
$
  \frac{ 1 }{ p } + \frac{ 1 }{ q_0 } + \frac{ 1 }{ q_1 } + \frac{ 1 }{ \hat{q}_0 } + \frac{ 1 }{ \hat{q}_1 } = \frac{ 1 }{ r } 
$
and
{\small
\begin{eqnarray}
&
  ( \mathcal{G}_{ \mu, \sigma } U_0 )( x )
  +
  \tfrac{ 1 }{ 2 } 
  \, \| \sigma( x )^* ( \nabla U_0 )( x ) \|^2_H 
  +
  U_1( x )
\leq
  \alpha \, U_0( x )
  +
  \beta
  ,
\nonumber
\\[0.5ex]
&
\label{eq:Galerkin_assumption}
  ( \mathcal{G}_{ P \mu, P \sigma } \hat{U}_0 )( y )
  +
  \tfrac{ 1 }{ 2 } 
  \, \| \sigma( y )^* P^* ( \nabla \hat{U}_0 )( y ) \|^2_H 
  +
  \hat{U}_1( y )
\leq
  \hat{\alpha} \, \hat{U}_0( y )
  +
  \hat{\beta}
  ,
\\[0.5ex]
&
\nonumber
    \left< 
      P x - y , P \mu( P x ) - P \mu( y ) 
    \right>_H
    +
    \frac{ ( p - 1 ) \, ( 1 + \varepsilon ) }{ 2 }
    \left\|
      P \sigma( P x ) - P \sigma( y ) 
    \right\|^2_{ HS( U , H ) }
  +
    \left< 
      y - P x , P \mu( P x ) - P \mu( x ) 
    \right>_H
\\ & 
\nonumber
    +
    \frac{ ( p - 1 ) \, ( 1 + 1 / \varepsilon ) }{ 2 }
    \left\|
      P \sigma( P x ) - P \sigma( x )
    \right\|^2_{ HS( U , H ) }
\leq
  \tfrac{
    \left|
      \varphi( x )
    \right|^2
  }{ 2 }
  +
  \left[
    c
    +
    \tfrac{
      U_0( x ) 
    }{
      q_0 T e^{ \alpha T }
    }
    +
    \tfrac{
      \hat{U}_0( y )
    }{
      \hat{q}_0 T e^{ \hat{\alpha} T }
    }
    +
    \tfrac{
      U_1( x ) 
    }{
      q_1 e^{ \alpha T }
    }
    +
    \tfrac{
      \hat{U}_1( y )
    }{
      \hat{q}_1 e^{ \hat{\alpha} T }
    }
  \right] 
  \left\| P x - y \right\|^2_H
\end{eqnarray}}for 
all $ x \in \mathcal{O} $,
$ y \in P( H ) \cap \mathcal{O} $,
let
$ X, Y \colon [  \tzero , T ] \times \Omega \to \mathcal{O} $
be predictable stochastic processes
with 
$
  \int_{  \tzero  }^T
  \| \mu( X_s ) \|_H
  +
  \| \sigma( X_s ) \|^2_{ HS( U, H ) }
  +
  \| \mu( P X_s ) \|_H
  +
  \| \sigma( P X_s ) \|^2_{ HS( U, H ) }
  +
  \| \mu( Y_s ) \|_H
  +
  \| \sigma( Y_s ) \|^2_{ HS( U, H ) }
  \,
  ds
  < \infty
$
$ \P $-a.s.,
$
  X_t 
  = 
  X_{  \tzero  } 
  +
  \int_{  \tzero  }^t \mu( X_s ) \, ds
  +
  \int_{  \tzero  }^t \sigma( X_s ) \, dW_s
$
$ \P $-a.s.,
$
  Y_t 
  = 
  P X_{  \tzero  } 
  +
  \int_{  \tzero  }^t P \mu( Y_s ) \, ds
  +
  \int_{  \tzero  }^t P \sigma( Y_s ) \, dW_s
$
$ \P $-a.s.\ for all
$ t \in [ \tzero ,T] $
and
$
  \E\big[
    e^{
      U_0( X_0 ) 
    }
    +
    e^{
      \hat{U}_0( Y_0 )
    }
  \big] 
  < \infty
$.
Then 
\begin{align}
&
  \sup_{ t \in [  \tzero , T] }
  \left\|
    X_t - Y_t
  \right\|_{
    L^r( \Omega; H )
  }
\leq
  T^{
    ( \frac{ 1 }{ 2 } - \frac{ 1 }{ p } )
  }
  \exp\!\left(
    \tfrac{ 1 }{ 2 } - \tfrac{ 1 }{ p }
    +
    \smallint_0^T
    c
    +
    \smallsum\limits_{ i = 0 }^1
    \left[
    \tfrac{
      \beta
    }{
      q_i e^{ \alpha s }
    }
    +
    \tfrac{
      \hat{\beta}
    }{
      \hat{q}_i e^{ \hat{ \alpha } s }
    }
    \right]
    ds
  \right)
\\ & \quad 
\nonumber
  \cdot
  \left\|
    \varphi( X )
  \right\|_{
    L^p( [  \tzero , T ] \times \Omega ; \R )
  }
  \left|
  \E\Big[
    e^{
      U_0( X_0 ) 
    }
  \Big]
  \right|^{
    \left[ 
      \frac{ 1 }{ q_0 }
      +
      \frac{ 1 }{ q_1 }
    \right]
  }
  \left|
  \E\!\left[
    e^{
      \hat{U}_0( Y_0 )
    }
  \right] 
  \right|^{
    \left[
      \frac{ 1 }{ \hat{q}_0 }
      +
      \frac{ 1 }{ \hat{q}_1 }
    \right]
  }
  +
  \sup_{ t \in [  \tzero , T] }
  \left\|
    ( I - P ) X_t
  \right\|_{
    L^r( \Omega; H ) 
  }
  .
\end{align}
\end{prop}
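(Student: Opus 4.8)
The plan is to derive the inequality from Corollary~\ref{cor:Galerkin} by a judicious choice of the integrating-factor process $\chi$, and then to estimate the two resulting factors separately with the help of Young's inequality and of the exponential integrability of $X$ and $Y$.

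First I would note that, since $P\in L(H)$ commutes with Bochner and It\^o integration, $Y_t=P\big(X_{\tzero}+\int_{\tzero}^t\mu(Y_s)\,ds+\int_{\tzero}^t\sigma(Y_s)\,dW_s\big)$ $\P$-a.s., so that $Y$ takes values in $P(H)\cap\mathcal{O}$ and the third inequality in \eqref{eq:Galerkin_assumption} may be applied with $x=X_s$, $y=Y_s$. Put $\kappa:=\tfrac1T(\tfrac12-\tfrac1p)\in[0,\infty)$ and
\[ \mathcal{C}_s:=c+\tfrac{U_0(X_s)}{q_0Te^{\alpha T}}+\tfrac{\hat U_0(Y_s)}{\hat q_0Te^{\hat\alpha T}}+\tfrac{U_1(X_s)}{q_1e^{\alpha T}}+\tfrac{\hat U_1(Y_s)}{\hat q_1e^{\hat\alpha T}}\ge 0, \]
and apply Corollary~\ref{cor:Galerkin} with the predictable process
\[ \chi_s:=\mathcal{C}_s+\kappa-\tfrac{\langle Y_s-PX_s,\,P\mu(Y_s)-P\mu(PX_s)\rangle_H+\frac{(p-1)(1+\varepsilon)}{2}\|P\sigma(Y_s)-P\sigma(PX_s)\|^2_{HS(U,H)}}{\|Y_s-PX_s\|^2_H} \]
(convention $\tfrac00=0$). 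With this choice the bracket under the exponential on the right-hand side of \eqref{eq:cor_Galerkin} is $[\mathcal{C}_s+\kappa]^+=\mathcal{C}_s+\kappa$, which is nonnegative and, by continuity of the sample paths of $X,Y$ and of $U_0,U_1,\hat U_0,\hat U_1$, integrable in $s$ $\P$-a.s., so that \eqref{eq:cor_Galerkin_assumption} holds; and the third inequality in \eqref{eq:Galerkin_assumption} (together with $\|P\sigma(X)-P\sigma(PX)\|_{HS(U,H)}^2=\|P\sigma(PX)-P\sigma(X)\|_{HS(U,H)}^2$) shows that the integrand $p\|Y_s-PX_s\|_H^{p-2}[\,\cdots\,]^+$ of the $L^1$-term of \eqref{eq:cor_Galerkin} is bounded above by $p\|Y_s-PX_s\|_H^{p-2}\big[\tfrac{|\varphi(X_s)|^2}{2}-\kappa\|Y_s-PX_s\|_H^2\big]^+$.

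Next I would estimate the two factors. For the $L^1$-term, Young's inequality with exponents $\tfrac{p}{p-2}$, $\tfrac p2$ and scaling parameter $(2T)^{-(p-2)/p}$ gives $\|Y_s-PX_s\|_H^{p-2}\tfrac{|\varphi(X_s)|^2}{2}\le\kappa\|Y_s-PX_s\|_H^p+\tfrac{T^{(p-2)/2}}{p}|\varphi(X_s)|^p$, hence $p\|Y_s-PX_s\|_H^{p-2}\big[\tfrac{|\varphi(X_s)|^2}{2}-\kappa\|Y_s-PX_s\|_H^2\big]^+\le T^{(p-2)/2}|\varphi(X_s)|^p$, so the $L^1$-term is at most $T^{(1/2-1/p)}\,\|\varphi(X)\|_{L^p([\tzero,T]\times\Omega;\R)}$. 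For the $L^q$-factor I would write $\exp(\int_{\tzero}^T(\mathcal{C}_s+\kappa)\,ds)=e^{\kappa T}e^{cT}$ times the four factors $\exp(\int_{\tzero}^T\tfrac{U_0(X_s)}{q_0Te^{\alpha T}}ds)$, $\exp(\int_{\tzero}^T\tfrac{U_1(X_s)}{q_1e^{\alpha T}}ds)$ and their hatted analogues for $Y$, use $e^{\kappa T}=e^{1/2-1/p}$, apply H\"older's inequality with the exponents $q_0,q_1,\hat q_0,\hat q_1$ (recall $\tfrac1q=\tfrac1{q_0}+\tfrac1{q_1}+\tfrac1{\hat q_0}+\tfrac1{\hat q_1}$), and bound each factor by Jensen's inequality together with the exponential moment estimates for $X$ (with data $U_0,U_1,\alpha,\beta,\mu,\sigma$) and for $Y$ (with data $\hat U_0,\hat U_1,\hat\alpha,\hat\beta$ and the \emph{projected} coefficients $P\mu,P\sigma$) furnished by the first two inequalities in \eqref{eq:Galerkin_assumption} via Corollary~2.4 in Cox et al.~\cite{CoxHutzenthalerJentzen2013}, using $U_0,U_1,\hat U_0,\hat U_1\ge 0$, $e^{\alpha T}\ge e^{\alpha s}$, $e^{\hat\alpha T}\ge e^{\hat\alpha s}$ for $s\in[\tzero,T]$ and $\E[e^{\hat U_0(Y_0)}]=\E[e^{\hat U_0(PX_0)}]<\infty$. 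This yields the $L^q$-factor bounded by $\exp(\tfrac12-\tfrac1p+\int_{\tzero}^T[c+\sum_{i=0}^1(\tfrac{\beta}{q_ie^{\alpha s}}+\tfrac{\hat\beta}{\hat q_ie^{\hat\alpha s}})]\,ds)\,|\E[e^{U_0(X_0)}]|^{1/q_0+1/q_1}\,|\E[e^{\hat U_0(Y_0)}]|^{1/\hat q_0+1/\hat q_1}$.

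Combining the bound from Corollary~\ref{cor:Galerkin} with these two estimates yields exactly the claimed inequality. The step I expect to be the main obstacle is the exponential-moment estimate: one must verify carefully that the purely analytic drift conditions in \eqref{eq:Galerkin_assumption} translate, through Corollary~2.4 in Cox et al.~\cite{CoxHutzenthalerJentzen2013}, into the supermartingale/exponential-moment inequalities used above --- in particular for $Y$, whose generator is $\mathcal{G}_{P\mu,P\sigma}$ and whose relevant gradient--noise term is $\|\sigma(y)^*P^*(\nabla\hat U_0)(y)\|_H^2$ --- and that the technical hypotheses of Corollary~\ref{cor:Galerkin} (predictability of $\chi$, finiteness of $\int_{\tzero}^T[\,\cdots\,]^+\,ds$) are met; the bookkeeping of the Young parameter $(2T)^{-(p-2)/p}$ that produces precisely the prefactors $T^{1/2-1/p}$ and $e^{1/2-1/p}$, though elementary, also has to be carried out with care.
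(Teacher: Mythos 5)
Your proposal is correct and follows essentially the same route as the paper's proof: the same choice of integrating factor $\chi$ (the Lyapunov terms plus $\tfrac1T(\tfrac12-\tfrac1p)$ minus the monotonicity quotient) plugged into Corollary~\ref{cor:Galerkin}, the same Young-inequality bookkeeping producing $T^{1/2-1/p}\|\varphi(X)\|_{L^p}$, and the same H\"older--Jensen--Corollary~2.4 chain for the exponential factor. The only cosmetic difference is that the paper justifies the a.s.\ finiteness hypothesis of Corollary~\ref{cor:Galerkin} directly from the finiteness of the $L^q$ exponential-moment bound rather than from sample-path continuity, which slightly streamlines the argument.
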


\begin{proof}[Proof
of Proposition~\ref{prop:Galerkin2}]
Throughout this proof 
let 
$ q \in (0,\infty] $
be given by $ \frac{ 1 }{ q_0 } + \frac{ 1 }{ q_1 } + \frac{ 1 }{ \hat{q}_0 } + \frac{ 1 }{ \hat{q}_1 } = \frac{ 1 }{ q } $
and let 
$ \chi \colon [  \tzero , T ] \times \Omega \to \R $ be given by
\begin{equation}
\label{eq:def_chi}
\begin{split}
  \chi_t 
& = 
    c
    +
    \tfrac{
      U_0( X_t ) 
    }{
      q_0 T e^{ \alpha T }
    }
    +
    \tfrac{
      \hat{U}_0( Y_t )
    }{
      \hat{q}_0 T e^{ \hat{\alpha} T }
    }
    +
    \tfrac{
      U_1( X_t ) 
    }{
      q_1 e^{ \alpha T }
    }
    +
    \tfrac{
      \hat{U}_1( Y_t )
    }{
      \hat{q}_1 e^{ \hat{\alpha} T }
    }
    +
    \tfrac{
      ( 1 / 2 - 1 / p )
    }{
      T
    }
\\ & \quad
    -
        \tfrac{
          \left< 
            Y_t - P X_t , P \mu( Y_t ) - P \mu( P X_t ) 
          \right>_H
          +
          \frac{ ( p - 1 ) \, ( 1 + \varepsilon ) }{ 2 }
          \left\|
            P \sigma( Y_t ) - P \sigma( P X_t )
          \right\|^2_{ HS( U, H ) }
        }{
          \left\| Y_t - P X_t \right\|^2_H
        }
\end{split}
\end{equation}
for all $ t \in [ 0, T ] $.
We intend to apply Corollary~\ref{cor:Galerkin}.
To do so, we need to verify 
assumption~\eqref{eq:cor_Galerkin_assumption}
in Corollary~\ref{cor:Galerkin}.
For this note that
the definition of $ \chi $ (see \eqref{eq:def_chi})
% assumption~\eqref{eq:Galerkin_assumption}
together with H\"{o}lder's inequality
ensures that
\begin{equation}
\begin{split}
&
  \left\|
    \exp\!\left(
      \smallint_{  \tzero  }^T 
      \Big[
        \tfrac{
          \left< 
            Y_s - P X_s , P \mu( Y_s ) - P \mu( P X_s ) 
          \right>_H
          +
          \frac{ ( p - 1 ) \, ( 1 + \varepsilon ) }{ 2 }
          \left\|
            P \sigma( Y_s ) - P \sigma( P X_s )
          \right\|^2_{ HS( U, H ) }
        }{
          \left\| Y_s - P X_s \right\|^2_H
        }
        +
        \chi_s 
      \Big]^+
      ds
    \right)
  \right\|_{
    L^q( \Omega; \R )
  }
\\ & =
  \left\|
    \exp\!\left(
      \tfrac{ 1 }{ 2 } - \tfrac{ 1 }{ p }
    +
      \smallint_{  \tzero  }^T 
    c
    +
    \tfrac{
      U_0( X_s ) 
    }{
      q_0 T e^{ \alpha T }
    }
    +
    \tfrac{
      \hat{U}_0( Y_s )
    }{
      \hat{q}_0 T e^{ \hat{\alpha} T }
    }
    +
    \tfrac{
      U_1( X_s ) 
    }{
      q_1 e^{ \alpha T }
    }
    +
    \tfrac{
      \hat{U}_1( Y_s )
    }{
      \hat{q}_1 e^{ \hat{\alpha} T }
    }
    \,
      ds
    \right)
  \right\|_{
    L^q( \Omega; \R )
  }
\\ & \leq
  \exp\!\left(
    \tfrac{ 1 }{ 2 } - \tfrac{ 1 }{ p }
    +
    c T 
  \right)
  \left\|
    \exp\!\left(
      \smallint_{  \tzero  }^T 
    \tfrac{
      U_0( X_s ) 
    }{
      q_0 T e^{ \alpha T }
    }
    \,
      ds
    \right)
  \right\|_{
    L^{ q_0 }( \Omega; \R )
  }
  \left\|
    \exp\!\left(
      \smallint_{  \tzero  }^T 
    \tfrac{
      \hat{U}_0( Y_s )
    }{
      \hat{q}_0 T e^{ \hat{\alpha} T }
    }
    \,
      ds
    \right)
  \right\|_{
    L^{ \hat{q}_0 }( \Omega; \R )
  }
\\ & \quad \cdot
  \left\|
    \exp\!\left(
      \smallint_{  \tzero  }^T 
    \tfrac{
      U_1( X_s ) 
    }{
      q_1 e^{ \alpha T }
    }
    \,
      ds
    \right)
  \right\|_{
    L^{ q_1 }( \Omega; \R )
  }
  \left\|
    \exp\!\left(
      \smallint_{  \tzero  }^T 
    \tfrac{
      \hat{U}_1( Y_s )
    }{
      \hat{q}_1 e^{ \hat{\alpha} T }
    }
    \,
      ds
    \right)
  \right\|_{
    L^{ \hat{q}_1 }( \Omega; \R )
  }
  .
\end{split}
\end{equation}
A simple consequence of Jensen's inequality 
(see, e.g., inequality~(19) in Li~\cite{Li1994} and 
Lemma~2.21 in Cox et al.~\cite{CoxHutzenthalerJentzen2013})
together with nonnegativity of $ U_0 $ and $ \hat{U}_0 $
hence proves that
\begin{equation}
\begin{split}
&
  \left\|
    \exp\!\left(
      \smallint_{  \tzero  }^T 
      \Big[
        \tfrac{
          \left< 
            Y_s - P X_s , P \mu( Y_s ) - P \mu( P X_s ) 
          \right>_H
          +
          \frac{ ( p - 1 ) \, ( 1 + \varepsilon ) }{ 2 }
          \left\|
            P \sigma( Y_s ) - P \sigma( P X_s )
          \right\|^2_{ HS( U, H ) }
        }{
          \left\| Y_s - P X_s \right\|^2_H
        }
        +
        \chi_s 
      \Big]^+
      ds
    \right)
  \right\|_{
    L^q( \Omega; \R )
  }
\\ & \leq
  \exp\!\left(
    \tfrac{ 1 }{ 2 } - \tfrac{ 1 }{ p }
    +
    c T 
  \right)
  \sup_{ s \in [0,T] }
  \left\|
    \exp\!\left(
    \tfrac{
      U_0( X_s ) 
    }{
      q_0 e^{ \alpha s }
    }
    \right)
  \right\|_{
    L^{ q_0 }( \Omega; \R )
  }
  \sup_{ s \in [0,T] }
  \left\|
    \exp\!\left(
    \tfrac{
      \hat{U}_0( Y_s )
    }{
      \hat{q}_0 e^{ \hat{\alpha} s }
    }
    \right)
  \right\|_{
    L^{ \hat{q}_0 }( \Omega; \R )
  }
\\ & \quad \cdot
  \left\|
    \exp\!\left(
    \tfrac{
      U_0( X_T ) 
    }{
      q_1 e^{ \alpha T }
    }
    +
      \smallint_{  \tzero  }^T 
    \tfrac{
      U_1( X_s ) 
    }{
      q_1 e^{ \alpha T }
    }
    \,
      ds
    \right)
  \right\|_{
    L^{ q_1 }( \Omega; \R )
  }
  \left\|
    \exp\!\left(
    \tfrac{
      \hat{U}_0( Y_T )
    }{
      \hat{q}_1 e^{ \hat{\alpha} T }
    }
    +
      \smallint_{  \tzero  }^T 
    \tfrac{
      \hat{U}_1( Y_s )
    }{
      \hat{q}_1 e^{ \hat{\alpha} T }
    }
    \,
      ds
    \right)
  \right\|_{
    L^{ \hat{q}_1 }( \Omega; \R )
  }
  .
\end{split}
\end{equation}
The nonnegativity of $ U_1 $ and $ \hat{U}_1 $ hence implies that
\begin{equation}
\begin{split}
&
  \left\|
    \exp\!\left(
      \smallint_{  \tzero  }^T 
      \Big[
        \tfrac{
          \left< 
            Y_s - P X_s , P \mu( Y_s ) - P \mu( P X_s ) 
          \right>_H
          +
          \frac{ ( p - 1 ) \, ( 1 + \varepsilon ) }{ 2 }
          \left\|
            P \sigma( Y_s ) - P \sigma( P X_s )
          \right\|^2_{ HS( U, H ) }
        }{
          \left\| Y_s - P X_s \right\|^2_H
        }
        +
        \chi_s 
      \Big]^+
      ds
    \right)
  \right\|_{
    L^q( \Omega; \R )
  }
\\ & \leq
  \exp\!\left(
    \tfrac{ 1 }{ 2 } - \tfrac{ 1 }{ p }
    +
    \smallint_0^T
    c
    +
    \tfrac{
      \beta
    }{
      q_0 e^{ \alpha s }
    }
    +
    \tfrac{
      \hat{\beta}
    }{
      \hat{q}_0 e^{ \hat{ \alpha } s }
    }
    +
    \tfrac{
      \beta
    }{
      q_1 e^{ \alpha s }
    }
    +
    \tfrac{
      \hat{\beta}
    }{
      \hat{q}_1 e^{ \hat{ \alpha } s }
    }
    \, ds
  \right)
\\ & \cdot
  \sup_{ s \in [0,T] }
  \left\|
    \exp\!\left(
    \tfrac{
      U_0( X_s ) 
    }{
      q_0 e^{ \alpha s }
    }
    +
      \smallint_{  \tzero  }^s 
    \tfrac{
      U_1( X_u ) 
      -
      \beta
    }{
      q_0 e^{ \alpha u }
    }
    \,
      du
    \right)
  \right\|_{
    L^{ q_0 }( \Omega; \R )
  }
  \sup_{ s \in [0,T] }
  \left\|
    \exp\!\left(
    \tfrac{
      \hat{U}_0( Y_s )
    }{
      \hat{q}_0 e^{ \hat{\alpha} s }
    }
    +
      \smallint_{  \tzero  }^s 
    \tfrac{
      \hat{U}_1( Y_u )
      -
      \hat{\beta}
    }{
      \hat{q}_0 e^{ \hat{\alpha} u }
    }
    \,
      du
    \right)
  \right\|_{
    L^{ \hat{q}_0 }( \Omega; \R )
  }
\\ & \cdot
  \left\|
    \exp\!\left(
    \tfrac{
      U_0( X_T ) 
    }{
      q_1 e^{ \alpha T }
    }
    +
      \smallint_{  \tzero  }^T 
    \tfrac{
      U_1( X_s ) 
      -
      \beta
    }{
      q_1 e^{ \alpha s }
    }
    \,
      ds
    \right)
  \right\|_{
    L^{ q_1 }( \Omega; \R )
  }
  \left\|
    \exp\!\left(
    \tfrac{
      \hat{U}_0( Y_T )
    }{
      \hat{q}_1 e^{ \hat{\alpha} T }
    }
    +
      \smallint_{  \tzero  }^T 
    \tfrac{
      \hat{U}_1( Y_s )
      -
      \hat{\beta}
    }{
      \hat{q}_1 e^{ \hat{\alpha} s }
    }
    \,
      ds
    \right)
  \right\|_{
    L^{ \hat{q}_1 }( \Omega; \R )
  }
\end{split}
\end{equation}
and this proves that
\begin{equation}
\begin{split}
&
  \left\|
    \exp\!\left(
      \smallint_{  \tzero  }^T 
      \Big[
        \tfrac{
          \left< 
            Y_s - P X_s , P \mu( Y_s ) - P \mu( P X_s ) 
          \right>_H
          +
          \frac{ ( p - 1 ) \, ( 1 + \varepsilon ) }{ 2 }
          \left\|
            P \sigma( Y_s ) - P \sigma( P X_s )
          \right\|^2_{ HS( U, H ) }
        }{
          \left\| Y_s - P X_s \right\|^2_H
        }
        +
        \chi_s 
      \Big]^+
      ds
    \right)
  \right\|_{
    L^q( \Omega; \R )
  }
\\ & \leq
  \exp\!\left(
    \tfrac{ 1 }{ 2 } - \tfrac{ 1 }{ p }
    +
    \smallint_0^T
    c
    +
    \smallsum\limits_{ i = 0 }^1
    \left[
    \tfrac{
      \beta
    }{
      q_i e^{ \alpha s }
    }
    +
    \tfrac{
      \hat{\beta}
    }{
      \hat{q}_i e^{ \hat{ \alpha } s }
    }
    \right]
    ds
  \right)
  \sup_{ s \in [0,T] }
  \left|
  \E\!\left[
    \exp\!\left(
    \tfrac{
      U_0( X_s ) 
    }{
      e^{ \alpha s }
    }
    +
      \smallint_{  \tzero  }^s 
    \tfrac{
      U_1( X_u ) 
      -
      \beta
    }{
      e^{ \alpha u }
    }
    \,
      du
    \right)
  \right]
  \right|^{
    \left[ 
      \frac{ 1 }{ q_0 }
      +
      \frac{ 1 }{ q_1 }
    \right]
  }
\\ & \quad \cdot
  \sup_{ s \in [0,T] }
  \left|
  \E\!\left[
    \exp\!\left(
    \tfrac{
      \hat{U}_0( Y_s )
    }{
      e^{ \hat{\alpha} s }
    }
    +
      \smallint_{  \tzero  }^s 
    \tfrac{
      \hat{U}_1( Y_u )
      -
      \hat{\beta}
    }{
      e^{ \hat{\alpha} u }
    }
    \,
      du
    \right)
  \right] 
  \right|^{
    \left[
      \frac{ 1 }{ \hat{q}_0 }
      +
      \frac{ 1 }{ \hat{q}_1 }
    \right]
  }
  .
\end{split}
\end{equation}
Corollary~2.4 in Cox et al.~\cite{CoxHutzenthalerJentzen2013}
together with assumption \eqref{eq:Galerkin_assumption}
and the assumption that
$
  \E\big[
    e^{
      U_0( X_0 ) 
    }
    +
    e^{
      \hat{U}_0( Y_0 )
    }
  \big] 
  < \infty
$
hence implies that
\begin{equation}
\label{eq:expU_est}
\begin{split}
&
  \left\|
    \exp\!\left(
      \smallint_{  \tzero  }^T 
      \Big[
        \tfrac{
          \left< 
            Y_s - P X_s , P \mu( Y_s ) - P \mu( P X_s ) 
          \right>_H
          +
          \frac{ ( p - 1 ) \, ( 1 + \varepsilon ) }{ 2 }
          \left\|
            P \sigma( Y_s ) - P \sigma( P X_s )
          \right\|^2_{ HS( U, H ) }
        }{
          \left\| Y_s - P X_s \right\|^2_H
        }
        +
        \chi_s 
      \Big]^+
      ds
    \right)
  \right\|_{
    L^q( \Omega; \R )
  }
\\ & \leq
  \exp\!\left(
    \tfrac{ 1 }{ 2 } - \tfrac{ 1 }{ p }
    +
    \smallint_0^T
    c
    +
    \smallsum\limits_{ i = 0 }^1
    \left[
    \tfrac{
      \beta
    }{
      q_i e^{ \alpha s }
    }
    +
    \tfrac{
      \hat{\beta}
    }{
      \hat{q}_i e^{ \hat{ \alpha } s }
    }
    \right]
    ds
  \right)
  \left|
  \E\Big[
    e^{
      U_0( X_0 ) 
    }
  \Big]
  \right|^{
    \left[ 
      \frac{ 1 }{ q_0 }
      +
      \frac{ 1 }{ q_1 }
    \right]
  }
  \left|
  \E\!\left[
    e^{
      \hat{U}_0( Y_0 )
    }
  \right] 
  \right|^{
    \left[
      \frac{ 1 }{ \hat{q}_0 }
      +
      \frac{ 1 }{ \hat{q}_1 }
    \right]
  }
  < \infty
\end{split}
\end{equation}
and this ensures that
\begin{equation}
      \smallint_{  \tzero  }^T 
      \Big[
        \tfrac{
          \left< 
            Y_s - P X_s , P \mu( Y_s ) - P \mu( P X_s ) 
          \right>_H
          +
          \frac{ ( p - 1 ) \, ( 1 + \varepsilon ) }{ 2 }
          \left\|
            P \sigma( Y_s ) - P \sigma( P X_s )
          \right\|^2_{ HS( U, H ) }
        }{
          \left\| Y_s - P X_s \right\|^2_H
        }
        +
        \chi_s 
      \Big]^+
      ds
  < \infty
\end{equation}
$ \P $-a.s.\ We 
can thus apply 
Corollary~\ref{cor:Galerkin}
to obtain that
% together with the estimate
% $
%   \| P \|_{ L( H ) } \leq 1
% $
%{\small
\begin{equation}
\begin{split}
&
  \sup_{ t \in [  \tzero , T] }
  \left\|
    X_t - Y_t
  \right\|_{
    L^r( \Omega; H )
  }
% \\ & 
\leq
  \sup_{ t \in [  \tzero , T] }
  \left\|
    ( I - P ) X_t
  \right\|_{
    L^r( \Omega; H ) 
  }
\\ & \quad 
  + 
  \left\|
    \exp\!\left(
      \smallint_{  \tzero  }^T 
      \Big[
        \tfrac{
          \left< 
            Y_s - P X_s , P \mu( Y_s ) - P \mu( P X_s ) 
          \right>_H
          +
          \frac{ ( p - 1 ) \, ( 1 + \varepsilon ) }{ 2 }
          \left\|
            P \sigma( Y_s ) - P \sigma( P X_s )
          \right\|^2_{ HS( U, H ) }
        }{
          \left\| Y_s - P X_s \right\|^2_H
        }
        +
        \chi_s 
      \Big]^+
      ds
    \right)
  \right\|_{
    L^q( \Omega; \R )
  }
\\ & \quad \cdot
    \Big\|
    \,
    p	
    \,
    \|
      Y - P X 
    \|^{ ( p - 2 ) }_H
    \big[
    \langle
      Y - P X
      ,
      P \mu( P X )
      -
      P \mu( X )
    \rangle_H
\\ & \quad
    +
    \tfrac{ ( p - 1 ) \, ( 1 + 1 / \varepsilon ) }{ 2 }
      \left\|
        P \sigma( X )
        -
        P \sigma( P X )
      \right\|_{ HS( U, H ) }^2
    - \chi \, \| Y - P X \|^2_H
    \big]^+
  \Big\|_{
    L^1( [  \tzero , T ] \times \Omega ; \R )
  }^{ 1 / p }
  .
\end{split}
\end{equation}
Assumption~\eqref{eq:Galerkin_assumption} 
together with the fact that
$
  Y_t \in P(H) \cap \mathcal{O}
$
$ \P $-a.s.\ for all $ t \in [0,T] $
and the definition 
of $ \chi $ (see \eqref{eq:def_chi})
hence shows that
\begin{equation}
\label{eq:proof_Galerkin2_start}
\begin{split}
&
  \sup_{ t \in [  \tzero , T] }
  \left\|
    X_t - Y_t
  \right\|_{
    L^r( \Omega; H )
  }
% \\ & 
\leq
  \sup_{ t \in [  \tzero , T] }
  \left\|
    ( I - P ) X_t
  \right\|_{
    L^r( \Omega; H ) 
  }
\\ & \quad 
  + 
  \left\|
    \exp\!\left(
      \smallint_{  \tzero  }^T 
      \Big[
        \tfrac{
          \left< 
            Y_s - P X_s , P \mu( Y_s ) - P \mu( P X_s ) 
          \right>_H
          +
          \frac{ ( p - 1 ) \, ( 1 + \varepsilon ) }{ 2 }
          \left\|
            P \sigma( Y_s ) - P \sigma( P X_s )
          \right\|^2_{ HS( U, H ) }
        }{
          \left\| Y_s - P X_s \right\|^2_H
        }
        +
        \chi_s 
      \Big]^+
      ds
    \right)
  \right\|_{
    L^q( \Omega; \R )
  }
\\ & \quad \cdot
    \Big\|
    \,
    p	
    \,
    \|
      Y - P X 
    \|^{ ( p - 2 ) }_H
    \big[
      \tfrac{
        1
      }{ 2 }
        \left| \varphi( X ) \right|^2
      - 
      \tfrac{
        ( 1 / 2 - 1 / p )
      }{
        T
      }
      \, 
      \| Y - P X \|^2_H
    \big]^+
  \Big\|_{
    L^1( [  \tzero , T ] \times \Omega ; \R )
  }^{ 1 / p }
  .
\end{split}
\end{equation}
Next observe that Young's inequality proves that
for all $ u \in ( 0, \infty ) $ it holds that
\begin{equation}
\label{eq:Young_estimate}
\begin{split}
&
    \Big\|
    \,
    p	
    \,
    \|
      Y - P X 
    \|^{ ( p - 2 ) }_H
    \big[
      \tfrac{
        1
      }{ 2 }
        \left| \varphi( X ) \right|^2
      - 
      \tfrac{
        ( 1 / 2 - 1 / p )
      }{
        T
      }
      \, 
      \| Y - P X \|^2_H
    \big]^+
  \Big\|_{
    L^1( [  \tzero , T ] \times \Omega ; \R )
  }^{ 1 / p }
\\ & = 
  \left[
  p 
  \int_0^T
  \E\!\left[
  \max\!\left(
    \tfrac{ 
      u^{ ( 1 - 2 / p ) }
    }{ 2 }
      \left| \varphi( X ) \right|^2
    \tfrac{
      \left\|
        Y_s - P X_s
      \right\|^{ ( p - 2 ) }_H
    }{ 
      u^{ ( 1 - 2 / p ) } 
    }
    -
      \tfrac{
        ( 1 / 2 - 1 / p )
      }{
        T
      }
    \left\|
      Y_s - P X_s 
    \right\|^p_H
    , 
    0
    \right)
  \right]
    ds
  \right]^{ 1 / p }
\\ & \leq
  \left[
  p 
  \int_0^T
  \E\!\left[
    \max\!\left(
    \tfrac{ 2 }{ p }
    \left[
      \tfrac{
        u^{
          \left( 1 - 2 / p \right)
        }
      }{
        2
      }
    \right]^{ \frac{ p }{ 2 } }
    \left|
      \varphi( X_s )
    \right|^p
    +
    \tfrac{ 
      ( p - 2 ) \,
    \left\|
      Y_s - P X_s
    \right\|^p_H
    }{ 
      p 
      \,
      u
    }
      - 
      \tfrac{
        ( 1 / 2 - 1 / p ) \,
    \left\|
      Y_s - P X_s 
    \right\|^p_H
      }{
        T
      }
    , 
    0
    \right)
  \right]
    ds
  \right]^{ 
    1 / p
  }
  .
\end{split}
\end{equation}
The choice $ u = 2 T $ 
in \eqref{eq:Young_estimate}
results in 
\begin{equation}
\label{eq:Young_estimate2_Galerkin}
\begin{split}
&
    \Big\|
    \,
    p	
    \,
    \|
      Y - P X 
    \|^{ ( p - 2 ) }_H
    \big[
      \tfrac{
        1
      }{ 2 }
        \left| \varphi( X ) \right|^2
      - 
      \tfrac{
        ( 1 / 2 - 1 / p )
      }{
        T
      }
      \, \| Y - P X \|^2_H
    \big]^+
  \Big\|_{
    L^1( [  \tzero , T ] \times \Omega ; \R )
  }^{ 1 / p }
\\ & 
\leq
  T^{
    \left( 
      1 / 2 - 1 / p
    \right)
  }
  \left\|
    \varphi( X )
  \right\|_{
    L^p( 
      [ 0, T ] \times \Omega ; \R
    )
  }
  .
\end{split}
\end{equation}
Putting this and \eqref{eq:expU_est} 
into \eqref{eq:proof_Galerkin2_start}
completes the proof of Proposition~\ref{prop:Galerkin2}.
\end{proof}

In a number of cases the functions 
$ U_0 $ and $ \hat{U}_0 $ 
in Proposition~\ref{prop:Galerkin2}
satisfy 
$
  U_0( x ) = \hat{U}_0( x ) = \frac{ \rho }{ 2 } \left\| x \right\|^2_H
$
for all $ x \in O $
and some $ \rho \in (0,\infty) $.
This special case of Proposition~\ref{prop:Galerkin2} is 
the subject of the next
result, Corollary~\ref{cor:Galerkin4}.
Corollary~\ref{cor:Galerkin4} follows immediately
from Proposition~\ref{prop:Galerkin2}.

\begin{corollary}
\label{cor:Galerkin4}
Assume the setting in Subsection~\ref{sec:setting},
let 
$ \varepsilon \in [0,\infty] $,
$ r, \rho \in (0,\infty) $, 
$ q \in (0,\infty] $,
$ c, \beta \in [0,\infty) $,
$ p \in [2,\infty) $,
$ 
  U \in C( \mathcal{O}, [0,\infty) ) 
$,
$ \mu \in \mathcal{L}^0( \mathcal{O} ; H ) $,
$ \sigma \in \mathcal{L}^0( \mathcal{O} ; HS( U, H ) ) $,
$
  \varphi \in \mathcal{L}^0( \mathcal{O}; \R )
$,
$ P \in L( H ) $
satisfy 
$ P^2 = P = P^* $,
$ \| P \|_{ L(H) } \leq 1 $,
$ P( \mathcal{O} ) \subseteq \mathcal{O} $,
$
  \frac{ 1 }{ p } + \frac{ 1 }{ q } 
  = \frac{ 1 }{ r } 
$
and
% {\small
\begin{eqnarray}
&
  \left< x, \mu(x) \right>_H
  +
  \tfrac{ 1 }{ 2 }
  \left\| \sigma( x ) \right\|^2_{ HS( U, H ) }
  +
  \tfrac{ \rho }{ 2 } 
  \, \| \sigma( x )^* x \|^2_H 
  +
  U( x )
\leq
  \beta
  ,
\label{eq:Galerkin_assumption_cor4}
\\[0.5ex]
&
\nonumber
    \left< 
      P x - y , \mu( P x ) - \mu( y ) 
    \right>_H
    +
    \frac{ ( p - 1 ) \, ( 1 + \varepsilon ) }{ 2 }
    \left\|
      \sigma( P x ) - \sigma( y ) 
    \right\|^2_{ HS( U , H ) }
  +
    \left< 
      y - P x , P \mu( P x ) - P \mu( x ) 
    \right>_H
\\ & 
\nonumber
    +
    \frac{ ( p - 1 ) \, ( 1 + 1 / \varepsilon ) }{ 2 }
    \left\|
      \sigma( P x ) - \sigma( x )
    \right\|^2_{ HS( U , H ) }
\leq
  \tfrac{
    \left|
      \varphi( x )
    \right|^2
  }{ 2 }
  +
  \big[
    c
    +
    \tfrac{
      \rho 
    }{
      2 q
    }
    \,
    U( x )
    +
    \tfrac{
      \rho 
    }{
      2 q
    }
    \,
    U( y )
  \big] 
  \left\| P x - y \right\|^2_H
\end{eqnarray}
% }
for 
all $ x \in \mathcal{O} $,
$ y \in P( H ) \cap \mathcal{O} $,
let
$ X, Y \colon [  \tzero , T ] \times \Omega \to \mathcal{O} $
be predictable stochastic processes
with 
$
  \E\big[
    e^{
      \frac{ \rho }{ 2 } \left\| X_0 \right\|^2_H
    }
  \big]
  < \infty
$,
$
  \int_{  \tzero  }^T
  \| \mu( X_s ) \|_H
  +
  \| \sigma( X_s ) \|^2_{ HS( U, H ) }
  +
  \| \mu( P X_s ) \|_H
  +
  \| \sigma( P X_s ) \|^2_{ HS( U, H ) }
  +
  \| \mu( Y_s ) \|_H
  +
  \| \sigma( Y_s ) \|^2_{ HS( U, H ) }
  \,
  ds
  < \infty
$
$ \P $-a.s.,
$
  X_t 
  = 
  X_{  \tzero  } 
  +
  \int_{  \tzero  }^t \mu( X_s ) \, ds
  +
  \int_{  \tzero  }^t \sigma( X_s ) \, dW_s
$
$ \P $-a.s.\ and
$
  Y_t 
  = 
  P X_{  \tzero  } 
  +
  \int_{  \tzero  }^t P \mu( Y_s ) \, ds
  +
  \int_{  \tzero  }^t P \sigma( Y_s ) \, dW_s
$
$ \P $-a.s.\ for all
$ t \in [ \tzero ,T] $.
Then 
\begin{equation}
\begin{split}
&
  \sup_{ t \in [  \tzero , T] }
  \left\|
    X_t - Y_t
  \right\|_{
    L^r( \Omega; H )
  }
\\ & \leq
  \left\|
    \varphi( X )
  \right\|_{
    L^p( [  \tzero , T ] \times \Omega ; \R )
  }
%   \sqrt{ T }
  T^{
    ( \frac{ 1 }{ 2 } - \frac{ 1 }{ p } )
  }
  \,
  e^{
    \left[
    \frac{ 1 }{ 2 } 
    - 
    \frac{ 1 }{ p }
    +
    c T
    +
    \frac{
      \beta \rho T
    }{
      q
    }
    \right]
  }
  \big|
  \E\big[
    e^{
      \frac{ \rho }{ 2 } \left\| X_0 \right\|^2_H
    }
  \big]
  \big|^{
    \frac{ 1 }{ q }
  }
  +
  \sup_{ t \in [  \tzero , T] }
  \left\|
    ( I - P ) X_t
  \right\|_{
    L^r( \Omega; H ) 
  }
  .
\end{split} 
\end{equation}
\end{corollary}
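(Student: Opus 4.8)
The plan is to obtain Corollary~\ref{cor:Galerkin4} from Proposition~\ref{prop:Galerkin2} by specialising the auxiliary objects there as follows: let the functions $U_0$ and $\hat{U}_0$ in Proposition~\ref{prop:Galerkin2} both be $\tfrac{\rho}{2}\|\cdot\|_H^2$ (an element of $C^2(O,[0,\infty))$), let the functions $U_1$ and $\hat{U}_1$ in Proposition~\ref{prop:Galerkin2} both be $\rho\,U$ (an element of $C(\mathcal{O},[0,\infty))$, since $U\in C(\mathcal{O},[0,\infty))$ and $\rho\in(0,\infty)$), let the reals $\alpha$ and $\hat{\alpha}$ in Proposition~\ref{prop:Galerkin2} both be $0$, let the reals $\beta$ and $\hat{\beta}$ in Proposition~\ref{prop:Galerkin2} both be $\rho\beta$ (where $\beta$, and the $c$ below, are those of Corollary~\ref{cor:Galerkin4}), let the real $c$ in Proposition~\ref{prop:Galerkin2} coincide with the $c$ of Corollary~\ref{cor:Galerkin4}, and let the extended reals $q_0,\hat{q}_0$ in Proposition~\ref{prop:Galerkin2} both be $\infty$ and $q_1,\hat{q}_1$ both be $2q$. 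With these choices $\tfrac1p+\tfrac1{q_0}+\tfrac1{q_1}+\tfrac1{\hat{q}_0}+\tfrac1{\hat{q}_1}=\tfrac1p+\tfrac1{2q}+\tfrac1{2q}=\tfrac1r$, as required, and the remaining structural hypotheses of Proposition~\ref{prop:Galerkin2} ($P(\mathcal{O})\subseteq\mathcal{O}$, the integrability of the coefficients along $X$, $PX$ and $Y$, the exponential integrability $\E[e^{U_0(X_0)}+e^{\hat{U}_0(Y_0)}]<\infty$, and the integral equations for $X$ and $Y$) will be checked below or are literally those assumed in Corollary~\ref{cor:Galerkin4}.

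First I would verify the two generator inequalities in~\eqref{eq:Galerkin_assumption}. Since $(\nabla U_0)(x)=\rho x$ and $(\operatorname{Hess}_x U_0)(x)=\rho\,\mathrm{id}_H$, a direct computation of $\mathcal{G}_{\mu,\sigma}$ from~\eqref{eq:generator} gives $(\mathcal{G}_{\mu,\sigma}U_0)(x)+\tfrac12\|\sigma(x)^*(\nabla U_0)(x)\|_H^2+U_1(x)=\rho\big[\langle x,\mu(x)\rangle_H+\tfrac12\|\sigma(x)\|_{HS(U,H)}^2+\tfrac{\rho}{2}\|\sigma(x)^*x\|_H^2+U(x)\big]\le\rho\beta$ by the first hypothesis inequality~\eqref{eq:Galerkin_assumption_cor4} of Corollary~\ref{cor:Galerkin4}, which is exactly $\alpha\,U_0(x)+\beta$ with the chosen parameters. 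For the second inequality I would use $P=P^*=P^2$, $\|P\|_{L(H)}\le1$ and $Py=y$ for $y\in P(H)$, so that $\langle y,P\mu(y)\rangle_H=\langle y,\mu(y)\rangle_H$, $\|P\sigma(y)\|_{HS(U,H)}\le\|\sigma(y)\|_{HS(U,H)}$ and $\sigma(y)^*P^*(\nabla\hat{U}_0)(y)=\rho\,\sigma(y)^*y$; hence $(\mathcal{G}_{P\mu,P\sigma}\hat{U}_0)(y)+\tfrac12\|\sigma(y)^*P^*(\nabla\hat{U}_0)(y)\|_H^2+\hat{U}_1(y)\le\rho\big[\langle y,\mu(y)\rangle_H+\tfrac12\|\sigma(y)\|_{HS(U,H)}^2+\tfrac{\rho}{2}\|\sigma(y)^*y\|_H^2+U(y)\big]\le\rho\beta$, the last step again by~\eqref{eq:Galerkin_assumption_cor4} applied at $y\in P(H)\cap\mathcal{O}\subseteq\mathcal{O}$.

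Next I would verify the third inequality in~\eqref{eq:Galerkin_assumption} and the exponential integrability condition. For the latter, $U_0(X_0)=\tfrac{\rho}{2}\|X_0\|_H^2$ and $\hat{U}_0(Y_0)=\tfrac{\rho}{2}\|PX_0\|_H^2\le\tfrac{\rho}{2}\|X_0\|_H^2$, so $\E[e^{U_0(X_0)}+e^{\hat{U}_0(Y_0)}]\le 2\,\E[e^{\rho\|X_0\|_H^2/2}]<\infty$. For the third inequality, I would use $\|P\|_{L(H)}\le1$ to bound $\|P\sigma(Px)-P\sigma(y)\|_{HS(U,H)}\le\|\sigma(Px)-\sigma(y)\|_{HS(U,H)}$ and $\|P\sigma(Px)-P\sigma(x)\|_{HS(U,H)}\le\|\sigma(Px)-\sigma(x)\|_{HS(U,H)}$, together with $P=P^*=P^2$ and $Py=y$ to rewrite $\langle Px-y,P\mu(Px)-P\mu(y)\rangle_H=\langle Px-y,\mu(Px)-\mu(y)\rangle_H$; since $(p-1)(1+\varepsilon)$ and $(p-1)(1+1/\varepsilon)$ are nonnegative, this shows that the left-hand side of the third inequality of Proposition~\ref{prop:Galerkin2} is dominated by the left-hand side of the second hypothesis inequality of Corollary~\ref{cor:Galerkin4}. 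On the right-hand side, with $\alpha=\hat{\alpha}=0$, $q_0=\hat{q}_0=\infty$ and $q_1=\hat{q}_1=2q$, the bracketed coefficient in Proposition~\ref{prop:Galerkin2} becomes $c+\tfrac{U_1(x)}{q_1}+\tfrac{\hat{U}_1(y)}{\hat{q}_1}=c+\tfrac{\rho}{2q}U(x)+\tfrac{\rho}{2q}U(y)$, which is precisely the bracket in Corollary~\ref{cor:Galerkin4}; so the second hypothesis inequality of Corollary~\ref{cor:Galerkin4} yields the third inequality of Proposition~\ref{prop:Galerkin2}.

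It then remains to invoke Proposition~\ref{prop:Galerkin2} and to simplify the resulting bound. With $\alpha=\hat{\alpha}=0$, $\beta=\hat{\beta}=\rho\beta$ and $q_0=\hat{q}_0=\infty$, $q_1=\hat{q}_1=2q$, the exponent $\tfrac12-\tfrac1p+\int_0^T\big(c+\sum_{i=0}^1[\tfrac{\beta}{q_ie^{\alpha s}}+\tfrac{\hat{\beta}}{\hat{q}_ie^{\hat{\alpha}s}}]\big)\,ds$ collapses to $\tfrac12-\tfrac1p+cT+\tfrac{\beta\rho T}{q}$, the factor $|\E[e^{U_0(X_0)}]|^{1/q_0+1/q_1}\,|\E[e^{\hat{U}_0(Y_0)}]|^{1/\hat{q}_0+1/\hat{q}_1}$ is bounded by $|\E[e^{\rho\|X_0\|_H^2/2}]|^{1/(2q)}\cdot|\E[e^{\rho\|X_0\|_H^2/2}]|^{1/(2q)}=|\E[e^{\rho\|X_0\|_H^2/2}]|^{1/q}$ (using once more $\|PX_0\|_H\le\|X_0\|_H$), and the factors $T^{(1/2-1/p)}$, $\|\varphi(X)\|_{L^p([\tzero,T]\times\Omega;\R)}$ and $\sup_{t\in[\tzero,T]}\|(I-P)X_t\|_{L^r(\Omega;H)}$ are unchanged, which is exactly the asserted estimate. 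Every step here is either a routine substitution or an elementary estimate, so I do not expect a genuine obstacle; the only point that needs a little attention is arranging that the $U$-terms on the two sides of the third inequality match up, which is what forces the particular choice $q_0=\hat{q}_0=\infty$, $q_1=\hat{q}_1=2q$.
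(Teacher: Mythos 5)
Your proposal is correct and follows exactly the route the paper intends: the paper states that Corollary~\ref{cor:Galerkin4} follows immediately from Proposition~\ref{prop:Galerkin2} with $U_0=\hat U_0=\tfrac{\rho}{2}\|\cdot\|_H^2$, and your choices $U_1=\hat U_1=\rho U$, $\alpha=\hat\alpha=0$, $\beta=\hat\beta=\rho\beta$, $q_0=\hat q_0=\infty$, $q_1=\hat q_1=2q$, together with the elementary facts $P=P^*=P^2$, $\|P\|_{L(H)}\le 1$ and $Y_0=PX_0$, verify the hypotheses and reduce the resulting bound to the asserted one. No gaps.
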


We now apply Corollary~\ref{cor:Galerkin4} %Corollary~\ref{cor:Galerkin3} 
and Proposition~\ref{prop:Galerkin2} 
respectively 
to two semilinear example SPDEs with non-globally monotone nonlinearities. 
In both example SPDEs,
the particular choice of the functions
of $ U_0 $ and $ U_1 $ 
in Proposition~\ref{prop:Galerkin2}
and the estimates associated with them
are particularly inspired by 
a revised version of the article Cox 
et al.~\cite{CoxHutzenthalerJentzen2013}.
In both example SPDEs, 
the following common setting is used.

\subsubsection{Setting}
\label{ssec:SPDE_setting}

In the remainder of Subsection~\ref{ssec:Galerkin} the following setting is used.
Let $ k, l \in \N $, $ T \in (0,\infty) $, 
$ \varrho, \theta \in [0,\infty) $,
$ \vartheta \in ( - 1, 0] $ satisfy
$
  \theta - \vartheta < 1
$,
let 
$ ( \Omega, \mathcal{F}, \P, ( \mathcal{F}_t )_{ t \in [0,T] } ) $
be a stochastic basis,
let $ D = (0,1) $,
%for simplicity
%let $ D = ( 0, 1 )^d $,
%let $ D \subseteq \R^d $ be a bounded 
%non-empty domain with a sufficiently 
%smooth boundary,
let
$ 
  ( H, \left< \cdot, \cdot \right>_H , \left\| \cdot \right\|_H ) 
  = 
  ( 
    L^2( D ; \R^k ) , 
    \left< \cdot , \cdot \right>_{
      L^2( D ; \R^k )
    } 
    ,
    \left\| \cdot \right\|_{
      L^2( D ; \R^k )
    } 	
  )
$
and
$ 
  ( U, \left< \cdot, \cdot \right>_U , \left\| \cdot \right\|_U ) 
  = 
  ( 
    L^2( D ; \R^l ) , 
    \left< \cdot , \cdot \right>_{
      L^2( D ; \R^l )
    } 
    ,
    \left\| \cdot \right\|_{
      L^2( D ; \R^l )
    } 	
  )
$
be the $ \R $-Hilbert spaces of equivalence classes
of Lebesgue square integrable functions
from $ D $ to $ \R^k $
and $ D $ to $ \R^l $ respectively
and let $ ( W_t )_{ t \in [0,T] } $ be a cylindrical
$ \operatorname{Id}_U $-Wiener process on $ U $
with respect to $ ( \mathcal{F}_t )_{ t \in [0,T] } $.
Moreover, 
let
$ A \colon D(A) \subset H \to H $
be a generator of a strongly continuous
analytic semigroup with the property that
$
  \rho - A
$
is strictly positive
and let
$
  ( 
    H_r, \left< \cdot, \cdot \right>_{ H_r }, \left\| \cdot \right\|_{ H_r }
  )
  =
  ( 
    D( ( \varrho - A )^r ), 
    \left< ( \varrho  - A )^r ( \cdot ), ( \varrho  - A )^r ( \cdot ) \right>_{ H }, \left\| ( \varrho - A )^r ( \cdot ) \right\|_{ H }
  )
$,
$ r \in \R $,
be the 
$ \R $-Hilbert spaces of domains of fractional powers of $ \varrho - A $.
Furthermore, let 
$
  x_0 \in H_{ \theta }
$,
$
  F \in C( H_{ \theta } , H_{ \vartheta } )
$,
$ B \in C( H , HS( U, H ) ) $,
% $ B \in C( H_{ \theta }, HS( U, H ) ) $,
let 
$ P_N \in L( H_{ \vartheta }, D(A) ) $,
$ N \in \N $,
be bounded linear operators 
with $ \dim( P_N( H ) ) < \infty $
for all $ N \in \N $
and let 
$ X \colon [0,T] \times \Omega \to H_{ \theta } $
be an adapted stochastic process with 
continuous sample paths satisfying
\begin{equation}
\label{eq:SPDE}
  X_t
  =
  e^{ A t } x_0
  +
  \smallint_0^t
  e^{ A ( t - s ) } F( X_s ) \, ds
  +
  \smallint_0^t
  e^{ A ( t - s ) } B( X_s ) \, dW_s
\end{equation}
$ \P $-a.s.\ for all
$ t \in [0,T] $.
Finally, let 
$
  \mu_N \colon P_N( H ) \to P_N( H )
$
and
$
  \sigma_N \colon P_N( H ) \to \HS( U, P_N( H ) )
$
be given by
$
  \mu_N( v ) 
  = P_N( A v + F(v) )
$
and
$
  \sigma_N(v) u 
  = 
  P_N\big( 
    B(v) u 
  \big)
$
for all 
$ v \in P_N( H ) $,
$ u \in U $,
$ N \in \N $
and let
$ X^N \colon [0,T] \times \Omega \to P_N( H ) $,
$ N \in \N $,
be adapted stochastic processes with 
continuous sample paths satisfying
\begin{equation}
\label{eq:Galerkin}
  X_t^N
  =
  P_N( X_0 )
  +
  \smallint_0^t
  \mu_N( X^N_s ) \, ds
  +
  \smallint_0^t
  \sigma_N( X^N_s ) \, dW_s
\end{equation}
$ \P $-a.s.\ for all
$ t \in [0,T] $,
$ N \in \N $.

\subsubsection{Cahn-Hilliard-Cook type equations}
\label{ssec:Cahn_Hilliard}

In this subsection
assume the setting 
in Subsection~\ref{ssec:SPDE_setting}, 
assume 
$ \theta \in ( \frac{ 1 }{ 12 }, \frac{ 1 }{ 2 } ) 
$,
$ k = 1 $,
$ \vartheta = - \frac{ 1 }{ 2 } $,
let $ c \in (0,\infty) $,
let
$ L \colon D(L) \subset H \to H $
be the Laplacian with 
Neumann boundary conditions on $ (0,1) $,
that is,
$ 
  D( L ) = \{ v \in H^2( D, \R ) \colon v'|_{ \partial D } \equiv 0 \}
$
and
$
  L v = v''
$
for all $ v \in D( L ) $,
assume that 
$ D( A ) = D( L^2 ) $ 
and that $ A v = - L^2 v $
for all $ v \in D(A) $,
let
$ e_n \in H $, $ n \in \N $,
be given by
$
  e_1( x ) = 1
$
and
$
  e_{ n + 1 }( x ) = \sqrt{ 2 } \cos( n \pi x )
$
for all 
$ x \in (0,1) $,
$ n \in \N $,
assume that
$
  P_N( v ) = \sum_{ n = 1 }^N
  \left< e_n, v \right>_H e_n
$
for all $ v \in H $,
$ N \in \N $,
assume that
$ F( v ) = c \, \Delta \left( v^3 - v \right) $
for all $ v \in H_{ \theta } $,
assume that 
$
  \eta :=
  \sup_{ 
    v, w \in H , v \neq w
  }
  \frac{
    \| B(v) - B(v) \|_{ HS( U, H ) }
  }{
    \| v - w \|_H
  }
  < \infty
$
and assume that
for all $ \varepsilon \in (0,\infty) $
it holds that
\begin{equation}
\label{eq:varsigma_assumption}
  \varsigma_{ \varepsilon } 
:=
  \sup_{ v \in H_{ \theta } }
  \left[
    \|
      ( I - P_1 ) B( v ) 
    \|^2_{ HS( U, H ) }
    -
    \varepsilon \,
    \| 
      ( ( I - P_1 ) v )^2 
    \|_H^2
    -
    \varepsilon
    \,
    \|
      ( I - P_1 ) v
    \|_H^2
    \left\| v \right\|^2_H
  \right] 
  < \infty
  .
\end{equation}
(For example, 
if $ Q \in L(U) $
is a trace class operator (see, e.g., Appendix~B
in Pr\'{e}v\^{o}t \& R\"{o}ckner~\cite{PrevotRoeckner2007}),
if $ l = 1 $
% $ a, b \colon D \to \R^{ 1 \times l } $
% are continuously differentiable functions
% with 
% $
%   \sup_{ x \in D } | A(x) | 
% %   +
% %   \int_{ D } | b( x ) |^2 \, dx
% %   < \infty
% $
and if $ B $ satisfies
$ ( B( v ) u )( x ) = ( \sqrt{ Q } u )( x ) $
for all $ x \in D $, $ v \in H $, $ u \in U $,
then 
$ B $ fulfills
$
  \sup_{ 
    v, w \in H , v \neq w
  }
  \frac{
    \| B(v) - B(w) \|_{ HS( U, H ) }
  }{
    \| v - w \|_H
  }
  = 0
$
and \eqref{eq:varsigma_assumption} and in that case 
\eqref{eq:SPDE} is 
the Cahn-Hilliard-Cook type SPDE
\begin{equation}
  d X_t( x ) 
  =
  \left[ 
    - 
    \tfrac{ \partial^4 }{ \partial x^4 } X_t( x )
    +
    c
    \,
    \tfrac{ \partial^2 }{ \partial x^2 } 
    \left[
      \left( X_t( x ) \right)^3
      -
      X_t( x )
    \right]
  \right] dt
  +
  \sqrt{ Q }
  \,
  dW_t(x)
\end{equation}
for $ x \in (0,1) $, $ t \in [0,T] $
equipped with the Neumann and the non-flux boundary conditions
$
  X_t'(0) = X_t'(1) =
  X_t'''(0) = X_t'''( 1 )
  = 0
$
for $ t \in [0,T] $;
cf., e.g., Da Prato \& 
Debussche~\cite{DaPratoDebussche1996}.)
%
%
% Below we present a natural example where
% these conditions are fulfilled.
We will now apply Proposition~\ref{prop:Galerkin2}
to the spectral Galerkin approximation processes $ X^N $
and $ X^M $ for $ N, M $ with $ N < M $.
For this let $ \tilde{P}, \tilde{L} \in L(H) $
be linear operators given by
$
  \tilde{P} v 
= 
  ( I - P_1 ) v 
= 
  v - P_1( v )
=
  v - e_1 \left< e_1, v \right>_H
$
and
$
  \tilde{L} v = 
  -
  \sum_{ n = 2 }^{ \infty }
%  \left| 
  n^{ - 2 } \, \pi^{ - 2 } 
%  \right|^{ - 1 / 2 }
  \left< e_n , v \right>_H 
  e_n
$
for all $ v \in H $
and note that
for all $ v \in D(L) $ 
it holds that
\begin{equation}
  \tilde{L} L v = 
  L \tilde{L} v = 
  \tilde{P} v
  .
\end{equation}
Then observe that 
Young's inequality proves that
for all
$ \delta \in [ \frac{ 3 }{ 4 } , \infty ) $,
$ M \in \N $,
$ x \in P_M( H_{ \vartheta } ) $ it holds that
\begin{equation}
\label{eq:key_estimate_Neumann}
\begin{split}
&
  -
  c \,
  \langle 
    \tilde{P} x, x^3 
  \rangle_H
=
  -
  c \,
  \langle 
    \tilde{P} x, ( \tilde{P} x + P_1 x )^3 
  \rangle_H
\\ & =
  -
  c \,
  \langle 
    \tilde{P} x, ( \tilde{P} x )^3 
  \rangle_H
  -
  3 \, c \,
  \langle 
    \tilde{P} x, ( \tilde{P} x )^2 ( P_1 x ) 
  \rangle_H
  -
  3 \, c \,
  \langle 
    \tilde{P} x, ( \tilde{P} x ) ( P_1 x )^2 
  \rangle_H
  -
  c \,
  \langle 
    \tilde{P} x, ( P_1 x )^3 
  \rangle_H
\\ & =
  -
  c \,
  \| 
    ( \tilde{P} x )^2 
  \|_H^2
  -
  3 \, c \,
  \langle 
    \tilde{P} x, ( \tilde{P} x )^2
  \rangle_H
  \left< e_1, x \right>_H
  -
  3 \, c \,
  \|
    \tilde{P} x
  \|_H^2
  \left| \left< e_1, x \right>_H \right|^2 
  -
  c \,
  \langle 
    \tilde{P} x, e_1
  \rangle_H
  ( \left< e_1, x \right>_H )^3 
\\ & \leq
  -
  c \,
  \| 
    ( \tilde{P} x )^2 
  \|_H^2
  +
  \left[ 
    \sqrt{ 2 c } \,
    \delta^{ 1 / 2 } 
    \|
      ( \tilde{P} x )^2 
    \|_H
  \right]
  \left[
  \tfrac{ 3 \sqrt{ c } }{ \sqrt{ 2 } } 
  \,
  \delta^{ - 1 / 2 } 
  \|
    \tilde{P} x
  \|_H
  \left| \left< e_1, x \right>_H \right|
  \right]
  -
  3 \, c \,
  \|
    \tilde{P} x
  \|_H^2
  \left| \left< e_1, x \right>_H \right|^2 
\\ & \leq
  -
  c \left( 1 - \delta \right)
  \| 
    ( \tilde{P} x )^2 
  \|_H^2
  -
  3 \, c \,
  ( 1 - \tfrac{ 3 }{ 4 \delta } )
  \,
  \|
    \tilde{P} x
  \|_H^2
  \left| \left< e_1, x \right>_H \right|^2 
\\ & =
  -
  c \left( 1 - \delta \right)
  \| 
    ( \tilde{P} x )^2 
  \|_H^2
  -
  3 \, c \,
  ( 1 - \tfrac{ 3 }{ 4 \delta } )
  \,
  \|
    \tilde{P} x
  \|_H^2
  \left[ 
    \| x \|^2_H 
    -
    \| \tilde{P} x \|^2_H
  \right]
\\ & \leq
  c 
  \left[
    \delta 
    +
    3 
%    \,
%    \lambda_{ \R }( D )
    \left(
      1 - \tfrac{ 3 }{ 4 \delta }
    \right)
    - 1
  \right]
  \| 
    ( \tilde{P} x )^2 
  \|_H^2
  -
  3 \, c \,
  ( 1 - \tfrac{ 3 }{ 4 \delta } )
  \,
  \|
    \tilde{P} x
  \|_H^2
  \,
  \| x \|^2_H 
  \,
  .
\end{split}
\end{equation}
In the next step observe that
for all 
$ M \in \N $
and all 
$ x \in P_M( H ) $
it holds that
\begin{equation}
\begin{split}
  \langle 
    \tilde{L} x, \mu_M( x ) 
  \rangle_H
& = 
  \langle
    \tilde{L} x, P_M( A x + F(x) )
  \rangle_H
=
  \langle
    \tilde{L} P_M x, A x + F(x)
  \rangle_H
=
  \langle
    \tilde{L} x, A x + F(x)
  \rangle_H
\\ & =
  -
  \langle
    \tilde{L} x, L^2 x 
  \rangle_H
  +
  \langle
    \tilde{L} x, 
    F(x)
  \rangle_H
=
  -
  \langle
    \tilde{P} x, L x 
  \rangle_H
  +
  c
  \langle
    \tilde{P} x, 
    x^3 - x
  \rangle_H
\\ & =
  \langle
    ( - L )^{ 1 / 2 } \tilde{P} x, ( - L )^{ 1 / 2 } 
    \tilde{P} x 
  \rangle_H
  +
  c
  \langle
    \tilde{P} x, 
    x^3 
  \rangle_H
  -
  c
  \langle
    \tilde{P} x, 
    x 
  \rangle_H
\\ & =
  \|
    ( - L )^{ 1 / 2 } \tilde{P} x
  \|^2_H
  +
  c
  \langle
    \tilde{P} x, 
    x^3 
  \rangle_H
  -
  c
  \|
    \tilde{P} x
  \|^2_H
  \,
  .
\end{split}
\end{equation}
This implies that
if $ M \in \N $, 
$ \rho, \hat{ \rho } \in (0,\infty) $
and if $ U_0 \in C^2( P_M( H ) , [0,\infty) ) $
satisfies 
$
  U_0( x ) 
  = 
  \frac{ \rho }{ 2 } 
  \, \| ( - \tilde{L} )^{ 1 / 2 } x \|^2_H
  +
  \frac{ \hat{ \rho } }{ 2 } \,
  \| \tilde{P} x \|^2_H
$
for all $ x \in P_M( H ) $,
then it holds for all
$ x \in P_M( H ) $ that
\begin{equation}
\begin{split}
&
  ( \mathcal{G}_{ \mu_M , \sigma_M } U_0 )( x )
  +
  \tfrac{ 1 }{ 2 } 
  \, \| \sigma_M( x )^* ( \nabla U_0 )( x ) \|^2_H 
\\ & =
\left[
  -
  \rho
  \,
  \langle 
    \tilde{L} x, \mu_M( x ) 
  \rangle_H
  +
  \tfrac{ \rho }{ 2 }
  \,
  \|
    ( - \tilde{L} )^{ 1 / 2 } \sigma_M( x ) 
  \|^2_{ HS( U, P_M( H ) ) }
\right]
\\ &
  +
\left[
  \hat{ \rho }
  \,
  \langle 
    \tilde{P} x , 
    \mu_M( x ) 
  \rangle_H
  +
  \tfrac{ \hat{ \rho } }{ 2 }
  \,
  \|
    \tilde{P} \sigma_M( x ) 
  \|^2_{ HS( U, P_M( H ) ) }
\right]
% \\ &
  +
  \tfrac{ 1 }{ 2 } 
  \,
  \big\| 
    \sigma_M( x )^* 
    \big[ 
      \rho 
      \,
      ( - \tilde{L} ) \, x
      +
      \hat{ \rho } 
      \,
      \tilde{P} 
      \, x
    \big] 
  \big\|^2_H 
\\ & \leq
\rho
\left[
  c \,
  \|
    \tilde{P} x
  \|^2_H
  -
  \left\|
    x'
  \right\|^2_H
  -
  c \,
  \langle 
    \tilde{P} x, x^3 
  \rangle_H
  +
  \tfrac{ 1 }{ 2 }
  \,
  \|
    ( - \tilde{L} )^{ 1 / 2 } B( x ) 
  \|^2_{ HS( U, H ) }
\right]
\\ &
  +
  \hat{ \rho }
\left[
  c \,
  \| x' \|^2_H
  -
  \| x'' \|^2_H
  -
  c
  \,
  \langle 
    x' , 
    ( x^3 )'
  \rangle_H
  +
  \tfrac{ 1 }{ 2 }
  \,
  \|
    \tilde{P} B( x ) 
  \|^2_{ HS( U, H ) }
\right]
  +
  \tfrac{ 1 }{ 2 }
  \,
  \big\| 
    B( x )^* 
    \big[
      \hat{ \rho } \tilde{P} - \rho \tilde{L}
    \big]
    x
  \big\|^2_H 
  .
\end{split}
\end{equation} 
Combining this with 
\eqref{eq:key_estimate_Neumann}
and the estimate
$
  \| ( - \tilde{L} )^{ 1 / 2 } v 
  \|^2_H
\leq 
  \| 
    \tilde{P} v 
  \|_H^2
$
for all $ v \in H $
proves that
if $ M \in \N $, 
$ \rho, \hat{ \rho } \in (0,\infty) $
and if $ U_0 \in C^2( P_M( H ) , [0,\infty) ) $
satisfies 
$
  U_0( x ) 
  = 
  \frac{ \rho }{ 2 } 
  \, \| ( - \tilde{L} )^{ 1 / 2 } x \|^2_H
  +
  \frac{ \hat{ \rho } }{ 2 } \,
  \| \tilde{P} x \|^2_H
$
for all $ x \in P_M( H ) $,
then it holds for all
$ \delta \in [ \frac{ 3 }{ 4 } , \infty ) $, $ x \in P_M( H ) $ 
that
\begin{equation}
\label{eq:CHC_U_est}
\begin{split}
&
  ( \mathcal{G}_{ \mu_M , \sigma_M } U_0 )( x )
  +
  \tfrac{ 1 }{ 2 } 
  \, \| \sigma_M( x )^* ( \nabla U_0 )( x ) \|^2_H 
\\ & \leq
\rho
\left[
  c \,
  \|
    \tilde{P} x
  \|^2_H
  -
  \left\|
    x'
  \right\|^2_H
  +
  c 
  \left[
    \delta 
    +
    3 \,
    \lambda_{ \R }( D )
    \left(
      1 - \tfrac{ 3 }{ 4 \delta }
    \right)
    - 1
  \right]
  \| 
    ( \tilde{P} x )^2 
  \|_H^2
  -
  3 \, c \,
  ( 1 - \tfrac{ 3 }{ 4 \delta } )
  \,
  \|
    \tilde{P} x
  \|_H^2
  \,
  \| x \|^2_H 
\right]
\\ &
  +
  \hat{ \rho }
\left[
  c \,
  \| x' \|^2_H
  -
  \| x'' \|^2_H
  -
  3 \, c
  \,
  \|
    x' x
  \|_H^2
\right]
  +
  \tfrac{ 
    ( \rho + \hat{ \rho } ) 
  \,
  \|
    \tilde{P} B( x ) 
  \|^2_{ HS( U, H ) }
  }{ 2 }
  +
  \tfrac{ 
    \| B(x) \|^2_{ HS( U, H ) }
    \| 
      \hat{ \rho } \tilde{P} - \rho \tilde{L} 
    \|_{ L(H) }^2
    \| 
      \tilde{P} x
    \|^2_H 
  }{ 2 }
  .
\end{split}
\end{equation} 
This implies that
if $ M \in \N $, 
$ \rho, \hat{ \rho } \in (0,\infty) $
and if $ U_0 \in C^2( P_M( H ) , [0,\infty) ) $
satisfies 
$
  U_0( x ) 
  = 
  \frac{ \rho }{ 2 } 
  \, \| ( - \tilde{L} )^{ 1 / 2 } x \|^2_H
  +
  \frac{ \hat{ \rho } }{ 2 } \,
  \| \tilde{P} x \|^2_H
$
for all $ x \in P_M( H ) $,
then it holds for all
$ \varepsilon \in (0,\infty) $, 
$ 
  \delta \in [ \tfrac{ 3 }{ 4 } , \infty ) 
$,
$ x \in P_M( H ) $ 
that
\begin{equation}
\label{eq:CHC_U_est2}
\begin{split}
&
  ( \mathcal{G}_{ \mu_M , \sigma_M } U_0 )( x )
  +
  \tfrac{ 1 }{ 2 } 
  \, \| \sigma_M( x )^* ( \nabla U_0 )( x ) \|^2_H 
\\ & \leq
\rho
\left[
  -
  \left\|
    x'
  \right\|^2_H
  +
  c 
  \left[
    \delta 
    +
    3 
    \left(
      1 - \tfrac{ 3 }{ 4 \delta }
    \right)
    - 1
  \right]
  \| 
    ( \tilde{P} x )^2 
  \|_H^2
  -
  3 \, c \,
  ( 1 - \tfrac{ 3 }{ 4 \delta } )
  \,
  \|
    \tilde{P} x
  \|_H^2
  \,
  \| x \|^2_H 
\right]
\\ & \quad
  +
  \hat{ \rho }
\left[
  c \,
  \| x' \|^2_H
  -
  \| x'' \|^2_H
  -
  3 \, c
  \,
  \|
    x' x
  \|_H^2
\right]
  +
  \tfrac{ 
    ( \rho + \hat{ \rho } ) 
    \,
    \left[ 
      \varsigma_{ \varepsilon }
      +
      \varepsilon \,
      \| 
        ( \tilde{P} x )^2 
      \|_H^2
      +
      \varepsilon
      \,
      \|
        \tilde{P} x
      \|_H^2
      \left\| x \right\|^2_H
    \right]
  }{ 2 }
\\ & \quad
  +
  \tfrac{ 
    \| B(x) - B(0) + B(0) \|^2_{ HS( U, H ) }
    \| 
      \hat{ \rho } \tilde{P} - \rho \tilde{L} 
    \|_{ L(H) }^2
    \| 
      \tilde{P} x
    \|^2_H 
  }{ 2 }
  +
  \rho \, c \,
  \|
    \tilde{P} x
  \|^2_H
\\ & \leq
  \left[ 
    \tfrac{
      ( \rho + \hat{ \rho } ) \, \varepsilon
    }{
      2
    }
    +
    \rho
    \,
    c 
    \left[
      \delta 
      +
      \left(
        3 - \tfrac{ 9 }{ 4 \delta }
      \right)
      - 1
    \right]
  \right]
  \| 
    ( \tilde{P} x )^2 
  \|_H^2
\\ & \quad
  +
  \left[
    \hat{ \rho }
    \,
    c 
    -
    \rho
  \right]
  \| x' \|^2_H
  -
  \hat{ \rho }
\left[
  \| x'' \|^2_H
  +
  3 \, c
  \,
  \|
    x' x
  \|_H^2
\right]
  +
  \left[
    \tfrac{ 
      ( \rho + \hat{ \rho } ) \, \varepsilon
    }{ 2 }
    -
    \rho \, c \,
    ( 3 - \tfrac{ 9 }{ 4 \delta } )
  \right]
  \|
    \tilde{P} x
  \|_H^2
  \,
  \| x \|^2_H 
\\ & \quad
  +
  \left[
    \eta^2 \, \| x \|^2_H
    +
    \| B(0) \|^2_{ HS( U, H ) }
  \right]
    \| 
      \hat{ \rho } \tilde{P} - \rho \tilde{L} 
    \|_{ L(H) }^2
    \,
    \| 
      \tilde{P} x
    \|^2_H 
  +
  \rho \, c \,
  \|
    \tilde{P} x
  \|^2_H
  +
  \tfrac{ 
    \varsigma_{ \varepsilon }
    \,
    ( \rho + \hat{ \rho } ) 
  }{ 2 }
  .
\end{split}
\end{equation} 
Hence, we obtain that
if $ M \in \N $, 
$ \rho, \hat{ \rho } \in (0,\infty) $
and if $ U_0 \in C^2( P_M( H ) , [0,\infty) ) $
satisfies 
$
  U_0( x ) 
  = 
  \frac{ \rho }{ 2 } 
  \, \| ( - \tilde{L} )^{ 1 / 2 } x \|^2_H
  +
  \frac{ \hat{ \rho } }{ 2 } \,
  \| \tilde{P} x \|^2_H
$
for all $ x \in P_M( H ) $,
then it holds for all
$ \varepsilon \in (0,\infty) $, 
$ \delta \in [ \frac{ 3 }{ 4 }, \infty ) $,
$ x \in P_M( H ) $ 
that
\begin{equation}
\label{eq:CHC_U_est3}
\begin{split}
&
  ( \mathcal{G}_{ \mu_M , \sigma_M } U_0 )( x )
  +
  \tfrac{ 1 }{ 2 } 
  \, \| \sigma_M( x )^* ( \nabla U_0 )( x ) \|^2_H 
\\ & \leq
  \left[ 
    \tfrac{
      ( \rho + \hat{ \rho } ) \, \varepsilon
    }{
      2
    }
    +
    \rho
    \,
    c 
    \left[
      \delta 
      +
        2 - \tfrac{ 9 }{ 4 \delta }
    \right]
  \right]
  \| 
    ( \tilde{P} x )^2 
  \|_H^2
\\ & 
  +
  \left[ 
    \rho \, c 
    +
    \| B(0) \|^2_{ HS( U, H ) }
    \,
    \| 
      \hat{ \rho } \tilde{P} - \rho \tilde{L} 
    \|_{ L(H) }^2
  \right]
  \|
    \tilde{P} x
  \|^2_H
  +
  \left[
    \hat{ \rho }
    \,
    c 
    -
    \rho
  \right]
  \| x' \|^2_H
  -
  \hat{ \rho }
\left[
  \| x'' \|^2_H
  +
  3 \, c
  \,
  \|
    x' x
  \|_H^2
\right]
\\ & 
  +
  \left[
    \tfrac{ 
      ( \rho + \hat{ \rho } ) \, \varepsilon
    }{ 2 }
    +
    \eta^2 \, 
    \| 
      \hat{ \rho } \tilde{P} - \rho \tilde{L} 
    \|_{ L(H) }^2
    -
    \rho \, c \,
    ( 3 - \tfrac{ 9 }{ 4 \delta } )
  \right]
  \|
    \tilde{P} x
  \|_H^2
  \,
  \| x \|^2_H 
  +
  \tfrac{ 
    \varsigma_{ \varepsilon }
    \,
    ( \rho + \hat{ \rho } ) 
  }{ 2 }
  .
\end{split}
\end{equation} 
This implies that there exist
real numbers
$ \rho, \hat{ \rho }, \tilde{\rho} \in (0,\infty) $
such that
$ U_0, U_1 \in C^2( D(A) , [0,\infty) ) $
given by
$
  U_0( x ) 
  = 
  \frac{ \rho }{ 2 } 
  \, \| ( - \tilde{L} )^{ 1 / 2 } x \|^2_H
  +
  \frac{ \hat{ \rho } }{ 2 } \,
  \| \tilde{P} x \|^2_H
$
and
$
  U_1( x )
= 
  \hat{ \rho }
  \left\|
    x''
  \right\|^2_H
  +
  \tilde{ \rho }
  \,
  \| x \|^2_H 
  \,
  \| \tilde{P} x \|^2_H
$
for all $ x \in D(A) $
fulfill that
\begin{equation}
\label{eq:CHC_U_est4}
  \beta
  :=
  \sup_{ M \in \N }
  \sup_{ x \in P_M( H ) }
  \left[
  ( \mathcal{G}_{ \mu_M , \sigma_M } 
    U_0|_{ P_M( H ) } 
  )( x )
  +
  \tfrac{ 1 }{ 2 } 
  \, \| \sigma_M( x )^* ( \nabla U_0 )( x ) \|^2_H
  +
  U_1( x )
  \right]
  < \infty
  .
\end{equation}
Next note that
for all 
$ \varepsilon \in [0,\infty) $,
$ p \in [2,\infty) $,
$ M, N \in \N $,
$ x \in P_M( H ) $,
$ y \in P_N( H ) $
with $ M > N $
it holds that
\begin{equation}
\begin{split}
&
    \left< 
      P_N x - y , P_N \mu_M( P_N x ) - P_N \mu_M( y ) 
    \right>_H
    +
    \tfrac{ ( p - 1 ) \, ( 1 + \varepsilon ) }{ 2 }
    \left\|
      P_N \sigma_M( P_N x ) - P_N \sigma_M( y ) 
    \right\|^2_{ HS( U , P_N( H ) ) }
\\ & 
  +
    \left< 
      y - P_N x , P_N \mu_M( P_N x ) - P_N \mu_M( x ) 
    \right>_H
    +
    \tfrac{ ( p - 1 ) \, ( 1 + 1 / \varepsilon ) }{ 2 }
    \left\|
      P_N \sigma_M( P_N x ) - P_N \sigma_M( x )
    \right\|^2_{ HS( U , P_N( H ) ) }
\\ & \leq
    \left< 
      P_N x - y , F( P_N x ) - F( y ) 
    \right>_H
    -
    \left\| 
      L ( P_N x - y )
    \right\|^2_H
    +
    \tfrac{ ( p - 1 ) \, ( 1 + \varepsilon ) }{ 2 }
    \left\|
      B( P_N x ) - B( y ) 
    \right\|^2_{ HS( U , H ) }
\\ & 
  +
    \left< 
      y - P_N x , F( P_N x ) - F( x ) 
    \right>_H
    +
    \tfrac{ ( p - 1 ) \, ( 1 + 1 / \varepsilon ) }{ 2 }
    \left\|
      B( P_N x ) - B( x )
    \right\|^2_{ HS( U , H ) }
\\ & \leq
    c \,
    \|
      ( - L )^{ 1 / 2 }
      ( P_N x - y )
    \|_H^2
    +
    c
    \left< 
      P_N x - y , L \! \left[ ( P_N x )^3 - y^3 \right] 
    \right>_H
    -
    \left\| 
      L ( P_N x - y )
    \right\|^2_H
\\ & 
    +
    \tfrac{ ( p - 1 ) \, ( 1 + \varepsilon ) \, \eta^2 }{ 2 }
    \left\|
      P_N x - y 
    \right\|^2_H
  +
  c
    \left< 
      L ( y - P_N x ) , ( P_N x )^3 - x^3 - ( P_N x - x )
    \right>_H
\\ &
    +
    \tfrac{ ( p - 1 ) \, ( 1 + 1 / \varepsilon ) \, \eta^2 }{ 2 }
    \left\|
      ( I - P_N ) x 
    \right\|^2_H
    .
% \\ & \leq
%   \tfrac{
%     \left|
%       \varphi( x )
%     \right|^2
%   }{ 2 }
%   +
%   \left[
%     c
%     +
%     \tfrac{
%       U_1( x ) + U_1( y )
%     }{
%       2 q
%     }
%   \right] 
%   \left\| P_N( x ) - y \right\|^2_H
\end{split}
\end{equation}
This implies that
for all 
$ \varepsilon \in [0,\infty) $,
$ p \in [2,\infty) $,
$ M, N \in \N $,
$ x \in P_M( H ) $,
$ y \in P_N( H ) $
with $ M > N $
it holds that
\begin{equation}
\begin{split}
&
    \left< 
      P_N x - y , P_N \mu_M( P_N x ) - P_N \mu_M( y ) 
    \right>_H
    +
    \tfrac{ ( p - 1 ) \, ( 1 + \varepsilon ) }{ 2 }
    \left\|
      P_N \sigma_M( P_N x ) - P_N \sigma_M( y ) 
    \right\|^2_{ HS( U , P_N( H ) ) }
\\ & 
  +
    \left< 
      y - P_N x , P_N \mu_M( P_N x ) - P_N \mu_M( x ) 
    \right>_H
    +
    \tfrac{ ( p - 1 ) \, ( 1 + 1 / \varepsilon ) }{ 2 }
    \left\|
      P_N \sigma_M( P_N x ) - P_N \sigma_M( x )
    \right\|^2_{ HS( U , P_N( H ) ) }
\\ & \leq
    c \,
    \|
      ( P_N x - y )'
    \|_H^2
    -
    c
    \left< 
      ( P_N x - y )' , [ ( P_N x - y ) ( ( P_N x )^2 + ( P_N x ) y + y^2 ) ]' 
    \right>_H
  -
  \tfrac{ 
    \left\| L ( y - P_N x ) \right\|^2_H
  }{ 2 } 
\\ & 
    +
    \tfrac{ ( p - 1 ) \, ( 1 + \varepsilon ) \, \eta^2 }{ 2 }
    \left\|
      P_N x - y 
    \right\|^2_H
  +
    c^2 
    \left\|
      ( P_N x )^3 - x^3 
    \right\|^2_H
  +
  \big[
    c^2
    +
    \tfrac{ ( p - 1 ) \, ( 1 + 1 / \varepsilon ) \, \eta^2 }{ 2 }
  \big]
    \left\|
      ( I - P_N ) x 
    \right\|^2_H
\\ & \leq
    c \,
    \|
      ( P_N x - y )'
    \|_H^2
    -
    c
    \left< 
      [ ( P_N x - y )' ]^2, ( P_N x )^2 + ( P_N x ) y + y^2 
    \right>_H
  -
  \tfrac{ 
    1
  }{ 2 } 
    \left\| L ( y - P_N x ) \right\|^2_H
\\ &
    -
    c
    \left< 
      ( P_N x - y )' , ( P_N x - y ) [ ( P_N x )^2 + ( P_N x ) y + y^2 ]' 
    \right>_H
    +
    \tfrac{ ( p - 1 ) \, ( 1 + \varepsilon ) \, \eta^2 }{ 2 }
    \left\|
      P_N x - y 
    \right\|^2_H
\\ & 
  +
    c^2 
    \left\|
      [ x - P_N x ]
      [ x^2 + ( P_N x )^2 + ( P_N x ) x ]
    \right\|^2_H
  +
  \big[
    c^2
    +
    \tfrac{ ( p - 1 ) \, ( 1 + 1 / \varepsilon ) \, \eta^2 }{ 2 }
  \big]
    \left\|
      ( I - P_N ) x 
    \right\|^2_H
  .
\end{split}
\end{equation}
Hence, we obtain that
for all 
$ \varepsilon \in [0,\infty) $,
$ p \in [2,\infty) $,
$ M, N \in \N $,
$ x \in P_M( H ) $,
$ y \in P_N( H ) $
with $ M > N $
it holds that
\begin{equation}
\begin{split}
&
    \left< 
      P_N x - y , P_N \mu_M( P_N x ) - P_N \mu_M( y ) 
    \right>_H
    +
    \tfrac{ ( p - 1 ) ( 1 + \varepsilon ) }{ 2 }
    \left\|
      P_N \sigma_M( P_N x ) - P_N \sigma_M( y ) 
    \right\|^2_{ HS( U , P_N( H ) ) }
\\ & 
  +
    \left< 
      y - P_N x , P_N \mu_M( P_N x ) - P_N \mu_M( x ) 
    \right>_H
    +
    \tfrac{ ( p - 1 ) \, ( 1 + 1 / \varepsilon ) }{ 2 }
    \left\|
      P_N \sigma_M( P_N x ) - P_N \sigma_M( x )
    \right\|^2_{ HS( U , P_N( H ) ) }
\\ & \leq
    c \,
    \|
      ( P_N x - y )'
    \|_H^2
    -
    \tfrac{ c }{ 2 }
    \left< 
      [ ( P_N x - y )' ]^2, ( P_N x )^2 + y^2 
    \right>_H
  -
  \tfrac{ 
    1
  }{ 2 } 
    \left\| L ( y - P_N x ) \right\|^2_H
\\ &
    -
    c
    \left< 
      ( P_N x - y )' , 
      ( P_N x - y ) 
      \big[ 
        2 ( P_N x )' ( P_N x ) + ( P_N x )' y + ( P_N x ) y' + 2 y' y 
      \big] 
    \right>_H
    +
    \tfrac{ 
      ( p - 1 ) \, ( 1 + \varepsilon ) \, \eta^2
      \,
      \left\|
        P_N x - y 
      \right\|^2_H
    }{ 2 }
\\ & 
  +
    c^2 
    \left\|
      [ x - P_N x ]
      [ x^2 + ( P_N x )^2 + ( P_N x ) x ]
    \right\|^2_H
  +
  \big[
    c^2
    +
    \tfrac{ ( p - 1 ) \, ( 1 + 1 / \varepsilon ) \, \eta^2 }{ 2 }
  \big]
    \left\|
      ( I - P_N ) x 
    \right\|^2_H
\\ & \leq
    c \,
    \|
      ( P_N x - y )'
    \|_H^2
    -
    \tfrac{ c }{ 2 }
    \left< 
      [ ( P_N x - y )' ]^2, ( P_N x )^2 + y^2 
    \right>_H
  -
  \tfrac{ 
    1
  }{ 2 } 
    \left\| L ( y - P_N x ) \right\|^2_H
\\ &
    +
    2 \, c
    \left\|
      ( P_N x - y )' 
      \left( 
        | P_N x | + | y | 
      \right)
    \right\|_H
    \left\|
      \left|
        P_N x - y 
      \right|
      \left(
        | ( P_N x )' | + | y' | 
      \right)
    \right\|_H
    +
    \tfrac{ 
      ( p - 1 ) \, ( 1 + \varepsilon ) \, \eta^2
    }{ 2 }
      \left\|
        P_N x - y 
      \right\|^2_H
\\ & 
  +
    c^2 
    \left\|
      [ x - P_N x ]
      [ x^2 + ( P_N x )^2 + ( P_N x ) x ]
    \right\|^2_H
  +
  \big[
    c^2
    +
    \tfrac{ ( p - 1 ) \, ( 1 + 1 / \varepsilon ) \, \eta^2 }{ 2 }
  \big]
    \left\|
      ( I - P_N ) x 
    \right\|^2_H
  .
\end{split}
\end{equation}
Young's inequality therefore shows that
for all 
$ \varepsilon \in [0,\infty) $,
$ p \in [2,\infty) $,
$ M, N \in \N $,
$ x \in P_M( H ) $,
$ y \in P_N( H ) $
with $ M > N $
it holds that
\begin{equation}
\label{eq:CHC_est1}
\begin{split}
&
    \left< 
      P_N x - y , P_N \mu_M( P_N x ) - P_N \mu_M( y ) 
    \right>_H
    +
    \tfrac{ ( p - 1 ) \, ( 1 + \varepsilon ) }{ 2 }
    \left\|
      P_N \sigma_M( P_N x ) - P_N \sigma_M( y ) 
    \right\|^2_{ HS( U , P_N( H ) ) }
\\ & 
  +
    \left< 
      y - P_N x , P_N \mu_M( P_N x ) - P_N \mu_M( x ) 
    \right>_H
    +
    \tfrac{ ( p - 1 ) \, ( 1 + 1 / \varepsilon ) }{ 2 }
    \left\|
      P_N \sigma_M( P_N x ) - P_N \sigma_M( x )
    \right\|^2_{ HS( U , P_N( H ) ) }
\\ & \leq
    c \,
    \|
      ( P_N x - y )'
    \|_H^2
  -
  \tfrac{ 
    \left\| L ( y - P_N x ) \right\|^2_H
  }{ 2 } 
    +
    4
    c
    \left\|
      \left|
        P_N x - y 
      \right|
      \left(
        | ( P_N x )' | + | y' | 
      \right)
    \right\|_H^2
    +
    \tfrac{ 
      ( p - 1 ) \, ( 1 + \varepsilon ) \, \eta^2
      \,
      \left\|
        P_N x - y 
      \right\|^2_H
    }{ 2 }
\\ &
  +
    c^2 
    \left\|
      x - P_N x
    \right\|^2_H
    \|
      x^2 + ( P_N x )^2 + ( P_N x ) x 
    \|_{ 
      L^{ \infty }( D; \R )
    }
  +
  \big[
    c^2
    +
    \tfrac{ ( p - 1 ) \, ( 1 + 1 / \varepsilon ) \, \eta^2 }{ 2 }
  \big]
    \left\|
      ( I - P_N ) x 
    \right\|^2_H
\\ & \leq
    c \,
    \|
      ( P_N x - y )'
    \|_H^2
  -
  \tfrac{ 
    \left\| L ( y - P_N x ) \right\|^2_H
  }{ 2 } 
    +
    8 \,
    c
    \left\|
      P_N x - y 
    \right\|_H^2
    \left[
      \left\|
        ( P_N x )' 
      \right\|^2_{ 
        L^{ \infty }( D; \R ) 
      }
      +
      \left\|
        y' 
      \right\|^2_{ 
        L^{ \infty }( D; \R ) 
      }
    \right]
\\ &
    +
    \tfrac{ 
      ( p - 1 ) \, ( 1 + \varepsilon ) \, \eta^2
      \,
      \left\|
        P_N x - y 
      \right\|^2_H
    }{ 2 }
  +
  \tfrac{ 3 c^2 }{ 2 }
    \left\|
      x - P_N x
    \right\|^2_H
    \|
      x^2 + ( P_N x )^2 
    \|_{ 
      L^{ \infty }( D; \R )
    }
\\ &
  +
  \big[
    c^2
    +
    \tfrac{ ( p - 1 ) \, ( 1 + 1 / \varepsilon ) \, \eta^2 }{ 2 }
  \big]
    \left\|
      ( I - P_N ) x 
    \right\|^2_H
    .
\end{split}
\end{equation}
In the next step observe that
the Sobolev embedding theorem together
with interpolation shows that
there exist real numbers
$ \hat{ \kappa } \in [0,\infty) $
and
$ ( \kappa_q )_{ q \in (0,\infty) } \subset [0,\infty) $
such that
for all $ x \in D(A) $,
$ q \in ( 0, \infty ) $ it holds that
\begin{equation}
  c
  \left\| x' \right\|_H^2
\leq 
  \hat{ \kappa }
  \left\| x \right\|^2_H
  +
  \tfrac{ 1 }{ 2 }
  \left\| x'' \right\|^2_H
\qquad 
  \text{and}
\qquad 
  8 \, c
  \left\| x' \right\|^2_{ 
    L^{ \infty }( D; \R )
  }
\leq
  \tfrac{ \kappa_q }{ 2 }
  +
  \tfrac{ 1 }{ 2 q }
  \,
  U_1( x )
\end{equation}
(cf., e.g., Theorem~37.5
in Sell \& You~\cite{sy02}).
Putting this into \eqref{eq:CHC_est1}
proves that
for all 
$ \varepsilon \in [0,\infty) $,
$ p \in [2,\infty) $,
$ q \in (0,\infty) $,
$ M, N \in \N $,
$ x \in P_M( H ) $,
$ y \in P_N( H ) $
with $ M > N $
it holds that
\begin{equation}
\begin{split}
&
    \left< 
      P_N x - y , P_N \mu_M( P_N x ) - P_N \mu_M( y ) 
    \right>_H
    +
    \tfrac{ ( p - 1 ) \, ( 1 + \varepsilon ) }{ 2 }
    \left\|
      P_N \sigma_M( P_N x ) - P_N \sigma_M( y ) 
    \right\|^2_{ HS( U , P_N( H ) ) }
\\ & 
  +
    \left< 
      y - P_N x , P_N \mu_M( P_N x ) - P_N \mu_M( x ) 
    \right>_H
    +
    \tfrac{ ( p - 1 ) \, ( 1 + 1 / \varepsilon ) }{ 2 }
    \left\|
      P_N \sigma_M( P_N x ) - P_N \sigma_M( x )
    \right\|^2_{ HS( U , P_N( H ) ) }
\\ & \leq
  \left[
    \hat{ \kappa }
    +
    \kappa_q
    +
    \tfrac{ 
      ( p - 1 ) \, ( 1 + \varepsilon ) \, \eta^2
    }{ 2 }
  \right]
    \|
      P_N x - y 
    \|_H^2
    +
    \tfrac{ 1 }{ 2 q }
    \left\|
      P_N x - y 
    \right\|_H^2
    \left[
      U_1( x )
      +
      U_1( y )
    \right]
\\ &
  +
  \left[
    c^2
    +
    \tfrac{ ( p - 1 ) \, ( 1 + 1 / \varepsilon ) \, \eta^2 }{ 2 }
    +
    \tfrac{ 3 c^2 }{ 2 }
    \|
      x
    \|_{ 
      L^{ \infty }( D; \R )
    }^2
    +
    \tfrac{ 3 c^2 }{ 2 }
    \|
      P_N( x )  
    \|_{ 
      L^{ \infty }( D; \R )
    }^2
  \right]
    \left\|
      ( I - P_N ) x 
    \right\|^2_H
    .
\end{split}
\end{equation}
Combining this and \eqref{eq:CHC_U_est4}
with Proposition~\ref{prop:Galerkin2}
then shows that
for all 
$ \varepsilon \in [0,\infty) $,
$ p \in [2,\infty) $,
$ q, r \in (0,\infty) $,
$ M, N \in \N $
with $ M > N $
and
$
  \frac{ 1 }{ p } + \frac{ 1 }{ q } = \frac{ 1 }{ r }
$
it holds that
\begin{align}
\nonumber
&
  \sup_{ t \in [  \tzero , T] }
  \left\|
    X_t^M - X_t^N
  \right\|_{
    L^r( \Omega; H )
  }
\leq
  \sqrt{ 2 }
  \,
  T^{
    ( \frac{ 1 }{ 2 } - \frac{ 1 }{ p } )
  }
  \exp\!\left(
    \tfrac{ 1 }{ 2 } - \tfrac{ 1 }{ p }
    +
    \left[ 
      \hat{ \kappa }
      +
      \kappa_q
      +
      \tfrac{ 
        ( p - 1 ) \, ( 1 + \varepsilon ) \, \eta^2
      }{ 2 }
    \right]
    T
    +
    \tfrac{
      \beta T
    }{
      q
    }
  \right)
\\ &  
\nonumber
  \cdot
  \left\|
    \sqrt{
      c^2
      +
      \tfrac{ ( p - 1 ) \, ( 1 + 1 / \varepsilon ) \, \eta^2 }{ 2 }
      +
      \tfrac{ 3 c^2 }{ 2 }
      \|
        X^M
      \|_{ 
        L^{ \infty }( D; \R )
      }^2
      +
      \tfrac{ 3 c^2 }{ 2 }
      \|
        P_N( X^M )  
      \|_{ 
        L^{ \infty }( D; \R )
      }^2
    }
    \left\|
      ( I - P_N ) X^M 
    \right\|_H
  \right\|_{
    L^p( [  \tzero , T ] \times \Omega ; \R )
  }
\\ & \cdot
  \left|
  \E\Big[
    e^{
      U_0( X_0^M ) 
    }
  \Big]
  \E\!\left[
    e^{
      U_0( X^N_0 )
    }
  \right] 
  \right|^{
      \frac{ 1 }{ 2 q }
  }
  +
  \sup_{ t \in [  \tzero , T] }
  \left\|
    ( I - P_N ) X_t^M
  \right\|_{
    L^r( \Omega; H ) 
  }
  .
\end{align}
The estimates
\begin{equation}
\begin{split}
&
   \| P_N v \|_{ 
     L^{ \infty }( D ; \R ) 
   }
\leq 
   \sum_{ n = 0 }^{ \infty }
   \left|
     \left< e_{ n + 1 }, v \right>_H
   \right|
   \left\|
     e_{ n + 1 }
   \right\|_{ L^{ \infty }( D; \R ) }
\\ & \leq
   \sqrt{2}
   \left[
     \sum_{ n = 0 }^{ \infty }
     \left( \varrho + \pi^4 n^4 \right)^{ - \frac{ \alpha }{ 4 } }
     \left(
       \left( \varrho + \pi^4 n^4 \right)^{ \frac{ \alpha }{ 4 } }
       \left|
         \left< e_{ n + 1 }, v \right>_H
       \right|
     \right)
   \right]
\\ & 
\leq
   \sqrt{2}
   \left[
     \sum_{ n = 0 }^{ \infty }
     \left( \varrho + \pi^4 n^4 \right)^{ - \frac{ \alpha }{ 2 } }
   \right]^{ \frac{ 1 }{ 2 } }
  \left[
    \sum_{ n = 0 }^{ \infty }
    \left|
      \left( \varrho + \pi^4 n^4 \right)^{ \frac{ \alpha }{ 4 } }
      \left< e_{ n + 1 }, v \right>_H
    \right|^2
  \right]^{
    \frac{ 1 }{ 2 }
  }
\\ & =
   \sqrt{2}
   \left[
     \sum_{ n = 0 }^{ \infty }
     \left( \varrho + \pi^4 n^4 \right)^{ - \frac{ \alpha }{ 2 } }
   \right]^{ \frac{ 1 }{ 2 } }
  \left[
    \sum_{ n = 1 }^{ \infty }
    \left|
      \left< 
        e_n , 
        ( \varrho - A )^{ \frac{ \alpha }{ 4 } } v 
      \right>_H
    \right|^2
  \right]^{
    \frac{ 1 }{ 2 }
  }
  =
   \sqrt{2}
   \left[
     \sum_{ n = 0 }^{ \infty }
     \left( \varrho + \pi^4 n^4 
     \right)^{ - \frac{ \alpha }{ 2 } }
   \right]^{ \frac{ 1 }{ 2 } }
   \| v \|_{ 
     H_{ \alpha / 4 } 
   }
\end{split}
\end{equation}
and
\begin{equation}
\begin{split}
&
   \left\|
     ( I - P_N ) 
     v
   \right\|_H
 \leq
   \left\|
     ( I - P_N ) \,
     ( \varrho - A )^{ - \alpha / 4 }
   \right\|_{ L( H ) }
   \left\| v \right\|_{ H_{ \alpha / 4 } }
 \leq 
   \left( 
     \varrho + \pi^4 N^4
   \right)^{
     - \alpha / 4
   }
   \left\|
     v
   \right\|_{ H_{ \alpha / 4 } }
\\ & \leq 
   \left[
     N^4
     \pi^4
   \right]^{
     - \alpha / 4
   }
   \| v \|_{ 
     H_{ \alpha / 4 } 
   }
 =
   N^{ - \alpha }
   \,
   \pi^{ - \alpha }
   \,
   \| v \|_{ 
     H_{ \alpha / 4 } 
   }
\end{split}
\end{equation}
for all $ N \in \N $,
$ v \in H_{ \alpha / 4 } $,
$ \alpha \in ( 0, \infty ) $
hence prove that
for all 
$ \varepsilon \in [0,\infty) $,
$ p \in [2,\infty) $,
$ \alpha, q, r \in (0,\infty) $,
$ M, N \in \N $
with $ M > N $
and
$
  \frac{ 1 }{ p } + \frac{ 1 }{ q } = \frac{ 1 }{ r }
$
it holds that
\begin{align}
\nonumber
&
  \sup_{ t \in [  \tzero , T] }
  \left\|
    X_t^M - X_t^N
  \right\|_{
    L^r( \Omega; H )
  }
\leq
  N^{ - \alpha } \, 
  \pi^{ - \alpha } 
  \sqrt{ 2 } \,
  T^{
    ( \frac{ 1 }{ 2 } - \frac{ 1 }{ p } )
  }
  \exp\!\left(
    \tfrac{ 1 }{ 2 } - \tfrac{ 1 }{ p }
    +
    \left[ 
      \hat{ \kappa }
      +
      \kappa_q
      +
      \tfrac{ 
        ( p - 1 ) \, ( 1 + \varepsilon ) \, \eta^2
      }{ 2 }
    \right]
    T
    +
    \tfrac{
      \beta T
    }{
      q
    }
  \right)
\\ &  
\nonumber
  \cdot
  \left\|
    \|
      X^M 
    \|_{ H_{ \alpha / 4 } }
    \sqrt{
      c^2
      +
      \tfrac{ ( p - 1 ) \, ( 1 + 1 / \varepsilon ) \, \eta^2 }{ 2 }
      +
      6 c^2 
      \left[
        \smallsum_{ n = 0 }^{ \infty }
        \left( \varrho + \pi^4 n^4 
        \right)^{ - \frac{ \alpha }{ 2 } }
      \right]
      \|
        X^M
      \|_{ 
        H_{ \alpha / 4 }
      }^2
    }
  \right\|_{
    L^p( [  \tzero , T ] \times \Omega ; \R )
  }
\\ & \cdot
\label{eq:CHC_final1}
  \left|
  \E\Big[
    e^{
      U_0( X_0 ) 
    }
  \Big]
  \right|^{
    1 / q
  }
  +
  N^{ - \alpha } \, 
  \pi^{ - \alpha } 
  \left[
  \sup\nolimits_{ t \in [  \tzero , T] }
  \left\|
    X_t^M
  \right\|_{
    L^r( \Omega; H_{ \alpha / 4 } ) 
  }
  \right]
  .
\end{align}
The choice $ \varepsilon = 1 $
in \eqref{eq:CHC_final1}
then shows that
for all 
$ p \in [2,\infty) $,
$ \alpha, q, r \in (0,\infty) $,
$ M, N \in \N $
with $ M > N $
and
$
  \frac{ 1 }{ p } + \frac{ 1 }{ q } = \frac{ 1 }{ r }
$
it holds that
\begin{align}
\nonumber
&
  \sup_{ t \in [  \tzero , T] }
  \left\|
    X_t^M - X_t^N
  \right\|_{
    L^r( \Omega; H )
  }
\leq
  N^{ - \alpha } \, 
  \pi^{ - \alpha } 
  \sqrt{ 2 T } 
  \exp\!\left(
    \tfrac{ 1 }{ 2 } - \tfrac{ 1 }{ p }
    +
    \left[ 
      \hat{ \kappa }
      +
      \kappa_q
      +
      ( p - 1 ) \, \eta^2
      +
      \tfrac{ \beta }{ q }
    \right]
    T
  \right)
\\ &  
  \cdot
    \left[
      \eta \sqrt{ p - 1 } 
      +
      c
      +
      \sqrt{6} \,
      c
      \sqrt{
        \smallsum_{ n = 0 }^{ \infty }
        \left( \varrho + \pi^4 n^4 
        \right)^{ - \alpha / 2 }
      }
    \right]
  \max\!\left( 
    1 ,
    \sup\nolimits_{ t \in [0,T] }
    \|
      X^M_t
    \|^2_{ L^{ 2 p }( \Omega; H_{ \alpha / 4 } ) }
  \right)
\\ & \cdot
\nonumber
  \left|
  \E\Big[
    \exp\!\left(
      \tfrac{ \rho }{ 2 } 
      \, \| ( - \tilde{L} )^{ 1 / 2 } X_0 \|^2_H
      +
      \tfrac{ \hat{ \rho } }{ 2 } \,
      \| \tilde{P} X_0 \|^2_H
    \right)
  \Big]
  \right|^{
    1 / q
  }
  +
  N^{ - \alpha } \, 
  \pi^{ - \alpha } 
  \left[
  \sup\nolimits_{ t \in [  \tzero , T] }
  \left\|
    X_t^M
  \right\|_{
    L^r( \Omega; H_{ \alpha / 4 } ) 
  }
  \right]
  .
\end{align}
This together with the
estimate
$ 
  \pi^{ - \alpha } \sqrt{ 2 T } \exp( \frac{ 1 }{ 2 } ) 
\leq 
  \exp( \frac{ T }{ 2 } )
$
for all $ \alpha \in ( \frac{1}{2}, \infty) $
implies that
for all 
$ p \in [2,\infty) $,
$ \alpha, q, r \in (0,\infty) $,
$ M, N \in \N $
with $ M > N $
and
$
  \frac{ 1 }{ p } + \frac{ 1 }{ q } = \frac{ 1 }{ r }
$
it holds that
\begin{align}
\nonumber
&
  \sup_{ t \in [  \tzero , T] }
  \left\|
    X_t^M - X_t^N
  \right\|_{
    L^r( \Omega; H )
  }
\leq
  N^{ - \alpha }  
  \exp\!\left(
    \big[ 
      \tfrac{ 1 }{ 2 } 
      +
      \hat{ \kappa }
      +
      \kappa_q
      +
      ( p - 1 ) \, \eta^2
      +
      \tfrac{ \beta }{ q }
    \big]
    \,
    T
  \right)
  \left|
  \E\Big[
    \exp\!\left(
      \tfrac{ ( \rho + \hat{ \rho } ) }{ 2 } 
      \, \| X_0 \|^2_H
    \right)
  \Big]
  \right|^{
    1 / q
  }
\\ &  
  \cdot
    \left[
      1 +
      \eta \sqrt{ p - 1 }
      +
      c
      +
      \sqrt{ 6 } \,
      c \,
      \big[
        \smallsum_{ n = 0 }^{ \infty }
        \left( \varrho + \pi^4 n^4 
        \right)^{ - \alpha / 2 }
      \big]^{ 1 / 2 }
    \right]
  \max\!\left( 
    1 ,
    \sup\nolimits_{ t \in [0,T] }
    \|
      X^M_t
    \|^2_{ L^{ 2 p }( \Omega; H_{ \alpha / 4 } ) }
  \right)
  .
\end{align}
Fatou's lemma hence shows that
for all 
$ p \in [2,\infty) $,
$ \alpha, q, r \in (0,\infty) $,
$ N \in \N $
with
$
  \frac{ 1 }{ p } + \frac{ 1 }{ q } = \frac{ 1 }{ r }
$
it holds that
\begin{align}
\nonumber
&
  \sup_{ t \in [  \tzero , T] }
  \left\|
    X_t - X_t^N
  \right\|_{
    L^r( \Omega; H )
  }
\leq
  N^{ - \alpha }  
  \exp\!\left(
    \big[ 
      \tfrac{ 1 }{ 2 } + p \, \eta^2
      +
      \hat{ \kappa }
      +
      \kappa_q
      +
      \tfrac{ \beta }{ q }
    \big]
    \,
    T
  \right)
  \left|
  \E\Big[
    \exp\!\left(
      \tfrac{ ( \rho + \hat{ \rho } ) }{ 2 } 
      \, \| X_0 \|^2_H
    \right)
  \Big]
  \right|^{
    1 / q
  }
\\ &  
  \cdot
    \left[
      1 +
      \eta \sqrt{ p }
      +
      c
      +
      \sqrt{ 6 } \,
      c
      \,
      \big[
        \smallsum_{ n = 0 }^{ \infty }
        \left( \varrho + \pi^4 n^4 
        \right)^{ - \alpha / 2 }
      \big]^{ 1 / 2 }
    \right]
  \max\!\left( 
    1 ,
    \liminf_{ M \to \infty }
    \sup\nolimits_{ t \in [0,T] }
    \|
      X^M_t
    \|^2_{ L^{ 2 p }( \Omega; H_{ \alpha / 4 } ) }
  \right)
  .
\label{eq:CHC.Galerkin.estimate}
\end{align}

\subsubsection{Stochastic Burgers equation}
\label{ssec:stochastic.Burgers.equation}

In this subsection 
assume the setting in Subsection~\ref{ssec:SPDE_setting},
assume that $ D = (0,1) $,
that $ k = 1 $,
that $ A $ is the Laplacian 
with Dirichlet boundary conditions on $ D $,
that is,
$ D( A ) = H^2( D, \R ) \cap H^1_0( D, \R ) $
and
$
  A v = v''
$
for all $ v \in D(A) $
and that
$ \varrho = 0 $,
$ 
  \theta = \frac{ 1 }{ 4 } 
$,
$
  \vartheta = - \frac{ 1 }{ 2 }
$,
let $ c \in \R \backslash \{ 0 \} $,
let
$ e_n \in H $, $ n \in \N $,
be given by
$
  e_n(x) = \sqrt{ 2 } \sin( n \pi x )
$
for all 
$ x \in (0,1) $,
$ n \in \N $
and 
assume that
$
  P_N( v ) = \sum_{ n = 1 }^N
  \left< e_n, v \right>_H e_n
$
for all $ v \in H $,
$ N \in \N $,
that
$
  F( v ) = \tfrac{ c }{ 2 } \, ( v^2 )'
$
for all $ v \in H_{ 1 / 4 } \subset L^4( D; \R ) $
and that
$
  B \colon H \to \HS( U, H )
$
is globally Lipschitz continuous 
with
$
  \eta 
  :=
  \sup_{ x \in H }
  \| B( x ) \|^2_{ \HS( U, H ) }
  \in (0,\infty)
$.
(For example, 
if $ b \colon (0,1) \times \R \to \R $
is a globally bounded function
with a globally bounded continuous derivative,
if $ Q \in L(U) $
is a trace class operator (see, e.g., Appendix~B
in Pr\'{e}v\^{o}t \& R\"{o}ckner~\cite{PrevotRoeckner2007}),
if $ l = 1 $
and if $ B $ satisfies
$ ( B( v ) u )( x ) = b(x,v(x)) \cdot ( \sqrt{ Q } u )( x ) $
for all $ x \in D $, $ u, v \in H = U $,
then 
$ B $ fulfills
$
    \| B \|_{
      \operatorname{Lip}( H, HS( U, H ) )
    }
  :=
  \sup_{ 
    v, w \in H , v \neq w
  }
  \frac{
    \| B(v) - B(w) \|_{ HS( U, H ) }
  }{
    \| v - w \|_H
  }
  < \infty
$
and 
$
  \sup_{ v \in H }
  \| B(v) \|_{ HS( U, H ) }^2
  < \infty
$
and in that case 
\eqref{eq:SPDE} 
is the stochastic Burgers equation
\begin{equation}
  d X_t( x ) 
  =
  \left[ 
    \tfrac{ \partial^2 }{ \partial x^2 } X_t( x )
    +
    c
    \,
    X_t(x) \,
    \tfrac{ \partial }{ \partial x } 
    X_t(x)
  \right] dt
  +
  b(x, X_t(x))
  \sqrt{ Q }
  \,
  dW_t(x)
\end{equation}
for $ x \in (0,1) $, $ t \in [0,T] $
equipped with the Dirichlet boundary conditions
$
  X_t(0) = X_t(1) 
  = 0
$
for $ t \in [0,T] $.)
%
%
%
% The SPDE~\eqref{eq:SPDE} thus reduces to the stochastic
% Burgers equation
% \begin{equation}
%   d X_t( x ) = 
%   \left[ 
%     \tfrac{ \partial^2 }{ \partial x^2 } X_t( x )
%     +
%     c \,
%     X_t(x)
%     \,
%     \tfrac{ \partial }{ \partial x } X_t( x )
%   \right]
%   dt
%   +
%   dW_t( x )
%   ,
% \qquad 
%   X_t( 0 ) = X_t( 1 ) = 0
% \end{equation}
% for $ x \in (0,1) $, $ t \in [0,T] $.
% 
% 
% 
% 
We will now apply Corollary~\ref{cor:Galerkin4}
to the spectral Galerkin approximation processes $ X^N $ and $ X^M $
for $ N, M \in \N $
with $ N < M $
(see \eqref{eq:Galerkin}).
For this note that
for all 
$ M \in \N $, 
$ 
  x \in P_M( H )
$,
$ \rho \in (0,\infty) $
it holds that
\begin{equation}
\label{eq:Burgers_section_estimate0}
\begin{split}
&
  \left< x, \mu_M(x) \right>_H
  +
  \tfrac{ 1 }{ 2 }
  \left\| \sigma_M( x ) \right\|^2_{ HS( U, P_M( H ) ) }
  +
  \tfrac{ \rho }{ 2 } 
  \, \| \sigma_M( x )^* x \|^2_H 
\\ & \leq
  \left< x, A x \right>_H
  +
  \tfrac{ 1 }{ 2 }
  \left\| B( x ) \right\|^2_{ HS( U, H ) }
  +
  \tfrac{ \rho }{ 2 } 
  \, \| B( x )^* x \|^2_H 
\leq
  \tfrac{ \eta }{ 2 }
  +
    \tfrac{ \rho \eta }{ 2 } 
    \, \| x \|^2_H 
  -
    \left\|
      x'
    \right\|^2_H
\\ & =
  \tfrac{ \eta }{ 2 }
  +
    \tfrac{ \rho \eta }{ 2 } 
    \, \| x \|^2_H 
  -
    \tfrac{
      \rho \eta 
    }{
      2 \pi^2
    }
    \left\|
      x'
    \right\|^2_H
  -
  \left[ 
    1 - \tfrac{ \rho \eta }{ 2 \pi^2 }
  \right]
    \left\|
      x'
    \right\|^2_H
\leq
  \tfrac{ \eta }{ 2 }
  -
  \left[ 
    1 - \tfrac{ \rho \eta }{ 2 \pi^2 }
  \right]
    \left\|
      x'
    \right\|^2_H
  .
\end{split}
\end{equation}
In addition, note that
for all $ N, M \in \N $ with $ N < M $,
$ x \in P_M( H ) $,
$ y \in P_N( H ) $,
$ p \in [2,\infty) $,
$ \varepsilon \in (0,\infty) $
it holds that
\begin{equation}
\begin{split}
& \quad
    \left< 
      P_N x - y , \mu_M( P_N x ) - \mu_M( y ) 
    \right>_H
    +
    \tfrac{ ( p - 1 ) \, ( 1 + \varepsilon ) }{ 2 }
    \left\|
      \sigma_M( P_N x ) - \sigma_M( y ) 
    \right\|^2_{ HS( U , P_M( H ) ) }
\\ & 
%\quad
    +
    \left< 
      y - P_N x , P_N \mu_M( P_N x ) - P_N \mu_M( x ) 
    \right>_H
    +
    \tfrac{ ( p - 1 ) \, ( 1 + 1 / \varepsilon ) }{ 2 }
    \left\|
      \sigma_M( P_N x ) - \sigma_M( x )
    \right\|^2_{ HS( U, P_M( H ) ) }
\\ & \leq
   - \| ( - A )^{ 1 / 2 } ( P_N x - y ) \|^2_H
   +
    \left< 
      P_N x - y , F( P_N x ) - F( y ) 
    \right>_H
    +
    \tfrac{ 
      ( p - 1 ) \, ( 1 + \varepsilon ) 
      \,
      \left\|
        B( P_N x ) - B( y ) 
      \right\|^2_{ HS( U, H ) }
    }{ 2 }
\\ & 
%\quad
    +
    \left< 
      y - P_N x , F( P_N x ) - F( x ) 
    \right>_H
    +
    \tfrac{ ( p - 1 ) \, ( 1 + 1 / \varepsilon ) }{ 2 }
    \left\|
      B( P_N x ) - B( x )
    \right\|^2_{ HS( U, H ) }
\\ & \leq
   - \| ( P_N x - y )' \|^2_H
   + \tfrac{ c }{ 4 }
    \left< 
      ( P_N x - y )^2 , ( P_N x + y )'
    \right>_H
    +
    \tfrac{ 
      ( p - 1 ) \, ( 1 + \varepsilon ) 
      \,
      \| B \|^2_{
        \operatorname{Lip}( H, HS( U, H ) )
      }
      \,
      \left\|
        P_N x - y 
      \right\|^2_H
    }{ 2 }
\\ & 
%\quad
    -
    \tfrac{ c }{ 2 }
    \left< 
      ( y - P_N x )', \left( ( P_N - I ) x \right) \left( P_N x + x \right) 
    \right>_H
    +
    \tfrac{ ( p - 1 ) \, ( 1 + 1 / \varepsilon ) }{ 2 }
    \,
    \| B \|^2_{
      \operatorname{Lip}( H, HS( U, H ) )
    }
    \left\|
      ( I - P_N ) x
    \right\|^2_H
  .
\end{split} 
\end{equation}
Young's inequality hence shows that
for all $ N, M \in \N $ with $ N < M $,
$ x \in P_M( H ) $,
$ y \in P_N( H ) $,
$ p \in [2,\infty) $,
$ \varepsilon, \delta \in (0,\infty) $
it holds that
\begin{equation}
\label{eq:Burgers_section_estimate}
\begin{split}
& \quad
    \left< 
      P_N x - y , \mu_M( P_N x ) - \mu_M( y ) 
    \right>_H
    +
    \tfrac{ ( p - 1 ) \, ( 1 + \varepsilon ) }{ 2 }
    \left\|
      \sigma_M( P_N x ) - \sigma_M( y ) 
    \right\|^2_{ HS( U , P_M( H ) ) }
\\ & \quad
    +
    \left< 
      y - P_N x , P_N \mu_M( P_N x ) - P_N \mu_M( x ) 
    \right>_H
    +
    \tfrac{ ( p - 1 ) \, ( 1 + 1 / \varepsilon ) }{ 2 }
    \left\|
      \sigma_M( P_N x ) - \sigma_M( x )
    \right\|^2_{ HS( U , P_M( H ) ) }
\\ & \leq
     \tfrac{ | c | }{ 4 }
     \left\| P_N x - y \right\|_H
     \left\| P_N x - y \right\|_{ L^{ \infty }( (0,1) ; \R ) }
     \left\| ( P_N x + y )' \right\|_H
    +
    \tfrac{ 
      ( p - 1 ) \, ( 1 + \varepsilon ) 
    }{ 2 }
      \,
      \| B \|^2_{
        \operatorname{Lip}( H, HS( U, H ) )
      }
      \left\|
        P_N x - y 
      \right\|^2_H
\\ & \quad
     - 
     \tfrac{ 1 }{ 2 }
     \,
     \| ( P_N x - y )' \|^2_H
    +
    \tfrac{ c^2 }{ 8 }
    \left\|
      \left( ( P_N - I ) x \right) \left( P_N x + x \right) 
    \right\|_H^2
    +
    \tfrac{ ( p - 1 ) \, ( 1 + 1 / \varepsilon ) }{ 2 }
    \,
    \| B \|^2_{
      \operatorname{Lip}( H, HS( U, H ) )
    }
    \left\|
      ( I - P_N ) x
    \right\|^2_H
\\ & \leq
  \left[
     \tfrac{ c^2 \delta }{ 2 }
     \left\| ( P_N x + y )' \right\|_H^2
    +
    \tfrac{ 
      ( p - 1 ) \, ( 1 + \varepsilon ) 
    }{ 2 }
      \,
      \| B \|^2_{
        \operatorname{Lip}( H, HS( U, H ) )
      }
  \right]
      \left\|
        P_N x - y 
      \right\|^2_H
     +
     \tfrac{ 1 }{ 32 \delta }
     \left\| P_N x - y \right\|_{ L^{ \infty }( (0,1) ; \R ) }^2
\\ & \quad
     - 
     \tfrac{ 1 }{ 2 }
     \,
     \| ( P_N x - y )' \|^2_H
  +
  \left[
    \tfrac{ c^2 }{ 8 }
    \left\|
      P_N x + x  
    \right\|_{ L^{ \infty }( (0,1) ; \R ) }^2
    +
    \tfrac{ ( p - 1 ) \, ( 1 + 1 / \varepsilon ) }{ 2 }
    \,
    \| B \|^2_{
      \operatorname{Lip}( H, HS( U, H ) )
    }
  \right]
    \left\|
      ( I - P_N ) x
    \right\|^2_H
    .
\end{split}
\end{equation}
Next let $ \kappa \colon ( 0, \infty ) \to (0,\infty) $
be a strictly decreasing function satisfying
$
  \frac{ 1 }{ 32 r }
  \,
  \| v \|_{ L^{ \infty }( (0,1) ; \R ) }^2
\leq
  \kappa(r) \left\| v \right\|^2_H
  +
  \tfrac{ 1 }{ 2 } 
  \left\| v' \right\|^2_H
$
for all $ v \in D( A ) $,
$ r \in ( 0, \infty ) $
(cf., e.g., Theorem~37.5
in Sell \& You~\cite{sy02}).
Then we get from \eqref{eq:Burgers_section_estimate} that
for all $ N, M \in \N $ with $ N < M $,
$ x \in P_M( H ) $,
$ y \in P_N( H ) $,
$ p \in [2,\infty) $,
$ \varepsilon, \delta \in (0,\infty) $
it holds that
\begin{equation}
\label{eq:Burgers_section_estimate2}
\begin{split}
& \quad
    \left< 
      P_N x - y , \mu_M( P_N x ) - \mu_M( y ) 
    \right>_H
    +
    \tfrac{ 
      ( p - 1 ) \, ( 1 + \varepsilon ) 
    }{ 2 }
    \left\|
      \sigma_M( P_N x ) - \sigma_M( y ) 
    \right\|^2_{ HS( U , P_M( H ) ) }
\\ & \quad
    +
    \left< 
      y - P_N x , P_N \mu_M( P_N x ) - P_N \mu_M( x ) 
    \right>_H
    +
    \tfrac{ 
      ( p - 1 ) \, ( 1 + 1 / \varepsilon ) 
    }{ 2 }
    \left\|
      \sigma_M( P_N x ) - \sigma_M( x )
    \right\|^2_{ HS( U , P_M( H ) ) }
\\ & \leq
  \left[
    \kappa( \delta )
    +
     c^2 \delta 
     \left\| x' \right\|_H^2
    +
     c^2 \delta 
     \left\| y' \right\|_H^2
    +
    \tfrac{ 
      ( p - 1 ) \, ( 1 + \varepsilon ) 
    }{ 2 }
      \,
      \| B \|^2_{
        \operatorname{Lip}( H, HS( U, H ) )
      }
  \right]
      \left\|
        P_N x - y 
      \right\|^2_H
\\ & \quad
  +
  \tfrac{ 1 }{ 2 }
  \left[
    \tfrac{ c^2 }{ 4 }
    \left\|
      P_N x + x  
    \right\|_{ L^{ \infty }( (0,1) ; \R ) }^2
    +
    \left( 
      p - 1 
    \right)
    \left( 
      1 + 1 / \varepsilon
    \right)
    \| B \|^2_{
      \operatorname{Lip}( H, HS( U, H ) )
    }
  \right]
    \left\|
      ( I - P_N ) x
    \right\|^2_H
    .
\end{split}
\end{equation}
Combining 
\eqref{eq:Burgers_section_estimate0}
and
\eqref{eq:Burgers_section_estimate2}
allows us to apply
Corollary~\ref{cor:Galerkin4}
% (with 
% $ \beta = \frac{ \eta }{ 2 } $
% in the notation of Corollary~\ref{cor:Galerkin4})
to obtain that
for all 
$ N, M \in \N $,
$ r, q, \varepsilon, \delta, \rho \in (0,\infty) $,
$ p \in [2,\infty) $
with
$ 
  N < M 
$,
$
  \frac{ 1 }{ p } + \frac{ 1 }{ q } = \frac{ 1 }{ r }
$
and
$
  c^2 \delta \leq \frac{ \rho }{ 2 q } \left[ 1 - \frac{ \rho \eta }{ 2 \pi^2 } \right]
$
it holds that
\begin{align}
\label{eq:Burgers_final_1}
&
  \sup_{ t \in [  \tzero , T] }
  \left\|
    X_t^M - X^N_t
  \right\|_{
    L^r( \Omega; H )
  }
\leq
  \sup_{ t \in [  \tzero , T] }
  \left\|
    ( I - P_N ) X_t^M
  \right\|_{
    L^r( \Omega; H ) 
  }
\nonumber
\\ &
  +
  T^{
    ( \frac{ 1 }{ 2 } - \frac{ 1 }{ p } )
  }
  \,
  \exp\!\left(
    \tfrac{ 1 }{ 2 } 
    - 
    \tfrac{ 1 }{ p }
    +
    \left[ 
      \kappa( \delta ) + 
      \tfrac{
        ( p - 1 ) \, ( 1 + \varepsilon )
      }{
        2
      }
      \,
      \| B \|^2_{ \operatorname{Lip}( H, HS( U, H ) ) }
    \right] 
    T
    +
    \tfrac{
      \eta \rho T
    }{
      2 q
    }
  \right)
  \Big|
  \E\Big[
    e^{
      \frac{ \rho }{ 2 } \left\| X_0^M \right\|^2_H
    }
  \Big]
  \Big|^{
    1 / q
  }
\\ & \cdot
  \left\|
    \Big[
      \tfrac{ | c | }{ 2 }
      \left\|
        P_N X^M + X^M  
      \right\|_{ L^{ \infty }( D ; \R ) }
      +
      \sqrt{
        ( p - 1 ) \, ( 1 + \tfrac{ 1 }{ \varepsilon } )
      }
      \,
      \| B \|_{
        \operatorname{Lip}( H, HS( U, H ) )
      }
    \Big]
    \left\|
      ( I - P_N ) X^M
    \right\|_H
  \right\|_{
    L^p( \llbracket \tzero , T \rrbracket ; \R )
  }
  .
\nonumber
\end{align}
The estimate
\begin{equation}
\begin{split}
&
  \| P_N v \|_{ L^{ \infty }( D ; \R ) }
\leq
  \sum_{ n = 1 }^N
  \left|
    \left< e_n, v \right>_H
  \right|
  \left\|
    e_n
  \right\|_{ L^{ \infty }( D; \R ) }
\leq
  \frac{
    \sqrt{2}
  }{
    \pi^{ \alpha }
  }
  \left[ 
    \sum_{ n = 1 }^N
    n^{ - \alpha }
    \left(
      \pi^{ \alpha }
      \,
      n^{ \alpha }
      \left|
        \left< e_n, v \right>_H
      \right|
    \right)
  \right]
\\ & \leq
  \frac{
    \sqrt{2}
  }{
    \pi^{ \alpha }
  }
  \left[ 
    \sum_{ n = 1 }^{ \infty }
    n^{ - 2 \alpha }
  \right]^{ 1 / 2 }
  \left[
    \sum_{ n = 1 }^{ \infty }
      \pi^{ 2 \alpha }
      \,
      n^{ 2 \alpha }
      \left|
        \left< e_n, v \right>_H
      \right|^2
  \right]^{ 1 / 2 }
=
  \frac{
    \sqrt{2}
    \,
    \| v \|_{ 
      H_{ \alpha / 2 } 
    }
  }{
    \pi^{ \alpha }
  }
  \left[
    \sum_{ n = 1 }^{ \infty }
    n^{ - 2 \alpha }
  \right]^{ 1 / 2 }
\\ & \leq
  \left[
    \smallsum_{ n = 1 }^{ \infty }
    n^{ - 2 \alpha }
  \right]^{ 1 / 2 }
    \| v \|_{ 
      H_{ \alpha / 2 } 
    }
\end{split}
\end{equation}
for all 
$ N \in \N $,
$ v \in H_{ \alpha / 2 } $,
$ \alpha \in ( 0 , \infty ) $
hence proves that
for all 
$ N, M \in \N $,
$ \alpha, r, q, \varepsilon, \delta, \rho \in (0,\infty) $,
$ p \in [2,\infty) $
with
$ 
  N < M 
$,
$
  \frac{ 1 }{ p } + \frac{ 1 }{ q } = \frac{ 1 }{ r }
$
and
$
  c^2 \delta \leq \frac{ \rho }{ 2 q } \left[ 1 - \frac{ \rho \eta }{ 2 \pi^2 } \right]
$
it holds that
\begin{align}
\label{eq:Burgers_final_3}
&
  \sup_{ t \in [  \tzero , T] }
  \left\|
    X_t^M - X^N_t
  \right\|_{
    L^r( \Omega; H )
  }
\leq
  \sup_{ t \in [  \tzero , T] }
  \left\|
    ( I - P_N ) X_t^M
  \right\|_{
    L^r( \Omega; H ) 
  }
\nonumber
\\ &
  +
  T^{
    ( \frac{ 1 }{ 2 } - \frac{ 1 }{ p } )
  }
  \,
  \exp\!\left(
    \tfrac{ 1 }{ 2 } 
    - 
    \tfrac{ 1 }{ p }
    +
    \left[ 
      \kappa( \delta ) + 
      \tfrac{
        ( p - 1 ) \, ( 1 + \varepsilon )
      }{
        2
      }
      \,
      \| B \|^2_{ \operatorname{Lip}( H, HS( U, H ) ) }
    \right] 
    T
    +
    \tfrac{
      \eta \rho T
    }{
      2 q
    }
  \right)
  \Big|
  \E\Big[
    e^{
      \frac{ \rho }{ 2 } \left\| X_0^M \right\|^2_H
    }
  \Big]
  \Big|^{
    1 / q
  }
\\ & \cdot
  \left\|
    \Big[
      | c | 
  \left[
    \smallsum_{ n = 1 }^{ \infty }
    n^{ - 2 \alpha }
  \right]^{ 1 / 2 }
      \| X^M \|_{ H_{ \alpha / 2 } }
      +
      \sqrt{
        ( p - 1 ) \, ( 1 + \tfrac{ 1 }{ \varepsilon } )
      }
      \,
      \| B \|_{
        \operatorname{Lip}( H, HS( U, H ) )
      }
    \Big]
    \left\|
      ( I - P_N ) X^M
    \right\|_H
  \right\|_{
    L^p( \llbracket \tzero , T \rrbracket ; \R )
  }
  .
\nonumber
\end{align}
The estimate
\begin{equation}
\begin{split}
&
   \left\|
     ( I - P_N ) v
   \right\|_H
=
    \left\|
      ( - A )^{ - \alpha / 2 } ( I - P_N )
      ( - A )^{ \alpha / 2 } v
    \right\|_H
\\ & 
 \leq
    \left\|
      ( - A )^{ - \alpha / 2 } ( I - P_N )
    \right\|_{ L( H ) }
    \left\|
      ( - A )^{ \alpha / 2 } v
    \right\|_H
  =
    \left\|
      ( - A )^{ - \alpha / 2 } ( I - P_N )
    \right\|_{ L( H ) }
    \left\|
      v
    \right\|_{ H_{ \alpha / 2 } }
\\ &
 = 
   \left[
     ( N + 1 )^2 \pi^2
   \right]^{
     - \alpha / 2
   }
   \left\|
     v
   \right\|_{ H_{ \alpha / 2 } }
 =
     ( N + 1 )^{ - \alpha } 
     \,
     \pi^{ - \alpha }
   \left\|
     v
   \right\|_{ H_{ \alpha / 2 } }
\leq 
   N^{ - \alpha }
   \,
   \pi^{ - \alpha }
   \left\|
     v
   \right\|_{ H_{ \alpha / 2 } }
\end{split}
\end{equation}
for all $ N \in \N $,
$ v \in H_{ \alpha / 2 } $,
$ \alpha \in ( 0, \infty ) $
therefore shows that 
for all 
$ N, M \in \N $,
$ \alpha, r, q, \varepsilon, \delta, \rho \in (0,\infty) $,
$ p \in [2,\infty) $
with
$ 
  N < M 
$,
$
  \frac{ 1 }{ p } + \frac{ 1 }{ q } = \frac{ 1 }{ r }
$
and
$
  c^2 \delta \leq \frac{ \rho }{ 2 q } \left[ 1 - \frac{ \rho \eta }{ 2 \pi^2 } \right]
$
it holds that
\begin{align}
\label{eq:Burgers_final_4}
&
  \sup_{ t \in [  \tzero , T] }
  \left\|
    X_t^M - X^N_t
  \right\|_{
    L^r( \Omega; H )
  }
\leq
  N^{ - \alpha } \, \pi^{ - \alpha } 
  \left[
  \sup\nolimits_{ t \in [  \tzero , T] }
  \left\|
    X_t^M
  \right\|_{
    L^r( \Omega; H_{ \alpha / 2 } ) 
  }
  \right]
\nonumber
\\ &
  +
  N^{ - \alpha } \, \pi^{ - \alpha } \,
  T^{
    \frac{ 1 }{ 2 } 
  }
  \exp\!\left(
    \tfrac{ 1 }{ 2 } 
    - 
    \tfrac{ 1 }{ p }
    +
    \left[ 
      \kappa( \delta ) + 
      \tfrac{
        ( p - 1 ) \, ( 1 + \varepsilon )
      }{
        2
      }
      \,
      \| B \|^2_{ \operatorname{Lip}( H, HS( U, H ) ) }
    \right] 
    T
    +
    \tfrac{
      \eta \rho T
    }{
      2 q
    }
  \right)
  \Big|
  \E\Big[
    e^{
      \frac{ \rho }{ 2 } \left\| X_0 \right\|^2_H
    }
  \Big]
  \Big|^{
    1 / q
  }
\\ & \cdot
    \Big[
      | c | 
  \left[
    \smallsum_{ n = 1 }^{ \infty }
    n^{ - 2 \alpha }
  \right]^{ 1 / 2 }
      +
      \sqrt{
        ( p - 1 ) \, ( 1 + 1 / \varepsilon )
      }
      \,
      \| B \|_{
        \operatorname{Lip}( H, HS( U, H ) )
      }
    \Big]
  \max\!\left(
    1
    ,
    \sup\nolimits_{ t \in [0,T] }
    \|
      X^M_t
    \|_{
      L^{ 2 p }( \Omega ; H_{ \alpha / 2 } )
    }^2
  \right)
  .
\nonumber
\end{align}
The estimate
$
  T^{ 1 / 2 }
  \leq 
  \exp( \frac{ ( T - 1 ) }{ 2 } )
$
and the choice
$ \varepsilon = 1 $
in \eqref{eq:Burgers_final_4}
imply 
that
for all 
$ N, M \in \N $,
$ \alpha, r, q, \delta, \rho \in (0,\infty) $,
$ p \in [2,\infty) $
with
$ 
  N < M 
$,
$
  \frac{ 1 }{ p } + \frac{ 1 }{ q } = \frac{ 1 }{ r }
$
and
$
  c^2 \delta \leq \frac{ \rho }{ 2 q } \left[ 1 - \frac{ \rho \eta }{ 2 \pi^2 } \right]
$
it holds that
\begin{align}
\label{eq:Burgers_final_5}
&
  \sup_{ t \in [  \tzero , T] }
  \left\|
    X_t^M - X^N_t
  \right\|_{
    L^r( \Omega; H )
  }
\leq
  N^{ - \alpha } 
  \exp\!\left(
    \left[ 
      \tfrac{ q + \eta \rho }{ 2 q }
      +
      \kappa( \delta ) + 
        p
      \,
      \| B \|^2_{ \operatorname{Lip}( H, HS( U, H ) ) }
    \right] 
    T
  \right)
  \Big|
  \E\Big[
    e^{
      \frac{ \rho }{ 2 } \left\| X_0 \right\|^2_H
    }
  \Big]
  \Big|^{
    1 / q
  }
\nonumber
\\ & \cdot
    \Big[
      1 +
      | c | 
  \left[
    \smallsum_{ n = 1 }^{ \infty }
    n^{ - 2 \alpha }
  \right]^{ 1 / 2 }
      +
      \sqrt{
        p
      }
      \,
      \| B \|_{
        \operatorname{Lip}( H, HS( U, H ) )
      }
    \Big]
  \max\!\left(
    1
    ,
    \sup\nolimits_{ t \in [0,T] }
    \|
      X^M_t
    \|_{
      L^{ 2 p }( \Omega ; H_{ \alpha / 2 } )
    }^2
  \right)
  .
\end{align}
Fatou's lemma hence shows
that
for all 
$ N \in \N $,
$ \alpha, r, q \in (0,\infty) $,
$ \rho \in ( 0, \frac{ 2 \pi^2 }{ \eta } ) $,
$ p \in [2,\infty) $
with
$
  \frac{ 1 }{ p } + \frac{ 1 }{ q } = \frac{ 1 }{ r }
$
it holds that
\begin{align}
\label{eq:Burgers.Galerkin.estimate}
&
  \sup_{ t \in [  \tzero , T] }
  \left\|
    X_t - X^N_t
  \right\|_{
    L^r( \Omega; H )
  }
% \nonumber
% \\ & 
\leq
  \exp\!\left(
      \tfrac{ ( q + \eta \rho ) T }{ 2 q }
      +
      \kappa\big( 
        \tfrac{ \rho \, [ 2 \pi^2 - \rho \eta ] }{ 4 q c^2 \pi^2 } 
      \big) T
      + 
        p T
      \| B \|^2_{ \operatorname{Lip}( H, HS( U, H ) ) }
  \right)
  \Big|
  \E\Big[
    e^{
      \frac{ \rho }{ 2 } \left\| X_0 \right\|^2_H
    }
  \Big]
  \Big|^{
    1 / q
  }
\nonumber
\\ & \cdot
% \nonumber
    \left[
      1 +
      | c | 
      \sqrt{
    \smallsum_{ n = 1 }^{ \infty }
    \tfrac{ 1 }{ n^{ 2 \alpha } }
    }
      +
      \sqrt{
        p
      }
      \,
      \| B \|_{
        \operatorname{Lip}( H, HS( U, H ) )
      }
    \right]
  \max\!\left(
    1
    ,
    \liminf_{ M \to \infty }
    \sup_{ t \in [0,T] }
    \|
      X^M_t
    \|_{
      L^{ 2 p }( \Omega ; H_{ \alpha / 2 } )
    }^2
  \right)
  N^{ - \alpha }
  .
\end{align}

\subsection{Analysis of SDEs with small noise}
\label{ssec:small.noise}

In this subsection we use 
Corollary~\ref{cor:norm2}
to study the perturbations 
of deterministic ordinary differential equations
and deterministic 
partial differential equations
by a small noise term.

\begin{corollary}
\label{cor:small_noise}
Assume the setting in Subsection~\ref{sec:setting},
let $ \varepsilon \in [0,\infty) $,
$ \mu \in \mathcal{L}^0( \mathcal{O}; H ) $, 
$ \sigma \in \mathcal{L}^0( \mathcal{O}; HS( U, H ) ) $, 
let $ \tau \colon \Omega \to [  \tzero  , T ] $ 
be a stopping time,
let
$ X, Y \colon [  \tzero , T ] \times \Omega \to \mathcal{O} $
be adapted stochastic processes
with c.s.p.\ and
{\small
$
  \int_{  \tzero  }^{ \tau }
  \frac{
    1
  }{
    \left\| X_s - Y_s \right\|^2_H
  }
    [
      \left< 
        X_s - Y_s , \mu( X_s ) - \mu( Y_s ) 
      \right>_H
    ]^+
  \,
  ds
  +
  \int_{  \tzero  }^{ T }
  \| \sigma( Y_s ) \|^2_{ HS( U, H ) }
    +
  \| \mu( X_s ) \|_H
  +
  \| \mu( Y_s ) \|_H
  \,
  ds
  < \infty
$}
$ \P $-a.s.,
$
  X_t 
  = 
  X_{  \tzero  } 
  +
  \int_{  \tzero  }^t \mu( X_s ) \, ds
$
$ \P $-a.s.\ and
$
  Y_t 
  = 
  X_{  \tzero  } 
  +
  \int_{  \tzero  }^t \mu( Y_s ) \, ds
  +
  \int_{  \tzero  }^t \varepsilon \, \sigma( Y_s ) \, dW_s
$
$ \P $-a.s.\ for all
$ t \in [ \tzero ,T] $.
Then it holds for all 
$ \rho, r \in (0,\infty) $,
$ p \in [2,\infty) $, $ q \in (0,\infty] $
with 
$ \frac{ 1 }{ p } + \frac{ 1 }{ q } = \frac{ 1 }{ r } $
that
% {\footnotesize
\begin{align}
\label{eq:cor_small_noise}
&
  \big\| 
    X_{ \tau } - Y_{ \tau }
  \big\|_{
    L^r( \Omega; H )
  }
\\ & \leq
\nonumber
      \varepsilon \,
      \rho^{ ( \frac{ 1 }{ 2 } - \frac{ 1 }{ p } ) }
      \sqrt{ p - 1 }
        \left\|
          \sigma( Y )
        \right\|_{ 
          L^p( \llbracket  \tzero  , \tau \rrbracket ; HS( U, H ) )
        }
  \left\|
  \exp\!\left(
    \smallint_{  \tzero  }^{ \tau }
  \Big[
  \tfrac{
    \left< 
      X_s - Y_s , \mu( X_s ) - \mu( Y_s ) 
    \right>_H
  }{
    \left\| X_s - Y_s \right\|^2_H
  }
      +
  \tfrac{
      ( \frac{ 1 }{ 2 } - \frac{ 1 }{ p } )
  }{
      \rho
  }
  \Big]^+
  \!
    ds
  \right)
  \right\|_{
    L^q( \Omega; \R )
  }
  .
\end{align}%}
\end{corollary}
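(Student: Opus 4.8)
The plan is to deduce Corollary~\ref{cor:small_noise} from Corollary~\ref{cor:norm2}. I would regard the deterministic process $X$ as the solution process of the SDE in the setting of Corollary~\ref{cor:norm2} with drift coefficient $\mu$ and with the everywhere-vanishing diffusion coefficient, and I would regard $Y$ as the accompanying It\^o process with drift process $a_s = \mu(Y_s)$ and diffusion process $b_s = \varepsilon\,\sigma(Y_s)$, $s \in [\tzero,T]$. The crucial additional choice is to take the parameter that is called $\varepsilon$ \emph{within} Corollary~\ref{cor:norm2} (its Young-type parameter, \emph{not} the small-noise level) equal to $\infty$.

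With these identifications the equations $X_t = X_{\tzero} + \smallint_{\tzero}^t \mu(X_s)\,ds + \smallint_{\tzero}^t 0 \, dW_s$ and $Y_t = X_{\tzero} + \smallint_{\tzero}^t a_s \, ds + \smallint_{\tzero}^t b_s \, dW_s$ hold $\P$-a.s.\ for all $t \in [\tzero,T]$, and the integrability requirement $\smallint_{\tzero}^T \|a_s\|_H + \|b_s\|^2_{HS(U,H)} + \|\mu(X_s)\|_H + \|\mu(Y_s)\|_H \, ds < \infty$ $\P$-a.s.\ of Corollary~\ref{cor:norm2} (the diffusion-related summands there drop out because the chosen diffusion coefficient vanishes) is precisely the second finiteness hypothesis assumed in Corollary~\ref{cor:small_noise}. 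Moreover, since the chosen diffusion coefficient is $0$, the quantity $\tfrac{(p-1)(1+\varepsilon)}{2}\|\sigma(X_s) - \sigma(Y_s)\|^2_{HS(U,H)}$ appearing in Corollary~\ref{cor:norm2} equals $0$ (using the convention $0\cdot\infty = 0$), so its finiteness hypothesis collapses to $\smallint_{\tzero}^\tau [\tfrac{\langle X_s - Y_s, \mu(X_s) - \mu(Y_s)\rangle_H}{\|X_s - Y_s\|^2_H}]^+ ds < \infty$ $\P$-a.s., which is the first finiteness hypothesis in Corollary~\ref{cor:small_noise}. Hence Corollary~\ref{cor:norm2} applies, and, after simplifying its conclusion using $X_{\tzero} = Y_{\tzero}$ (so the initial-value term vanishes), $a = \mu(Y)$ (so the $\delta^{(1-1/p)}$ drift-residual term vanishes), $\sqrt{(p-1)(1+1/\varepsilon)} = \sqrt{p-1}$, and $b - \sigma(Y) = \varepsilon\,\sigma(Y)$, it yields for every $\delta \in (0,\infty)$ that
\begin{equation*}
\begin{split}
&
  \big\| X_{ \tau } - Y_{ \tau } \big\|_{ L^r( \Omega; H ) }
\\ & \leq
  \varepsilon \, \rho^{ ( \frac{ 1 }{ 2 } - \frac{ 1 }{ p } ) }
  \sqrt{ p - 1 } \, \big\| \sigma( Y ) \big\|_{ L^p( \llbracket \tzero , \tau \rrbracket ; HS( U, H ) ) }
\\ & \quad \cdot
  \left\| \exp\!\left(
    \smallint_{ \tzero }^{ \tau }
    \Big[ \tfrac{ \langle X_s - Y_s , \mu( X_s ) - \mu( Y_s ) \rangle_H }{ \| X_s - Y_s \|^2_H }
    + \tfrac{ 1 - 1/p }{ \delta } + \tfrac{ 1/2 - 1/p }{ \rho } \Big]^+ ds
  \right) \right\|_{ L^q( \Omega; \R ) }
  .
\end{split}
\end{equation*}

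It remains to let $\delta \to \infty$. For every $\omega \in \Omega$ and every $s$ the quantity $[\,\cdot\, + \tfrac{1-1/p}{\delta} + \tfrac{1/2-1/p}{\rho}]^+$ decreases monotonically, as $\delta \uparrow \infty$, to $[\,\cdot\, + \tfrac{1/2-1/p}{\rho}]^+$ (here $1 - 1/p \ge 0$), so a monotone convergence argument (applied to the $ds$-integral and then to the $L^q(\Omega;\R)$-norm, the case $q = \infty$ being read as an essential supremum) shows that the exponential-moment factor on the right-hand side above decreases, as $\delta \to \infty$, to $\big\| \exp( \smallint_{\tzero}^{\tau} [\tfrac{\langle X_s - Y_s, \mu(X_s) - \mu(Y_s)\rangle_H}{\|X_s - Y_s\|^2_H} + \tfrac{1/2-1/p}{\rho}]^+ ds ) \big\|_{L^q(\Omega;\R)}$; since the left-hand side does not depend on $\delta$, passing to the limit proves \eqref{eq:cor_small_noise}. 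I do not expect a genuine obstacle: the two points needing care are keeping the two roles of the symbol $\varepsilon$ apart (the small-noise level, left free; the Young-type parameter of Corollary~\ref{cor:norm2}, pinned at $\infty$) and the appeal to the convention $0\cdot\infty = 0$ that makes every term containing $\|\sigma(X_s) - \sigma(Y_s)\|^2_{HS(U,H)}$ disappear; the rest is the routine simplification in the display above together with the elementary $\delta \to \infty$ limit.
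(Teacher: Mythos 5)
Your proposal is correct and is essentially the paper's own route: the paper simply remarks that Corollary~\ref{cor:small_noise} ``follows immediately from Corollary~\ref{cor:norm2}'', and your application (zero diffusion coefficient for the SDE, Young-type parameter of Corollary~\ref{cor:norm2} equal to $\infty$ with the convention $0\cdot\infty=0$, $a=\mu(Y)$, $b=\varepsilon\,\sigma(Y)$, so that the initial-value and drift-residual terms vanish) is exactly the intended one, cf.\ the choice described for problem~(iv) in the introduction. Your extra $\delta\to\infty$ step to remove the $(1-\tfrac1p)/\delta$ term from the exponent is sound; note it can be done even more directly via $\big[x+\tfrac{1-1/p}{\delta}+\tfrac{1/2-1/p}{\rho}\big]^+\leq\big[x+\tfrac{1/2-1/p}{\rho}\big]^+ +\tfrac{1-1/p}{\delta}$, which gives the exponential factor an upper bound of $e^{(1-1/p)T/\delta}$ times the target factor and thereby avoids any integrability caveat in the decreasing-sequence convergence argument.
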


Corollary~\ref{cor:small_noise}
follows immediately from
Corollary~\ref{cor:norm2}.
If the suitable exponential integrability properties 
of the processes $ X $ and $ Y $ in Corollary~\ref{cor:small_noise}
are known (see, e.g., Corollary~2.4 in Cox et al.~\cite{CoxHutzenthalerJentzen2013}), 
then the right-hand side of \eqref{eq:cor_small_noise}
can be further estimated in an appropriate way.
This is the subject of the next result, Corollary~\ref{cor:small_noise2}.
Corollary~\ref{cor:small_noise2} can be applied
to a number of nonlinear ordinary
and partial differential equation
perturbed by a small 
noise term such as the 
examples in Subsections~\ref{ssec:Lorenz}--\ref{ssec:Langevin.dynamics}
as well as the examples in Subsections~\ref{ssec:Cahn_Hilliard}--\ref{ssec:stochastic.Burgers.equation}.

\begin{corollary}
\label{cor:small_noise2}
Assume the setting in Subsection~\ref{sec:setting},
let 
$ 
  \varepsilon, \alpha, \hat{ \alpha }, \beta, \hat{ \beta } 
  \in [0,\infty) 
$,
$ p \in [2,\infty) $,
$ r, q_0, \hat{q}_0, q_1, \hat{q}_1 \in (0,\infty] $,
$ U_1, \hat{U}_1 \in C( \mathcal{O}, [0,\infty) ) $,
$ U_0, \hat{U}_0 \in C^2( O, [0,\infty) ) $,
$ \mu \in \mathcal{L}^0( \mathcal{O}, H ) $, 
$ \sigma \in \mathcal{L}^0( \mathcal{O}, HS( U, H ) ) $
satisfy
$
  \frac{ 1 }{ q_0 } + \frac{ 1 }{ \hat{q}_0 }
  +
  \frac{ 1 }{ q_1 } + \frac{ 1 }{ \hat{q}_1 }
  +
  \frac{ 1 }{ p }
  =
  \frac{ 1 }{ r }
$
and
\begin{equation*}
%\label{eq:small_noise_assumption}
\begin{array}{c}
  U_0'(x) \, \mu( x )
  +
  U_1( x )
\leq
  \alpha \, U_0( x )
  +
  \beta
  ,
\\[1ex]
  \hat{U}_0'(x) \, \mu( x )
  +
  \tfrac{ \varepsilon^2 }{ 2 }
  \operatorname{tr}\!\big(
    \sigma( x ) \sigma( x )^*
    ( \operatorname{Hess} \hat{U}_0)( x )
  \big)
%   +
%   ( \mathcal{G}_{ \mu, \sigma } \hat{U}_0 )( x )
  +
  \tfrac{ \varepsilon^2 }{ 2 } 
  \, \| \sigma( x )^* ( \nabla \hat{U}_0 )( x ) \|^2_H 
  +
  \hat{U}_1( x )
\leq
  \hat{ \alpha } \, \hat{U}_0( x )
  +
  \hat{ \beta }
  ,
\\[1ex]
  \left< 
    x - y
    , 
    \mu( x ) - \mu( y )
  \right>_H
\leq
  \left[ 
    c
    +
    \tfrac{
      U_0( x ) 
    }{
      q_0 T e^{ \alpha_0 T }
    }
    +
    \tfrac{
      U_1( x )
    }{
      q_1 e^{ \alpha_1 T }
    }
    +
    \tfrac{
      \hat{U}_0( y )
    }{
      \hat{ q }_0 T e^{ \hat{ \alpha }_0 T }
    }
    +
    \tfrac{
      \hat{U}_1( y )
    }{
      \hat{q}_1 e^{ \hat{\alpha}_1 T }
    }
  \right]
  \left\| x - y \right\|^2_H  
\end{array}
\end{equation*}
for all $ x, y \in \mathcal{O} $,
let
$ X, Y \colon [ 0, T ] \times \Omega \to \mathcal{O} $
be predictable stochastic processes
with 
$
  \E\big[
    e^{
      U_0( X_0 )
    }
    +
    e^{
      \hat{U}_0( X_0 )
    }
  \big]
  < \infty
$,
$
  \int_{ 0 }^T
  \| \mu( X_s ) \|_H
  +
  \| \mu( Y_s ) \|_H
  +
  \| \sigma( Y_s ) \|^2_{ HS( U, H ) }
  \,
  ds
  < \infty
$
$ \P $-a.s.,
$
  X_t 
  = 
  X_{ 0 } 
  +
  \int_{ 0 }^t \mu( X_s ) \, ds
$
$ \P $-a.s.\ and
$
  Y_t 
  = 
  X_{ 0 } 
  +
  \int_{ 0 }^t \mu( Y_s ) \, ds
  +
  \int_{ 0 }^t \varepsilon \, \sigma( Y_s ) \, dW_s
$
$ \P $-a.s.\ for all
$ t \in [0,T] $.
Then
\begin{equation}
\begin{split}
&
  \sup_{ t \in [0,T] }
  \big\| X_t - Y_t
  \big\|_{
    L^r( \Omega; H )
  }
\leq
  \varepsilon \,
  T^{ ( \frac{ 1 }{ 2 } - \frac{ 1 }{ p } ) }
  \exp\!\left(
      \tfrac{ 1 }{ 2 } - \tfrac{ 1 }{ p } 
    +
    \smallint_{0}^T
    c
    +
    \tfrac{ 
      \beta 
    }{ 
      e^{ \alpha s } 
    }
    \!
    \left[ 
      \tfrac{ 1 }{ q_0 }
      +
      \tfrac{ 1 }{ q_1 }
    \right]
    +
    \tfrac{ 
      \hat{ \beta }
    }{ 
      e^{ \hat{ \alpha } s } 
    }
    \!
    \left[ 
      \tfrac{ 1 }{ \hat{q}_0 }
      +
      \tfrac{ 1 }{ \hat{q}_1 }
    \right]
    ds
  \right)
\\ & \quad \cdot
      \sqrt{ p - 1 }
        \left\|
          \sigma( Y )
        \right\|_{ 
          L^p( [ 0 , T ] \times \Omega ; HS( U, H ) )
        }
  \left|
  \E\Big[
    e^{ U_0( X_0 ) }
  \Big]
  \right|^{
    \left[ 
      \frac{ 1 }{ q_0 } + \frac{ 1 }{ q_1 } 
    \right]
  }
  \left|
  \E\!\left[
    e^{
      \hat{U}_0( X_0 ) 
    }
  \right]
  \right|^{
    \left[
      \frac{ 1 }{ \hat{q}_0 }
      +
      \frac{ 1 }{ \hat{q}_1 }
    \right]
  }
  .
\end{split}
\end{equation}
\end{corollary}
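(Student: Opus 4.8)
The plan is to apply Corollary~\ref{cor:small_noise} with the deterministic stopping time $\tau=t$ for each $t\in[0,T]$ and with the parameter choice $\rho=T$, and then to control the resulting exponential $L^q(\Omega;\R)$-moment by combining the third assumption of Corollary~\ref{cor:small_noise2} with H\"older's inequality, Jensen's inequality and Corollary~2.4 in Cox et al.~\cite{CoxHutzenthalerJentzen2013}.

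First I would fix $t\in[0,T]$ and choose $q\in(0,\infty]$ so that $\tfrac1q=\tfrac1{q_0}+\tfrac1{\hat q_0}+\tfrac1{q_1}+\tfrac1{\hat q_1}$ (hence $\tfrac1p+\tfrac1q=\tfrac1r$). The integrability hypothesis of Corollary~\ref{cor:small_noise} is then satisfied: the third assumption bounds $\tfrac{\langle X_s-Y_s,\mu(X_s)-\mu(Y_s)\rangle_H}{\|X_s-Y_s\|_H^2}$ from above by $c$ plus nonnegative multiples of $U_0(X_s),U_1(X_s),\hat U_0(Y_s),\hat U_1(Y_s)$, which are locally bounded along the continuous paths of $X$ and $Y$, and the remaining integrability conditions are assumed outright. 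Corollary~\ref{cor:small_noise} with $\tau=t$, $\rho=T$ then gives
\begin{equation*}
\|X_t-Y_t\|_{L^r(\Omega;H)}\leq\varepsilon\,T^{(\frac12-\frac1p)}\sqrt{p-1}\,\|\sigma(Y)\|_{L^p([0,T]\times\Omega;HS(U,H))}\,\Big\|\exp\Big(\smallint_0^t\big[\Lambda_s+\tfrac{(\frac12-\frac1p)}{T}\big]^+ds\Big)\Big\|_{L^q(\Omega;\R)},
\end{equation*}
where $\Lambda_s:=\tfrac{\langle X_s-Y_s,\mu(X_s)-\mu(Y_s)\rangle_H}{\|X_s-Y_s\|_H^2}$ and I used $\|\sigma(Y)\|_{L^p(\llbracket0,t\rrbracket;HS(U,H))}\leq\|\sigma(Y)\|_{L^p([0,T]\times\Omega;HS(U,H))}$.

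Next, since $p\geq2$ and all the summands just mentioned are nonnegative, monotonicity of $[\cdot]^+$ together with $t\leq T$ gives
\begin{equation*}
\smallint_0^t\big[\Lambda_s+\tfrac{(\frac12-\frac1p)}{T}\big]^+ds\leq(\tfrac12-\tfrac1p)+cT+\tfrac{\smallint_0^TU_0(X_s)\,ds}{q_0Te^{\alpha T}}+\tfrac{\smallint_0^TU_1(X_s)\,ds}{q_1e^{\alpha T}}+\tfrac{\smallint_0^T\hat U_0(Y_s)\,ds}{\hat q_0Te^{\hat\alpha T}}+\tfrac{\smallint_0^T\hat U_1(Y_s)\,ds}{\hat q_1e^{\hat\alpha T}}.
\end{equation*}
Pulling out the deterministic factor $e^{(\frac12-\frac1p)+cT}$ and applying H\"older's inequality in $L^q(\Omega;\R)$ with exponents $q_0,q_1,\hat q_0,\hat q_1$ reduces matters to estimating $\big\|\exp\big(\tfrac1{q_0Te^{\alpha T}}\smallint_0^TU_0(X_s)ds\big)\big\|_{L^{q_0}}$, $\big\|\exp\big(\tfrac1{q_1e^{\alpha T}}\smallint_0^TU_1(X_s)ds\big)\big\|_{L^{q_1}}$ and their hatted counterparts built from $\hat U_0(Y),\hat U_1(Y)$.

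Finally, for the $U_0$-factor I would use Jensen's inequality (as in inequality~(19) of Li~\cite{Li1994}, cf.\ Lemma~2.21 in \cite{CoxHutzenthalerJentzen2013}) together with $e^{\alpha T}\geq e^{\alpha s}$ to obtain the bound $\sup_{s\in[0,T]}|\E\exp(U_0(X_s)e^{-\alpha s})|^{1/q_0}$, and for the $U_1$-factor I would additionally insert the nonnegative term $U_0(X_T)e^{-\alpha T}$ into the exponent to obtain $\sup_{s\in[0,T]}|\E\exp(U_0(X_s)e^{-\alpha s}+\smallint_0^sU_1(X_u)e^{-\alpha u}du)|^{1/q_1}$. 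Corollary~2.4 in \cite{CoxHutzenthalerJentzen2013}, applied to the zero-diffusion process $X$ with the pair $(U_0,U_1)$ and constants $(\alpha,\beta)$ and — using $Y_0=X_0$ — to the $\varepsilon\sigma$-diffusion process $Y$ with the pair $(\hat U_0,\hat U_1)$ and constants $(\hat\alpha,\hat\beta)$, whose structural hypotheses are exactly the first two displayed inequalities of Corollary~\ref{cor:small_noise2} (note that for $Y$ the quantity $\tfrac12\|(\varepsilon\sigma)(x)^*(\nabla\hat U_0)(x)\|_H^2$ equals $\tfrac{\varepsilon^2}2\|\sigma(x)^*(\nabla\hat U_0)(x)\|_H^2$), then controls $\sup_s\E\exp(U_0(X_s)e^{-\alpha s}+\smallint_0^s(U_1(X_u)-\beta)e^{-\alpha u}du)$ by $\E[e^{U_0(X_0)}]$ and the hatted analogue by $\E[e^{\hat U_0(X_0)}]$; discarding nonnegative $U_1$-/$\hat U_1$-terms where needed and pulling out the $\beta$-/$\hat\beta$-integrals turns these into $e^{\smallint_0^T\beta e^{-\alpha s}ds}\E[e^{U_0(X_0)}]$ and $e^{\smallint_0^T\hat\beta e^{-\hat\alpha s}ds}\E[e^{\hat U_0(X_0)}]$. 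Raising these to the powers $1/q_0,1/q_1$ resp.\ $1/\hat q_0,1/\hat q_1$, multiplying, reinstating the prefactor $\varepsilon\,T^{(\frac12-\frac1p)}\sqrt{p-1}\,\|\sigma(Y)\|_{L^p}$ and the deterministic factor $e^{(\frac12-\frac1p)+cT}$, and taking the supremum over $t\in[0,T]$ yields exactly the asserted estimate. I expect the only delicate point to be the bookkeeping in this last step: correctly matching the generator expressions in the hypotheses to the zero-diffusion process $X$ and the $\varepsilon\sigma$-diffusion process $Y$, and tracking the $e^{\alpha s}$-versus-$e^{\alpha T}$ comparisons and the constants $\beta,\hat\beta$ through H\"older and Corollary~2.4; there is no genuine analytic obstacle.
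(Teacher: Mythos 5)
Your proposal is correct and follows essentially the same route as the paper's proof: apply Corollary~\ref{cor:small_noise}, bound the monotonicity quotient via the third hypothesis, and control the exponential $L^q$-moment with H\"older's inequality, the Jensen-type estimate (Li/Lemma~2.21 in Cox et al.) and Corollary~2.4 in Cox et al., applied to $X$ with $(U_0,U_1,\alpha,\beta)$ and to $Y$ with $(\hat{U}_0,\hat{U}_1,\hat{\alpha},\hat{\beta})$ using $Y_0=X_0$. The only cosmetic differences are that you fix $\rho=T$ at the outset (the paper keeps $\rho$ free and optimizes at the end, the optimizer being exactly $\rho=T$) and that you justify the a.s.\ integrability hypothesis of Corollary~\ref{cor:small_noise} by path continuity rather than, as the paper does, by first establishing finiteness of the exponential moment.
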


\begin{proof}[Proof 
of Corollary~\ref{cor:small_noise2}]
Throughout this proof let $ q \in (0,\infty] $
be a real number given by
$
  \frac{ 1 }{ q_0 } + \frac{ 1 }{ \hat{q}_0 } +
  \frac{ 1 }{ q_1 } + \frac{ 1 }{ \hat{q}_1 }
  =
  \frac{ 1 }{ q }
$.
We intend to prove Corollary~\ref{cor:small_noise2}
through an application of 
Corollary~\ref{cor:small_noise}.
To do so, we need to verify the assumptions
of Corollary~\ref{cor:small_noise}.
For this observe that
the assumptions of Corollary~\ref{cor:small_noise2}
and H\"{o}lder's inequality 
ensure that
\begin{equation}
\begin{split}
&
  \left\|
  \exp\!\left(
    \smallint_{0}^T
  \Big[
  \tfrac{
    \left< 
      X_s - Y_s , \mu( X_s ) - \mu( Y_s ) 
    \right>_H
  }{
    \left\| X_s - Y_s \right\|^2_H
  }
  \Big]^+
    ds
  \right)
  \right\|_{
    L^q( \Omega; \R )
  }
\\ & \leq 
  e^{ c T }
  \left\|
  \exp\!\left(
    \smallint_{0}^T
    \tfrac{ 
      U_0( X_s ) 
    }{
      q_0 T e^{ \alpha T }
    }
    +
    \tfrac{ 
      U_1( X_s ) 
    }{
      q_1 e^{ \alpha T }
    }
    +
    \tfrac{ 
      \hat{U}_0( Y_s ) 
    }{
      \hat{q}_0 T e^{ \hat{\alpha} T }
    }
    +
    \tfrac{ 
      \hat{U}_1( Y_s ) 
    }{
      \hat{q}_1 e^{ \hat{\alpha} T }
    }
    \,
    ds
  \right)
  \right\|_{
    L^q( \Omega; \R )
  }
\\ & \leq 
  e^{ c T }
  \left\|
  \exp\!\left(
    \smallint_{0}^T
    \tfrac{ 
      U_0( X_s ) 
    }{
      q_0 T e^{ \alpha T }
    }
    \,
    ds
  \right)
  \right\|_{
    L^{ q_0 }( \Omega; \R )
  }
  \left\|
  \exp\!\left(
    \smallint_{0}^T
    \tfrac{ 
      U_1( X_s ) 
    }{
      q_1 e^{ \alpha T }
    }
    \,
    ds
  \right)
  \right\|_{
    L^{ q_1 }( \Omega; \R )
  }
\\ & \quad \cdot
  \left\|
  \exp\!\left(
    \smallint_{0}^T
    \tfrac{ 
      \hat{U}_0( Y_s ) 
    }{
      \hat{q}_0 T e^{ \hat{\alpha} T }
    }
    \,
    ds
  \right)
  \right\|_{
    L^{ \hat{q}_0 }( \Omega; \R )
  }
  \left\|
  \exp\!\left(
    \smallint_{0}^T
    \tfrac{ 
      \hat{U}_1( Y_s ) 
    }{
      \hat{q}_1 e^{ \hat{\alpha} T }
    }
    \,
    ds
  \right)
  \right\|_{
    L^{ \hat{q}_1 }( \Omega; \R )
  }
  .
\end{split}
\end{equation}
A simple consequence of Jensen's inequality 
(see, e.g., inequality~(19) in Li~\cite{Li1994} and 
Lemma~2.21 in Cox et al.~\cite{CoxHutzenthalerJentzen2013})
together with nonnegativity of $ U_0 $ and $ \hat{U}_0 $
hence proves that
\begin{equation}
\begin{split}
&
  \left\|
  \exp\!\left(
    \smallint_{0}^T
  \Big[
  \tfrac{
    \left< 
      X_s - Y_s , \mu( X_s ) - \mu( Y_s ) 
    \right>_H
  }{
    \left\| X_s - Y_s \right\|^2_H
  }
  \Big]^+
    ds
  \right)
  \right\|_{
    L^q( \Omega; \R )
  }
\\ & \leq 
  e^{ c T }
  \sup_{ t \in [0,T] }
  \left\|
  \exp\!\left(
    \tfrac{ 
      U_0( X_t ) 
    }{
      q_0 e^{ \alpha t }
    }
  \right)
  \right\|_{
    L^{ q_0 }( \Omega; \R )
  }
  \left\|
  \exp\!\left(
    \tfrac{ 
      U_0( X_T ) 
    }{
      q_1 e^{ \alpha T }
    }
    +
    \smallint_{0}^T
    \tfrac{ 
      U_1( X_s ) 
    }{
      q_1 e^{ \alpha s }
    }
    \,
    ds
  \right)
  \right\|_{
    L^{ q_1 }( \Omega; \R )
  }
\\ & \quad \cdot
  \sup_{ t \in [0,T] }
  \left\|
  \exp\!\left(
    \tfrac{ 
      \hat{U}_0( Y_t ) 
    }{
      \hat{q}_0 e^{ \hat{\alpha} t }
    }
  \right)
  \right\|_{
    L^{ \hat{q}_0 }( \Omega; \R )
  }
  \left\|
  \exp\!\left(
    \tfrac{ 
      \hat{U}_0( Y_T ) 
    }{
      \hat{q}_1 e^{ \hat{\alpha} T }
    }
    +
    \smallint_{0}^T
    \tfrac{ 
      \hat{U}_1( Y_s ) 
    }{
      \hat{q}_1 e^{ \hat{\alpha} s }
    }
    \,
    ds
  \right)
  \right\|_{
    L^{ \hat{q}_1 }( \Omega; \R )
  }
  .
\end{split}
\end{equation}
The nonnegativity of $ U_1 $ and $ \hat{U}_1 $ therefore shows that
\begin{equation}
\begin{split}
&
  \left\|
  \exp\!\left(
    \smallint_{0}^T
  \Big[
  \tfrac{
    \left< 
      X_s - Y_s , \mu( X_s ) - \mu( Y_s ) 
    \right>_H
  }{
    \left\| X_s - Y_s \right\|^2_H
  }
  \Big]^+
    ds
  \right)
  \right\|_{
    L^q( \Omega; \R )
  }
\leq
  \exp\!\left(
    c T
    +
    \smallint_{0}^T
    \tfrac{ 
      \beta 
    }{ 
      q_0 e^{ \alpha s } 
    }
    +
    \tfrac{ 
      \beta 
    }{ 
      q_1 e^{ \alpha s } 
    }
    +
    \tfrac{ 
      \hat{ \beta }
    }{ 
      \hat{ q }_0 e^{ \hat{ \alpha } s } 
    }
    +
    \tfrac{ 
      \hat{ \beta }
    }{ 
      \hat{ q }_1 e^{ \hat{ \alpha } s } 
    }
    \, ds
  \right)
\\ & \cdot
  \sup_{ t \in [0,T] }
  \left\|
  \exp\!\left(
    \tfrac{ 
      U_0( X_t ) 
    }{
      q_0 e^{ \alpha t }
    }
    +
    \smallint_{0}^t
    \tfrac{ 
      U_1( X_s ) - \beta 
    }{
      q_0 e^{ \alpha s }
    }
    \,
    ds
  \right)
  \right\|_{
    L^{ q_0 }( \Omega; \R )
  }
  \left\|
  \exp\!\left(
    \tfrac{ 
      U_0( X_T ) 
    }{
      q_1 e^{ \alpha T }
    }
    +
    \smallint_{0}^T
    \tfrac{ 
      U_1( X_s ) - \beta
    }{
      q_1 e^{ \alpha s }
    }
    \,
    ds
  \right)
  \right\|_{
    L^{ q_1 }( \Omega; \R )
  }
\\ & \cdot
  \sup_{ t \in [0,T] }
  \left\|
  \exp\!\left(
    \tfrac{ 
      \hat{U}_0( Y_t ) 
    }{
      \hat{q}_0 e^{ \hat{\alpha} t }
    }
    +
    \smallint_{0}^t
    \tfrac{ 
      \hat{U}_1( Y_s ) - \hat{ \beta }
    }{
      \hat{q}_0 e^{ \hat{\alpha} s }
    }
    \,
    ds
  \right)
  \right\|_{
    L^{ \hat{q}_0 }( \Omega; \R )
  }
  \left\|
  \exp\!\left(
    \tfrac{ 
      \hat{U}_0( Y_T ) 
    }{
      \hat{q}_1 e^{ \hat{\alpha} T }
    }
    +
    \smallint_{0}^T
    \tfrac{ 
      \hat{U}_1( Y_s ) - \hat{ \beta }
    }{
      \hat{q}_1 e^{ \hat{\alpha} s }
    }
    \,
    ds
  \right)
  \right\|_{
    L^{ \hat{q}_1 }( \Omega; \R )
  }
  .
\end{split}
\end{equation}
This, in turn, implies that
\begin{equation}
\begin{split}
&
  \left\|
  \exp\!\left(
    \smallint_{0}^T
  \Big[
  \tfrac{
    \left< 
      X_s - Y_s , \mu( X_s ) - \mu( Y_s ) 
    \right>_H
  }{
    \left\| X_s - Y_s \right\|^2_H
  }
  \Big]^+
    ds
  \right)
  \right\|_{
    L^q( \Omega; \R )
  }
\leq
  \exp\!\left(
    c T
    +
    \smallint_{0}^T
    \tfrac{ 
      \beta 
    }{ 
      e^{ \alpha s } 
    }
    \!
    \left[ 
      \tfrac{ 1 }{ q_0 }
      +
      \tfrac{ 1 }{ q_1 }
    \right]
    +
    \tfrac{ 
      \hat{ \beta }
    }{ 
      e^{ \hat{ \alpha } s } 
    }
    \!
    \left[ 
      \tfrac{ 1 }{ \hat{q}_0 }
      +
      \tfrac{ 1 }{ \hat{q}_1 }
    \right]
    ds
  \right)
\\ & \cdot
  \sup_{ t \in [0,T] }
  \left|
  \E\!\left[
  \exp\!\left(
    \tfrac{ 
      U_0( X_t ) 
    }{
      e^{ \alpha t }
    }
    +
    \smallint_{0}^t
    \tfrac{ 
      U_1( X_s ) - \beta 
    }{
      e^{ \alpha s }
    }
    \,
    ds
  \right)
  \right]
  \right|^{
    \left[ 
      \frac{ 1 }{ q_0 } + \frac{ 1 }{ q_1 } 
    \right]
  }
\\ & \cdot
  \sup_{ t \in [0,T] }
  \left|
  \E\!\left[
  \exp\!\left(
    \tfrac{ 
      \hat{U}_0( Y_t ) 
    }{
      e^{ \hat{\alpha} t }
    }
    +
    \smallint_{0}^t
    \tfrac{ 
      \hat{U}_1( Y_s ) - \hat{ \beta }
    }{
      e^{ \hat{\alpha} s }
    }
    \,
    ds
  \right)
  \right]
  \right|^{
    \left[
      \frac{ 1 }{ \hat{q}_0 }
      +
      \frac{ 1 }{ \hat{q}_1 }
    \right]
  }
  .
\end{split}
\end{equation}
Corollary~2.4 in 
Cox et al.~\cite{CoxHutzenthalerJentzen2013}
therefore shows that
\begin{equation}
\label{eq:exp_moment_small_noise}
\begin{split}
&
  \left\|
  \exp\!\left(
    \smallint_{0}^T
  \Big[
  \tfrac{
    \left< 
      X_s - Y_s , \mu( X_s ) - \mu( Y_s ) 
    \right>_H
  }{
    \left\| X_s - Y_s \right\|^2_H
  }
  \Big]^+
    ds
  \right)
  \right\|_{
    L^q( \Omega; \R )
  }
\\ & \leq
  \exp\!\left(
    c T
    +
    \smallint_{0}^T
    \tfrac{ 
      \beta 
    }{ 
      e^{ \alpha s } 
    }
    \!
    \left[ 
      \tfrac{ 1 }{ q_0 }
      +
      \tfrac{ 1 }{ q_1 }
    \right]
    +
    \tfrac{ 
      \hat{ \beta }
    }{ 
      e^{ \hat{ \alpha } s } 
    }
    \!
    \left[ 
      \tfrac{ 1 }{ \hat{q}_0 }
      +
      \tfrac{ 1 }{ \hat{q}_1 }
    \right]
    ds
  \right)
  \left|
  \E\!\left[
    e^{ U_0( X_0 ) }
  \right]
  \right|^{
    \left[ 
      \frac{ 1 }{ q_0 } + \frac{ 1 }{ q_1 } 
    \right]
  }
  \left|
  \E\!\left[
    e^{
      \hat{U}_0( Y_0 ) 
    }
  \right]
  \right|^{
    \left[
      \frac{ 1 }{ \hat{q}_0 }
      +
      \frac{ 1 }{ \hat{q}_1 }
    \right]
  }
  .
\end{split}
\end{equation}
The assumption 
$
  \E\big[
    e^{
      U_0( X_0 )
    }
    +
    e^{
      \hat{U}_0( Y_0 )
    }
  \big] 
  < \infty
$
hence shows that
\begin{equation}
    \smallint_{0}^T
  \Big[
  \tfrac{
    \left< 
      X_s - Y_s , \mu( X_s ) - \mu( Y_s ) 
    \right>_H
  }{
    \left\| X_s - Y_s \right\|^2_H
  }
  \Big]^+
    ds
  < \infty
\end{equation}
$ \P $-a.s.\ Corollary~\ref{cor:small_noise}
can thus be applied to obtain that
for all $ \rho \in (0,\infty) $ it holds that
\begin{align}
\label{eq:proof_small_noise2}
&
  \sup_{ t \in [0,T] }
  \big\| X_t - Y_t
  \big\|_{
    L^r( \Omega; H )
  }
\\ & \leq
\nonumber
      \varepsilon \,
      \rho^{ ( \frac{ 1 }{ 2 } - \frac{ 1 }{ p } ) }
      \sqrt{ p - 1 }
        \left\|
          \sigma( Y )
        \right\|_{ 
          L^p( [ 0 , T ] \times \Omega ; HS( U, H ) )
        }
  \left\|
  \exp\!\left(
    \smallint_{0}^T
  \Big[
  \tfrac{
    \left< 
      X_s - Y_s , \mu( X_s ) - \mu( Y_s ) 
    \right>_H
  }{
    \left\| X_s - Y_s \right\|^2_H
  }
  \Big]^+
  \!
      +
  \tfrac{
      ( \frac{ 1 }{ 2 } - \frac{ 1 }{ p } )
  }{
      \rho
  }
  \,
    ds
  \right)
  \right\|_{
    L^q( \Omega; \R )
  }
  .
\end{align}
Combining this with 
\eqref{eq:exp_moment_small_noise}
proves that
for all $ \rho \in (0,\infty) $
it holds that
\begin{align}
&
  \sup_{ t \in [0,T] }
  \big\| X_t - Y_t
  \big\|_{
    L^r( \Omega; H )
  }
\leq
      \varepsilon \,
      \rho^{ ( \frac{ 1 }{ 2 } - \frac{ 1 }{ p } ) }
      \sqrt{ p - 1 }
        \left\|
          \sigma( Y )
        \right\|_{ 
          L^p( [ 0 , T ] \times \Omega ; HS( U, H ) )
        }
\\ &
  \cdot
  \exp\!\left(
    \smallint_{0}^T
    c
    +
    \tfrac{
      ( \frac{ 1 }{ 2 } - \frac{ 1 }{ p } ) 
    }{
      \rho
    }
    +
    \tfrac{ 
      \beta 
    }{ 
      e^{ \alpha s } 
    }
    \!
    \left[ 
      \tfrac{ 1 }{ q_0 }
      +
      \tfrac{ 1 }{ q_1 }
    \right]
    +
    \tfrac{ 
      \hat{ \beta }
    }{ 
      e^{ \hat{ \alpha } s } 
    }
    \!
    \left[ 
      \tfrac{ 1 }{ \hat{q}_0 }
      +
      \tfrac{ 1 }{ \hat{q}_1 }
    \right]
    ds
  \right)
  \left|
  \E\Big[
    e^{ U_0( X_0 ) }
  \Big]
  \right|^{
    \left[ 
      \frac{ 1 }{ q_0 } + \frac{ 1 }{ q_1 } 
    \right]
  }
  \left|
  \E\!\left[
    e^{
      \hat{U}_0( X_0 ) 
    }
  \right]
  \right|^{
    \left[
      \frac{ 1 }{ \hat{q}_0 }
      +
      \frac{ 1 }{ \hat{q}_1 }
    \right]
  }
  .
\nonumber
\end{align}
This and the identity
\begin{equation}
\begin{split}
&
  \inf_{ 
    \rho \in (0,\infty)
  }
  \left[
      \rho^{ ( \frac{ 1 }{ 2 } - \frac{ 1 }{ p } ) }
  \exp\!\left(
    \tfrac{
      T \, ( \frac{ 1 }{ 2 } - \frac{ 1 }{ p } ) 
    }{
      \rho
    }
  \right) 
  \right]
=
  \exp\!\left(
  \inf_{ 
    \rho \in (0,\infty)
  }
  \left[
    \tfrac{
      T \, ( \frac{ 1 }{ 2 } - \frac{ 1 }{ p } ) 
    }{
      \rho
    }
    +
    \ln( \rho )
    \,
      ( \tfrac{ 1 }{ 2 } - \tfrac{ 1 }{ p } ) 
  \right]
  \right) 
\\ & =
  \exp\!\left(
  \left(
  \min_{ 
    \rho \in (0,\infty)
  }
    \big[
      T \rho
      -
      \ln( \rho )
    \big]
  \right)
      ( \tfrac{ 1 }{ 2 } - \tfrac{ 1 }{ p } ) 
  \right) 
=
  \exp\!\left(
    \big[
      1
      -
      \ln( \tfrac{ 1 }{ T } )
    \big]
    \,
    \big[ 
      \tfrac{ 1 }{ 2 } - \tfrac{ 1 }{ p } 
    \big] 
  \right) 
\\ & =
  \exp\!\left(
    \big[
      1
      +
      \ln( T )
    \big]
    \,
    \big[ 
      \tfrac{ 1 }{ 2 } - \tfrac{ 1 }{ p } 
    \big] 
  \right) 
  =
  T^{ ( \frac{ 1 }{ 2 } - \frac{ 1 }{ p } ) }
  \exp\!\left(
    \tfrac{ 1 }{ 2 } - \tfrac{ 1 }{ p } 
  \right)
\end{split}
\end{equation}
complete
the proof of Corollary~\ref{cor:small_noise2}.
\end{proof}

\subsubsection*{Acknowledgements}

Ryan Kurniawan, Marco Noll and 
Christoph Schwab are gratefully acknowledged
for pointing out several misprints to us
and for useful comments regarding 
notation and the overall presentation 
of this work.
Special thanks are due to Andrew Stuart
for his suggestion to view a time-discrete
numerical approximation as a perturbation
of the solution process of the SDE.

This project has been partially supported
by the research project 
``Numerical approximations of stochastic
differential equations with non-globally
Lipschitz continuous coefficients''
funded by the German Research Foundation.

%\bibliographystyle{acm}
%\bibliography{../../Bib/bibfile}
\def\cprime{$'$} \def\cprime{$'$}

\end{document}